\documentclass[reqno]{amsart}
\usepackage{amssymb}


 \newtheorem{Theorem}{Theorem}[section]
 \newtheorem{Corollary}[Theorem]{Corollary}
 \newtheorem{Lemma}[Theorem]{Lemma}
 \newtheorem{Proposition}[Theorem]{Proposition}

 \newtheorem{Definition}[Theorem]{Definition}

 \newtheorem{Remark}[Theorem]{Remark}

 \numberwithin{equation}{section}



\begin{document}

\title[concavity property of minimal $L^{2}$ integrals]
{concavity property of minimal $L^{2}$ integrals with Lebesgue measurable gain}

\author{Qi'an Guan}
\address{Qi'an Guan: School of
Mathematical Sciences, Peking University, Beijing 100871, China.}
\email{guanqian@math.pku.edu.cn}

\author{Zheng Yuan}
\address{Zheng Yuan: School of
Mathematical Sciences, Peking University, Beijing 100871, China.}
\email{zyuan@pku.edu.cn}

\thanks{}

\subjclass[2010]{32D15 32E10 32L10 32U05 32W05}

\keywords{concavity, minimal $L^{2}$ integral, multiplier ideal sheaf,
plurisubharmonic function, sublevel set}

\date{\today}

\dedicatory{}

\commby{}


\begin{abstract}
In this article, we present a concavity property of the minimal $L^{2}$ integrals related to multiplier ideal sheaves with Lebesgue measurable gain.
As applications, we give necessary conditions for our concavity degenerating to linearity,
characterizations for 1-dimensional case,
and a characterization for the holding of the equality in  optimal $L^2$ extension problem on open Riemann surfaces with weights may not be subharmonic.
\end{abstract}

\maketitle

\section{Introduction}
The multiplier ideal sheaves related to plurisubharmonic functions plays an important role in complex geometry and algebraic geometry
(see e.g. \cite{tian87,Nadel90,siu96,DEL00,D-K01,demailly-note2000,D-P03,Laz04,siu05,siu09,demailly2010}).
Recall the definition of the multiplier ideal sheaves as follows (see \cite{demailly2010}).

\emph{The multiplier ideal sheaf $\mathcal{I}(\varphi)$ was defined as the sheaf of germs of holomorphic functions $f$ such that
$|f|^{2}e^{-\varphi}$ is locally integrable, where $\varphi$ is a plurisubharmonic function on a complex manifold $M$.}

The strong openness conjecture is $\mathcal{I}(\varphi)=\mathcal{I}_{+}(\varphi):=\cup_{\varepsilon>0}\mathcal{I}((1+\varepsilon)\varphi)$, which was posed by Demailly (see \cite{demailly-note2000}) and be proved by Guan-Zhou \cite{GZopen-c} (the dimension two case was proved by Jonsson-Mustata \cite{JM12}). When $\mathcal{I}(\varphi)=\mathcal{O}$, this conjecture is called the openness conjecture, which was posed by
Demailly-Koll\'{a}r \cite{D-K01}, and was proved by Berndtsson \cite{berndtsson13} (the dimension two case was proved by Favre-Jonsson \cite{FM05j}) by establishing an effectiveness result of the openness conjecture.

Simulated by Berndtsson's effectiveness result,
continuing the solution of the strong openness conjecture \cite{GZopen-c},
Guan-Zhou \cite{GZopen-effect} established a non-sharp effectiveness result of the strong openness conjecture.
Recall that for the first time Guan-Zhou \cite{GZopen-effect} considered the minimal $L^{2}$ integral related to multiplier ideals on the sublevel set $\{\varphi<0\}$
i.e. the pseudoconvex domain $D$.

In \cite{guan_sharp}, by considering all the minimal $L^{2}$ integrals on the sublevels of the weights $\varphi$,
Guan presented a sharp version of the effectiveness result of the strong openness conjecture,
and obtained a concavity property of the minimal $L^{2}$ integrals without gain.
In \cite{guan_general concave}, Guan generalized the concavity property in \cite{guan_sharp} to minimal $L^{2}$ integrals with smooth gain.

In \cite{GM-concave}, Guan-Mi gave some applications of the concavity property in \cite{guan_general concave}:
a necessary condition for the concavity degenerating to linearity,
a characterization for 1-dimensional case,
and a characterization for the holding of the equality in  optimal $L^2$ extension problem on open Riemann surfaces with subharmonic weights.
Recall that if the subharmonic weights degenerate to $0$,
the characterization for the holding of the equality in  optimal $L^2$ extension problem on open Riemann surfaces is the solution of the equality part of the Suita conjecture in \cite{guan-zhou13ap};
if the subharmonic weights degenerate to harmonic,
the characterization for the holding of the equality in  optimal $L^2$ extension problem on open Riemann surfaces is the solution of the equality part of the extended Suita conjecture in \cite{guan-zhou13ap}.

In the present article, we point out that the smooth gain of the general concavity property in \cite{guan_general concave} (see also \cite{GM-concave})
can be replaced by Lebesgue measurable gain (Definition \ref{def:gain} and Theorem \ref{thm:general_concave}).
As applications, we give necessary conditions for our concavity degenerating to linearity (Section \ref{sec:ne}),
characterizations for 1-dimensional case (Section \ref{sec:1-d}),
and a characterization for the holding of the equality in  optimal $L^2$ extension problem on open Riemann surfaces with weights may not be subharmonic (Section \ref{sec:20210808}).

\subsection{Concavity property of minimal $L^{2}$ integrals with Lebesgue measurable gain}\label{sec:main}

Let $M$ be a complex manifold. We call $M$ that satisfies condition $(a)$, if there exists a closed subset $X\subset M$ satisfying the following two statements:

$(a1)$ $X$ is locally negligible with respect to $L^2$ holomorphic functions; i.e., for any local coordinate neighborhood $U\subset M$ and for any $L^2$ holomorphic function $f$ on $U\backslash X$, there exists an $L^2$ holomorphic function $\tilde f$ on $U$ such that $\tilde f|_{U\backslash X}=f$ with the same $L^2$ norm;

$(a2)$ $M\backslash X$ is a Stein manifold.

\

Let $M$ be an $n-$dimensional complex manifold satisfying condition $(a)$,  and let $K_{M}$ be the canonical (holomorphic) line bundle on $M$.
Let $\psi$ be a plurisubharmonic function on $M$,
and let  $\varphi$ be a Lebesgue measurable function on $M$,
such that $\varphi+\psi$ is a plurisubharmonic function on $M$. Take $T=-\sup_{M}\psi$ ($T$ maybe $-\infty$).

\begin{Definition}
\label{def:gain}
We call a positive measurable function $c$ (so-called "\textbf{gain}") on $(T,+\infty)$ in class $\mathcal{P}_T$ if the following two statements hold:

$(1)$ $c(t)e^{-t}$ is decreasing with respect to $t$;

$(2)$ there is a closed subset $E$ of $M$ such that $E\subset \{z\in Z:\psi(z)=-\infty\}$ and for any compact subset $K\subseteq M\backslash E$, $e^{-\varphi}c(-\psi)$ has a positive lower bound on $K$, where $Z$ is some analytic subset of $M$.	
\end{Definition}

\begin{Remark}
We recall a class $\mathcal{P}'_T$ of positive smooth functions in \cite{guan_general concave}. A positive smooth function $c$ on $(T,+\infty)$ in class $\mathcal{P}'_T$ if the following three statements hold:

$(1)'$ $\int_T^{+\infty}c(t)e^{-t}dt<+\infty$;

$(2)'$ $c(t)e^{-t}$ is decreasing with respect to $t$;

$(3)'$ for any compact subset $K\subseteq M$, $e^{-\varphi}c(-\psi)$ has a positive lower bound on $K$.	

We compare these two classes of functions $\mathcal{P}_T$ and $\mathcal{P}'_T$. When $c\in\mathcal{P}_T$, $c$ maybe non-smooth on $(T,+\infty)$ and $\int_T^{+\infty}c(t)e^{-t}dt$ maybe $+\infty$. When $\varphi$ is continuous on $M$, condition $(3)'$ is equivalent to $\liminf_{t\rightarrow+\infty}c(t)>0$.  When $\varphi$ is continuous on $M$ and $\psi\in A(S)$ (see Section \ref{sec:L2}), the decreasing property of $c(t)e^{-t}$ implies that $c\in\mathcal{P}_T$ and $\liminf_{t\rightarrow+\infty}c(t)$ may be equal to $0$.
\end{Remark}

Let $Z_{0}$ be a subset of $\{\psi=-\infty\}$ such that $Z_{0}\cap Supp(\{\mathcal{O}/\mathcal{I(\varphi+\psi)}\})\neq\emptyset$.
Let $U\supseteq Z_{0}$ be an open subset of $M$
and let $f$ be a holomorphic $(n,0)$ form on $U$.
Let $\mathcal{F}\supseteq\mathcal{I}(\varphi+\psi)|_{U}$ be a  analytic subsheaf of $\mathcal{O}$ on $U$.

Denote
\begin{equation*}
\begin{split}
\inf\Bigg\{\int_{\{\psi<-t\}}|\tilde{f}|^{2}e^{-\varphi}c(-\psi):(\tilde{f}-f)\in H^{0}(Z_0,&
(\mathcal{O}(K_{M})\otimes\mathcal{F})|_{Z_0})\\&\&{\,}\tilde{f}\in H^{0}(\{\psi<-t\},\mathcal{O}(K_{M}))\Bigg\},
\end{split}
\end{equation*}
by $G(t;\varphi,\psi,c)$ (so-called "\textbf{minimal $L^{2}$ integrals related to multiplier ideal sheaves}"),  where $t\in[T,+\infty)$, $c$ is a nonnegative function on $(T,+\infty)$,
$|f|^{2}:=\sqrt{-1}^{n^{2}}f\wedge\bar{f}$ for any $(n,0)$ form $f$ and $(\tilde{f}-f)\in H^{0}(Z_0,
(\mathcal{O}(K_{M})\otimes\mathcal{F})|_{Z_0})$ means $(\tilde{f}-f,z_0)\in(\mathcal{O}(K_{M})\otimes\mathcal{F})_{z_0}$ for all $z_0\in Z_0$. If there is no holomorphic holomorphic $(n,0)$ form $\tilde{f}$ on $\{\psi<-t\}$ satisfying $(\tilde{f}-f)\in H^{0}(Z_0,(\mathcal{O}(K_{M})\otimes\mathcal{F})|_{Z_0})$,
we set $G(t;\varphi,\psi,c)=+\infty$.  Without misunderstanding, we denote $G(t;\varphi,\psi,c)$ by $G(t)$, and when we  focus on different $\varphi$, $\psi$ or $c$, we denote it by $G(t;\varphi)$, $G(t;\psi)$ or $G(t;c)$, respectively.

In the present article, we obtain the following concavity for $G(t)$.
\begin{Theorem}
\label{thm:general_concave}
Let $c\in\mathcal{P}_T$. If there exists $t\in[T,+\infty)$ satisfying that $G(t)<+\infty$, then $G(h^{-1}(r))$ is concave with respect to $r\in(\int_{T_1}^{T}c(t)e^{-t}dt,\int_{T_1}^{+\infty}c(t)e^{-t}dt)$, $\lim_{t\rightarrow T+0}G(t)=G(T)$ and $\lim_{t\rightarrow +\infty}G(t)=0$, where $h(t)=\int_{T_1}^{t}c(t_{1})e^{-t_{1}}dt_{1}$ and $T_1\in(T,+\infty)$.
\end{Theorem}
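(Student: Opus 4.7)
My plan is to reduce Theorem~\ref{thm:general_concave} to the concavity theorem for smooth gains $c \in \mathcal{P}'_T$ established by Guan in~\cite{guan_general concave}, via an approximation of the measurable gain by smooth gains followed by passage to the limit in the variational definition of $G(t)$.

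First, given $c \in \mathcal{P}_T$, I construct an increasing sequence $c_j \in \mathcal{P}'_T$ of smooth functions on $(T, +\infty)$ with $c_j \le c$ and $c_j \uparrow c$ pointwise almost everywhere, each satisfying: $c_j(t)e^{-t}$ is decreasing in $t$, $\int_T^{+\infty} c_j(t)e^{-t}\,dt < +\infty$, and $e^{-\varphi}c_j(-\psi)$ has a positive lower bound on compact subsets of $M$. A concrete recipe: set $\tilde c(t) := c(t)e^{-t}$ (nonnegative and decreasing), mollify it by convolution with a one-sided smoothing kernel (which preserves monotonicity), take the minimum with a smooth, decreasing majorant $g_j$ that is integrable on $(T,+\infty)$ with $g_j \uparrow +\infty$ pointwise, smooth the resulting minimum, and multiply back by $e^t$. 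Applying the concavity theorem of~\cite{guan_general concave} to each $c_j$ yields that $G(h_j^{-1}(r); c_j)$ is concave on its interval, where $h_j(t) := \int_{T_1}^t c_j(s)e^{-s}\,ds$.

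Next, I would show $G(t; c_j) \uparrow G(t; c)$ for every $t \in [T, +\infty)$. The bound $G(t; c_j) \le G(t; c)$ follows from $c_j \le c$, since both variational problems share the same admissible set. For the reverse, pick near-minimizers $\tilde f_j$ for $G(t; c_j)$; condition $(2)$ of $\mathcal{P}_T$ gives locally uniform positive lower bounds on $e^{-\varphi}c(-\psi)$ over compact subsets of $\{\psi<-t\}\setminus E$, and since $E \subset \{\psi=-\infty\}$ is pluripolar (hence Lebesgue negligible) one obtains uniform local $L^2$ bounds on $\tilde f_j$ and extracts a normal-family subsequential limit $\tilde f$, holomorphic on $\{\psi<-t\}\setminus E$. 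Condition $(a)$ on $M$, applied to $E$ together with the analytic set $Z$, allows $\tilde f$ to extend across the exceptional set and shows it is admissible for $G(t; c)$. Fatou's lemma applied to $c_j(-\psi)\uparrow c(-\psi)$ then yields $G(t;c) \le \liminf_j G(t; c_j)$. Combined with $h_j \to h$ pointwise and the fact that pointwise limits preserve concavity, this delivers concavity of $G(h^{-1}(r); c)$ on the stated interval.

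Finally, the boundary limits follow from standard arguments: $G(t)$ is nonincreasing in $t$ because the admissibility constraint relaxes as $\{\psi<-t\}$ shrinks, so $\lim_{t\to T+0} G(t) = \sup_{t>T} G(t)$, and this supremum equals $G(T)$ by weak $L^2$ compactness applied to an exhausting sequence $\{\psi < -T-1/k\}\uparrow\{\psi<-T\}$; the limit at $+\infty$ follows by exhibiting trial extensions obtained from $f$ times smooth cutoffs supported near $\{\psi=-\infty\}$ whose weighted $L^2$ norms tend to zero. The main obstacle will be the reverse inequality in the convergence $G(t; c_j) \uparrow G(t; c)$: the gain $c$ need not admit a positive lower bound near $\{\psi=-\infty\}$ (this is precisely the weakening of $\mathcal{P}'_T$ to $\mathcal{P}_T$ permitted by tolerating the exceptional set $E$), so extracting a subsequential limit of the minimizers $\tilde f_j$ requires careful use of condition $(a)$ and control of possible concentration of mass of $\tilde f_j$ near $E$ as $j \to +\infty$.
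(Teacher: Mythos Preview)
Your reduction to the smooth-gain theorem of \cite{guan_general concave} has a genuine gap in the approximation step. You propose to build smooth $c_j\in\mathcal P'_T$ with $c_j\le c$ and $c_j\uparrow c$, but condition $(3)'$ of $\mathcal P'_T$ --- that $e^{-\varphi}c_j(-\psi)$ have a positive lower bound on every compact subset of $M$ --- is in general incompatible with $c_j\le c$. Take $M=\Delta$, $\psi=\log|z|$, $\varphi=-\tfrac12\log|z|$ (so $\varphi+\psi=\tfrac12\log|z|$ is subharmonic), and $c\equiv 1$. Then $e^{-\varphi}c(-\psi)=|z|^{1/2}$, which forces $E=\{0\}$ in the definition of $\mathcal P_T$. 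Any $c_j\le 1$ satisfies $e^{-\varphi}c_j(-\psi)\le |z|^{1/2}\to 0$ at the origin, so $c_j\notin\mathcal P'_T$; hence no such approximating sequence exists. The difficulty is structural: the exceptional set $E$ in $\mathcal P_T$ accommodates blow-up of $\varphi$ along $\{\psi=-\infty\}$, and no modification of $c$ alone can compensate for $e^{-\varphi}=0$ there. Approximating from above also fails, since forcing $c_j(t)\gtrsim e^{t/2}$ to repair the lower bound is incompatible with $c_j\downarrow 1$. A secondary issue: the cited theorem is stated for Stein $M$, whereas here $M$ only satisfies condition~$(a)$; and your argument for $\lim_{t\to+\infty}G(t)=0$ via ``$f$ times smooth cutoffs'' produces non-holomorphic forms, so is not admissible.

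The paper avoids this obstruction by \emph{not} approximating at the level of $G(t)$. Instead it establishes the key $\bar\partial$-estimate (Lemma~\ref{lem:GZ_sharp}) directly for measurable $c$ with $c(t)e^{-t}$ decreasing, smoothing $c$ only inside that proof (Step~7 of Section~\ref{sec:p-GZsharp}), where no global lower bound on $e^{-\varphi}c(-\psi)$ is needed. Lemma~\ref{lem:GZ_sharp2} then handles the passage across $X$ and $E$ using condition~$(a)$ and the analytic set $Z\supset E$, and from this the differential inequality for $G(t)$ (Lemma~\ref{lem:C}) and the concavity follow directly for the original measurable $c$. The boundary limits come from Lemma~\ref{lem:B}: monotonicity plus lower semicontinuity give $\lim_{t\to T+0}G(t)=G(T)$, and $\lim_{t\to+\infty}G(t)=0$ follows by restricting a fixed minimizer to $\{\psi<-t\}$ and applying dominated convergence.
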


When   $c(t)\in\mathcal{P}'_T$ and $M$ is a Stein manifold,  Theorem \ref{thm:general_concave} is the concavity property in \cite{guan_general concave} (see also \cite{GM-concave}).

Theorem \ref{thm:general_concave}  implies the following corollary.
\begin{Corollary}
\label{infty}
	If $\int_{T_1}^{+\infty}c(t)e^{-t}dt=+\infty$, where $c\in \mathcal{P}_T$, and $f\notin H^{0}(Z_0,
(\mathcal{O}(K_{M})\otimes\mathcal{F})|_{Z_0})$, then $G(t)=+\infty$ for any $t\geq T$, i.e., there is no holomorphic holomorphic $(n,0)$ form $\tilde{f}$ on $\{\psi<-t\}$ satisfying $(\tilde{f}-f)\in H^{0}(Z_0,
(\mathcal{O}(K_{M})\otimes\mathcal{F})|_{Z_0})$ and $\int_{\{\psi<-t\}}|\tilde{f}|^{2}e^{-\varphi}c(-\psi)<+\infty$.
\end{Corollary}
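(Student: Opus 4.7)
The plan is to argue by contradiction using Theorem~\ref{thm:general_concave}. Suppose $G(t_0) < +\infty$ for some $t_0 \geq T$. Then Theorem~\ref{thm:general_concave} applies and, writing $g(r) := G(h^{-1}(r))$, tells us that $g$ is concave on $\bigl(\int_{T_1}^{T} c(t)e^{-t}dt,\ \int_{T_1}^{+\infty} c(t)e^{-t}dt\bigr)$ with $\lim_{r \to +\infty} g(r) = \lim_{t \to +\infty} G(t) = 0$. Our standing assumption $\int_{T_1}^{+\infty} c(t)e^{-t}dt = +\infty$ turns this concavity interval into a half-line stretching to $+\infty$. I then invoke the elementary fact that a nonnegative concave function $g$ on such a half-line with $\lim_{r \to +\infty} g(r) = 0$ must be identically zero: fixing $r_0 < r < r_1$, the chord inequality $g(r) \geq \frac{r_1-r}{r_1-r_0} g(r_0) + \frac{r-r_0}{r_1-r_0} g(r_1)$ and $r_1 \to +\infty$ give $g(r) \geq g(r_0)$, so $g$ is simultaneously nondecreasing and vanishing at infinity, hence zero. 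Translating back, $G(t) \equiv 0$ on $(T,+\infty)$, and $G(T) = \lim_{t \to T+0} G(t) = 0$ by the boundary limit in Theorem~\ref{thm:general_concave}.

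To derive a contradiction with $f \notin H^0(Z_0,(\mathcal{O}(K_M)\otimes\mathcal{F})|_{Z_0})$, fix some $t \geq T$ and take a minimizing sequence $\tilde{f}_n$ of holomorphic $(n,0)$-forms on $\{\psi<-t\}$ with $(\tilde{f}_n - f) \in H^0(Z_0,(\mathcal{O}(K_M)\otimes\mathcal{F})|_{Z_0})$ and $\int_{\{\psi<-t\}} |\tilde{f}_n|^2 e^{-\varphi} c(-\psi) \to 0$. Note that $\tilde{f}_n \equiv 0$ is not admissible by the very hypothesis of the corollary, so the infimum is approached only by a genuinely nontrivial sequence. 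The positive lower bound of $e^{-\varphi} c(-\psi)$ on compact subsets of $M \setminus E$ from Definition~\ref{def:gain}$(2)$ yields $\tilde{f}_n \to 0$ in $L^2_{\mathrm{loc}}$, and hence locally uniformly, on $\{\psi<-t\} \setminus E$. Passing the germ condition $(\tilde{f}_n - f)_{z_0} \in (\mathcal{O}(K_M)\otimes\mathcal{F})_{z_0}$ to the limit at each $z_0 \in Z_0$ then forces $f_{z_0}$ to lie in $(\mathcal{O}(K_M)\otimes\mathcal{F})_{z_0}$, contradicting the hypothesis.

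The main obstacle is this final limit step, because $Z_0 \subset \{\psi=-\infty\}$ may meet the exceptional set $E$, precisely where the locally uniform convergence argument breaks down. I expect to handle this by combining condition $(a1)$, which allows $L^2$-holomorphic extension across the thin set $X$ containing or controlling $E$, with the closedness of sections of a coherent analytic subsheaf under locally uniform limits of holomorphic sections; together these ensure that the sheaf membership is preserved in the limit at every point of $Z_0$, not only at the generic ones outside $E$.
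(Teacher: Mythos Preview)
Your approach is essentially the paper's: assume $G(t_0)<+\infty$, invoke the concavity of $G(h^{-1}(r))$ from Theorem~\ref{thm:general_concave} on the half-line $(\int_{T_1}^T c(t)e^{-t}dt,+\infty)$, and derive a contradiction. The paper reaches the contradiction in the opposite order---it first uses Lemma~\ref{lem:0} to get $G(t)>0$ from $f\notin H^0(Z_0,(\mathcal O(K_M)\otimes\mathcal F)|_{Z_0})$, then observes that a positive decreasing non-constant concave function cannot live on an unbounded interval---but your route (nonnegative, concave, limit zero forces $G\equiv0$, then contradict the hypothesis on $f$) is equivalent.

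Your ``main obstacle'' in the final step is precisely the content of Lemma~\ref{lem:0}, and the resolution is not condition~$(a1)$ but Lemma~\ref{l:converge}: since $E$ sits inside an \emph{analytic} subset $Z$ of $M$, the Local Parametrization Theorem and Maximum Principle propagate the $L^2$ bound (hence locally uniform convergence) from compact subsets of $\{\psi<-t\}\setminus Z$ to all of $\{\psi<-t\}$, including every point of $Z_0$. Once you have $\tilde f_n\to 0$ locally uniformly on a full neighborhood of each $z_0\in Z_0$, Lemma~\ref{closedness} (closedness of submodules under uniform limits) passes the germ condition to the limit and gives $(f,z_0)\in(\mathcal O(K_M)\otimes\mathcal F)_{z_0}$, the desired contradiction. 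So you may simply cite Lemma~\ref{lem:0} and be done.
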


In the following, we give  two corollaries of Theorem \ref{thm:general_concave} when  concavity degenerates to linearity.
\begin{Corollary}
\label{thm:linear}
Let $c\in \mathcal{P}_T$, and let $G(t)\in(0,+\infty)$ for some $t\geq T$, then  $G(h^{-1}(r))$ is concave with respect to $r\in(\int_{T_1}^{T}c(t)e^{-t}dt,\int_{T_1}^{+\infty}c(t)e^{-t}dt]$ and
the following three statements are equivalent:

$(1) $
$G(t)=\frac{G(T_1)}{\int_{T_1}^{+\infty}c(t_1)e^{-t_1}dt_1}\int_{t}^{+\infty}c(t_1)e^{-t_1}dt_1$
holds for any $t\in[T,+\infty)$, i.e., $G(\hat{h}^{-1}(r))$ is linear with respect to $r\in[0,\int_{T}^{+\infty}c(s)e^{-s}ds)$, where $\hat{h}(t)=\int_{t}^{+\infty}c(s)e^{-s}ds$;

$(2)$ $\frac{G(h^{-1}(r_{0}))}{\int_{T_1}^{+\infty}c(t_1)e^{-t_1}dt_1-r_{0}}\leq\lim_{t\rightarrow T+0}\frac{G(t)}{\int_{t}^{+\infty}c(t_1)e^{-t_1}dt_1}$ holds
for some $r_{0}\in(\int_{T_1}^{T}c(t)e^{-t}dt,\int_{T_1}^{+\infty}c(t)e^{-t}dt)$, i.e., $$\frac{G(t_{0})}{\int_{t_{0}}^{+\infty}c(t_{1})e^{-t_{1}}dt_{1}}\leq\lim_{t\rightarrow T+0}\frac{G(t)}{\int_{t}^{+\infty}c(t_1)e^{-t_1}dt_1}$$
holds for some $t_{0}\in(T,+\infty)$;

$(3)$ $\lim_{r\to{\int_{T_1}^{+\infty}c(t_1)e^{-t_1}dt_1}-0}\frac{G(h^{-1}(r))}{\int_{T_1}^{+\infty}c(t)e^{-t}dt-r}\leq\lim_{t\rightarrow T+0}\frac{G(t)}{\int_{t}^{+\infty}c(t_1)e^{-t_1}dt_1}$ holds,
i.e.,
$$\lim_{t\to+\infty}\frac{G(t)}{\int_{t}^{+\infty}c(t_{1})e^{-t_{1}}dt_{1}}\leq\lim_{t\rightarrow T+0}\frac{G(t)}{\int_{t}^{+\infty}c(t_1)e^{-t_1}dt_1}$$
holds. 	
\end{Corollary}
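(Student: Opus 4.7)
The plan is to reduce the corollary to a standard fact about non-negative concave functions that vanish at an endpoint. Write $R:=\int_{T_1}^{+\infty}c(t_1)e^{-t_1}\,dt_1$ and $R_0:=\int_{T_1}^{T}c(t_1)e^{-t_1}\,dt_1$. I would first observe that the assumption $G(t)\in(0,+\infty)$ excludes $R=+\infty$: indeed $G(t)>0$ rules out $f\in H^{0}(Z_0,(\mathcal{O}(K_M)\otimes\mathcal{F})|_{Z_0})$, so Corollary~\ref{infty} would otherwise force $G(t)=+\infty$. Set $g(r):=G(h^{-1}(r))$ on $(R_0,R)$. Theorem~\ref{thm:general_concave} gives concavity of $g$ together with the boundary behavior $\lim_{r\to R^-}g(r)=0$; defining $g(R):=0$ extends $g$ to a concave function on $(R_0,R]$ by passing to the limit in the chord inequality, which justifies the half-open concavity statement asserted at the start of the corollary.

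Next I would establish the key monotonicity: for a non-negative concave function $g$ on $(R_0,R]$ with $g(R)=0$, the ratio $\phi(r):=g(r)/(R-r)$ is non-decreasing. This follows from a one-line calculation: for $R_0<r_1<r_2<R$, writing $r_2=\tfrac{R-r_2}{R-r_1}r_1+\tfrac{r_2-r_1}{R-r_1}R$ and applying concavity with $g(R)=0$ yields $g(r_2)\geq\tfrac{R-r_2}{R-r_1}g(r_1)$. Translating through $r=h(t)$ and $R-r=\int_t^{+\infty}c(s)e^{-s}\,ds$ says that $t\mapsto G(t)/\int_t^{+\infty}c(s)e^{-s}\,ds$ is non-decreasing, so the limits at $t\to T+0$ and $t\to+\infty$ are respectively the infimum and supremum of $\phi$.

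With monotonicity of $\phi$ in hand the implications become transparent. Statement (1) says $\phi$ is constant, so it clearly implies (2) and (3). For (3)$\Rightarrow$(1) the stated inequality reads $\sup\phi\leq\inf\phi$, which combined with monotonicity forces $\phi$ to be constant and hence $g$ linear on $(R_0,R]$. For (2)$\Rightarrow$(1) the inequality $\phi(r_0)\leq\inf\phi$ together with monotonicity forces $\phi$ to be constant on $(R_0,r_0]$, so $g(r)=c_0(R-r)$ on this initial segment for some $c_0\geq0$. To promote this to linearity on all of $(R_0,R]$ I would use concavity at three points: for any $r\in(r_0,R)$ and any $r_1\in(R_0,r_0)$, write $r_0$ as a convex combination of $r_1$ and $r$; using the known values $g(r_1)=c_0(R-r_1)$ and $g(r_0)=c_0(R-r_0)$, a short simplification of the concavity inequality gives $g(r)\leq c_0(R-r)$, and this matches the lower bound $g(r)\geq c_0(R-r)$ coming from monotonicity of $\phi$.

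The main obstacle is precisely the (2)$\Rightarrow$(1) step: monotonicity of $\phi$ alone supplies only the lower bound $g\geq c_0(R-r)$ on the portion of the interval beyond $r_0$, and one must extract a matching upper bound. The right trick is the three-point concavity inequality anchored at a reference point $r_1$ strictly inside the already-linear piece, which after cancellation of the factor $(r_0-r_1)$ collapses to $g(r)\leq c_0(R-r)$. Everything else is essentially bookkeeping between the $r$-variable and the $t$-variable.
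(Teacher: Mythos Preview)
Your proposal is correct and follows the same approach as the paper: both deduce $R<+\infty$ from Corollary~\ref{infty} (together with Lemma~\ref{lem:0}, which is your observation that $G(t)>0$ forces $f\notin H^{0}(Z_0,(\mathcal{O}(K_M)\otimes\mathcal{F})|_{Z_0})$), extend concavity to the closed endpoint via $\lim_{t\to+\infty}G(t)=0$, and then appeal to elementary properties of nonnegative concave functions vanishing at an endpoint. The paper's proof simply asserts that ``the concavity of $G(h^{-1}(r))$ implies that the three statements are equivalent,'' whereas you have carefully unpacked this into the monotonicity of $r\mapsto g(r)/(R-r)$ and the three-point argument for $(2)\Rightarrow(1)$; your added detail is sound and fills in exactly what the paper leaves to the reader.
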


\begin{Remark}
\label{r:1}
	Let $M=\Delta\subset \mathbb{C}$ and let $\psi=\psi+\varphi=2\log|z|$. Let $c(t)\equiv1$ and let $\mathcal{F}=\mathcal{I}(\varphi+\psi)$. Let $f\equiv dz$ and $Z_0=o$ the origin of $\mathbb{C}$. It is clear that $G(\hat{h}^{-1}(r))=2\pi r$ is linear with respect to $r$, where $\hat{h}(t)=\int_{t}^{+\infty}c(l)e^{-l}dl$.
\end{Remark}

Let $c(t)$ be a nonnegative measurable function  on $(T,+\infty)$. Set
\begin{displaymath}
\begin{split}
\mathcal H^2(c,t)=\{\tilde f:\int_{\{\psi<-t\}}|\tilde{f}|^{2}e^{-\varphi}c(-\psi)<+\infty,{\,}(\tilde{f}-f)&\in H^{0}(Z_0,
(\mathcal{O}(K_{M})\otimes\mathcal{F})|_{Z_0})\\&\&{\,}\tilde{f}\in H^{0}(\{\psi<-t\},\mathcal{O}(K_{M}))\},
\end{split}
\end{displaymath}
where $t\in[T,+\infty)$.

\begin{Corollary}
	\label{thm:1}
Let $c\in \mathcal{P}_T$, if $G(t)\in(0,+\infty)$ for some $t\geq T$ and $G(\hat{h}^{-1}(r))$ is linear with respect to $r\in[0,\int_{T}^{+\infty}c(s)e^{-s}ds)$, where $\hat{h}(t)=\int_{t}^{+\infty}c(l)e^{-l}dl$,
 then there is a unique holomorphic $(n,0)$ form $F$ on $M$ satisfying $(F-f)\in H^{0}(Z_0,(\mathcal{O}(K_{M})\otimes\mathcal F)|_{Z_0})$ and $G(t;c)=\int_{\{\psi<-t\}}|F|^2e^{-\varphi}c(-\psi)$ for any $t\geq T$.  Equality
\begin{equation}
	\label{eq:20210412b}
	\int_{\{-t_1\leq\psi<-t_2\}}|F|^2e^{-\varphi}a(-\psi)=\frac{G(T_1;c)}{\int_{T_1}^{+\infty}c(t)e^{-t}dt}\int_{t_2}^{t_1} a(t)e^{-t}dt
\end{equation}
holds for any nonnegative measurable function $a$ on $(T,+\infty)$, where $+\infty\geq t_1>t_2\geq T$.

Furthermore, if $\mathcal H^2(\tilde{c},t_0)\subset\mathcal H^2(c,t_0)$ for some $t_0\geq T$, where $\tilde{c}$ is a nonnegative measurable function on $(T,+\infty)$, we have
\begin{equation}
	\label{eq:20210412a}
	G(t_0;\tilde{c})=\int_{\{\psi<-t_0\}}|F|^2e^{-\varphi}\tilde{c}(-\psi)=\frac{G(T_1;c)}{\int_{T_1}^{+\infty}c(s)e^{-s}ds}\int_{t_0}^{+\infty} \tilde{c}(s)e^{-s}ds.
	\end{equation}	
\end{Corollary}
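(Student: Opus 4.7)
The plan is to produce $F$ as the unique minimizer of the $c$-weighted $L^{2}$ problem at the base level $t=T$, to use the linearity hypothesis to promote $F$ to the minimizer at every level $t\ge T$, and then to read off \eqref{eq:20210412b} and \eqref{eq:20210412a} from the push-forward of $|F|^{2}e^{-\varphi}$ under $-\psi$.

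\emph{Step 1 (existence and uniqueness of $F$).} The linearity hypothesis forces $G(T;c)<+\infty$, so pick a minimizing sequence $\{\tilde f_{j}\}$ in $\mathcal{H}^{2}(c,T)$. The germ condition at $Z_{0}$ passes to weak $L^{2}$ limits; Montel gives a subsequence converging locally uniformly to a holomorphic $(n,0)$ form $F$; and lower semicontinuity of the weighted norm yields $\int_{\{\psi<-T\}}|F|^{2}e^{-\varphi}c(-\psi)=G(T;c)$. Condition $(a1)$ extends $F$ across the pluripolar exceptional set to a holomorphic form on $M$. The parallelogram identity applied to any two putative minimizers forces them to agree.

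\emph{Step 2 (minimality at every level).} Set $\nu(t):=\int_{\{\psi<-t\}}|F|^{2}e^{-\varphi}c(-\psi)$. Clearly $\nu(T)=G(T;c)$, and $\nu(t)\ge G(t;c)$ for every $t\ge T$ since $F|_{\{\psi<-t\}}$ competes for $G(t;c)$. Rewrite the linearity as $G(t;c)=A\,\hat{h}(t)$ with $A=G(T_{1};c)/\hat{h}(T_{1})$. The parallelogram argument from Step 1 applies at every level and produces a unique minimizer $F_{t}$ for $G(t;c)$. The linearity assumption is precisely condition $(1)$ of Corollary \ref{thm:linear}, i.e.\ the equality case of the concavity in Theorem \ref{thm:general_concave}; feeding this equality case back through the proof of Theorem \ref{thm:general_concave} forces the family $\{F_{t}\}_{t\ge T}$ to be compatible under restriction, i.e.\ $F_{t}=F_{t'}|_{\{\psi<-t\}}$ whenever $T\le t'\le t$. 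Taking $t'=T$ and using uniqueness at $t=T$ gives $F_{t}=F|_{\{\psi<-t\}}$, whence $\nu(t)=G(t;c)$ for every $t\ge T$.

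\emph{Step 3 (formulas \eqref{eq:20210412b} and \eqref{eq:20210412a}).} Step 2 gives $\nu(t)=A\int_{t}^{+\infty}c(s)e^{-s}\,ds$ for all $t\ge T$. Let $\mu$ be the Borel push-forward of $|F|^{2}e^{-\varphi}$ under $-\psi\colon M\to[T,+\infty]$; the identity $\int_{(t,+\infty)}c\,d\mu=A\int_{t}^{+\infty}c(s)e^{-s}\,ds$ for every $t\ge T$, together with positivity of $c$, forces $d\mu(s)=A\,e^{-s}\,ds$ on $(T,+\infty)$. Integrating an arbitrary nonnegative measurable $a$ against $d\mu$ over $[t_{2},t_{1})$ yields \eqref{eq:20210412b}. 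For \eqref{eq:20210412a}, taking $a=\tilde c$, $t_{2}=t_{0}$, $t_{1}=+\infty$ gives $\int_{\{\psi<-t_{0}\}}|F|^{2}e^{-\varphi}\tilde c(-\psi)=A\int_{t_{0}}^{+\infty}\tilde c(s)e^{-s}\,ds$. If this is finite then $F\in\mathcal{H}^{2}(\tilde c,t_{0})$ gives $G(t_{0};\tilde c)\le$ the right-hand side, while for the reverse inequality any $\tilde c$-minimizer $\tilde F$ lies in $\mathcal{H}^{2}(c,t_{0})$ by the inclusion, and a combined orthogonality argument in the $c$- and $\tilde c$-inner products (using Step 2 for the former and minimality of $\tilde F$ for the latter) forces $\tilde F=F$. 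If instead the right-hand side is $+\infty$, the inclusion together with uniqueness of the $c$-minimizer forces $\mathcal{H}^{2}(\tilde c,t_{0})=\emptyset$, so both sides of \eqref{eq:20210412a} are $+\infty$.

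\emph{Main obstacle.} The crux is Step 2: lifting $F$ from minimizer at $t=T$ to minimizer at every $t\ge T$. Monotonicity of $\nu$ and $G(\cdot;c)$ together with their equality at the two endpoints $T$ and $+\infty$ do not on their own force $\nu\equiv G(\cdot;c)$; the linearity of $G(\hat{h}^{-1}(r))$ must be converted into equality in the optimal $L^{2}$ extension underlying Theorem \ref{thm:general_concave} in order to glue minimizers at different levels into a single $F$. Once Step 2 is in hand, Step 3 is essentially measure-theoretic bookkeeping together with a short variational argument for \eqref{eq:20210412a}.
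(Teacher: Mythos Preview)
Your Steps 1--2 match the paper's approach: the paper also obtains compatibility of the minimizers $F_{t}$ by tracing the equality case through the proof of Lemma~\ref{lem:C} (the engine behind Theorem~\ref{thm:general_concave}), extracting
\[
\int_{\{\psi<-t_0\}}|F_1-F_{t_0}|^{2}e^{-\varphi}\bigl(e^{-\psi-t_0}c(t_0)-c(-\psi)\bigr)=0,
\]
and then using that $c(t)e^{-t}$ is strictly smaller than $c(t_0)e^{-t_0}$ on some $\{\psi<-t_2\}$ (because $\int c(t)e^{-t}\,dt<\infty$) to force $F_1=F_{t_0}$. One correction: your claim that linearity forces $G(T;c)<+\infty$ is unjustified---$\int_{T}^{+\infty}c(t)e^{-t}\,dt$ may be infinite, in which case $G(T;c)=+\infty$ and there is no minimizer at level $T$ to start from. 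The paper avoids this by working at levels $t_1>T$, gluing the compatible family $\{F_{t}\}_{t>T}$ into a form on $M=\bigcup_{t>T}\{\psi<-t\}$, and recovering $t=T$ via $\lim_{t\to T+}G(t)=G(T)$.

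Your Step 3 is a genuinely cleaner alternative to the paper's argument. For \eqref{eq:20210412b} the paper carries out a hands-on approximation: it partitions $[t_2',t_1']$, controls the oscillation of $c$ off a shrinking sequence of open sets containing its (countably many) discontinuities, and squeezes from above and below. Your push-forward argument---two Borel measures agreeing on all half-lines $(t,+\infty)$ must coincide, then divide by the positive $c$---reaches the same conclusion in one stroke (and $c$ is bounded away from $0$ on compacta of $(T,+\infty)$ since $c(t)e^{-t}$ is decreasing, so the division is legitimate). For \eqref{eq:20210412a} the paper repeats a similar partition argument to transfer the orthogonality identity of Lemma~\ref{lem:A} from the weight $c(-\psi)$ to the bare weight, eventually obtaining $\int|\tilde F|^{2}e^{-\varphi}\tilde c(-\psi)\ge\int|F|^{2}e^{-\varphi}\tilde c(-\psi)$. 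In your framework this is immediate: the $c$-orthogonality at \emph{every} level $t\ge t_0$ (available by Step 2) gives $c\,d\mu_{\tilde F}=c\,d\mu_{F}+c\,d\mu_{\tilde F-F}$ on $(t_0,+\infty)$, hence $d\mu_{\tilde F}\ge d\mu_{F}$, and integrating against $\tilde c$ yields the inequality. Note, however, that you do \emph{not} need any ``$\tilde c$-orthogonality'' here, and indeed no Hilbert-space structure for the $\tilde c$-weight is available in general; your phrase ``combined orthogonality in the $c$- and $\tilde c$-inner products'' mislocates the mechanism.
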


 When $c(t)\in\mathcal{P}'_T$ and $M$ is a Stein manifold, Corollary \ref{thm:linear} and Corollary \ref{thm:1} can be referred to \cite{GM-concave} (when $c\equiv1$, $M$ is a Stein manifold, $\varphi$ is a smooth plurisubharmonic function on $M$ and $\{\psi=-\infty\}$ is a closed subset of $M$, Xu-Zhou \cite{xu-phd} also get the existence of $F$ in Corollary \ref{thm:1} independently).

\begin{Remark}
\label{r:c} Let $\tilde{c}\in\mathcal{P}_T$, if $\mathcal H^2(\tilde{c},t_1)\subset\mathcal H^2(c,t_1)$, then $\mathcal H^2(\tilde{c},t_2)\subset\mathcal H^2(c,t_2)$, where $t_1>t_2>T$. In the following, we give some sufficient conditions of
	$\mathcal H^2(\tilde{c},t_0)\subset\mathcal H^2(c,t_0)$ for $t_0> T$:
	
	$(1)$ $\tilde{c}\in\mathcal{P}_T$ and $\lim_{t\rightarrow+\infty}\frac{\tilde{c}(t)}{c(t)}>0$. Especially,  $\tilde{c}\in\mathcal{P}_T$, $c$ and $\tilde c$ are smooth on $(T,+\infty)$ and $\frac{d}{dt}(\log (\tilde{c}(t))\geq \frac{d}{dt}(\log c(t))$;
	
	$(2)$ $\tilde{c}\in\mathcal{P}_T$, $\mathcal H^2(c,t_0)\not=\emptyset$ and there exists $t>t_0$, such that $\{\psi<-t\}\subset\subset\{\psi<-t_0\}$, $\{z\in\overline{\{\psi<-t\}}:\mathcal{I}(\varphi+\psi)_z\not=\mathcal{O}_z\}\subset Z_0$ and $\mathcal{F}|_{\overline{\{\psi<-t\}}}=\mathcal{I}(\varphi+\psi)|_{\overline{\{\psi<-t\}}}$.
	
	The sufficiency of condition $(1)$ is clear. For condition (2), assume that $\mathcal H^2(\tilde{c},t_0)\not=\emptyset$, then the following inequality gives the  sufficiency of condition $(2)$:
	\begin{displaymath}
	\begin{split}
		&\int_{\{\psi<-t_0\}}|\tilde F|^2e^{-\varphi}c(-\psi)\\
		\leq&2\int_{\{\psi<-t\}}|\tilde F-F|^2e^{-\varphi}c(-\psi)+2\int_{\{\psi<-t\}}|F|^2e^{-\varphi}c(-\psi)\\
		&+\int_{\{-t\leq\psi<-t_0\}}|\tilde F|^2e^{-\varphi}c(-\psi)\\
		\leq&2C\int_{\{\psi<-t\}}|\tilde F-F|^2e^{-\varphi-\psi}+2\int_{\{\psi<-t\}}|F|^2e^{-\varphi}c(-\psi)\\
		&+\frac{\sup_{s\in(t_0,t]}c(s)}{\inf_{s\in(t_0,t]}\tilde{c}(s)}\int_{\{\psi<-t_0\}}|\tilde F|^2e^{-\varphi}\tilde{c}(-\psi)\\
		<&+\infty,
	\end{split}			
	\end{displaymath}
	where $\tilde F\in\mathcal H^2(\tilde{c},t_0)$ and $F\in\mathcal H^2(c,t_0)$.
\end{Remark}

\subsection{Applications}\label{sec:app}

In this section, we give some applications of our concavity property.

\subsubsection{Applications in  optimal $L^2$ extension theorem}\label{sec:L2}
Let $M$ be an $n-$dimensional complex manifold, and let $S$ be an analytic subset of $M$. Let $dV_{M}$ be a continuous volume form on $M$.

We consider a class of plurisubharmonic functions $\Phi$ from $M$ to $[-\infty,+\infty)$, such that

$(1)$ $S\subset\Phi^{-1}(-\infty)$, and $\Phi^{-1}(-\infty)$ is a closed subset of some analytic subset of $M$ satisfying that $\Phi$ has locally lower bound on $M\backslash\Phi^{-1}(-\infty)$.

$(2)$ if $S$ is $l-$dimensional around a point $x\in S_{reg}$, there is a local coordinate $(z_1,...,z_n)$ on a neighborhood $U$ of $x$ such that $z_{l+1}=...=z_n=0$ on $S\cap U$ and
$$\sup_{U-S}|\Phi(z)-(n-l)\log\sum_{l+1}^{n}|z_j|^2|<+\infty.$$

The set of such polar functions $\Phi$ will be denoted by $A(S)$.  We call $\Phi$ is in class $A'(S)$, if the condition $(2)$ is replaced by $(2)'$:

$(2)'$ if $S$ is $l-$dimensional around a point $x\in S_{reg}$, there is a local coordinate $(z_1,...,z_n)$ on a neighborhood $U$ of $x$ such that $z_{l+1}=...=z_n=0$ on $S\cap U$ and
$\Phi(z)-(n-l)\log\sum_{l+1}^{n}|z_j|^2$ is continuous on $U$.

Let $\psi\in A(S)$. Following \cite{ohsawa5} (see also \cite{guan-zhou13ap}),  one can define a positive measure $dV_{M}[\psi]$ on $S_{reg}$ as the minimum element of the partially ordered set of positive measures $d\mu$ satisfying
$$\int_{S_l}fd\mu\geq\limsup_{t\rightarrow+\infty}\frac{2(n-l)}{\sigma_{2n-2l-1}}\int_{M}\mathbb I_{\{-t-1<\psi<-t\}}fe^{-\psi}dV_{M}$$
for any nonnegative continuous function $f$ with $suppf\subset\subset M$. Here denote by $\sigma_m$ the volume of the unit sphere in $\mathbb R^{m+1}$.
If $\psi\in A'(S)$, then $dV_{M}[\psi]|_{S_l}$ is a continuous volume form on $S_l$ and $dV_{M}[\psi+h]|_{S_l}=e^{-h}dV_{M}[\psi]|_{S_l}$ (see \cite{guan-zhou13ap}), where $h$ is a continuous function on $M$.

Let us recall a class of complex manifolds (see \cite{guan-zhou13ap}).
	Let $M$ be a complex manifold with the volume form $dV_{M}$, and let $S$ be an analytic subset of $M$.
	We say $(M,S)$ satisfies condition $(ab)$ if  there exists a closed subset $X\subset M$ satisfying the following statements:
	
	$(a)$ $X$ is locally negligible with respect to $L^2$ holomorphic functions;
	
	 $(b)$ $M\backslash X$ is a Stein manifold which intersects with every component of $S$, such that $S_{sing}\subset X$.
	
\

We give the following  $L^{2}$ extension theorem with an optimal estimate. When $c(t)$ is continuous, the theorem can be referred to \cite{guan-zhou13ap}.

\begin{Theorem}
	\label{thm:L2}
	Let $(M,S)$ satisfy condition $(ab)$. Let $\psi\in A(S)$ satisfying $\psi<-T$ on $M$. Let $\varphi$ be a continuous function on $M$, such that $\varphi+\psi$ is plurisubharmonic on $M$. Let $c(t)$ be a positive function on $(T,+\infty)$ such that $c(t)e^{-t}$ is decreasing and $\int_{T}^{+\infty}c(t)e^{-t}dt<+\infty$. Then for any  holomorphic section $f$ of $K_M|_S$ on $S$, such that
$$\sum_{k=1}^{n}\frac{\pi^k}{k!}\int_{S_{n-k}}\frac{|f|^2}{dV_{M}}e^{-\varphi}dV_{M}[\psi]<+\infty,$$
there exists a holomorphic $(n,0)$ form $F$ on $M$ such that $F|_S=f$ and
$$\int_{M}|F|^2e^{-\varphi}c(-\psi)\leq\left(\int_T^{+\infty}c(t)e^{-t}dt\right)\sum_{k=1}^{n}\frac{\pi^k}{k!}\int_{S_{n-k}}\frac{|f|^2}{dV_{M}}e^{-\varphi}dV_{M}[\psi].$$

\end{Theorem}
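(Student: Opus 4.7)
My plan is to reduce Theorem \ref{thm:L2} to the continuous-gain version of the same statement, established in \cite{guan-zhou13ap}, via a monotone approximation of the measurable gain $c$ from above by continuous gains together with a Hilbert-space / normal-family limit. The key structural observation is monotonicity of the minimal $L^{2}$ integral in the gain: if $c_{n}\geq c$ pointwise, then for every admissible extension $F$ one has $\int_{M}|F|^{2}e^{-\varphi}c(-\psi)\leq\int_{M}|F|^{2}e^{-\varphi}c_{n}(-\psi)$, and hence $G(T;c)\leq G(T;c_{n})$.

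To build the approximation, set $g(t):=c(t)e^{-t}$, which is positive, decreasing, and integrable on $(T,+\infty)$. Let $\bar g\geq g$ denote its left-continuous modification (which agrees with $g$ off a countable set), and fix a smooth nonnegative mollifier $\rho_{n}$ supported in $[0,1/n]$. Extending $\bar g$ by the constant $\bar g(T{+}0)$ on $(-\infty,T]$, set
\[
g_{n}(t):=\int_{0}^{1/n}\bar g(t-s)\,\rho_{n}(s)\,ds,\qquad c_{n}(t):=g_{n}(t)e^{t}.
\]
Because $\bar g$ is decreasing, $g_{n}$ is smooth, positive, decreasing, and satisfies $g_{n}\geq\bar g\geq g$; moreover $g_{n}\to g$ pointwise almost everywhere, and the uniform bound $g_{n}(t)\leq\bar g(t-1/n)\leq\bar g(t-1)$ combined with dominated convergence yields $\int_{T}^{+\infty}g_{n}\,dt\to\int_{T}^{+\infty}g\,dt$. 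Thus each $c_{n}$ is continuous and positive with $c_{n}(t)e^{-t}$ decreasing, $c_{n}\geq c$, and $\int c_{n}(t)e^{-t}\,dt\to\int c(t)e^{-t}\,dt$. Applying the continuous-gain version of the theorem from \cite{guan-zhou13ap} to each $c_{n}$ produces holomorphic $(n,0)$-forms $F_{n}$ on $M$ with $F_{n}|_{S}=f$ and
\[
\int_{M}|F_{n}|^{2}e^{-\varphi}c_{n}(-\psi)\leq B\int_{T}^{+\infty}c_{n}(t)e^{-t}\,dt,
\]
where $B:=\sum_{k=1}^{n}\frac{\pi^{k}}{k!}\int_{S_{n-k}}\frac{|f|^{2}}{dV_{M}}e^{-\varphi}\,dV_{M}[\psi]$ is finite by hypothesis and independent of $n$.

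Since $c_{n}\geq c$, the sequence $\{F_{n}\}$ is uniformly bounded in the Hilbert space $\mathcal{H}:=\{F\in H^{0}(M,K_{M}):\int_{M}|F|^{2}e^{-\varphi}c(-\psi)<+\infty\}$, with $\|F_{n}\|_{\mathcal{H}}^{2}\leq B\int c_{n}e^{-t}\,dt$. Since the weight $e^{-\varphi}c(-\psi)$ is bounded below on any compact subset of $M\setminus\psi^{-1}(-\infty)$, Montel's theorem together with Hilbert-space weak compactness produces a subsequence $\{F_{n_{j}}\}$ converging locally uniformly on $M\setminus\psi^{-1}(-\infty)$ and weakly in $\mathcal{H}$ to some $F\in\mathcal{H}$. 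Weak lower semicontinuity of the norm (equivalently Fatou's lemma) combined with $\int c_{n}e^{-t}\,dt\to\int c\,e^{-t}\,dt$ then yields the desired bound $\int_{M}|F|^{2}e^{-\varphi}c(-\psi)\leq B\int_{T}^{+\infty}c(t)e^{-t}\,dt$.

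The main obstacle is verifying the interpolation condition $F|_{S}=f$, because $S\subset\psi^{-1}(-\infty)$ by condition (1) of the definition of $A(S)$, so the locally uniform convergence $F_{n_{j}}\to F$ off the polar set does not by itself identify $F$ on $S$. The idea is to upgrade to uniform convergence near $S_{reg}$ via a local analysis: at $p\in S_{reg}$ with coordinates provided by condition (2) of $A(S)$ (so that $S=\{z_{l+1}=\cdots=z_{n}=0\}$ and $\psi-(n-l)\log\sum_{j>l}|z_{j}|^{2}$ is bounded), on an annulus $\{|z''|=\epsilon_{0}\}$ of fixed small radius the quantity $c(-\psi)$ is uniformly bounded below, so the $\mathcal{H}$-bound gives uniform $L^{2}$ control of the $F_{n_{j}}$ on a thickened annulus; Cauchy estimates and the maximum modulus principle propagate this to uniform sup bounds on $\{|z''|<\epsilon_{0}/2\}$, and Montel's theorem yields uniform convergence $F_{n_{j}}\to F$ on a neighborhood of $p$. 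The identity $F|_{\{z''=0\}}=\lim F_{n_{j}}|_{\{z''=0\}}=f$ then follows on $S_{reg}$, and extension to $S_{sing}\subset X$ uses condition (a) in $(ab)$ together with the locally negligible character of $X$.
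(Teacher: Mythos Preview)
Your reduction strategy---approximate $c$ from above by continuous gains $c_{n}$, invoke the continuous-gain extension theorem of \cite{guan-zhou13ap}, and pass to the limit---is sound and genuinely different from the paper's route. The paper does \emph{not} reduce to the continuous case: instead it first truncates $c$ at height $T+j$ to obtain measurable gains $c_{j}$ that are bounded above and below on every $(t',+\infty)$ (Remark~\ref{r:L2c}), and then proves the bounded-measurable case from scratch by applying the $\bar\partial$-estimate of Lemma~\ref{lem:GZ_sharp} on an exhaustion $D_{m}\Subset M$, letting the cutoff level $t_{0}\to+\infty$ and using the very definition of $dV_{M}[\psi]$ to recover the jet norm on $S$. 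Your approach trades that direct $\bar\partial$ work for a black-box citation, which is shorter here; the paper's approach is self-contained and, more importantly, its key Lemma~\ref{lem:GZ_sharp} is the engine behind the concavity theorem as well, so proving it once serves double duty.

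Two points in your argument need repair. First, the mollification step tacitly assumes $\bar g(T{+}0)<+\infty$: when $c(t)e^{-t}\to+\infty$ as $t\to T^{+}$ (which the hypotheses allow), your constant extension and the dominating function $\bar g(t-1)$ both fail. This is easily fixed, e.g.\ by mollifying only on $(T+1/n,+\infty)$ and matching continuously near $T$, or by combining with a truncation as in Remark~\ref{r:L2c}. Second, and more substantively, your extraction of the limit $F$ as a holomorphic form on all of $M$ leans on weak compactness in $\mathcal{H}$, but $\mathcal{H}$ need not be closed in weighted $L^{2}$: the weight $e^{-\varphi}c(-\psi)$ can vanish along $\psi^{-1}(-\infty)$ (take $c(t)=e^{-t}$), so finite $\mathcal{H}$-norm gives no local $L^{2}$ control there, and $\psi^{-1}(-\infty)$ may strictly contain $S$, so your annulus argument at $S_{reg}$ does not cover it. The clean fix is exactly Lemma~\ref{l:converge}: since $\psi^{-1}(-\infty)$ sits inside an analytic set $Z$ and $e^{-\varphi}c(-\psi)$ is locally bounded below on $M\setminus Z$, the Local Parametrization/maximum-modulus argument there upgrades your $L^{2}$ bounds to uniform sup bounds on compacta of $M$, giving locally uniform convergence $F_{n_{j}}\to F$ on all of $M$ and hence $F|_{S}=f$ immediately, with no separate treatment of $S_{sing}$ or of $\psi^{-1}(-\infty)\setminus S$ required.
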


By the definition of $dV_{M}[\psi]$, we know $\frac{|f|^2}{dV_{M}}dV_{M}[\psi]$ is independent of the choice of $dV_M$ (see \cite{guan-zhou13ap}).

Denote that $\|f\|_S:=\sum_{k=1}^{n}\frac{\pi^k}{k!}\int_{S_{n-k}}\frac{|f|^2}{dV_{M}}e^{-\varphi}dV_{M}[\psi]$ and $\|F\|_M:=\int_{M}|F|^2e^{-\varphi}c(-\psi)$.
Let $\mathcal{F}|_{Z_0}=\mathcal{I}(\psi)|_{S_{reg}}$ and choose  the $f$ in the definition of $G(t)$ by any holomorphic extension of the $f$ in Theorem \ref{thm:L2}. Then $G(T)=\inf\{\|F\|_M:F$ is a holomorphic extension of $f$ from $S$ to $M$$\}$, and Theorem \ref{thm:L2} tells us that
\begin{equation}
	\label{eq:210628a}G(T)\leq\left(\int_T^{+\infty}c(t)e^{-t}dt\right)\|f\|_S
\end{equation}
(when $G(T)<+\infty$, Lemma \ref{lem:A} shows that there exists a holomorphic extension $F$ of $f$ such that $G(T)=\|F\|_M$).

Using Corollary \ref{thm:1} and Theorem \ref{thm:L2}, we obtain  a necessary condition of inequality \eqref{eq:210628a} becomes an equality.

\begin{Theorem}
	\label{thm:ll1}
Let $(M,S)$ satisfy condition $(ab)$. Let $\psi\in A(S)$, and let $\psi<-T$. Let $\varphi$ be a continuous function on $M$, such that $\varphi+\psi$ is plurisubharmonic on $M$. Let $c(t)$ be a positive function on $(T,+\infty)$ such that $c(t)e^{-t}$ is decreasing and $\int_{T}^{+\infty}c(t)e^{-t}dt<+\infty$. Let $f$ be a  holomorphic section  of $K_M|_S$ on $S$, such that
$$\sum_{k=1}^{n}\frac{\pi^k}{k!}\int_{S_{n-k}}\frac{|f|^2}{dV_{M}}e^{-\varphi}dV_{M}[\psi]<+\infty.$$
If $G(T)=\left(\int_T^{+\infty}c(t)e^{-t}dt\right)\|f\|_S,$ then $G(\hat{h}^{-1}(r))$ is linear with respect to $r$ and there exists a  unique holomorphic $(n,0)$ form $F$ on $M$ such that $F|_S=f$ and $G(T)=\|F\|_M$.
	
For any $t\geq T$, there exists a unique holomorphic $(n,0)$ form $F_t$ on $\{\psi<-t\}$ such that $F_t|_S=f$ and
$$\int_{\{\psi<-t\}}|F_t|^2e^{-\varphi}c(-\psi)\leq\left(\int_t^{+\infty}c(l)e^{-l}dl\right)\sum_{k=1}^{n}\frac{\pi^k}{k!}\int_{S_{n-k}}\frac{|f|^2}{dV_{M}}e^{-\varphi}dV_{M}[\psi].$$
In fact, $F_t=F$ on $\{\psi<-t\}$.

If $\mathcal H^2(\tilde{c},t)\subset\mathcal H^2(c,t)$ for some $t\geq T$, where $\tilde{c}$ is a nonnegative measurable function on $(T,+\infty)$, then there exists a unique holomorphic $(n,0)$ form $F_t$ on $\{\psi<-t\}$ such that $F_t|_S=f$ and
$$\int_{\{\psi<-t\}}|F_t|^2e^{-\varphi}\tilde{c}(-\psi)\leq\left(\int_t^{+\infty}\tilde{c}(l)e^{-l}dl\right)\sum_{k=1}^{n}\frac{\pi^k}{k!}\int_{S_{n-k}}\frac{|f|^2}{dV_{M}}e^{-\varphi}dV_{M}[\psi].$$
In fact, $F_t=F$ on $\{\psi<-t\}$.
\end{Theorem}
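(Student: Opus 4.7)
The strategy is to derive linearity of $G(\hat{h}^{-1}(r))$ from the equality hypothesis, invoke Corollary~\ref{thm:1} to produce a unique $F$ on $M$, and then restrict it to each sublevel set; throughout set $\hat{h}(t)=\int_{t}^{+\infty}c(l)e^{-l}dl$.

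First I would apply Theorem~\ref{thm:L2} not only on $M$ itself but on every sublevel set $\{\psi<-t\}$ with $t\geq T$. Since $S\subset\{\psi=-\infty\}\subset\{\psi<-t\}$, the pair $(\{\psi<-t\},S)$ still satisfies condition~$(ab)$, and $\psi|_{\{\psi<-t\}}\in A(S)$ satisfies $\psi<-t$; the theorem therefore gives
$$G(t)\leq\hat{h}(t)\,\|f\|_{S}\quad\text{for every }t\geq T.$$
Combined with the concavity of $r\mapsto G(\hat{h}^{-1}(r))$ from Theorem~\ref{thm:general_concave}, the limit $\lim_{t\to+\infty}G(t)=0$, and the equality $G(T)=\hat{h}(T)\|f\|_{S}$, the graph of $G(\hat{h}^{-1}(r))$ on $[0,\hat{h}(T)]$ is squeezed between the chord from $(0,0)$ to $(\hat{h}(T),\hat{h}(T)\|f\|_{S})$ (by concavity, from below) and the same line (by the upper bound). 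Hence $G(\hat{h}^{-1}(r))=r\|f\|_{S}$ on $[0,\hat{h}(T)]$, which is the required linearity.

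With linearity in hand, Corollary~\ref{thm:1} yields a unique holomorphic $(n,0)$ form $F$ on $M$ with $(F-f)\in H^{0}(Z_{0},(\mathcal{O}(K_{M})\otimes\mathcal{F})|_{Z_{0}})$ and $G(t)=\int_{\{\psi<-t\}}|F|^{2}e^{-\varphi}c(-\psi)$ for every $t\geq T$. Because $\mathcal{F}|_{Z_{0}}=\mathcal{I}(\psi)|_{S_{reg}}$ and $\psi\sim(n-l)\log\sum_{j>l}|z_{j}|^{2}$ at $x\in S_{reg}$, the local ideal $\mathcal{I}(\psi)_{x}$ is the defining ideal of $S$ at $x$, so the interpolation datum is exactly $F|_{S_{reg}}=f$; condition~$(ab)$ (which places $S_{sing}\subset X$) extends this to $F|_{S}=f$. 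Together with Step~1 this proves $\|F\|_{M}=G(T)=\hat{h}(T)\|f\|_{S}$, i.e.\ the first part of the theorem.

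For the family $\{F_{t}\}$ I would take $F_{t}:=F|_{\{\psi<-t\}}$, which realises the estimate as an equality. For uniqueness, observe that the infimum defining $G(s)$ on $\{\psi<-s\}$ is the same whether the ambient manifold is $M$ or $\{\psi<-t\}$ (for $s\geq t$), so linearity of $G(\hat{h}^{-1}(r))$ transfers verbatim to the sublevel setting and Corollary~\ref{thm:1} applied on $\{\psi<-t\}$ forces any candidate $F_{t}$ to coincide with $F|_{\{\psi<-t\}}$. Finally, for the $\tilde{c}$-case, equation~\eqref{eq:20210412a} of Corollary~\ref{thm:1}, together with the identity $G(T_{1};c)/\int_{T_{1}}^{+\infty}c(s)e^{-s}ds=\|f\|_{S}$ inherited from Step~1, yields
$$\int_{\{\psi<-t\}}|F|^{2}e^{-\varphi}\tilde{c}(-\psi)=\|f\|_{S}\int_{t}^{+\infty}\tilde{c}(s)e^{-s}ds,$$
which is the $\tilde{c}$-estimate with equality; uniqueness of $F_{t}$ here is again by Corollary~\ref{thm:1} applied on $\{\psi<-t\}$.

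The step I expect to require the most care is the matching of the three encodings of $f$—as a section of $K_{M}|_{S}$ governing $\|f\|_{S}$, as a Taylor datum modulo $\mathcal{I}(\psi)|_{S_{reg}}$ governing the quantity $G(t)$, and as the holomorphic extension appearing in Theorem~\ref{thm:L2}—together with the verification that Theorem~\ref{thm:L2} and Corollary~\ref{thm:1} apply verbatim on each sublevel set $\{\psi<-t\}$. These compatibilities are standard near $S_{reg}$ via the local model of $\psi\in A(S)$, but they have to be set up cleanly for the sandwich argument and the subsequent uniqueness clauses to go through.
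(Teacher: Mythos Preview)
Your proposal is correct and follows essentially the same route as the paper: apply Theorem~\ref{thm:L2} on each sublevel set to get $G(t)\leq\hat h(t)\|f\|_S$, combine this with concavity to force linearity, and then invoke Corollary~\ref{thm:1}. The only cosmetic differences are that the paper packages your sandwich argument as an appeal to Corollary~\ref{thm:linear} (its condition~(2) is exactly the inequality you obtain), and it disposes of the degenerate case $\|f\|_S=0$ separately before invoking Corollary~\ref{thm:1}, whose hypothesis $G(t)\in(0,+\infty)$ would otherwise fail.
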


When $c(t)\in \mathcal{P}'_T$ and $M$ is a Stein manifold, Theorem \ref{thm:ll1} was obtained by Guan-Mi in \cite{GM-concave}.

Using Theorem \ref{thm:L2}, we obtain the following optimal $L^2$ extension theorem.
\begin{Corollary}
\label{c:L2b}Let $M$ be an $n-$dimensional Stein manifold, and let $S$ be an analytic subset of $M$.	Let $\psi_1\in A(S)$, and let $\psi_2$ be a plurisubharmonic function on $M$ such that $\psi=\psi_1+\psi_2<-T$ on $M$.  Let $\varphi$ be a Lebesgue measurable function on $M$ such that $\varphi+\psi_2$ is  plurisubharmonic on $M$.  Let $c(t)$ be a positive function on $(T,+\infty)$, such that $c(t)e^{-t}$ is decreasing, $\int_{T}^{+\infty}c(t)e^{-t}dt<+\infty$ and $e^{-\varphi}c(-\psi)$ has locally a positive lower bound on $M\backslash Z$, where $Z$ is some analytic subset of $M$. For any  holomorphic section $f$ of $K_M|_{S_{reg}}$ on $S_{reg}$ satisfying
$$\sum_{k=1}^{n}\frac{\pi^k}{k!}\int_{S_{n-k}}\frac{|f|^2}{dV_{M}}e^{-\varphi-\psi_2}dV_{M}[\psi_1]<+\infty,$$
there exists a holomorphic $(n,0)$ form $F$ on $M$ such that $F|_{S_{reg}}=f$ and
$$\int_{M}|F|^2e^{-\varphi}c(-\psi)\leq\left(\int_T^{+\infty}c(t)e^{-t}dt\right)\sum_{k=1}^{n}\frac{\pi^k}{k!}\int_{S_{n-k}}\frac{|f|^2}{dV_{M}}e^{-\varphi-\psi_2}dV_{M}[\psi_1].$$
	\end{Corollary}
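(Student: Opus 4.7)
The plan is to deduce this corollary from Theorem \ref{thm:L2} by regularizing the two plurisubharmonic data in play, $\psi_2$ and $u := \varphi + \psi_2$, via smooth plurisubharmonic approximations on the Stein manifold $M$, and then passing to the limit using Fatou's lemma.

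Using a Demailly-type smoothing on the Stein manifold $M$, I choose decreasing sequences of smooth plurisubharmonic functions $u_m \downarrow u$ and $\psi_2^{(m)} \downarrow \psi_2$ on $M$, and set $\varphi_m := u_m - \psi_2^{(m)}$ and $\tilde\psi_m := \psi_1 + \psi_2^{(m)}$. Then $\varphi_m$ is smooth, $\varphi_m + \psi_2^{(m)} = u_m$ and $\varphi_m + \tilde\psi_m = u_m + \psi_1$ are both plurisubharmonic, and $\tilde\psi_m \in A(S)$ since $\psi_1 \in A(S)$ and $\psi_2^{(m)}$ is smooth; pointwise $\varphi_m \to \varphi$ and $\tilde\psi_m \downarrow \psi$. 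Applying Theorem \ref{thm:L2} to the data $(\varphi_m,\tilde\psi_m)$ on $M$ (the pair $(M,S)$ satisfies condition $(ab)$ since $M$ is Stein), and using the identities $dV_M[\tilde\psi_m] = e^{-\psi_2^{(m)}}\,dV_M[\psi_1]$ and $\varphi_m + \psi_2^{(m)} = u_m$, yields a holomorphic $(n,0)$-form $F_m$ on $M$ with $F_m|_{S_{\mathrm{reg}}} = f$ and
\begin{equation*}
\int_M |F_m|^2 e^{-\varphi_m}\,c(-\tilde\psi_m) \leq \left(\int_{T_m}^{+\infty} c(t)\,e^{-t}\,dt\right) \sum_{k=1}^{n} \frac{\pi^k}{k!} \int_{S_{n-k}} \frac{|f|^2}{dV_M}\,e^{-u_m}\,dV_M[\psi_1],
\end{equation*}
where $T_m := -\sup_M \tilde\psi_m$. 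As $u_m \downarrow u$ and $T_m \to T$, monotone convergence drives the right-hand side monotonically up to the desired (finite) bound $\bigl(\int_T^{+\infty} c(t)e^{-t}\,dt\bigr) \sum_k \tfrac{\pi^k}{k!}\int_{S_{n-k}} \tfrac{|f|^2}{dV_M}\,e^{-\varphi-\psi_2}\,dV_M[\psi_1]$.

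To pass to the limit, note the key pointwise bound
\begin{equation*}
e^{-\varphi_m}\,c(-\tilde\psi_m) \;\geq\; e^{\psi_2 - u_m}\,c(-\psi) \;\geq\; e^{\psi_2 - u_1}\,c(-\psi),
\end{equation*}
obtained from the monotonicity of $c(t)e^{-t}$ (using $-\tilde\psi_m \leq -\psi$) together with $u_m \leq u_1$. Combined with the hypothesis that $e^{-\varphi}c(-\psi)$ is locally positively bounded below on $M\setminus Z$, this yields uniform local $L^2$ bounds on the $F_m$ over every compact subset of the open set $M\setminus(Z\cup\{\psi_2 = -\infty\})$. Montel's theorem extracts a subsequence converging locally uniformly to a holomorphic $(n,0)$-form $F$ on that open set, and $L^2$-removability extends $F$ holomorphically across the thin exceptional set to all of $M$; local uniform convergence gives $F|_{S_{\mathrm{reg}}} = f$, and Fatou's lemma applied to the pointwise-convergent nonnegative integrands $|F_m|^2 e^{-\varphi_m} c(-\tilde\psi_m) \to |F|^2 e^{-\varphi} c(-\psi)$ delivers the desired $L^2$ bound.

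The main obstacle is this final limit step: because the weight $e^{-\varphi_m}c(-\tilde\psi_m)$ converges only pointwise (not monotonically) to $e^{-\varphi}c(-\psi)$, only Fatou's lemma is available, and one must separately establish uniform local $L^2$ control on the $F_m$ away from the thin exceptional set, then invoke $L^2$-removability to recover the extension $F$ on all of $M$. A secondary technical point is to verify that the identity $dV_M[\psi_1 + \psi_2^{(m)}] = e^{-\psi_2^{(m)}}\,dV_M[\psi_1]$ applies (or is proved directly from the definition) for $\psi_1 \in A(S)$ and smooth $\psi_2^{(m)}$, and to adjust $T_m$ suitably so that $c$ remains defined on $(T_m,+\infty)$ as $T_m$ converges to $T$.
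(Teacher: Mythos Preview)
Your strategy---regularize $\psi_2$ and $u=\varphi+\psi_2$ by smooth plurisubharmonic approximants, apply Theorem~\ref{thm:L2} with the resulting continuous weight, then pass to the limit via Fatou---is exactly the paper's approach. The difference, and the gap, is that you apply Theorem~\ref{thm:L2} globally on $M$, whereas the paper first exhausts $M$ by relatively compact Stein open sets $D_l$ and applies the theorem on each $D_l$.

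This exhaustion is not cosmetic; it is precisely what resolves the issue you label ``secondary''. Your polar function $\tilde\psi_m=\psi_1+\psi_2^{(m)}$ need not satisfy $\tilde\psi_m<-T$ on all of $M$: on a noncompact Stein manifold the Fornaess--Narasimhan approximants $\psi_2^{(m)}$ give only pointwise decreasing convergence with no control near infinity, so $\sup_M\tilde\psi_m$ may well be $+\infty$ for every $m$, making $T_m=-\infty$ and $c(-\tilde\psi_m)$ undefined. Your assertion that $T_m\to T$ is unjustified and in general false, so Theorem~\ref{thm:L2} cannot be invoked on $M$. By contrast, on a compact $\overline{D_l}$ the strict inequality $\psi<-T$ together with upper semicontinuity gives $\sup_{\overline{D_l}}\psi<-T$, and then the decreasing convergence $\Psi_m+\psi_1\downarrow\psi$ forces $\Psi_m+\psi_1<-T$ on $D_l$ for $m$ large enough (depending on $l$); Theorem~\ref{thm:L2} then applies on $D_l$ with the original $c$ and the original threshold $T$. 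The paper also decouples the two regularizations (index $m$ for $\psi_2$, index $m'$ for $\varphi+\psi_2$): with $m$ fixed one lets $m'\to\infty$ and uses Lemma~\ref{l:converge} to obtain $F_l$ on $D_l$ already satisfying the estimate with the final weight $e^{-\varphi}c(-\psi)$; a second application of Lemma~\ref{l:converge} and the diagonal method then produces $F$ on $M$. Once you insert the exhaustion and separate the two limits, your argument becomes the paper's.
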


 Especially, when $c\equiv1$ and $\psi_1=2\log|w|$, where $w$ is a holomorphic function on $M$, such that $dw$ does not vanish identically on any branch of $w^{-1}(0)$ and $S_{reg}=\{z\in M:w(z)=0\,\&\,dw(z)\not=0\}$, Corollary \ref{c:L2b} can be referred to \cite{guan-zhou12} (see also \cite{guan-zhou13ap}).

Denote that $\|f\|_S^{*}:=\sum_{k=1}^{n}\frac{\pi^k}{k!}\int_{S_{n-k}}\frac{|f|^2}{dV_{M}}e^{-\varphi-\psi_2}dV_{M}[\psi_1]$.
Let $\mathcal{F}|_{Z_0}=\mathcal{I}(\psi_1)|_{S_{reg}}$ and choose  the $f$ in the definition of $G(t)$ by any holomorphic extension of the $f$ in Corollary \ref{c:L2b}. Then $G(T)=\inf\{\|F\|_M:F$ is a holomorphic extension of $f$ from $S$ to $M$$\}$, and Corollary \ref{c:L2b} tells us that
\begin{equation}
	\label{eq:210628b}G(T)\leq\left(\int_T^{+\infty}c(t)e^{-t}dt\right)\|f\|_S^*.
\end{equation}

Similarly to Theorem \ref{thm:ll1}, we give  a necessary condition of  inequality \eqref{eq:210628b} becomes an equality.

\begin{Corollary}
\label{thm:ll2}
	Let $M$ be an $n-$dimensional Stein manifold, and let $S$ be an analytic subset of $M$.	Let $\psi_1\in A(S)$, and let $\psi_2$ be a plurisubharmonic function on $M$ such that $\psi=\psi_1+\psi_2<-T$ on $M$.  Let $\varphi$ be a Lebesgue measurable function on $M$ such that $\varphi+\psi_2$ is  plurisubharmonic on $M$.  Let $c(t)\in\mathcal{P}_T$  such that $\int_{T}^{+\infty}c(t)e^{-t}dt<+\infty$. Let $f$ be a holomorphic section of $K_M|_{S_{reg}}$ on $S_{reg}$ satisfying
$$\sum_{k=1}^{n}\frac{\pi^k}{k!}\int_{S_{n-k}}\frac{|f|^2}{dV_{M}}e^{-\varphi-\psi_2}dV_{M}[\psi_1]<+\infty.$$
	If $G(T)=\left(\int_T^{+\infty}c(t)e^{-t}dt\right)\|f\|_S^*,$ then $G(\hat{h}^{-1}(r))$ is linear with respect to $r$ and there exists a unique   holomorphic $(n,0)$ form $F$ on $M$ such that $F|_S=f$ and $G(T)=\|F\|_M$.

For any $t\geq T$, there exists a unique holomorphic $(n,0)$ form $F_t$ on $\{\psi<-t\}$ such that $F_t|_S=f$ and
$$\int_{\{\psi<-t\}}|F_t|^2e^{-\varphi}c(-\psi)\leq\left(\int_t^{+\infty}c(l)e^{-l}dl\right)\sum_{k=1}^{n}\frac{\pi^k}{k!}\int_{S_{n-k}}\frac{|f|^2}{dV_{M}}e^{-\varphi-\psi_2}dV_{M}[\psi_1].$$
In fact, $F_t=F$ on $\{\psi<-t\}$.

If $\mathcal H^2(\tilde{c},t)\subset\mathcal H^2(c,t)$ for some $t\geq T$, where $\tilde{c}$ is a nonnegative measurable function on $(T,+\infty)$, then there exists a unique holomorphic $(n,0)$ form $F_t$ on $\{\psi<-t\}$ such that $F_t|_S=f$ and
$$\int_{\{\psi<-t\}}|F_t|^2e^{-\varphi}\tilde{c}(-\psi)\leq\left(\int_t^{+\infty}\tilde{c}(l)e^{-l}dl\right)\sum_{k=1}^{n}\frac{\pi^k}{k!}\int_{S_{n-k}}\frac{|f|^2}{dV_{M}}e^{-\varphi-\psi_2}dV_{M}[\psi_1].$$
In fact, $F_t=F$ on $\{\psi<-t\}$.
\end{Corollary}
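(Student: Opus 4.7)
The plan is to follow the pattern of Theorem \ref{thm:ll1}, replacing the use of Theorem \ref{thm:L2} by the more general optimal extension Corollary \ref{c:L2b}, and then to invoke the concavity (Theorem \ref{thm:general_concave}) together with the rigidity (Corollary \ref{thm:1}). First I would apply Corollary \ref{c:L2b} on the open Stein subset $\{\psi<-t\}$ of $M$ for each $t\geq T$ (sublevel sets of plurisubharmonic functions on a Stein manifold are Stein). Since $\psi_{1}\in A(S)$ forces $S\subset\{\psi_{1}=-\infty\}\subset\{\psi<-t\}$, the analytic subset $S$, the polar function $\psi_{1}$, the continuity of $\varphi$, and the plurisubharmonicity of $\varphi+\psi_{2}$ all persist on the sublevel set, and Corollary \ref{c:L2b} yields
\begin{equation*}
G(t)\leq\left(\int_{t}^{+\infty}c(l)e^{-l}\,dl\right)\|f\|_{S}^{*}=\hat{h}(t)\,\|f\|_{S}^{*},\qquad t\geq T.
\end{equation*}

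Combined with the hypothesis $G(T)=\hat{h}(T)\|f\|_{S}^{*}$, the concavity of $r\mapsto G(\hat{h}^{-1}(r))$ from Theorem \ref{thm:general_concave}, together with $\lim_{t\to+\infty}G(t)=0$ and $\lim_{t\to T+0}G(t)=G(T)$, and the elementary observation that a concave function on an interval which coincides with its chord at both endpoints and lies below it in the interior must equal that chord everywhere, I deduce $G(\hat{h}^{-1}(r))=r\,\|f\|_{S}^{*}$ on $r\in[0,\hat{h}(T)]$, which is the claimed linearity. Having linearity, I invoke Corollary \ref{thm:1} with $\mathcal{F}|_{Z_{0}}=\mathcal{I}(\psi_{1})|_{S_{reg}}$; it produces a unique holomorphic $(n,0)$ form $F$ on $M$ satisfying $(F-\tilde f)\in H^{0}(S_{reg},(\mathcal{O}(K_{M})\otimes\mathcal{I}(\psi_{1}))|_{S_{reg}})$ and $G(t;c)=\int_{\{\psi<-t\}}|F|^{2}e^{-\varphi}c(-\psi)$ for every $t\geq T$. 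Since at each $x\in S_{reg}$ the normalization in $A(S)$ makes $\mathcal{I}(\psi_{1})_{x}$ equal to the ideal generated by the transverse coordinates, the membership condition forces $F|_{S_{reg}}=f$, and then $F|_{S}=f$ by density of $S_{reg}$ in $S$.

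For each fixed $t\geq T$, any candidate $F_{t}$ in the statement saturates the $L^{2}$ inequality (its right-hand side equals $G(t)$ by the linearity just obtained), hence $F_{t}$ is a minimizer for $G(t;c)$ on $\{\psi<-t\}$; uniqueness of the minimizer then forces $F_{t}=F|_{\{\psi<-t\}}$. The claim for a second weight $\tilde c$ satisfying $\mathcal{H}^{2}(\tilde c,t)\subset\mathcal{H}^{2}(c,t)$ follows immediately from equation \eqref{eq:20210412a} of Corollary \ref{thm:1}, with the unique minimizer again being $F|_{\{\psi<-t\}}$. I expect the main obstacle to be the very first step: carefully verifying that all hypotheses of Corollary \ref{c:L2b}---Steinness, $\psi_{1}\in A(S)$, plurisubharmonicity of $\varphi+\psi_{2}$, the locally positive lower bound of $e^{-\varphi}c(-\psi)$ off an analytic set, and the integrability of $c(l)e^{-l}$---genuinely descend from $M$ to each sublevel set $\{\psi<-t\}$ with parameter $t$ in place of $T$. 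Once this bookkeeping is in hand, the rest of the proof is a mechanical combination of the concavity/linearity dichotomy and minimizer rigidity.
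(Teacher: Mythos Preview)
Your proposal is correct and follows essentially the same route as the paper: apply Corollary \ref{c:L2b} on each sublevel set $\{\psi<-t\}$ to obtain $G(t)/\hat h(t)\leq G(T)/\hat h(T)$, invoke the concavity of Theorem \ref{thm:general_concave} (the paper packages your chord argument as Corollary \ref{thm:linear}), and then conclude via Corollary \ref{thm:1}. One small slip: you refer to ``the continuity of $\varphi$'', but in the present setting $\varphi$ is only Lebesgue measurable with $\varphi+\psi_2$ plurisubharmonic---this is exactly what Corollary \ref{c:L2b} assumes, so no continuity is needed; also note the paper separately disposes of the trivial case $\|f\|_S^*=0$ before running the argument.
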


\subsubsection{Necessary conditions of $G(\hat{h}^{-1}(r))$ is linear}\label{sec:ne}

 In this section, we give some necessary conditions of $G(\hat{h}^{-1}(r))$ is linear.

\begin{Theorem}
\label{thm:n1}
	Let $M$ be an $n-$dimensional complex manifold satisfying condition $(a)$. Let $c\in \mathcal{P}_T$, and assume that there exists $t\geq T$ such that $G(t)\in(0,+\infty)$. If there exists a Lebesgue measurable function $\tilde \varphi\geq\varphi$ such that $\tilde\varphi+\psi$ is plurisubharmonic function on $M$ and satisfies that:
	
	$(1)$ $\tilde\varphi\not=\varphi$;
	
	$(2)$ $\lim_{t\rightarrow T+0}\sup_{\{\psi\geq-t\}}(\tilde\varphi-\varphi)=0$;
	
	$(3)$ $\tilde\varphi-\varphi$ is bounded on $M$.
	
	Then $G(\hat{h}^{-1}(r))$ is not linear with respect to $r\in(0,\int_{T}^{+\infty}c(s)e^{-s}ds)$.
	Especially, if $\varphi+\psi$ is strictly plurisubharmonic at $z_1\in M$, $G(\hat{h}^{-1}(r))$ is not linear with respect to $r\in(0,\int_{T}^{+\infty}c(s)e^{-s}ds)$.
\end{Theorem}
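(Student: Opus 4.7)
The plan is to argue by contradiction. Suppose $G(\hat h^{-1}(r))$ is linear on $(0,I)$ with $I:=\int_T^{+\infty}c(s)e^{-s}ds$, which is finite by Corollary \ref{infty} since $G(t)\in(0,+\infty)$. Corollary \ref{thm:1} then supplies a unique holomorphic $(n,0)$-form $F$ on $M$ with $(F-f)\in H^0(Z_0,(\mathcal O(K_M)\otimes\mathcal F)|_{Z_0})$,
\[
G(t)=\int_{\{\psi<-t\}}|F|^2e^{-\varphi}c(-\psi)=K\hat h(t),\qquad t\geq T,
\]
where $K=G(T_1;c)/\int_{T_1}^{+\infty}c(s)e^{-s}ds$, together with the mass-distribution identity \eqref{eq:20210412b} for every nonnegative measurable $a$.

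Condition $(3)$ gives $\sup_M(\tilde\varphi-\varphi)=:M_0<+\infty$, so $e^{-\tilde\varphi}c(-\psi)\geq e^{-M_0}e^{-\varphi}c(-\psi)$ inherits the positive-lower-bound property on compact subsets of $M\setminus E$; hence $c\in\mathcal P_T$ for $(\tilde\varphi,\psi)$, and $\mathcal I(\tilde\varphi+\psi)\subseteq\mathcal I(\varphi+\psi)\subseteq\mathcal F$. Theorem \ref{thm:general_concave} applied to $(\tilde\varphi,\psi,c)$ then gives that $\tilde G(t):=G(t;\tilde\varphi,c)$ is concave in the $\hat h$-coordinate. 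Since $F$ is admissible for this modified problem, $\tilde G(t)\leq\tilde H(t):=\int_{\{\psi<-t\}}|F|^2e^{-\tilde\varphi}c(-\psi)\leq G(t)$. Condition $(1)$ combined with the fact that the zero set of the nonzero holomorphic form $F$ has Lebesgue measure zero yields $\int_M|F|^2(e^{-\varphi}-e^{-\tilde\varphi})c(-\psi)>0$, so $\tilde H(T)<G(T)=KI$, hence $\tilde G(T)<KI$.

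Condition $(2)$ enters through a level-set comparison. Let $\rho(s)\,ds$ denote the $(-\psi)$-pushforward of the positive measure $|F|^2e^{-\tilde\varphi}$. By \eqref{eq:20210412b} applied to $\varphi$, the $\varphi$-pushforward density equals $Ke^{-s}$, and because $\{-\psi=s\}\subseteq\{\psi\geq -s\}$, condition $(2)$ gives $\tilde\varphi-\varphi\leq\delta(s)$ on $\{-\psi=s\}$ with $\delta(s):=\sup_{\{\psi\geq-s\}}(\tilde\varphi-\varphi)\to 0$ as $s\to T+0$. Therefore $e^{-\delta(s)}Ke^{-s}\leq\rho(s)\leq Ke^{-s}$, and integration against $c(s)$ on shells yields
\[
\lim_{t\to T+0}\frac{\tilde H(T)-\tilde H(t)}{\hat h(T)-\hat h(t)}=K.
\]
The main obstacle is to convert this limit, together with $\tilde G\leq\tilde H$ and the concavity of $\tilde G$ in the $\hat h$-coordinate, into a contradiction with $\tilde G(T)<KI$. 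The plan is to construct, for each $t>T$ close to $T$, an admissible form for the $(\tilde\varphi,\psi,c)$-problem from $F$ by truncating on the shell $\{-t\leq\psi<-T\}$ and solving a $\bar\partial$-equation via the $L^2$-extension underlying Theorem \ref{thm:L2}; condition $(2)$ controls the error, forcing the left slope of $\tilde G_1(r):=\tilde G(\hat h^{-1}(r))$ at $r=I$ to equal $K$. Since $\tilde G_1$ is concave with $\tilde G_1(0)=0$ and $\tilde G_1(I)=\tilde G(T)<KI$, the tangent inequality $\tilde G_1'(I^-)\leq\tilde G_1(I)/I<K$ contradicts $\tilde G_1'(I^-)=K$.

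For the ``Especially'' clause, strict plurisubharmonicity of $\varphi+\psi$ at $z_1$ permits the construction $\tilde\varphi:=\varphi+\eta$ where $\eta\geq 0$ is a smooth, nonnegative bump function with $\eta(z_1)>0$ supported in a small neighborhood $U$ of $z_1$ chosen so that $U\cap\{\psi\geq -t_0\}=\emptyset$ for some fixed $t_0>T$. The strict positivity of the Levi form of $\varphi+\psi$ at $z_1$ dominates the bounded Hessian of $\eta$ for sufficiently small amplitude, so $\tilde\varphi+\psi$ remains plurisubharmonic; conditions $(1)$--$(3)$ hold by construction, and the general statement yields the non-linearity of $G(\hat h^{-1}(r))$.
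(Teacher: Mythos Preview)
Your overall strategy matches the paper's: argue by contradiction, extract the minimizer $F$ from Corollary~\ref{thm:1}, apply Theorem~\ref{thm:general_concave} to the pair $(\tilde\varphi,\psi)$ to get concavity of $\tilde G(\hat h^{-1}(r))$, use condition~(1) to get the strict inequality $\tilde G(T_0)<K\hat h(T_0)$, and finish by a slope comparison. The ``Especially'' construction is also essentially the paper's (though your extra requirement $U\cap\{\psi\geq -t_0\}=\emptyset$ is not needed in the paper and is not always achievable if $\psi(z_1)=-T$).

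The genuine gap is in the step you flag yourself as ``the main obstacle'': you compute the shell slope of $\tilde H(t)=\int_{\{\psi<-t\}}|F|^2e^{-\tilde\varphi}c(-\psi)$, but you need a lower bound on the shell increments of $\tilde G$, and the inequality $\tilde G\leq\tilde H$ gives no control on $\tilde G(T)-\tilde G(t)$. Your proposed $\bar\partial$-fix would at best produce another competitor, i.e.\ another \emph{upper} bound for $\tilde G$; it does not yield a lower bound for $\tilde G(T)-\tilde G(t)$, which is what forces $\tilde G_1'(I^-)\geq K$.

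The paper closes this gap without any $\bar\partial$-equation, using Lemma~\ref{lem:A} for the $\varphi$-problem. Let $F_{t_1}$ be the $\tilde\varphi$-minimizer on $\{\psi<-t_1\}$. Condition~(3) gives $\int_{\{\psi<-t_1\}}|F_{t_1}|^2e^{-\varphi}c(-\psi)<+\infty$, so $F_{t_1}$ is admissible for the $\varphi$-problem on every sublevel set. Since $F$ is the $\varphi$-minimizer on every $\{\psi<-t\}$, the Pythagorean identity in Lemma~\ref{lem:A} applied at $t_1$ and at $t_2>t_1$ and then subtracted gives
\[
\int_{\{-t_2\leq\psi<-t_1\}}|F_{t_1}|^2e^{-\varphi}c(-\psi)\;\geq\;\int_{\{-t_2\leq\psi<-t_1\}}|F|^2e^{-\varphi}c(-\psi)\;=\;K\int_{t_1}^{t_2}c(s)e^{-s}ds.
\]
Combined with $G(t_1;\tilde\varphi)-G(t_2;\tilde\varphi)\geq\int_{\{-t_2\leq\psi<-t_1\}}|F_{t_1}|^2e^{-\tilde\varphi}c(-\psi)$ (since $F_{t_1}$ restricted is admissible at level $t_2$) and $e^{-\tilde\varphi}\geq\big(\inf_{\{\psi\geq-t_2\}}e^{\varphi-\tilde\varphi}\big)e^{-\varphi}$ on the shell, one gets
\[
\frac{G(t_1;\tilde\varphi)-G(t_2;\tilde\varphi)}{\int_{t_1}^{t_2}c(s)e^{-s}ds}\;\geq\;\Big(\inf_{\{\psi\geq-t_2\}}e^{\varphi-\tilde\varphi}\Big)\,K.
\]
Sending $t_1,t_2\to T+0$ and using condition~(2) makes the right side tend to $K$, while concavity and $\tilde G(T_0)<K\hat h(T_0)$ force the left side to be strictly below $K$ for $t_1<T_0$. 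This is the contradiction. Replace your $\bar\partial$-plan by this orthogonality argument.
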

  In the following, we give two necessary conditions for $\psi$ when $G(\hat{h}^{-1}(r))$ is  linear.

\begin{Theorem}
	\label{thm:n4}Let $M$ be an $n-$dimensional complex manifold satisfying condition $(a)$. Let $c\in\mathcal{P}_T$, and assume that  $G(T)\in(0,+\infty)$.  If  there exists a plurisubharmonic function $\tilde\psi\geq\psi$ on $M$ satisfying that:
	
	$(1)$ $\tilde\psi<-T$ on $M$;
	
	$(2)$ $\tilde\psi\not=\psi$;
	
	$(3)$ $\lim_{t\rightarrow+\infty}\sup_{\{\psi<-t\}}(\tilde\psi-\psi)=0$.
	
	Then $G(\hat{h}^{-1}(r))$ is not linear with respect to $r\in(0,\int_{T}^{+\infty}c(s)e^{-s}ds)$.
	Especially, if $\psi$ is strictly plurisubharmonic at $z_1\in M\backslash(\cap_{t}\overline{\{\psi<-t\}})$, $G(\hat{h}^{-1}(r))$ is not linear with respect to $r\in(0,\int_{T}^{+\infty}c(s)e^{-s}ds)$.
\end{Theorem}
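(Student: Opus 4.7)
The plan is to argue by contradiction, following the strategy of Theorem \ref{thm:n1}. Suppose $G(\hat{h}^{-1}(r))$ is linear on $r\in(0,\int_T^{+\infty}c(s)e^{-s}ds)$. By Corollary \ref{thm:1}, there exists a unique holomorphic $(n,0)$-form $F$ on $M$ with $(F-f)\in H^0(Z_0,(\mathcal{O}(K_M)\otimes\mathcal{F})|_{Z_0})$ such that
$$\int_{\{-t_1\leq\psi<-t_2\}}|F|^2 e^{-\varphi}\,a(-\psi)=C\int_{t_2}^{t_1}a(t)e^{-t}\,dt$$
holds for every nonnegative measurable $a$ and $T\leq t_2<t_1\leq+\infty$, where $C:=G(T_1)/\int_{T_1}^{+\infty}c(s)e^{-s}ds>0$. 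In particular $G(t)=C\int_t^{+\infty}c(s)e^{-s}ds$ for all $t\geq T$, and since $G(T)>0$ we have $F\not\equiv 0$, so $|F|^2>0$ almost everywhere on $M$.

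To exploit the perturbation $\tilde\psi$ while preserving plurisubharmonicity of the weighted sum, set $\tilde\varphi:=\varphi+\psi-\tilde\psi\leq\varphi$, so that $\tilde\varphi+\tilde\psi=\varphi+\psi$ is plurisubharmonic. Define $\tilde G(t):=G(t;\tilde\varphi,\tilde\psi,c)$ with the same data $f$, $Z_0$, $\mathcal{F}$; condition $(1)$ gives $\{\tilde\psi<-T\}=M$, and condition $(3)$ together with $\tilde\psi\geq\psi$ ensures $Z_0\subset\{\tilde\psi=-\infty\}$. The monotonicity of $c(s)e^{-s}$ and $\tilde\psi\geq\psi$ yield the pointwise inequality
$$e^{\tilde\psi-\psi}c(-\tilde\psi)\geq c(-\psi),$$
strict on $\{\tilde\psi>\psi\}$ (using strict monotonicity of $c(s)e^{-s}$ on the relevant interval). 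For any admissible $\tilde f$ of the $\tilde\psi$-problem at $t=T$, which is holomorphic on all of $M$ and hence also admissible for the $\psi$-problem at $t=T$,
$$\int_M|\tilde f|^2 e^{-\tilde\varphi}c(-\tilde\psi)=\int_M|\tilde f|^2 e^{-\varphi}e^{\tilde\psi-\psi}c(-\tilde\psi)\geq\int_M|\tilde f|^2 e^{-\varphi}c(-\psi)\geq G(T),$$
with strict first inequality for $\tilde f\not\equiv 0$ because $\{\tilde\psi>\psi\}$ has positive Lebesgue measure (two plurisubharmonic functions agreeing off a null set must coincide). Hence $\tilde G(T)>G(T)$. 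Applying Theorem \ref{thm:general_concave} to $\tilde G$, concavity of $r\mapsto\tilde G(\hat{h}^{-1}(r))$ with $\tilde G(\hat{h}^{-1}(r))\to 0$ as $r\to 0$ and $\tilde G(T)/\hat{h}(T)>C$ forces $\tilde G(t)\geq k' G(t)$ for some fixed $k'>1$ and all $t\geq T$.

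On the other hand, using $F$ as a candidate at level $t$,
$$\tilde G(t)\leq\int_{\{\tilde\psi<-t\}}|F|^2 e^{-\tilde\varphi}c(-\tilde\psi),$$
and $\{\tilde\psi<-t\}\subset\{\psi<-t\}$ with $\tilde\psi-\psi\leq\sup_{\{\psi<-t\}}(\tilde\psi-\psi)=:\epsilon_t\to 0$ by condition $(3)$. Controlling the integrand via $e^{\tilde\psi-\psi}\leq e^{\epsilon_t}$ and a near-continuity bound for $c(-\tilde\psi)/c(-\psi)$ that comes from the monotonicity of $c(s)e^{-s}$ on the small interval $[-\psi,-\tilde\psi]$ of length at most $\epsilon_t$, one expects $\tilde G(t)\leq(1+o(1))G(t)$ as $t\to+\infty$, directly contradicting $\tilde G(t)\geq k' G(t)$ with $k'>1$.

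The ``especially'' statement follows by taking $\tilde\psi:=\psi+\epsilon\chi$ for a small nonnegative smooth bump $\chi$ supported in a neighborhood of $z_1$: strict plurisubharmonicity of $\psi$ at $z_1$ permits $\epsilon>0$ small enough that $\tilde\psi$ is plurisubharmonic, and $z_1\notin\bigcap_t\overline{\{\psi<-t\}}$ supplies $\psi<-T-\delta$ in a neighborhood of $z_1$, ensuring $\tilde\psi<-T$ globally; conditions $(2)$ and $(3)$ are then immediate. The main obstacle is making the upper-bound comparison $\tilde G(t)\leq(1+o(1))G(t)$ rigorous when $c$ is merely Lebesgue measurable, so that $c(s)e^{-s}$ may contain jumps or flat pieces; this requires careful leveraging of the uniform smallness of $\tilde\psi-\psi$ on $\{\psi<-t\}$ from condition $(3)$ to tame the integrand comparison and, in the presence of flat pieces of $c(s)e^{-s}$, an additional approximation or case analysis to restore strictness in the pointwise inequality.
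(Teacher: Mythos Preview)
Your overall strategy coincides with the paper's: set $\tilde\varphi=\varphi+\psi-\tilde\psi$, compare $G(t;\varphi,\psi)$ with $\tilde G(t)=G(t;\tilde\varphi,\tilde\psi)$, and derive a contradiction from the concavity of $\tilde G(\hat h^{-1}(r))$ by showing that $\tilde G$ is strictly larger than $G$ near $t=T$ but asymptotically no larger as $t\to+\infty$. The gap you flag at the end---handling general measurable $c$---is exactly the missing step, and the paper closes it not by a delicate pointwise analysis but by a reduction lemma you did not invoke.

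Specifically, once linearity of $G(\hat h^{-1}(r);c)$ is assumed, Corollary~\ref{thm:1} together with Remark~\ref{r:c} show that linearity persists for any $\tilde c\in\mathcal P_T$ with $\tilde c\geq c$ (condition $(1)$ of Remark~\ref{r:c} applies since $\tilde c/c\geq 1$). Lemma~\ref{l:c'} then manufactures such a $\tilde c$ with two extra properties: $\tilde c(t)e^{-t}$ is \emph{strictly} decreasing, and $\tilde c$ is \emph{increasing} on $(a,+\infty)$ for some $a>T$. After this replacement both of your obstacles evaporate. Strict decrease of $c(t)e^{-t}$ gives the strict inequality $\tilde G(T)>G(T)$ directly, with no case analysis for flat pieces. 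Eventual monotonicity of $c$ gives the clean upper bound: on $\{\psi<-t\}$ with $t$ large one has $-\tilde\psi\leq-\psi$ and both exceed $a$, hence $c(-\tilde\psi)\leq c(-\psi)$, so
\[
\tilde G(t)\leq\int_{\{\tilde\psi<-t\}}|F|^2e^{-\varphi-\psi+\tilde\psi}c(-\tilde\psi)
\leq\Big(\sup_{\{\psi<-t\}}e^{\tilde\psi-\psi}\Big)\int_{\{\psi<-t\}}|F|^2e^{-\varphi}c(-\psi)
= e^{\epsilon_t}\,G(t),
\]
and $\epsilon_t\to 0$ by hypothesis $(3)$. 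No ``near-continuity'' estimate for $c(-\tilde\psi)/c(-\psi)$ is needed.

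One minor correction in your ``especially'' paragraph: the hypothesis $z_1\notin\bigcap_t\overline{\{\psi<-t\}}$ is what guarantees condition $(3)$ (choose the bump supported in a neighborhood $U$ with $U\cap\bigcap_t\overline{\{\psi<-t\}}=\emptyset$, so $\tilde\psi=\psi$ on $\{\psi<-t\}$ for large $t$). The inequality $\tilde\psi<-T$ follows instead from upper semicontinuity of $\psi$ on the compact support of $\chi$, which gives $\psi\leq -T-\delta$ there; your sentence attributing ``$\psi<-T-\delta$'' to the intersection condition conflates these two points.
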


Let $M$ be an $n-$dimensional Stein manifold, and let $S$ be an analytic subset of $M$.	Let  $\psi$ be a plurisubharmonic function on $M$, and let $\varphi$ be a Lebesgue measurable function on $M$ such that $\varphi+\psi$ is  plurisubharmonic on $M$.

We call $(\varphi,\psi)$ in class $W$ if there exist two plurisubharmonic functions $\psi_1\in A'(S)$ and $\psi_2$, such that $\varphi+\psi_2$ is plurisubharmonic function on $M$ and $\psi=\psi_1+\psi_2$.

 The following theorem gives a necessary condition of $G(\hat{h}^{-1}(r))$ is linear when $(\varphi,\psi)\in W$.
 \begin{Theorem}
 	\label{thm:n2}Let $c\in P_T$, and let $(\varphi,\psi)\in W$. Let $\mathcal{F}|_{Z_0}=\mathcal{I}(\psi_1)|_{S_{reg}}$. Assume that  $G(T)\in(0,+\infty)$ and  $\psi_2(z)>-\infty$ for almost every $z\in S_{reg}$. If $G(\hat{h}^{-1}(r))$ is  linear with respect to $r\in(0,\int_{T}^{+\infty}c(s)e^{-s}ds)$, then we have
 	\begin{equation}
 		\label{eq:2106i}
 		\frac{G(T)}{\int_{T}^{+\infty}c(t)e^{-t}dt}=\sum_{k=1}^{n}\frac{\pi^k}{k!}\int_{S_{n-k}}\frac{|f|^2}{dV_{M}}e^{-\varphi-\psi_2}dV_{M}[\psi_1], 	
 		\end{equation}	
 		and there is no plurisubharmonic function $\tilde\psi\geq\psi$ on $M$ satisfying that:
 	
 	$(1)$ $\tilde\psi<-T$;
 	
 	$(2)$ $\tilde\psi\not=\psi$;
 	
 	$(3)$ $(\varphi+\psi-\tilde\psi,\tilde\psi)\in W$.
 	   \end{Theorem}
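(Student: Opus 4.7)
By the linearity assumption and Corollary \ref{thm:1}, there is a unique holomorphic $(n,0)$-form $F$ on $M$ with $F|_{S_{reg}}=f$ (using $\mathcal{F}|_{Z_0}=\mathcal{I}(\psi_1)|_{S_{reg}}$ and that $\psi_2>-\infty$ a.e.\ on $S_{reg}$) satisfying $G(t)=A\int_t^{+\infty}c(s)e^{-s}ds$ for every $t\ge T$, where $A:=G(T)/\int_T^{+\infty}c(s)e^{-s}ds$. Corollary \ref{c:L2b} immediately yields $A\le\|f\|_S^*$. For the reverse inequality, I test equation \eqref{eq:20210412b} with $a(s)=e^s\mathbb{I}_{(t,t+1)}(s)$ to obtain
$$\int_{\{-t-1<\psi<-t\}}|F|^2 e^{-\varphi-\psi}\,dV_M=A\qquad\text{for all }t>T.$$
Rewriting the integrand as $(|F|^2 e^{-\varphi-\psi_2})e^{-\psi_1}$ and letting $t\to+\infty$, the shell concentrates near $\{\psi_1=-\infty\}\supset S_{reg}$; the defining property of the Ohsawa measure $dV_M[\psi_1]$ for $\psi_1\in A'(S)$ converts the limit into $\|f\|_S^*$, giving \eqref{eq:2106i}.

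For the second assertion, suppose such a $\tilde\psi$ exists, with decomposition $\tilde\psi=\tilde\psi_1+\tilde\psi_2$, and set $\tilde\varphi:=\varphi+\psi-\tilde\psi$. Since $c(s)e^{-s}$ is decreasing and $-\tilde\psi\le-\psi$, $c(-\tilde\psi)e^{\tilde\psi-\psi}\ge c(-\psi)$, hence $e^{-\tilde\varphi}c(-\tilde\psi)\ge e^{-\varphi}c(-\psi)$ on $M=\{\tilde\psi<-T\}$. For any holomorphic extension $\tilde F$ of $f$ on $M$,
$$\int_M|\tilde F|^2 e^{-\tilde\varphi}c(-\tilde\psi)\ge\int_M|\tilde F|^2 e^{-\varphi}c(-\psi)\ge G(T;\varphi,\psi,c),$$
so $G(T;\tilde\varphi,\tilde\psi,c)\ge G(T;\varphi,\psi,c)$. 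Conversely, applying Corollary \ref{c:L2b} to $(\tilde\varphi,\tilde\psi)$, and using $\tilde\varphi+\tilde\psi_2=\varphi+\psi-\tilde\psi_1$ together with the transformation rule $dV_M[\tilde\psi_1]=e^{\psi_1-\tilde\psi_1}dV_M[\psi_1]$ on $S_{reg}$ (valid since $\psi_1-\tilde\psi_1$ is continuous there, as both $\psi_1,\tilde\psi_1\in A'(S)$), yields $G(T;\tilde\varphi,\tilde\psi,c)\le\|f\|_S^*\int_T^{+\infty}c(s)e^{-s}ds$, and by \eqref{eq:2106i} all inequalities collapse to equalities.

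Corollary \ref{thm:ll2} applied to $(\tilde\varphi,\tilde\psi)$ then provides a unique minimizer $\tilde F$ which, since it also realizes $G(T;\varphi,\psi,c)$ through the above chain, must equal $F$ by uniqueness, giving $c(-\tilde\psi)e^{\tilde\psi-\psi}=c(-\psi)$ a.e.\ on $\{F\ne 0\}$. To rule out the possibility that $c(s)e^{-s}$ is merely weakly decreasing, I rerun the whole argument with $c$ replaced by $c'(s):=c(s)+e^{-s}\in\mathcal{P}_T$ (noting $c'\ge c$ implies $\mathcal{H}^2(c',t)\subset\mathcal{H}^2(c,t)$ and $\int c'(s)e^{-s}ds<+\infty$); by Corollary \ref{thm:1} (equation \eqref{eq:20210412a}), $F$ remains the minimizer for the $c'$-problem with the same linearity, and the preceding argument gives $c'(-\tilde\psi)e^{\tilde\psi-\psi}=c'(-\psi)$ a.e.\ on $\{F\ne 0\}$. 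Since $c'(s)e^{-s}=c(s)e^{-s}+e^{-2s}$ is strictly decreasing, this forces $-\tilde\psi=-\psi$ on the dense open set $\{F\ne 0\}$, hence $\tilde\psi=\psi$ on $M$ by upper semicontinuity of plurisubharmonic functions, contradicting condition (2). The chief technical obstacle is the limit identification in the first paragraph: translating the $\psi$-shell mass of $|F|^2 e^{-\varphi}$ into $\|f\|_S^*$ via the splitting $\psi=\psi_1+\psi_2$ requires a careful stratum-by-stratum analysis near $S_{reg}$ that combines the structural condition $\psi_1\in A'(S)$ with the a.e.\ finiteness of $\psi_2$.
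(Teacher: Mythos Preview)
Your plan is correct in its overall architecture and matches the paper's structure closely; the differences are mostly tactical.

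For equality \eqref{eq:2106i}: the paper packages precisely your ``chief technical obstacle'' as a separate result (Proposition \ref{p:G}), which computes $\lim_{t\to+\infty}G(t)/\int_t^{+\infty}c(l)e^{-l}dl=\|f\|_S^*$ via the stratum-by-stratum analysis you describe. To make that proposition work, the paper first replaces $c$ (using Lemma \ref{l:c'} together with Corollary \ref{thm:1} and Remark \ref{r:c}) by a gain that is increasing on $(a,+\infty)$ and has $c(t)e^{-t}$ strictly decreasing; the monotonicity of $c$ is used in the local estimate (the analogue of your shell limit) to compare $c(-\psi)$ with $c$ evaluated at a nearby function. Your route via the exact identity $\int_{\{-t-1<\psi<-t\}}|F|^2e^{-\varphi-\psi}=A$ (from \eqref{eq:20210412b} with $a(s)=e^s\mathbb I_{(t,t+1)}$) is a nice observation because it eliminates $c$ from the limit altogether, so you would not need the extra monotonicity hypothesis on $c$; but you still owe the same lower-bound computation (mean-value inequality for $\frac{|F|^2}{dV_M}e^{\psi_2}$ in the transverse variables, approximation of $\varphi+\psi_2$ by smooth psh functions, and the local model $\psi_1=(n-l)\log|w^\alpha|^2+h_1$). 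That is genuine work, not a one-liner from the definition of $dV_M[\psi_1]$, since the shell is taken in $\psi=\psi_1+\psi_2$ rather than $\psi_1$ and the weight $e^{-\varphi-\psi_2}$ need not be continuous.

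For the non-existence of $\tilde\psi$: the paper's argument is shorter. Having already reduced to strictly decreasing $c(t)e^{-t}$, it simply observes that the minimizer $\tilde F_T$ for $G(T;\tilde\varphi,\tilde\psi)$ (which exists since Corollary \ref{c:L2b} gives $G(T;\tilde\varphi,\tilde\psi)\le\|f\|_S^*\int_T^{+\infty}c\,e^{-s}ds<+\infty$) satisfies
\[
G(T;\tilde\varphi,\tilde\psi)=\int_M|\tilde F_T|^2e^{-\tilde\varphi}c(-\tilde\psi)>\int_M|\tilde F_T|^2e^{-\varphi}c(-\psi)\ge G(T;\varphi,\psi),
\]
the strict inequality coming from strict monotonicity of $s\mapsto c(s)e^{-s}$ on a set where $\tilde\psi>\psi$ of positive measure and $\tilde F_T\not\equiv 0$. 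Together with \eqref{eq:2106i} this yields $\|f\|_S^*>\|f\|_S^*$, a contradiction. Your detour through Corollary \ref{thm:ll2}, uniqueness $\tilde F=F$, and the auxiliary gain $c'(s)=c(s)+e^{-s}$ is correct and self-contained, but it reproduces in several steps what the paper achieves in one by front-loading the reduction to strictly decreasing $c(t)e^{-t}$ via Lemma \ref{l:c'}.
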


\subsubsection{Characterizations for the linearity of $G(\hat{h}(r))$ in $1-$dimensional case}\label{sec:1-d}

 In this section, we consider the $1-$dimensional case. Let $M=\Omega$ be an open Riemann surface admitted a nontrivial Green function $G_{\Omega}$, we give characterizations of the linearity.

 We recall some notations (see \cite{guan-zhou13ap}). Let $p:\Delta\rightarrow\Omega$ be the universal covering from unit disc $\Delta$ to $\Omega$. we call the holomorphic function $f$ (resp. holomorphic $(1,0)$ form $F$) on $\Delta$ a multiplicative function (resp. multiplicative differential (Prym differential)), if there is a character $\chi$, which is the representation of the fundamental group of $\Omega$, such that $g^{*}f=\chi(g)f$ (resp. $g^{*}(F)=\chi(g)F$), where $|\chi|=1$ and $g$ is an element of the fundamental group of $\Omega$. Denote the set of such kinds of $f$ (resp. $F$) by $\mathcal{O}^{\chi}(\Omega)$ (resp. $\Gamma^{\chi}(\Omega)$).

 For any harmonic function $u$ on $\Omega$, there exists a $\chi_{u}$ and a multiplicative function $f_u\in\mathcal{O}^{\chi_u}(\Omega)$, such that $|f_u|=p^{*}\left(e^{u}\right)$. If $u_1-u_2=\log|f|$, where $u_1$ and $u_2$ are harmonic function on $\Omega$ and $f$ is holomorphic function on $\Omega$, then $\chi_{u_1}=\chi_{u_2}$.

 For the Green function $G_{\Omega}(z,z_0)$, one can also find a $\chi_{z_0}$ and a multiplicative function $f_{z_0}\in\mathcal{O}^{\chi_{z_0}}(\Omega)$, such that $|f_{z_0}|=p^{*}\left(e^{G_{\Omega}(z,z_0)}\right)$.

 Let $M=\Omega$ be an open Riemann surface admitted a nontrivial Green function $G_{\Omega}$.  Let $\psi$ be a subharmonic function on $\Omega$ satisfying $T=-\sup_{\Omega}\psi=0$, and let $\varphi$ be a Lebesgue measurable function on $\Omega$, such that $\varphi+\psi$ is subharmonic on $\Omega$. Let $Z_0=z_0$ be a point in $\Omega$.

 Let $w$ be a local coordinate on a neighborhood $V_{z_0}$ of $z_0\in\Omega$ satisfying $w(z_0)=0$. Set $f=f_1(w)dw$ on $V_{z_0}$, where $f$ is the holomorphic $(1,0)$ form in the definition of $G(t)$ (see Section \ref{sec:main}) and $f_1$ is a holomorphic function on $V_{z_0}$.

   The following two theorems give characterizations of $G(\hat{h}^{-1}(r))$ is linear with respect to $r\in(0,\int_{0}^{+\infty}c(l)e^{-l}dl)$ for some kinds of $(\varphi,\psi)$. Set $d^c=\frac{1}{2\pi i}(\partial-\bar\partial)$.

\begin{Theorem}
	\label{thm:e2}
	Let $c\in\mathcal{P}_0$.   Assume that $\varphi+a\psi$ is a subharmonic function on a neighborhood of $z_0$ for some $a\in[0,1)$, and $G(0)\in(0,+\infty)$. Then $G(\hat{h}^{-1}(r))$ is linear with respect to $r$ if and only if the following statements hold:
	
	$(1)$ $\varphi+\psi=2\log|g|+2G_{\Omega}(z,z_0)+2u$, $ord_{z_0}(g)=ord_{z_0}(f_1)$ and $\mathcal{F}_{z_0}=\mathcal{I}(\varphi+\psi)_{z_0}$, where $g$ is a holomorphic function on $\Omega$ and $u$ is a harmonic function on $\Omega$;
	
	$(2)$ $\psi=2pG_{\Omega}(z,z_0)$ on $\Omega$ for some $p>0$;
	
	$(3)$ $\chi_{-u}=\chi_{z_0}$, where $\chi_{-u}$ and $\chi_{z_0}$ are the  characters associated to the functions $-u$ and $G_{\Omega}(z,z_0)$ respectively.	
\end{Theorem}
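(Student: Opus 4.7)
The proof naturally splits into the two directions of the equivalence, with Corollary \ref{thm:1} serving as the bridge: linearity of $G(\hat h^{-1}(r))$ produces (and is characterized by) a unique holomorphic $(1,0)$-form $F$ on $\Omega$ with $(F-f)\in(\mathcal O(K_M)\otimes\mathcal F)_{z_0}$ satisfying the equidistribution identity \eqref{eq:20210412b} for every nonnegative measurable weight $a$.

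For the sufficiency direction, I assume (1)--(3) and construct the extremal $F$ explicitly on the universal cover $p:\Delta\to\Omega$ as a product of the multiplicative functions $g$, $f_{z_0}$, $f_u$ together with the canonical differential. Condition (3) (together with matching of characters coming from the Green potential) ensures that the combination has trivial character and descends to a single-valued holomorphic $(1,0)$-form $F$ on $\Omega$; the order identity $ord_{z_0}(g)=ord_{z_0}(f_1)$ and the sheaf identity $\mathcal F_{z_0}=\mathcal I(\varphi+\psi)_{z_0}$ then supply $(F-f)\in(\mathcal O(K_M)\otimes\mathcal F)_{z_0}$. Using $\psi=2pG_{\Omega}(z,z_0)$ and $\varphi+\psi=2\log|g|+2G_{\Omega}(z,z_0)+2u$, one computes that $|F|^2e^{-\varphi}$ coincides with $C\, e^{-\psi}\, d\psi\wedge d^c\psi$ as measures where defined; a coarea argument along sublevel sets of $G_{\Omega}$ yields
\[
\int_{\{\psi<-t\}}|F|^2e^{-\varphi}c(-\psi)=C\int_{t}^{+\infty}c(s)e^{-s}\,ds,
\]
which is precisely the claimed linearity of $G(\hat h^{-1}(r))$.

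For the necessity direction, I assume linearity and invoke Corollary \ref{thm:1} to produce $F$ and the identity \eqref{eq:20210412b}. To deduce (2), I apply Theorem \ref{thm:n4}: any subharmonic majorant $\tilde\psi\ge\psi$ with $\tilde\psi\ne\psi$ and the stated asymptotics is forbidden by linearity, so $\psi$ is maximal among subharmonic functions on $\Omega$ with its singularity at $z_0$ and supremum $0$; on an open Riemann surface with nontrivial Green function this forces $\psi=2pG_{\Omega}(z,z_0)$ for some $p>0$. To deduce (1), note that \eqref{eq:20210412b} forces $|F|^2e^{-\varphi}$ to equal $C\,e^{-\psi}\,d\psi\wedge d^c\psi$ as Radon measures on $\{\psi<0\}$, so $\varphi+\psi-\log|F|^2$ is harmonic on $\Omega\setminus\{F=0\}$; the local subharmonicity of $\varphi+a\psi$ near $z_0$ (with $a\in[0,1)$) then pins down the pole structure at $z_0$ and allows separation into a Green-function contribution $2G_{\Omega}(z,z_0)$ and a holomorphic-zero contribution $2\log|g|$, yielding the decomposition with $u$ harmonic on $\Omega$. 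Matching jets of $F$ and $f$ at $z_0$ via $(F-f)\in(\mathcal O(K_M)\otimes\mathcal F)_{z_0}$ then gives $ord_{z_0}(g)=ord_{z_0}(f_1)$ and $\mathcal F_{z_0}=\mathcal I(\varphi+\psi)_{z_0}$. Finally (3) is extracted by lifting the single-valued $F$ to the universal cover and matching characters of the multi-valued factors.

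I expect the main obstacle to lie in the necessity direction, specifically in the passage from the equidistribution identity \eqref{eq:20210412b} to the explicit decomposition of $\varphi+\psi$. The identification $\psi=2pG_{\Omega}$ requires combining Theorem \ref{thm:n4} with the maximality properties of the Green function on open Riemann surfaces, and the identification of $\varphi+\psi$ demands a delicate pole/zero bookkeeping near $z_0$ where the technical hypothesis that $\varphi+a\psi$ be subharmonic for some $a\in[0,1)$ is essential for cleanly splitting the logarithmic singularity into its Green-function and holomorphic-zero components. Correctly tracking characters on the universal cover throughout adds a further bookkeeping layer that must be handled in parallel.
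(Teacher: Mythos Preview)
Your overall architecture (sufficiency/necessity split, with Corollary~\ref{thm:1} as the pivot) is right, but the necessity direction has real gaps.

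\textbf{On (2) via Theorem~\ref{thm:n4}.} Writing $\psi=2pG_{\Omega}(\cdot,z_0)+\psi_1$ with $\psi_1\le 0$ subharmonic and $v(dd^c\psi_1,z_0)=0$, your candidate $\tilde\psi=2pG_{\Omega}(\cdot,z_0)$ must satisfy condition~(3) of Theorem~\ref{thm:n4}, namely $\lim_{t\to+\infty}\sup_{\{\psi<-t\}}(-\psi_1)=0$. This fails whenever $\psi_1(z_0)<0$, which is perfectly compatible with $v(dd^c\psi_1,z_0)=0$; Lelong number zero does not force continuity at $z_0$. The paper closes this by \emph{first} proving $\psi_1$ is harmonic near $z_0$ (a separate rigidity step, Proposition~\ref{l:n} combined with Lemma~\ref{l:cu}), and only then invokes Theorem~\ref{thm:n2}, not Theorem~\ref{thm:n4}, to conclude $\psi=2pG_\Omega$.

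\textbf{On (1) via the equidistribution identity.} Equation~\eqref{eq:20210412b} only says the pushforward of $|F|^2e^{-\varphi}$ under $-\psi$ equals $Ce^{-t}\,dt$; it constrains integrals over the annuli $\{-t_1\le\psi<-t_2\}$ but says nothing about the distribution \emph{along} the level sets of $\psi$. So you cannot conclude $|F|^2e^{-\varphi}=Ce^{-\psi}\,d\psi\wedge d^c\psi$ as measures, and hence cannot deduce that $\varphi+\psi-\log|F|^2$ is harmonic from this alone. The paper obtains harmonicity of $u$ by the same rigidity mechanism as above: if $u$ were not harmonic, Lemma~\ref{l:cu} manufactures a perturbation $\tilde\varphi$ violating Proposition~\ref{l:n}.

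\textbf{On (3) via ``matching characters''.} Single-valuedness of the minimizer $F$ does not by itself give $\chi_{-u}=\chi_{z_0}$; you would need to know a priori that $F$ factors through $g\cdot f_{z_0}\cdot f_u$, which is what you are trying to prove. The paper instead reduces (after (1) and (2)) to $\hat G(0;1)=2/B_{\Omega,e^{-2u}}(z_0)$, shows via Theorem~\ref{thm:n2} that this equals $2\pi e^{-2u(z_0)}/c_\beta^2(z_0)$, and then reads off $\chi_{-u}=\chi_{z_0}$ from the equality case of the extended Suita conjecture (Theorem~\ref{thm:suita}). For sufficiency the paper also goes through Theorem~\ref{thm:suita} and Theorem~\ref{thm:ll1} rather than an explicit coarea construction; your direct construction is plausible but would need the coarea computation carried out carefully (note that $|F|^2e^{-\varphi}$ comes out proportional to $e^{\psi}$ times a fixed form, not $e^{-\psi}d\psi\wedge d^c\psi$).
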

When $\psi=2G_{\Omega}(z,z_0)$, $\mathcal{F}_{z_0}=\mathcal{I}(\varphi+\psi)_{z_0}$ and $c(t)\in\mathcal{P}'_0$, Theorems \ref{thm:e2} can be referred to \cite{GM-concave}.

\begin{Theorem}
	\label{thm:e1}
		Let $c\in\mathcal{P}_0$, and let $Z_0=z_0$ be a point in $\Omega$. Assume that $(\psi-2pG_{\Omega}(z,z_0))(z_0)>-\infty$, where $p=\frac{1}{2}v(dd^{c}\psi,z_0)$, and  $G(0)\in(0,+\infty)$. Then $G(\hat{h}^{-1}(r))$ is linear with respect to $r$ if and only if the following statements hold:

	$(1)$ $\varphi+\psi=2\log|g|+2G_{\Omega}(z,z_0)+2u$, $ord_{z_0}(g)=ord_{z_0}(f_1)$ and $\mathcal{F}_{z_0}=\mathcal{I}(\varphi+\psi)_{z_0}$, where $g$ is a holomorphic function on $\Omega$ and $u$ is a harmonic function on $\Omega$;
	
	$(2)$ $p>0$ and $\psi=2pG_{\Omega}(z,z_0)$ on $\Omega$;
	
	$(3)$ $\chi_{-u}=\chi_{z_0}$, where $\chi_{-u}$ and $\chi_{z_0}$ are the  characters associated to the functions $-u$ and $G_{\Omega}(z,z_0)$ respectively.
\end{Theorem}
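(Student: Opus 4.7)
The plan is to prove both directions of the equivalence, following the template used for Theorem \ref{thm:e2} but replacing the local subharmonicity hypothesis on $\varphi+a\psi$ by the Lelong-number hypothesis $(\psi-2pG_\Omega(z,z_0))(z_0)>-\infty$.

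For the \emph{sufficiency} direction, assume conditions (1), (2), (3) hold. The character matching $\chi_{-u}=\chi_{z_0}$ implies that the product of multiplicative functions $f_{-u}\cdot f_{z_0}$ descends from the universal cover $\Delta\to\Omega$ to a single-valued meromorphic function on $\Omega$ with a simple pole at $z_0$. Combined with the holomorphic $g$ from (1) and the order condition $\mathrm{ord}_{z_0}(g)=\mathrm{ord}_{z_0}(f_1)$, this produces an explicit holomorphic $(1,0)$-form $F$ on $\Omega$ with $(F-f)\in (\mathcal{O}(K_\Omega)\otimes\mathcal{F})_{z_0}$. Conditions (1) and (2) then give $|F|^{2}e^{-\varphi}c(-\psi)=\lambda\,e^{-2G_\Omega(z,z_0)}c(-2pG_\Omega(z,z_0))$ for a positive constant $\lambda$, and decomposing $\Omega$ into level sets of $-\psi$ collapses $\int_{\{\psi<-t\}}|F|^{2}e^{-\varphi}c(-\psi)$ to a scalar multiple of $\int_{t}^{+\infty}c(s)e^{-s}\,ds$; the linearity of $G(\hat{h}^{-1}(r))$ then follows from Corollary \ref{thm:1}.

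For the \emph{necessity} direction, assume $G(\hat{h}^{-1}(r))$ is linear. Corollary \ref{thm:1} supplies a unique minimizer $F$ on $\Omega$ realizing $G(t)$ for every $t\geq 0$, together with the pointwise identity \eqref{eq:20210412b}. The core step is the rigid identification $p>0$ and $\psi\equiv\Psi:=2pG_\Omega(z,z_0)$ on all of $\Omega$. Using the extremality of the Green function among nonpositive subharmonic functions with a prescribed logarithmic singularity at $z_0$, one has $\psi\leq\Psi$; the Lelong-number hypothesis forces $\Psi-\psi$ to be bounded on a neighborhood $U$ of $z_0$, which in turn allows the construction of a subharmonic competitor $\tilde\psi\geq\psi$ of the shape $\max\{\psi,\Psi-D\}$ that satisfies $\tilde\psi=\psi$ near every polar point of $\psi$ (so that $\sup_{\{\psi<-t\}}(\tilde\psi-\psi)\to 0$ as $t\to+\infty$), while satisfying $\tilde\psi<0$ and $\tilde\psi\not\equiv\psi$ whenever $\psi\not\equiv\Psi$. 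Theorem \ref{thm:n4} then yields a contradiction to the assumed linearity, forcing $\psi\equiv\Psi$; the case $p=0$ is excluded analogously, with $\Psi\equiv 0$ replaced by a small upward perturbation of $\psi$ away from $z_0$. Once $\psi=2pG_\Omega(z,z_0)$ is secured, plugging into \eqref{eq:20210412b} and lifting $F$ to $\Delta$ pins down $|F|^{2}e^{-\varphi}$ up to a positive constant as $c_{0}|g|^{2}|f_{z_0}|^{-2}e^{-2u}$ for a holomorphic $g$ on $\Omega$ and harmonic $u$, yielding (1); the single-valuedness of $F$ on $\Omega$ forces the character equality (3), while matching $F$ with $f$ at $z_0$ combined with the strong openness property produces $\mathrm{ord}_{z_0}(g)=\mathrm{ord}_{z_0}(f_1)$ and $\mathcal{F}_{z_0}=\mathcal{I}(\varphi+\psi)_{z_0}$.

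The chief obstacle is the rigid identification $\psi\equiv 2pG_\Omega(z,z_0)$ globally on $\Omega$: the Lelong-number hypothesis is used precisely to construct the competitor $\tilde\psi$ of Theorem \ref{thm:n4} in a way that matches $\psi$ exactly near each of its polar points (so that condition (3) of that theorem is valid), while still producing a genuine upward perturbation elsewhere on $\Omega$. Once this rigidity is in hand, the identification of $\varphi$, the harmonic function $u$, and the character condition is a matter of careful bookkeeping of multiplicative functions on the universal cover, in close analogy with the argument of \cite{GM-concave}.
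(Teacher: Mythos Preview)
Your necessity argument has a genuine gap at the key step, the rigid identification $\psi\equiv 2pG_\Omega(\cdot,z_0)$ via Theorem~\ref{thm:n4}. The competitor $\tilde\psi=\max\{\psi,\,2pG_\Omega(\cdot,z_0)-D\}$ need not satisfy condition~(3) of that theorem. The set $\{\psi<-t\}$ shrinks, as $t\to+\infty$, to the full polar set $\{\psi=-\infty\}$, and at this stage nothing prevents $\psi$ from having poles at points $z_1\neq z_0$ (indeed that is part of what you are trying to rule out). At such $z_1$ one has $2pG_\Omega(z_1,z_0)>-\infty$ while $\psi(z_1)=-\infty$, so $\tilde\psi-\psi=2pG_\Omega(\cdot,z_0)-D-\psi$ is unbounded on every neighbourhood of $z_1$, and $\sup_{\{\psi<-t\}}(\tilde\psi-\psi)$ does not tend to $0$. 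Relatedly, the claim that the hypothesis $(\psi-2pG_\Omega)(z_0)>-\infty$ forces $2pG_\Omega-\psi$ to be \emph{bounded} on a neighbourhood of $z_0$ is not justified: $\psi_2:=\psi-2pG_\Omega$ is subharmonic and finite at $z_0$, but a subharmonic function can still be $-\infty$ on a polar set accumulating at $z_0$.

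The paper avoids this by using Theorem~\ref{thm:n2} instead of Theorem~\ref{thm:n4}. After establishing $p>0$, $\mathrm{ord}_{z_0}(g)=\mathrm{ord}_{z_0}(f_1)$ and $\mathcal F_{z_0}=\mathcal I(\varphi+\psi)_{z_0}$ (via Corollary~\ref{thm:1}, Remark~\ref{r:c} with $\tilde c(t)=\max\{c(t),e^{rt}\}$, and a H\"older-type argument), one divides by a holomorphic $f_2$ on $\Omega$ with $\{f_2=0\}=\{z_0\}$ and $\mathrm{ord}_{z_0}(f_2)=\mathrm{ord}_{z_0}(f_1)$, so that $(\tilde\varphi,\psi):=(\varphi-2\log|f_2|,\psi)$ lies in the class $W$ with $\psi_1=2G_\Omega(\cdot,z_0)$ and $\psi_2=\psi-2G_\Omega(\cdot,z_0)$. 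Here the hypothesis $(\psi-2pG_\Omega)(z_0)>-\infty$ is exactly what guarantees $\psi_2(z_0)>-\infty$, the only assumption on $\psi_2$ that Theorem~\ref{thm:n2} needs; no boundedness near $z_0$ is required. Theorem~\ref{thm:n2} (applied with $\tilde\psi=2G_\Omega(\cdot,z_0)$) then forces $\psi=2G_\Omega(\cdot,z_0)$, and once this is known $\varphi$ itself is subharmonic and Theorem~\ref{thm:e2} delivers the remaining conclusions (harmonicity of $u$ and $\chi_{-u}=\chi_{z_0}$). Your sufficiency sketch is reasonable in spirit, but note that exhibiting one $F$ with the right integral does not by itself show it is the minimizer for $G(t)$; the paper handles sufficiency simply by invoking Theorem~\ref{thm:e2}.
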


\subsubsection{Characterizations for the holding of the equality in  optimal $L^2$ extension problem on open Riemann surfaces with weights may not be subharmonic.}\label{sec:20210808}

 Let $M=\Omega$ be an open Riemann surface admitted a nontrivial Green function $G_{\Omega}$.  Let $\psi$ be a subharmonic function on $\Omega$ satisfying $T=-\sup_{\Omega}\psi=0$, and let $\varphi$ be a Lebesgue measurable function on $\Omega$, such that $\varphi+\psi$ is subharmonic on $\Omega$. Let $Z_0=z_0$ be a point in $\Omega$.

 Let $w$ be a local coordinate on a neighborhood $V_{z_0}$ of $z_0\in\Omega$ satisfying $w(z_0)=0$. Let  $f\equiv dw$ be a holomorphic $(1,0)$ form  on $V_{z_0}$.
Following the notations in Section \ref{sec:L2}.   Now, we give characterizations for the holding of the equality in  optimal $L^2$ extension problem on open Riemann surfaces with weights may not be subharmonic.

\begin{Corollary}
	\label{c:ll3}Let $M=\Omega$, $S=z_0$, and $T=0$. Let $\varphi(z_0)>-\infty$. Assume that $\psi\in A(z_0)$, $e^{-\varphi-\psi}$ is not $L^1$ on any neighborhood of $z_0$ and  $c(t)\in\mathcal{P}_0$ satisfying $\int_{0}^{+\infty}c(t)e^{-t}dt<+\infty$.

Then there exists a holomorphic $(1,0)$ form $F$ on $\Omega$ such that $F(z_0)=f(z_0)$ and
\begin{equation}
	\label{eq:210809}
	\int_{\Omega}|F|^2e^{-\varphi}c(-\psi)\leq\left(\int_{0}^{+\infty}c(t)e^{-t}dt\right)\|f\|_{z_0}.\end{equation}
	Moreover,
	  equality $\left(\int_{0}^{+\infty}c(t)e^{-t}dt\right)\|f\|_{z_0}=\inf\{\|\tilde F\|_{\Omega}$: $\tilde F$ is a holomorphic extension of $f$ from $z_0$ to $\Omega$$\}$ holds if and only if
	the following statements hold:
	
	$(1)$ $\varphi=2\log|g|+2u$, where $u$ is a harmonic function on $\Omega$ and $g$ is a holomorphic function on $\Omega$ such that $g(z_0)\not=0$;
	
	$(2)$ $\psi=2G_{\Omega}(z,z_0)$ on $\Omega$;
	
	$(3)$ $\chi_{-u}=\chi_{z_0}$, where $\chi_{-u}$ and $\chi_{z_0}$ are the  characters associated to the functions $-u$ and $G_{\Omega}(z,z_0)$ respectively.
\end{Corollary}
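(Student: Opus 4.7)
I obtain the existence of the extension $F$ and the inequality \eqref{eq:210809} by applying Corollary \ref{c:L2b} to the decomposition $\psi=\psi_1+\psi_2$ where $\psi_1:=2G_\Omega(\cdot,z_0)$ and $\psi_2:=\psi-2G_\Omega(\cdot,z_0)$. Since $G_\Omega(\cdot,z_0)-\log|w|$ is harmonic near $z_0$, we have $\psi_1\in A'(z_0)\subset A(z_0)$. Since $\psi$ and $2G_\Omega(\cdot,z_0)$ share the same logarithmic pole at $z_0$ (as $\psi\in A(z_0)$), $\psi_2$ is bounded near $z_0$ and, being subharmonic on $\Omega\setminus\{z_0\}$ as a sum of subharmonic and harmonic, extends to a subharmonic function on $\Omega$ by the removable singularity theorem. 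The same reasoning shows $\varphi+\psi_2=(\varphi+\psi)-2G_\Omega(\cdot,z_0)$ extends to a plurisubharmonic function on $\Omega$. Taking $\mathcal{F}|_{z_0}=\mathcal{I}(\psi_1)_{z_0}$ and using the identity $dV_M[\psi]=e^{-\psi_2}dV_M[\psi_1]$ at $z_0$ (valid because $\psi_1\in A'(z_0)$) gives $\|f\|^*_{z_0}=\|f\|_{z_0}$, and \eqref{eq:210809} follows from Corollary \ref{c:L2b}.

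For the necessity direction, suppose equality holds in \eqref{eq:210809}. Then $G(0)<+\infty$ by the bound, while $G(0)>0$ because the non-local-integrability of $e^{-\varphi-\psi}$ near $z_0$ prevents $f$ from belonging to $\mathcal{I}(\varphi+\psi)_{z_0}\cdot K_\Omega$. Corollary \ref{thm:ll2} then yields linearity of $G(\hat h^{-1}(r))$ in $r$, whereupon I apply Theorem \ref{thm:e1}: its hypothesis $(\psi-2pG_\Omega(\cdot,z_0))(z_0)>-\infty$ is verified since $\psi\in A(z_0)$ forces $v(dd^c\psi,z_0)=2$, giving $p=1$ and bounded $\psi-2G_\Omega$ near $z_0$. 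Theorem \ref{thm:e1} then produces $\psi=2G_\Omega(\cdot,z_0)$ (condition (2)), a factorization $\varphi+\psi=2\log|g|+2G_\Omega(\cdot,z_0)+2u$ with $\mathrm{ord}_{z_0}(g)=\mathrm{ord}_{z_0}(f_1)=0$ (since $f=dw$ gives $f_1\equiv 1$), hence $g(z_0)\neq 0$, together with $\chi_{-u}=\chi_{z_0}$ (condition (3)). Substituting $\psi=2G_\Omega$ into the factorization yields $\varphi=2\log|g|+2u$, establishing condition (1).

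For the converse direction, assume (1), (2), (3) hold. Since $\chi_{-u}=\chi_{z_0}$, the product $f_u\cdot f_{z_0}$ has trivial character on the universal cover and descends to a holomorphic function $h_0$ on $\Omega$ with a simple zero at $z_0$; normalizing by $g$ and by $f(z_0)/dh_0(z_0)$, I construct a global holomorphic $(1,0)$ form $F$ on $\Omega$ with $F(z_0)=f(z_0)$ and $|F|^2=|g|^2 e^{2u}|dh_0/h_0|^2\cdot(\text{constant})$, in which $|dh_0/h_0|^2$ pulls back on the universal cover to a form involving $|\partial G_\Omega|^2$. Then, using $\psi=2G_\Omega$ and $\varphi=2\log|g|+2u$, the integral $\int_\Omega|F|^2e^{-\varphi}c(-\psi)$ reduces after cancellation of $|g|^2 e^{2u}$ to an integral against $c(-2G_\Omega)$, and the coarea formula on the sublevel sets of $G_\Omega$ (which at level $-t/2$ are parametrized via $|h_0|=e^{-t/2}$) transforms it into $(\int_0^{+\infty}c(t)e^{-t}dt)\cdot\|f\|_{z_0}$, attaining equality. \textbf{The main obstacle} is the careful verification that the splitting of $\psi$ yields a valid application of Corollary \ref{c:L2b}, in particular the plurisubharmonic extension of $\varphi+\psi_2$ across $z_0$ when $\varphi$ is only measurable, together with the explicit construction and integration for the converse; once these are in place, the equivalence follows cleanly from Corollary \ref{thm:ll2} and Theorem \ref{thm:e1}.
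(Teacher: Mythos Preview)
Your plan has the right overall architecture, but two of your shortcuts skip over subtleties that the paper handles carefully.

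First, your extension of $\varphi+\psi_2=(\varphi+\psi)-2G_\Omega(\cdot,z_0)$ across $z_0$ is not justified by the ``same reasoning'' (boundedness). Unlike $\psi-2G_\Omega$, which is bounded near $z_0$ because $\psi\in A(z_0)$, the difference $(\varphi+\psi)-2G_\Omega$ need not be bounded above there. The paper uses the hypothesis $e^{-\varphi-\psi}\notin L^1_{\mathrm{loc}}$ together with Skoda's lemma and Siu's decomposition to guarantee enough Lelong number of $\varphi+\psi$ at $z_0$ to subtract $2G_\Omega$ and stay subharmonic. You have not used that hypothesis at this stage.

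Second, and more seriously, your reduction of the existence statement to Corollary~\ref{c:L2b} requires $\|f\|^*_{z_0}=\|f\|_{z_0}$, i.e., $dV_\Omega[\psi]=e^{-\psi_2}dV_\Omega[2G_\Omega]$ at $z_0$. But the identity $dV_M[\psi_1+h]=e^{-h}dV_M[\psi_1]$ is stated in the paper only for $\psi_1\in A'(S)$ \emph{and} $h$ continuous, whereas your $\psi_2$ is merely subharmonic. This is precisely why the paper does \emph{not} invoke Corollary~\ref{c:L2b} here: it goes back to Lemma~\ref{lem:GZ_sharp}, approximates $\varphi+\psi_2$ by smooth $\Phi_l$ and $\psi_2$ by smooth $\Psi_k$, and uses the definition of $dV_\Omega[\psi]$ with continuous weights before letting $k,l\to\infty$. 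The same gap reappears in your necessity argument when you call Corollary~\ref{thm:ll2}, whose hypothesis is equality with $\|f\|^*_S$, not $\|f\|_S$. The paper instead applies the just-proved existence result to each $\{\psi<-t\}$ to get $G(t)/\int_t^\infty c(l)e^{-l}dl\le G(0)/\int_0^\infty c(l)e^{-l}dl$, then uses concavity and Theorem~\ref{thm:e1} directly.

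For sufficiency, your explicit construction via the multiplicative function $f_u f_{z_0}$ and a coarea computation is a genuinely different route from the paper's (which uses Theorem~\ref{thm:e2} for linearity, then Lemma~\ref{l:c'}, Proposition~\ref{p:G}, and Corollary~\ref{thm:1} to identify $G(0)$). Your sketch is plausible but incomplete: you must show the integral of your candidate $F$ equals, not merely is bounded by, $\|f\|_{z_0}\int_0^\infty c(t)e^{-t}dt$, and that no other extension does strictly better. The paper's abstract argument via linearity and Corollary~\ref{thm:1} gives both automatically.
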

When $\psi=2G_{\Omega}(z,z_0)$ and $c(t)\in\mathcal{P}'_0$, Corollary \ref{c:ll3} can be referred to \cite{GM-concave}.

\begin{Corollary}\label{c:ll4}
Let $M=\Omega$, $S=z_0$, and $T=0$. Let $(\varphi,\psi)\in W$ and let $\|f\|_{z_0}^{*}\in(0,+\infty)$.  Let $c(t)\in\mathcal{P}_0$  such that $\int_{0}^{+\infty}c(t)e^{-t}dt<+\infty$.
 Then equality $\left(\int_{0}^{+\infty}c(t)e^{-t}dt\right)\|f\|_{z_0}^{*}=\inf\{\|F\|_{\Omega}:F$ is a holomorphic extension of $f$ from $z_0$ to $\Omega$$\}$ holds if and only if
	the following statements hold:
	
	$(1)$ $\varphi=2\log|g|+2u$, where $u$ is a harmonic function on $\Omega$ and $g$ is a holomorphic function on $\Omega$ such that $g(z_0)\not=0$;
	
	$(2)$ $\psi=2G_{\Omega}(z,z_0)$ on $\Omega$;
	
	$(3)$ $\chi_{-u}=\chi_{z_0}$, where $\chi_{-u}$ and $\chi_{z_0}$ are the  characters associated to the functions $-u$ and $G_{\Omega}(z,z_0)$ respectively.
\end{Corollary}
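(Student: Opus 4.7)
The plan is to derive Corollary \ref{c:ll4} by reducing each direction of the biconditional to results already in the paper: the linearity characterization Corollary \ref{thm:ll2}, the one-dimensional characterization Theorem \ref{thm:e1}, and Corollary \ref{c:ll3}. Throughout, $n=1$, $S=\{z_{0}\}$, $T=0$, so the sum defining $\|f\|_{z_{0}}^{*}$ collapses to the $k=1$ term at $z_{0}$.

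For the only-if direction, I fix any decomposition $\psi=\psi_{1}+\psi_{2}$ witnessing $(\varphi,\psi)\in W$ and set $\mathcal{F}|_{Z_{0}}=\mathcal{I}(\psi_{1})|_{\{z_{0}\}}$, so that $\inf\{\|F\|_{\Omega}\}=G(0)$ (since $\mathcal{I}(\psi_{1})_{z_{0}}=\mathfrak{m}_{z_{0}}$ forces $\tilde F(z_{0})=f(z_{0})$). The hypothesized equality is precisely the hypothesis of Corollary \ref{thm:ll2}, yielding that $G(\hat{h}^{-1}(r))$ is linear. I then apply Theorem \ref{thm:e1}; its hypothesis $(\psi-2pG_{\Omega}(\cdot,z_{0}))(z_{0})>-\infty$ with $p=\tfrac{1}{2}v(dd^{c}\psi,z_{0})$ is verified by noting that $\psi_{1}\in A'(z_{0})$ forces $\psi_{1}=2\log|w|+(\mathrm{continuous})$ near $z_{0}$ (Lelong number $2$), while $\psi_{2}(z_{0})>-\infty$ forces Lelong number $0$ for $\psi_{2}$; hence $p=1$ and $(\psi-2G_{\Omega})(z_{0})=(\psi_{1}-2G_{\Omega})(z_{0})+\psi_{2}(z_{0})>-\infty$. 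Theorem \ref{thm:e1} then produces a holomorphic $g$ and harmonic $u$ with $\psi=2pG_{\Omega}$, $\varphi+\psi=2\log|g|+2G_{\Omega}+2u$, $\mathrm{ord}_{z_{0}}(g)=\mathrm{ord}_{z_{0}}(f_{1})$, and $\chi_{-u}=\chi_{z_{0}}$. Since $f=dw$ on $V_{z_{0}}$ gives $f_{1}\equiv 1$, we conclude $g(z_{0})\neq 0$; the Lelong calculation forces $p=1$, hence $\psi=2G_{\Omega}$; subtracting from the $\varphi+\psi$ identity gives $\varphi=2\log|g|+2u$. These are conditions (1)--(3).

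For the if direction, I assume (1)--(3) and choose the canonical decomposition $\psi_{1}:=2G_{\Omega}(\cdot,z_{0})$, $\psi_{2}:=0$. Then $\psi_{1}\in A'(z_{0})$ (since $G_{\Omega}-\log|w|$ is harmonic near $z_{0}$), $\psi=\psi_{1}+\psi_{2}$ by (2), and $\varphi+\psi_{2}=\varphi=2\log|g|+2u$ is subharmonic; hence $(\varphi,\psi)\in W$ via this decomposition and $\|f\|_{z_{0}}^{*}$ coincides with the norm $\|f\|_{z_{0}}$ of Corollary \ref{c:ll3}. The remaining hypotheses of Corollary \ref{c:ll3} hold: $\varphi(z_{0})>-\infty$ since $g(z_{0})\neq 0$; $\psi=2G_{\Omega}\in A(z_{0})$; and $e^{-\varphi-\psi}\sim|w|^{-2}$ fails local integrability at $z_{0}$. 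The ``if'' direction of Corollary \ref{c:ll3} then yields the desired equality.

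The principal technicality is the decomposition-dependence of $\|f\|_{z_{0}}^{*}$: in the if direction I tacitly replace an arbitrary $W$-decomposition by $(2G_{\Omega},0)$, which is legitimate because the shift $\psi_{1}\mapsto\psi_{1}+h$, $\psi_{2}\mapsto\psi_{2}-h$ (with $h$ continuous at $z_{0}$) rescales $dV_{M}[\psi_{1}]$ by $e^{-h}$ and exactly compensates the change in $e^{-\varphi-\psi_{2}}$, leaving $\|f\|_{z_{0}}^{*}$ invariant. A secondary point is the tacit assumption $\psi_{2}(z_{0})>-\infty$ in the only-if direction (needed for both Corollary \ref{thm:ll2} and Theorem \ref{thm:e1}), which follows from the finiteness of $\|f\|_{z_{0}}^{*}$ together with the positivity of $dV_{M}[\psi_{1}]$ at $z_{0}$.
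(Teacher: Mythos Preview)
Your sufficiency argument via Corollary \ref{c:ll3} matches the paper's, and your observation that $\|f\|_{z_0}^{*}$ is independent of the chosen $W$-decomposition is exactly the content of equation \eqref{eq:210730a}.

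The necessity argument, however, has a genuine gap. You claim that $\psi_{2}(z_{0})>-\infty$ follows from $\|f\|_{z_{0}}^{*}<+\infty$; but the finiteness of $\|f\|_{z_{0}}^{*}=\pi\frac{|f|^{2}}{dV_{\Omega}}e^{-(\varphi+\psi_{2})(z_{0})}dV_{\Omega}[\psi_{1}](z_{0})$ only yields $(\varphi+\psi_{2})(z_{0})>-\infty$, not $\psi_{2}(z_{0})>-\infty$ separately. Since $\varphi$ is merely Lebesgue measurable (with only $\varphi+\psi$ and $\varphi+\psi_{2}$ subharmonic), nothing prevents $\varphi(z_{0})=+\infty$ with $\psi_{2}(z_{0})=-\infty$. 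A model situation is $\psi_{2}=-\log(-\log|w|)$ (subharmonic, Lelong number $0$, value $-\infty$ at $0$) and $\varphi=-\psi_{2}$; then $\varphi+\psi_{2}\equiv 0$ and all structural hypotheses of $W$ are met, yet $\psi_{2}(0)=-\infty$. Consequently the hypothesis $(\psi-2pG_{\Omega}(\cdot,z_{0}))(z_{0})>-\infty$ of Theorem \ref{thm:e1} cannot be verified a priori, and your appeal to that theorem is circular (it holds a posteriori only because the conclusion forces $\psi=2G_{\Omega}$). Also, contrary to what you write, Corollary \ref{thm:ll2} does \emph{not} require $\psi_{2}(z_{0})>-\infty$.

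The paper closes this gap by a different route: after obtaining linearity from Corollary \ref{thm:ll2}, it sets $\tilde\psi=2G_{\Omega}(\cdot,z_{0})$, $\tilde\varphi=\varphi+\psi-\tilde\psi$, and compares $G(0;\varphi,\psi)$ with $G(0;\tilde\varphi,\tilde\psi)$. Since $\psi\leq\tilde\psi$ (Lemma \ref{l:green}) and $c(t)e^{-t}$ may be taken strictly decreasing (Lemma \ref{l:c'} and Corollary \ref{thm:1}), one has $e^{-\tilde\varphi}c(-\tilde\psi)\geq e^{-\varphi}c(-\psi)$ with strict inequality on a set of positive measure unless $\psi\equiv\tilde\psi$; together with $G(0;\tilde\varphi,\tilde\psi)\leq\|f\|_{z_{0}}^{*}\int_{0}^{+\infty}c(t)e^{-t}dt=G(0;\varphi,\psi)$ from Corollary \ref{c:L2b}, this forces $\psi=2G_{\Omega}$. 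Only then is Theorem \ref{thm:e2} invoked, and at that stage its hypothesis (that $\varphi+a\psi$ be subharmonic near $z_{0}$ for some $a\in[0,1)$) is checkable: once $\psi=2G_{\Omega}$ one finds that $\psi_{2}=2G_{\Omega}-\psi_{1}$ is harmonic near $z_{0}$, so $\varphi=(\varphi+\psi_{2})-\psi_{2}$ is subharmonic there, i.e.\ one may take $a=0$.
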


\section{Preparation}

\subsection{$L^{2}$ methods}

We call a positive measurable function $c$ on $(S,+\infty)$ in class $\tilde{\mathcal{P}}_{S}$ if $\int_{S}^{s}c(l)e^{-l}dl<+\infty$ for some $s>S$ and $c(t)e^{-t}$ is decreasing with respect to $t$. Note that $\mathcal{P}_T\subset\tilde{\mathcal{P}}_S$ when $S>T$.

In this section,
we present the following Lemma (proof can be referred to Section \ref{sec:p-GZsharp}),
whose various forms already appear in \cite{guan-zhou13p,guan-zhou13ap,guan_sharp,GM-concave} etc.:

\begin{Lemma} \label{lem:GZ_sharp}Let $B\in(0,+\infty)$ and $t_{0}\geq S$ be arbitrarily given.
Let $M$ be an $n-$dimensional Stein manifold.
Let $\psi<-S$ be a plurisubharmonic function
on $M$.
Let $\varphi$ be a plurisubharmonic function on $M$.
Let $F$ be a holomorphic $(n,0)$ form on $\{\psi<-t_{0}\}$,
such that
\begin{equation}
\label{equ:20171124a}
\int_{K\cap\{\psi<-t_{0}\}}|F|^{2}<+\infty
\end{equation}
for any compact subset $K$ of $M$,
and
\begin{equation}
\label{equ:20171122a}
\int_{M}\frac{1}{B}\mathbb{I}_{\{-t_{0}-B<\psi<-t_{0}\}}|F|^{2}e^{-\varphi}\leq C<+\infty.
\end{equation}
Then there exists a
holomorphic $(n,0)$ form $\tilde{F}$ on $M$, such that
\begin{equation}
\label{equ:3.4}
\begin{split}
\int_{M}&|\tilde{F}-(1-b_{t_0,B}(\psi))F|^{2}e^{-\varphi+v_{t_0,B}(\psi)}c(-v_{t_0,B}(\psi))\leq C\int_{S}^{t_{0}+B}c(t)e^{-t}dt
\end{split}
\end{equation}
where
$b_{t_0,B}(t)=\int_{-\infty}^{t}\frac{1}{B}\mathbb{I}_{\{-t_{0}-B< s<-t_{0}\}}ds$,
$v_{t_0,B}(t)=\int_{-t_0}^{t}b_{t_0,B}(s)ds-t_0$,
and $c(t)\in \tilde{\mathcal{P}}_{S}$.
\end{Lemma}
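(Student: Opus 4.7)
My approach is to follow the Guan-Zhou strategy for producing optimal $L^{2}$ extensions via a twisted $L^2$ estimate on the $\bar\partial$-equation, adapted to handle a merely measurable gain $c$. First I would set up approximations: exhaust $M$ by relatively compact Stein open subsets $M_1\Subset M_2\Subset\cdots$ with $\bigcup_k M_k=M$, regularize $\psi$ and $\varphi$ to smooth plurisubharmonic functions $\psi_\nu\downarrow\psi$ and $\varphi_\mu\downarrow\varphi$ on neighborhoods of each $\overline{M_k}$, and approximate $c$ from below by smooth positive functions $c_n$ with $c_n(t)e^{-t}$ decreasing and $c_n\uparrow c$ almost everywhere; the latter is possible because $c(t)e^{-t}$ is decreasing, so it can be mollified while preserving monotonicity. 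It then suffices to prove the estimate for this smoothed data and take $n,\nu,\mu\to\infty$ followed by $k\to\infty$.

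The central object on each $M_k$ is the auxiliary form $(1-b_{t_0,B}(\psi_\nu))F$, which extends by zero across $\{\psi_\nu\geq -t_0\}$ since $b_{t_0,B}\equiv 1$ there. Its $\bar\partial$ is
\[\alpha_\nu:=\bar\partial\bigl[(1-b_{t_0,B}(\psi_\nu))F\bigr]=-b'_{t_0,B}(\psi_\nu)\,\bar\partial\psi_\nu\wedge F,\]
which is smooth, $\bar\partial$-closed, and supported in the thin annulus $\{-t_0-B<\psi_\nu<-t_0\}$. I would then solve $\bar\partial u=\alpha_\nu$ on $M_k$ using a twisted H\"ormander--Donnelly-Fefferman--Berndtsson $L^{2}$ estimate with weight $\Phi:=\varphi_\mu-v_{t_0,B}(\psi_\nu)$ and twist factor built from $c_n(-v_{t_0,B}(\psi_\nu))$. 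The identities $v'_{t_0,B}=b_{t_0,B}$ and $v''_{t_0,B}=b'_{t_0,B}$, together with the decreasing of $c_n(t)e^{-t}$ (which controls the sign of the contribution of $-\log c_n$ to the twisted curvature), are engineered precisely so that the relevant twisted Bochner-Kodaira form is nonnegative. Inserting $\alpha_\nu$ and invoking hypothesis \eqref{equ:20171122a}, the right-hand side of the twisted estimate collapses, after a change of variables in $t$ via the affine character of $b_{t_0,B}$ on the annulus, to $C\int_S^{t_0+B}c_n(t)e^{-t}\,dt$. Setting $\tilde F_{\nu,\mu,n,k}:=(1-b_{t_0,B}(\psi_\nu))F-u$ gives a holomorphic form on $M_k$ obeying \eqref{equ:3.4} with $(c_n,\varphi_\mu,\psi_\nu)$ in place of $(c,\varphi,\psi)$.

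Finally I would extract $\tilde F$ by a diagonal argument: the local bound \eqref{equ:20171124a} and the local positivity of the twisted weight force uniform local $L^{2}$ control on $\tilde F_{\nu,\mu,n,k}$, whence a subsequence converges locally uniformly to a holomorphic $(n,0)$-form $\tilde F$ on $M$; Fatou's lemma, together with monotone convergence as $c_n\uparrow c$ and the monotone decreasing of $\varphi_\mu,\psi_\nu$, then transfers the estimate to $(c,\varphi,\psi)$. The principal obstacle is the twisted curvature computation in the third step: one has to verify that the specific combination of cut-off $b_{t_0,B}$, antiderivative $v_{t_0,B}$, and twist $c_n(-v_{t_0,B}(\psi_\nu))$ actually yields the sharp constant $\int_S^{t_0+B}c_n(t)e^{-t}\,dt$. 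The new subtlety compared with the smooth case treated in \cite{guan_sharp,GM-concave} is that $c$ is only measurable, so one cannot differentiate $c$ directly and must instead route through the monotone approximation $c_n\uparrow c$, exploiting only that $c(t)e^{-t}$ is decreasing.
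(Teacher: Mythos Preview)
Your outline matches the paper's proof: exhaust $M$ by relatively compact Stein subdomains, regularize $\psi,\varphi$ to smooth plurisubharmonic $\psi_m,\varphi_{m'}$, smooth $c$, solve a twisted $\bar\partial$-equation with weights built from $v_{t_0,B}$, then pass to all limits via Fatou. Three technical points where the paper's execution differs from your sketch deserve mention. First, the paper approximates $c$ from \emph{above}, not below: it sets $c_l(t)=e^t\int_{\mathbb{R}}h(e^y(t-S)+S)g_l(y)\,dy$ with $h(t)=c(t)e^{-t}$ and suitable kernels $g_l$, so that $c_l\geq c$, $c_l(t)e^{-t}$ is decreasing, and $c_l\to c^-$ (the left limit) pointwise everywhere; at the final Fatou step one then uses $c\leq c^-$. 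Your route via $c_n\uparrow c$ only almost everywhere is risky, because $c_n(-v_{t_0,B}(\psi))\to c(-v_{t_0,B}(\psi))$ need not hold a.e.\ on $M$ when level sets of $\psi$ have positive measure. Second, the sharp constant $\int_S^{t_0+B}c_l(t)e^{-t}\,dt$ does not drop out of a generic twist: the paper takes $\eta=s(-v_\varepsilon(\psi_m))$ and $\phi=u(-v_\varepsilon(\psi_m))$ where $(s,u)$ solve the ODE system $(s+s'^2/(u''s-s''))e^{u-t}=1/c_l(t)$, $s'-su'=1$, with explicit solution $u(t)=-\log\int_S^t c_l(\tau)e^{-\tau}\,d\tau$; this is exactly the computation you flagged as the principal obstacle. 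Third, $b_{t_0,B}$ is only Lipschitz, so the paper introduces an additional smoothing $v_\varepsilon\to v_{t_0,B}$ before differentiating; your $\alpha_\nu=-b'_{t_0,B}(\psi_\nu)\bar\partial\psi_\nu\wedge F$ as written is not smooth.
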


We give the proof of Lemma \ref{lem:GZ_sharp} in  Section \ref{sec:p-GZsharp}.
It is clear that $\mathbb{I}_{(-t_{0},+\infty)}(t)\leq b_{t_0,B}(t)\leq\mathbb{I}_{(-t_{0}-B,+\infty)}(t)$ and $\max\{t,-t_{0}-B\}\leq v_{t_0,B}(t) \leq\max\{t,-t_{0}\}$.

 Lemma \ref{lem:GZ_sharp} implies the following lemma, which will be used in the proof of Theorem \ref{thm:general_concave}.

\begin{Lemma} \label{lem:GZ_sharp2}Let $M$ be an $n-$dimensional complex manifold satisfying condition $(a)$, and let $c(t)\in\mathcal{P}_T$.
Let $B\in(0,+\infty)$ and $t_{0}>t_1>T$ be arbitrarily given.
Let $\psi<-T$ be a plurisubharmonic function
on $M$.
Let $\varphi$ be a Lebesgue measurable function on $M$, such that $\varphi+\psi$ is plurisubharmonic on $M$.
Let $F$ be a holomorphic $(n,0)$ form on $\{\psi<-t_{0}\}$,
such that
\begin{equation}
\label{eq:210627b}
\int_{\{\psi<-t_{0}\}}|F|^{2}e^{-\varphi}c(-\psi)<+\infty.
\end{equation}

Then there exists a
holomorphic $(n,0)$ form $\tilde{F}$ on $\{\psi<-t_1\}$, such that
\begin{equation}
\label{eq:210627d}
\begin{split}
\int_{\{\psi<-t_1\}}&|\tilde{F}-(1-b_{t_0,B}(\psi))F|^{2}e^{-\varphi-\psi+v_{t_0,B}(\psi)}c(-v_{t_0,B}(\psi))\leq C\int_{t_1}^{t_{0}+B}c(t)e^{-t}dt
\end{split}
\end{equation}
where
$C=\int_{M}\frac{1}{B}\mathbb{I}_{\{-t_{0}-B<\psi<-t_{0}\}}|F|^{2}e^{-\varphi-\psi}<+\infty$, $b_{t_0,B}(t)=\int_{-\infty}^{t}\frac{1}{B}\mathbb{I}_{\{-t_{0}-B< s<-t_{0}\}}ds$ and
$v_{t_0,B}(t)=\int_{0}^{t}b_{t_0,B}(s)ds$.
\end{Lemma}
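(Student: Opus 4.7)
The plan is to reduce Lemma \ref{lem:GZ_sharp2} to Lemma \ref{lem:GZ_sharp} by two standard moves. First, although $\varphi$ is only measurable, the sum $\varphi+\psi$ is plurisubharmonic, so it plays the role of the $\varphi$ of Lemma \ref{lem:GZ_sharp}. Second, because $M$ need not be Stein, I would work on $M_0 := (M\setminus X)\cap\{\psi<-t_1\}$, which is Stein: $M\setminus X$ is Stein by condition $(a2)$, and $\{\psi<-t_1\}$ is the sublevel set of the plurisubharmonic function $\psi$. On $M_0$ one has $\psi<-t_1$ and the weight $\varphi+\psi$ plurisubharmonic; and since $c\in\mathcal{P}_T$ with $t_1>T$, the observation just before Lemma \ref{lem:GZ_sharp} gives $c\in\tilde{\mathcal{P}}_{t_1}$.

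The main calculation before invoking Lemma \ref{lem:GZ_sharp} is to check its hypothesis \eqref{equ:20171122a}, which here reads
\[
\int_{M_0}\tfrac{1}{B}\mathbb{I}_{\{-t_0-B<\psi<-t_0\}}|F|^2 e^{-\varphi-\psi}<+\infty.
\]
I would deduce this from hypothesis \eqref{eq:210627b} of Lemma \ref{lem:GZ_sharp2}: on the strip $\{-t_0-B<\psi<-t_0\}$ the decreasingness of $c(t)e^{-t}$ forces $c(-\psi)\geq c(t_0+B)e^{-B}$ while $e^{-\psi}\leq e^{t_0+B}$, so that $e^{-\varphi-\psi}$ is dominated on this strip by a constant multiple of $e^{-\varphi}c(-\psi)$. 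Lemma \ref{lem:GZ_sharp} applied on $M_0$ with $S=t_1$, weight $\varphi+\psi$, parameters $t_0,B$, and gain $c$ then yields a holomorphic $(n,0)$ form $\tilde F_0$ on $M_0$ satisfying \eqref{eq:210627d} with $M_0$ in place of $\{\psi<-t_1\}$; note that the exponent in \eqref{equ:3.4} becomes $-(\varphi+\psi)+v_{t_0,B}(\psi)=-\varphi-\psi+v_{t_0,B}(\psi)$, matching \eqref{eq:210627d}.

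Finally I would extend $\tilde F_0$ across $X$ chart by chart via property $(a1)$. Writing $\tilde F_0=R+(1-b_{t_0,B}(\psi))F$, where $R$ is the residual controlled by the weighted estimate, on a relatively compact coordinate chart $U\subset\{\psi<-t_1\}$ the weight $e^{-\varphi-\psi+v_{t_0,B}(\psi)}c(-v_{t_0,B}(\psi))$ admits a positive lower bound: $\varphi+\psi$ is locally bounded above as a plurisubharmonic function; $v_{t_0,B}(\psi)\geq -t_0-B/2$; and $-v_{t_0,B}(\psi)$ is confined to a bounded subinterval of $(T,+\infty)$ on $\bar U$, on which $c$ is bounded below thanks to the decreasingness of $c(t)e^{-t}$. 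This yields local $L^2$ bounds for $R$, while $(1-b_{t_0,B}(\psi))F$ is a bounded cutoff of the holomorphic form $F$ on $\{\psi<-t_0\}$ (extended by zero on $\{\psi\geq -t_0\}$), hence locally $L^2$ as well. Condition $(a1)$ then delivers $\tilde F$ on $\{\psi<-t_1\}$, and since $X$ is Lebesgue-null the estimate on $M_0$ coincides with the asserted estimate on $\{\psi<-t_1\}$. I expect this extension step to be the main technical nuisance, requiring care near points of $X$ that lie in $\{\psi=-\infty\}$; however, at such points the blow-up of $e^{-\varphi-\psi}$ actually aids the argument by forcing $R$ to be small.
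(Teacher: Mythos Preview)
Your overall strategy matches the paper's: apply Lemma \ref{lem:GZ_sharp} on the Stein piece $M_0=(M\setminus X)\cap\{\psi<-t_1\}$ with the plurisubharmonic weight $\varphi+\psi$, then extend the resulting form across $X$. However, there is a genuine gap: you never invoke condition $(2)$ in Definition \ref{def:gain}, namely the existence of a closed set $E\subset Z\cap\{\psi=-\infty\}$ off of which $e^{-\varphi}c(-\psi)$ is locally bounded below. Without this you cannot verify hypothesis \eqref{equ:20171124a} of Lemma \ref{lem:GZ_sharp} (local unweighted $L^2$-integrability of $F$ on compacta of $M_0$, which you do not mention), and your extension step breaks down at points $z_0\in X$ with $\psi(z_0)\in[-t_0,-t_1)$. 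Near such $z_0$ the term $(1-b_{t_0,B}(\psi))F$ is supported in $\{\psi<-t_0\}$, which can accumulate at $z_0$, while $F$ is holomorphic only on the \emph{open} set $\{\psi<-t_0\}$; the phrase ``bounded cutoff of the holomorphic form $F$ \ldots\ hence locally $L^2$'' is unjustified there. Your intuition that the difficulty lies near $X\cap\{\psi=-\infty\}$ is in fact backwards: there $F$ is holomorphic in a full neighborhood and the extension is easy.

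The paper fills this gap by (implicitly) enlarging $X$ so that $E\subset X$, and then performing a two-step extension. First across $X\setminus E$: one uses the $\mathcal P_T$ lower bound $e^{-\varphi}c(-\psi)\geq C_1>0$ on small neighborhoods $V_z\subset\subset M\setminus E$, together with $\int|(1-b)F|^2e^{-\varphi}c(-\psi)\leq\int_{\{\psi<-t_0\}}|F|^2e^{-\varphi}c(-\psi)<\infty$ and the comparison $e^{-\varphi}c(-\psi)\leq e^{-\varphi-\psi+v}c(-v)$, to obtain unweighted local $L^2$ bounds on $\tilde F_X$ and apply $(a1)$. Second across $E$: since $E\subset\{\psi=-\infty\}\subset\{\psi<-t_0\}$, here $F$ is holomorphic on a full neighborhood $U_z$, so $\int_{U_z}|(1-b)F|^2<\infty$ directly; combined with your (correct) observation that $e^{-\varphi-\psi+v}c(-v)$ is locally bounded below, this gives $\int_{U_z\setminus E}|\tilde F_E|^2<\infty$, and one extends across the analytic set $Z\supset E$.
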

\begin{proof}
	As $M$ is an $n-$dimensional complex manifold satisfying condition $(a)$ and  $c(t)\in\mathcal{P}_T$, there exist  a closed subset $X\subset M$ and a closed subset $E\subset X\cap\{\psi=-\infty\}$ satisfying that $X$ is locally negligible with respect to $L^2$ holomorphic functions, $M\backslash X$ is a Stein manifold, $e^{-\varphi}c(-\psi)$ has locally a positive lower bound on $M\backslash E$ and there exists an analytic subset $Z$ of $M$ such that $E\subset Z$.
	
	Combining inequality \eqref{eq:210627b} and $e^{-\varphi}c(-\psi)$ has locally a positive lower bound on $M\backslash E$, we obtain that
	$$\int_{K\cap\{\psi<-t_{0}\}}|F|^{2}<+\infty$$	
holds for any compact subset $K$ of $M\backslash X$. Then Lemma \ref{lem:GZ_sharp} shows that there exists a
holomorphic $(n,0)$ form $\tilde{F}_X$ on $\{\psi<-t_1\}\backslash X$, such that
\begin{equation}
\label{eq:210627f}
\begin{split}
\int_{\{\psi<-t_1\}\backslash X}|\tilde{F}_X-(1-b_{t_0,B}(\psi))F|^{2}e^{-\varphi-\psi+v_{t_0,B}(\psi)}c(-v_{t_0,B}(\psi))\leq C\int_{t_1}^{t_{0}+B}c(t)e^{-t}dt.
\end{split}
\end{equation}	

For any $z\in \{\psi<-t_1\}\cap(X\backslash E)$, there exists an open neighborhood $V_z$ of $z$, such that $V_z\subset\subset \{\psi<-t_1\}\backslash E$. Note that $c(t)e^{-t}$ is decreasing on $(T,+\infty)$ and $v_{t_0,B}(\psi)\geq\psi$, then we have
\begin{equation}\begin{split}
	&\int_{V_z\backslash X}|\tilde{F}_X-(1-b_{t_0,B}(\psi))F|^2e^{-\varphi}c(-\psi)\\
	\leq&\int_{V_z\backslash X}|\tilde{F}_X-(1-b_{t_0,B}(\psi))F|^{2}e^{-\varphi-\psi+v_{t_0,B}(\psi)}c(-v_{t_0,B}(\psi))\\
	<&+\infty
\end{split}\end{equation}
Note that there exists a positive number $C_1>0$ such that $e^{-\varphi}c(-\psi)>C_1$ on $V_{z}$ and $\int_{V_z\backslash X}|(1-b_{t_0,B}(\psi))F|^2e^{-\varphi}c(-\psi)\leq\int_{\{\psi<-t_{0}\}}|F|^{2}e^{-\varphi}c(-\psi)<+\infty$, then we have
\begin{equation}
	\label{eq:210627g}\begin{split}
		&\int_{V_z\backslash X}|\tilde{F}_X|^2\\
	\leq &C_1\int_{V_z\backslash X}|\tilde{F}_X|^2e^{-\varphi}c(-\psi)	\\
	\leq &2C_1\left(\int_{V_z\backslash X}|(1-b_{t_0,B}(\psi))F|^2e^{-\varphi}c(-\psi)+\int_{V_z\backslash X}|\tilde{F}_X-(1-b_{t_0,B}(\psi))F|^2e^{-\varphi}c(-\psi)\right)\\
	<&+\infty.\end{split}
\end{equation}
As $X$ is locally negligible with respect to $L^2$ holomorphic functions, we can find a holomorphic extension $\tilde{F}_E$ of $\tilde{F}_X$  from $\{\psi<-t_1\}\backslash X$ to $\{\psi<-t_1\}\backslash E$ such that
\begin{equation}
	\label{eq:210627h}
	\int_{\{\psi<-t_1\}\backslash E}|\tilde{F}_E-(1-b_{t_0,B}(\psi))F|^{2}e^{-\varphi-\psi+v_{t_0,B}(\psi)}c(-v_{t_0,B}(\psi))\leq C\int_{t_1}^{t_{0}+B}c(t)e^{-t}dt.\end{equation}
	
Note that $E\subset \{\psi<-t_0\}\subset\{\psi<-t_1\}$, for any $z\in E$, there exists an open neighborhood $U_z$ of $z$, such that $U_z\subset\subset\{\psi<-t_0\}$. As $\varphi+\psi$ is plurisubharmonic on $M$ and $e^{v_{t_0,B}(\psi)}c(-v_{t_0,B}(\psi))$ has a positive lower bound on $\{\psi<-t_1\}$, then we have
\begin{equation}
	\label{eq:210627i}
	\begin{split}
		&\int_{U_z\backslash E}|\tilde{F}_E-(1-b_{t_0,B}(\psi))F|^{2}\\
		\leq&C_2\int_{\{\psi<-t_1\}\backslash E}|\tilde{F}_E-(1-b_{t_0,B}(\psi))F|^{2}e^{-\varphi-\psi+v_{t_0,B}(\psi)}c(-v_{t_0,B}(\psi))\\
		<&+\infty,
	\end{split}
\end{equation}
where $C_2$ is some positive number. As $U_z\subset\subset\{\psi<-t_0\}$, we have
\begin{equation}
	\label{eq:210627j}
	\int_{U_z}|(1-b_{t_0,B}(\psi))F|^2\le\int_{U_z}|F|^2<+\infty.
\end{equation}
Combining inequality \eqref{eq:210627i} and \eqref{eq:210627j}, we obtain that $\int_{U_z\backslash E}|\tilde{F}_E|^2<+\infty$.

 As $E$ is contained in some analytic subset of $M$, we can find a holomorphic extension  $\tilde{F}$ of $\tilde{F}_E$  from $\{\psi<-t_1\}\backslash E$ to $\{\psi<-t_1\}$ such that
\begin{equation}
	\label{eq:210627h}
	\int_{\{\psi<-t_1\}}|\tilde{F}-(1-b_{t_0,B}(\psi))F|^{2}e^{-\varphi-\psi+v_{t_0,B}(\psi)}c(-v_{t_0,B}(\psi))\leq C\int_{t_1}^{t_{0}+B}c(t)e^{-t}dt.\end{equation}
This proves Lemma \ref{lem:GZ_sharp2}.
\end{proof}

\subsection{Some properties of $G(t)$}\label{sec:minimal}

\

 We present some properties related to $G(t)$
in the this section.

\begin{Lemma}(see \cite{G-R})
\label{closedness}
Let $N$ be a submodule of $\mathcal O_{\mathbb C^n,o}^q$, $1\leq q<+\infty$, let $f_j\in\mathcal O_{\mathbb C^n}(U)^q$ be a sequence of $q-$tuples holomorphic in an open neighborhood $U$ of the origin $o$. Assume that the $f_j$ converge uniformly in $U$ towards  a $q-$tuples $f\in\mathcal O_{\mathbb C^n}(U)^q$, assume furthermore that all germs $(f_{j},o)$ belong to $N$. Then $(f,o)\in N$.	
\end{Lemma}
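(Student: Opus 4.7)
The plan is to combine the Noetherianity of the local ring $\mathcal{O}_{\mathbb{C}^n,o}$ with Krull's intersection theorem, bridging the algebra to the analytic hypothesis via Cauchy's integral formula. Let $\mathfrak{m}$ denote the maximal ideal of $\mathcal{O}_{\mathbb{C}^n,o}$, and regard $N$ as a submodule of the finitely generated free module $\mathcal{O}_{\mathbb{C}^n,o}^{q}$.

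First I would translate the analytic hypothesis into an algebraic one. Uniform convergence $f_j\to f$ on the neighborhood $U$ of $o$ yields, via Cauchy's integral formula applied on a closed polydisc $\overline{\Delta}\subset U$ centered at $o$, uniform convergence of all partial derivatives on $\overline{\Delta}$, and in particular convergence of every Taylor coefficient of $f_j$ to that of $f$ at $o$. Hence for each integer $k\geq 0$ there exists $j_k$ such that $f-f_{j_k}$ vanishes to order at least $k$ at $o$, i.e.\ $(f-f_{j_k},o)\in\mathfrak{m}^{k}\mathcal{O}_{\mathbb{C}^n,o}^{q}$. Since $(f_{j_k},o)\in N$ by hypothesis, this gives
$$(f,o)\in N+\mathfrak{m}^{k}\mathcal{O}_{\mathbb{C}^n,o}^{q}\quad\text{for every }k\geq 0.$$

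The remaining step is to invoke the identity $\bigcap_{k\geq 0}\bigl(N+\mathfrak{m}^{k}\mathcal{O}_{\mathbb{C}^n,o}^{q}\bigr)=N$, which is a standard consequence of the Artin--Rees lemma applied to the finitely generated $\mathcal{O}_{\mathbb{C}^n,o}$-module $\mathcal{O}_{\mathbb{C}^n,o}^{q}/N$ (equivalently, Krull's intersection theorem together with Nakayama's lemma), valid because $\mathcal{O}_{\mathbb{C}^n,o}$ is a Noetherian local ring. This immediately yields $(f,o)\in N$, as required. The only place where anything substantive happens is the passage from uniform convergence on $U$ to convergence in the $\mathfrak{m}$-adic topology on $\mathcal{O}_{\mathbb{C}^n,o}^{q}$; everything else is a citation of standard commutative algebra, so I do not anticipate a real obstacle beyond having those two ingredients at hand.
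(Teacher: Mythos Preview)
Your overall strategy---reduce to showing $(f,o)\in N+\mathfrak{m}^{k}\mathcal{O}_{\mathbb{C}^n,o}^{q}$ for every $k$ and then invoke Krull's intersection theorem---is the right one, and is essentially the argument one finds in Grauert--Remmert (the paper itself does not prove the lemma; it simply cites that reference). However, the step you label as the bridge from analysis to algebra is incorrect as written.

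You claim that convergence of Taylor coefficients implies the existence of $j_k$ with $(f-f_{j_k},o)\in\mathfrak{m}^{k}\mathcal{O}_{\mathbb{C}^n,o}^{q}$. This is false: take $q=n=1$, $f_j(z)=1/j$, $f=0$. Then $f_j\to f$ uniformly and all Taylor coefficients converge, yet $f-f_j=-1/j$ is a unit in $\mathcal{O}_{\mathbb{C},o}$ for every $j$, so it never lies in $\mathfrak{m}$. Convergence of coefficients means they become \emph{small}, not \emph{zero}.

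The fix is easy and preserves your structure. For fixed $k$, the quotient
\[
V_k:=\mathcal{O}_{\mathbb{C}^n,o}^{q}\big/\bigl(N+\mathfrak{m}^{k}\mathcal{O}_{\mathbb{C}^n,o}^{q}\bigr)
\]
is a \emph{finite-dimensional} $\mathbb{C}$-vector space, being a quotient of the jet space $\mathcal{O}_{\mathbb{C}^n,o}^{q}/\mathfrak{m}^{k}\mathcal{O}_{\mathbb{C}^n,o}^{q}$. The composite map $\mathcal{O}_{\mathbb{C}^n}(U)^q\to\mathcal{O}_{\mathbb{C}^n,o}^{q}/\mathfrak{m}^{k}\mathcal{O}_{\mathbb{C}^n,o}^{q}\to V_k$ depends only on finitely many Taylor coefficients at $o$ and is therefore continuous for uniform convergence on $U$ (this is exactly your Cauchy-estimate observation). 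Since each $f_j$ maps to $0$ in $V_k$ and $f_j\to f$ uniformly, continuity forces $(f,o)$ to map to $0$ in $V_k$, i.e.\ $(f,o)\in N+\mathfrak{m}^{k}\mathcal{O}_{\mathbb{C}^n,o}^{q}$. Now Krull's intersection theorem finishes the proof exactly as you described.
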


The closedness of submodules will be used in the following discussion.

\begin{Lemma}
	\label{l:converge}
	Let $M$ be a complex manifold. Let $S$ be an analytic subset of $M$.  	
	Let $\{g_j\}_{j=1,2,...}$ be a sequence of nonnegative Lebesgue measurable functions on $M$, which satisfies that $g_j$ are almost everywhere convergent to $g$ on  $M$ when $j\rightarrow+\infty$,  where $g$ is a nonnegative Lebesgue measurable function on $M$. Assume that for any compact subset $K$ of $M\backslash S$, there exist $s_K\in(0,+\infty)$ and $C_K\in(0,+\infty)$ such that
	$$\int_{K}{g_j}^{-s_K}dV_M\leq C_K$$
	 for any $j$, where $dV_M$ is a continuous volume form on $M$.
	
 Let $\{F_j\}_{j=1,2,...}$ be a sequence of holomorphic $(n,0)$ form on $M$. Assume that $\liminf_{j\rightarrow+\infty}\int_{M}|F_j|^2g_j\leq C$, where $C$ is a positive constant. Then there exists a subsequence $\{F_{j_l}\}_{l=1,2,...}$, which satisfies that $\{F_{j_l}\}$ is uniformly convergent to a holomorphic $(n,0)$ form $F$ on $M$ on any compact subset of $M$ when $l\rightarrow+\infty$, such that
 $$\int_{M}|F|^2g\leq C.$$
\end{Lemma}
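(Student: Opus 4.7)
The strategy is to convert the weighted bound $\int_M|F_j|^2 g_j\le C$ into locally uniform sup-bounds on $\{F_j\}$ via H\"older's inequality and the sub-mean-value inequality for plurisubharmonic functions, extract a subsequential limit by Montel's theorem, and conclude with Fatou. The integrability hypothesis on $g_j^{-s_K}$ enters through H\"older: for any compact $K\subset M\setminus S$, set $\alpha_K := s_K/(s_K+1)\in (0,1)$. Writing $|F_j|^{2\alpha_K} = (|F_j|^2 g_j)^{\alpha_K}\,g_j^{-\alpha_K}$ pointwise and applying H\"older with exponents $1/\alpha_K$ and $1/(1-\alpha_K)$ yields
$$\int_K |F_j|^{2\alpha_K}\,dV_M \le \Bigl(\int_K |F_j|^2 g_j\Bigr)^{\alpha_K}\Bigl(\int_K g_j^{-s_K}\,dV_M\Bigr)^{1-\alpha_K}\le C^{\alpha_K}C_K^{1-\alpha_K},$$
so $\{|F_j|^{2\alpha_K}\}$ is uniformly $L^1$-bounded on every compact $K\subset M\setminus S$.

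Since $F_j$ is holomorphic on all of $M$, the scalar $|F_j|^{2\alpha_K}$ is plurisubharmonic, and the sub-mean-value inequality on any ball $B(z_0,r)\subset\subset M\setminus S$ converts this $L^1$-bound into a uniform pointwise sup-bound on $|F_j|$ over every compact subset of $M\setminus S$. To transfer the bound across $S$, for any $z_0\in M$ choose local holomorphic coordinates and a polydisc $\Delta=\Delta(z_0,r)\subset\subset M$ whose distinguished boundary $T=\{z:|z_k-z_{0,k}|=r_k,\,k=1,\dots,n\}$ is disjoint from $S$; such polydiscs exist because $S$ is a proper analytic subset, so for generic multi-radii $T$ avoids $S$. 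Cauchy's integral formula then yields, for $z\in\Delta(z_0,r/2)$,
$$|F_j(z)|\le C_r \sup_T |F_j|,$$
and $\sup_T |F_j|$ is uniformly bounded in $j$ by the previous step since $T$ is compact in $M\setminus S$. Covering any compact subset of $M$ by finitely many such shrunken polydiscs produces locally uniform bounds on $\{F_j\}$ over all of $M$.

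Montel's theorem together with a diagonal argument on an exhaustion of $M$ by compact sets now yields a subsequence $\{F_{j_l}\}$ converging uniformly on compact subsets of $M$ to a holomorphic $(n,0)$ form $F$ on $M$, chosen so that $\int_M |F_{j_l}|^2 g_{j_l}\to \liminf_j\int_M |F_j|^2 g_j\le C$. Pointwise convergence $F_{j_l}\to F$ on $M$ combined with $g_{j_l}\to g$ a.e.\ on $M$ gives $|F_{j_l}|^2 g_{j_l}\to |F|^2 g$ a.e., and Fatou's lemma concludes with $\int_M |F|^2 g\le\liminf_l \int_M |F_{j_l}|^2 g_{j_l}\le C$. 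The \emph{main obstacle} is the passage across $S$: H\"older and sub-mean-value only deliver bounds on compact subsets of $M\setminus S$, and the geometric input---that around every $z_0\in S$ there exists a polydisc whose distinguished boundary lies in $M\setminus S$, allowing Cauchy's formula to transport the bound on $T$ into a sup-bound on a neighborhood of $z_0$---is what bridges the gap.
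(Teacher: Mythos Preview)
Your approach matches the paper's: H\"older with exponent $r=s_K/(s_K+1)$ to get local $L^1$-bounds on $|F_j|^{2r}$, sub-mean-value to upgrade to sup-bounds off $S$, a maximum-principle device to cross $S$, then Montel and Fatou. The paper phrases the crossing step as ``Local Parametrization Theorem and Maximum Principle'' to produce, for each compact $K\subset\subset M$, a compact $K_1\subset\subset M\setminus S$ with $\sup_K|F_j|\le C_1\sup_{K_1}|F_j|$; your distinguished-boundary argument is the same idea in different clothing.

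Two small points. First, the claim ``for generic multi-radii $T$ avoids $S$'' is not correct as stated: for instance, with $S=\{z_3=z_1+z_2\}\subset\mathbb C^3$ and $z_0=0$, the torus $T_\rho$ meets $S$ whenever $|\rho_1-\rho_2|\le\rho_3\le\rho_1+\rho_2$, an open set of radii. What is true (and sufficient) is that \emph{some} multi-radius works: after a linear change of coordinates making $S\subset\{h=0\}$ with $h$ regular in $z_n$, Weierstrass preparation bounds the $z_n$-roots in terms of $\rho_1,\dots,\rho_{n-1}$, so taking $\rho_n$ slightly larger than this bound forces $T_\rho\cap S=\emptyset$. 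This is precisely the content of the Local Parametrization step the paper invokes. Second, since only $\liminf_j\int_M|F_j|^2g_j\le C$ is assumed, you should pass to a subsequence realizing the liminf \emph{before} running H\"older (otherwise the bound $C^{\alpha_K}C_K^{1-\alpha_K}$ may be vacuous for some $j$), not after Montel as written.
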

\begin{proof}
	As $S$ is a analytic subset of $M$, by Local Parametrization Theorem (see \cite{demailly-book}) and Maximum Principle, for any compact set $K\subset\subset M$ there exists  $ K_1\subset\subset M\backslash S$ satisfying 	
	\begin{equation}
		\label{eq:210809a}\sup_{z\in K}\frac{|F_j(z)|^2}{dV_M}\leq C_1\sup_{z\in K_1}\frac{|F_j(z)|^2}{dV_M}	\end{equation}
	for  any $j$, where $C_1$ is a constant.
Then there exists a compact set $K_2\subset\subset M\backslash S$ satisfying $K_1\subset K_2$ and
\begin{equation}
	\label{eq:210809b}\begin{split}
	\left(\frac{|F_j(z)|^{2}}{dV_M}\right)^r&\leq C_2\int_{K_2}\left(\frac{|F_j(z)|^{2}}{dV_M}\right)^r\\
		&\leq C_2 \left(\int_{K_2}|F_j|^{2}g_j\right)^{r}\left(\int_{K_2}g_j^{-\frac{r}{1-r}}\right)^{1-r}		
	\end{split}\end{equation}
for any $j$ and any $z\in K_1$, where $r\in(0,1)$ and $C_2$ is a constant. Let $\frac{r}{1-r}=s_{K_2}$, inequality \eqref{eq:210809b} implies
\begin{equation}
	\label{eq:210809c}\sup_{z\in K_1}\frac{|F_j(z)|^2}{dV_M}\leq C_3\int_{K_2}|F_j|^{2}g_j,
\end{equation}
where $C_3$ is a constant. As $\liminf_{j\rightarrow+\infty}\int_{M}|F_j|^2g_j<C$, combining inequality \eqref{eq:210809a}, \eqref{eq:210809c} and the diagonal method,
 we obtain a subsequence of $\{F_j\}$, denoted still by $\{F_j\}$, which is uniformly convergent to a holomorphic $(n,0)$ form $F$ on $M$ on any compact subset of $M$.
	
	It follows from the Fatou's Lemma and $\lim_{j\rightarrow+\infty}\int_{M}|F_j|^2g_j\leq C$ that
	\begin{displaymath}
		\begin{split}
			\int_{M}|F|^2g=&\int_{M}\lim_{j\rightarrow+\infty}|F_j|^2g_j\\
			\leq&\liminf_{j\rightarrow+\infty}\int_{M}|F_j|^2g_j\\
			\leq&C.
		\end{split}
	\end{displaymath}
Thus Lemma \ref{l:converge} holds.	
\end{proof}

Let $M$ be an $n-$dimensional complex manifold satisfying condition $(a)$.
Let $\psi$ be a plurisubharmonic function on $M$,
and let  $\varphi$ be a Lebesgue measurable function on $M$,
such that $\varphi+\psi$ is a plurisubharmonic function on $M$. Let $c\in\mathcal{P}_T$.
The following lemma is a characterization of $G(t)=0$ for any $t\geq T$, where $T=-\sup_M{\psi}$ and the meaning of  $G(t)$ can be referred to Section \ref{sec:main}.

\begin{Lemma}
\label{lem:0}
$f\in H^0(Z_0,(\mathcal{O}(K_{M})\otimes\mathcal{F})|_{Z_0})\Leftrightarrow G(t)=0$.
\end{Lemma}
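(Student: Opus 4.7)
The plan is to handle the two implications separately. The easy direction is to show that if $f\in H^{0}(Z_{0},(\mathcal{O}(K_{M})\otimes\mathcal{F})|_{Z_{0}})$, then $\tilde f\equiv 0$ is an admissible competitor in the infimum defining $G(t)$: it is holomorphic on $\{\psi<-t\}$ for every $t\ge T$, and $(\tilde f-f,z_{0})=(-f,z_{0})\in(\mathcal{O}(K_{M})\otimes\mathcal{F})_{z_{0}}$ at every $z_{0}\in Z_{0}$ because $\mathcal{F}$ is an analytic subsheaf and therefore closed under negation. The integral $\int_{\{\psi<-t\}}|0|^{2}e^{-\varphi}c(-\psi)$ vanishes, so $G(t)\le 0$; combined with the nonnegativity of the integrand this forces $G(t)=0$ for every $t\ge T$.

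For the converse, I would fix some $t\ge T$ with $G(t)=0$ and take a minimizing sequence $\{\tilde f_{j}\}$ of holomorphic $(n,0)$ forms on $\{\psi<-t\}$ with $(\tilde f_{j}-f)\in H^{0}(Z_{0},(\mathcal{O}(K_{M})\otimes\mathcal{F})|_{Z_{0}})$ and $\int_{\{\psi<-t\}}|\tilde f_{j}|^{2}e^{-\varphi}c(-\psi)\to 0$. I would then apply Lemma \ref{l:converge} on the open manifold $\{\psi<-t\}$ with $g_{j}\equiv g:=e^{-\varphi}c(-\psi)$ and analytic subset $Z\cap\{\psi<-t\}$: Definition \ref{def:gain}(2) guarantees that $g$ has a positive lower bound on compact subsets of $\{\psi<-t\}\setminus Z$, so the local integrability hypothesis of Lemma \ref{l:converge} holds with $s_{K}=1$. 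The lemma then yields a subsequence (still denoted $\{\tilde f_{j}\}$) converging locally uniformly to a holomorphic $(n,0)$ form $F$ on $\{\psi<-t\}$ with $\int_{\{\psi<-t\}}|F|^{2}e^{-\varphi}c(-\psi)=0$; using the lower bound on $g$ outside $E$, this forces $F\equiv 0$ almost everywhere on $\{\psi<-t\}\setminus E$, and by holomorphy $F\equiv 0$ on all of $\{\psi<-t\}$.

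To finish, I use that $Z_{0}\subset\{\psi=-\infty\}\subset\{\psi<-t\}$, so every $z_{0}\in Z_{0}$ has a compact neighborhood inside $\{\psi<-t\}$ on which $\tilde f_{j}-f$ converges uniformly to $-f$. Choosing local generators of $\mathcal{F}$ near $z_{0}$ and applying Lemma \ref{closedness} to the corresponding submodule of the stalk $\mathcal{O}(K_{M})_{z_{0}}$, I obtain $(-f,z_{0})\in(\mathcal{O}(K_{M})\otimes\mathcal{F})_{z_{0}}$, equivalently $(f,z_{0})\in(\mathcal{O}(K_{M})\otimes\mathcal{F})_{z_{0}}$. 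Letting $z_{0}$ range over $Z_{0}$ yields $f\in H^{0}(Z_{0},(\mathcal{O}(K_{M})\otimes\mathcal{F})|_{Z_{0}})$. The only real subtlety I anticipate is that the positive lower bound in Definition \ref{def:gain}(2) is available only on $M\setminus E$, but since $E$ is contained in an analytic set $Z$ this is enough both to feed Lemma \ref{l:converge} and to upgrade the vanishing of the weighted integral to pointwise vanishing of $F$ on $\{\psi<-t\}$.
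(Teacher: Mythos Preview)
Your proposal is correct and follows essentially the same approach as the paper's own proof: both handle the forward implication by taking $\tilde f\equiv 0$, and for the converse both take a minimizing sequence, invoke Lemma~\ref{l:converge} (via the positive lower bound on $e^{-\varphi}c(-\psi)$ off an analytic set furnished by Definition~\ref{def:gain}(2)) to extract a subsequence converging locally uniformly to $0$, and then apply Lemma~\ref{closedness} to conclude that $(f,z_0)\in(\mathcal{O}(K_M)\otimes\mathcal{F})_{z_0}$ for every $z_0\in Z_0$. Your explicit remarks about $s_K=1$, the passage from $\int|F|^2 g=0$ to $F\equiv 0$, and the distinction between $E$ and $Z$ are helpful elaborations of details the paper leaves implicit, but the overall argument is the same.
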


\begin{proof}
It is clear that $f\in H^0(Z_0,(\mathcal{O}(K_{M})\otimes\mathcal{F})|_{Z_0})\Rightarrow G(t)=0$.

In the following part, we prove that $G(t)=0\Rightarrow f\in H^0(Z_0,(\mathcal{O}(K_{M})\otimes\mathcal{F})|_{Z_0})$.
As $G(t)=0$, then there exists holomorphic $(n,0)$ forms $\{\tilde{f}_{j}\}_{j\in\mathbb{N}^{+}}$ on $\{\psi<-t\}$
such that $\lim_{j\to+\infty}\int_{\{\psi<-t\}}|\tilde{f}_{j}|^{2}e^{-\varphi}c(-\psi)=0$ and
$(f_{j}-f)\in H^{0}(Z_0,(\mathcal{O}(K_{M})\otimes\mathcal{F})|_{Z_0})$ for any $j$.
As $e^{-\varphi}c(-\psi)$ has a positive lower bound on any compact subset of $M\backslash Z$, where $Z$ is some analytic subset of $M$, it follows from Lemma \ref{l:converge} that there exists a subsequence of $\{\tilde{f}_{j}\}_{j\in\mathbb{N}^{+}}$
denoted by $\{\tilde{f}_{j_{k}}\}_{k\in\mathbb{N}^{+}}$ that compactly convergent to $0$.
It is clear that $\tilde{f}_{j_{k}}-f$ is compactly convergent to $0-f=f$ on $U\cap\{\psi<-t\}$.
It follows from Lemma \ref{closedness}
that $f\in H^{0}(Z_0,(\mathcal{O}(K_{M})\otimes\mathcal{F})|_{Z_0})$.
This proves Lemma \ref{lem:0}.
\end{proof}

The following lemma shows the existence and uniqueness of the holomorphic $(n,0)$ form related to $G(t)$.

\begin{Lemma}
\label{lem:A}
Assume that $G(t)<+\infty$ for some $t\in[T,+\infty)$.
Then there exists a unique holomorphic $(n,0)$ form $F_{t}$ on
$\{\psi<-t\}$ satisfying $(F_{t}-f)\in H^{0}(Z_0,(\mathcal{O}(K_{M})\otimes\mathcal{F})|_{Z_0})$ and $\int_{\{\psi<-t\}}|F_{t}|^{2}e^{-\varphi}c(-\psi)=G(t)$.
Furthermore,
for any holomorphic $(n,0)$ form $\hat{F}$ on $\{\psi<-t\}$ satisfying $(\hat{F}-f)\in H^{0}(Z_0,(\mathcal{O}(K_{M})\otimes\mathcal{F})|_{Z_0})$ and
$\int_{\{\psi<-t\}}|\hat{F}|^{2}e^{-\varphi}c(-\psi)<+\infty$,
we have the following equality
\begin{equation}
\label{equ:20170913e}
\begin{split}
&\int_{\{\psi<-t\}}|F_{t}|^{2}e^{-\varphi}c(-\psi)+\int_{\{\psi<-t\}}|\hat{F}-F_{t}|^{2}e^{-\varphi}c(-\psi)
\\=&
\int_{\{\psi<-t\}}|\hat{F}|^{2}e^{-\varphi}c(-\psi).
\end{split}
\end{equation}
\end{Lemma}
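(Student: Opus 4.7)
The plan is to run the standard Hilbert-space projection argument in the weighted Bergman-type space, using Lemma \ref{l:converge} in place of abstract weak compactness (so that the minimizer is automatically holomorphic) and Lemma \ref{closedness} to verify that the germ condition at $Z_0$ is preserved in the limit.

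First I would fix $t$ with $G(t)<+\infty$ and choose a minimizing sequence $\{\tilde f_j\}$ of holomorphic $(n,0)$ forms on $\{\psi<-t\}$ with $(\tilde f_j-f)\in H^{0}(Z_0,(\mathcal{O}(K_M)\otimes\mathcal{F})|_{Z_0})$ and $\int_{\{\psi<-t\}}|\tilde f_j|^{2}e^{-\varphi}c(-\psi)\to G(t)$. Because $c\in\mathcal{P}_T$, the weight $g:=e^{-\varphi}c(-\psi)$ has a positive lower bound on every compact subset of $\{\psi<-t\}\setminus Z$, where $Z$ is the analytic subset furnished by Definition \ref{def:gain}. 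With $g_j\equiv g$ and $s_K>0$ taken arbitrarily small, the hypotheses of Lemma \ref{l:converge} are met on the complex manifold $\{\psi<-t\}$ (taking the ``$S$'' of that lemma to be $Z\cap\{\psi<-t\}$). Lemma \ref{l:converge} then produces a subsequence of $\{\tilde f_j\}$ converging locally uniformly on $\{\psi<-t\}$ to a holomorphic $(n,0)$ form $F_t$ with
\[
\int_{\{\psi<-t\}}|F_t|^{2}e^{-\varphi}c(-\psi)\leq G(t).
\]
Near each $z_0\in Z_0$ the germs $(\tilde f_j-f,z_0)$ lie in $(\mathcal{O}(K_M)\otimes\mathcal{F})_{z_0}$ and converge uniformly on a neighborhood of $z_0$ to $F_t-f$, so Lemma \ref{closedness} will yield $(F_t-f,z_0)\in(\mathcal{O}(K_M)\otimes\mathcal{F})_{z_0}$. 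Hence $F_t$ is admissible, which forces $\int|F_t|^{2}e^{-\varphi}c(-\psi)\geq G(t)$ by definition, and equality follows.

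For the orthogonality identity \eqref{equ:20170913e}, I would take $\hat F$ as in the statement and observe that for every $\lambda\in\mathbb{C}$ the form $F_t+\lambda(\hat F-F_t)$ is still admissible, since $(F_t-f,z_0)$ and $(\hat F-f,z_0)$ both lie in the $\mathcal{O}_{z_0}$-module $(\mathcal{O}(K_M)\otimes\mathcal{F})_{z_0}$. Minimality of $F_t$ at $\lambda=0$ for the quadratic function
\[
\lambda\longmapsto\int_{\{\psi<-t\}}|F_t+\lambda(\hat F-F_t)|^{2}e^{-\varphi}c(-\psi),
\]
tested first with real and then with purely imaginary $\lambda$, forces the weighted Hermitian pairing of $F_t$ and $\hat F-F_t$ to vanish, and \eqref{equ:20170913e} then drops out of the Pythagoras identity. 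Uniqueness follows by specializing \eqref{equ:20170913e} to another minimizer $\hat F$: the equation reduces to $\int|\hat F-F_t|^{2}e^{-\varphi}c(-\psi)=0$, and positivity of the weight on $\{\psi<-t\}\setminus Z$ together with holomorphy will force $\hat F\equiv F_t$.

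The main obstacle I anticipate is purely technical: checking that Lemma \ref{l:converge} applies in the presence of the possibly degenerate weight $e^{-\varphi}c(-\psi)$, and confirming that the degeneracy locus $E$ of Definition \ref{def:gain} is swallowed by an analytic subset (so that the ``$S$'' of Lemma \ref{l:converge} may absorb it). Once this administrative check is in place the remainder is a textbook projection argument in a weighted Bergman space.
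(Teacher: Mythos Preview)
Your proof is correct and follows essentially the same route as the paper: a minimizing sequence, compactness via Lemma \ref{l:converge} (using that $e^{-\varphi}c(-\psi)$ is bounded below on compacta of $M\setminus Z$), preservation of the germ condition via Lemma \ref{closedness}, and then the standard first-variation/Pythagoras argument. The only cosmetic difference is ordering: the paper proves uniqueness first by the parallelogram law and then derives \eqref{equ:20170913e}, whereas you derive \eqref{equ:20170913e} first and read off uniqueness as a special case; both are equivalent. Your anticipated ``obstacle'' is already handled by Definition \ref{def:gain}, which guarantees $E\subset Z$ with $Z$ analytic.
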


\begin{proof}
Firstly, we prove the existence of $F_{t}$.
As $G(t)<+\infty$
then there exists holomorphic (n,0) forms $\{f_{j}\}_{j\in\mathbb{N}^{+}}$ on $\{\psi<-t\}$ such that
$\lim_{j\rightarrow+\infty}\int_{\{\psi<-t\}}|f_{j}|^{2}e^{-\varphi}c(-\psi)=G(t)$,
and $(f_{j}-f)\in H^{0}(Z_0,(\mathcal{O}(K_{M})\otimes\mathcal{F})|_{Z_0})$.
As $e^{-\varphi}c(-\psi)$ has a positive lower bound on any compact subset of $M\backslash Z$, where $Z$ is some analytic subset of $M$, it follows from Lemma \ref{l:converge} that there exists a subsequence of $\{f_{j}\}$ compact convergence to a holomorphic $(n,0)$ form $F$ on $\{\psi<-t\}$
satisfying  $ \int_{\{\psi<-t\}}|F|^{2}e^{-\varphi}c(-\psi)\leq G(t)$.
It follows from
Lemma \ref{closedness}
that $(F-f)\in H^{0}(Z_0,(\mathcal{O}(K_{M})\otimes\mathcal{F})|_{Z_0})$.
Then we obtain the existence of $F_{t}(=F)$.

Secondly, we prove the uniqueness of $F_{t}$ by contradiction:
if not, there exist two different holomorphic (n,0) forms $f_{1}$ and $f_{2}$ on on $\{\psi<-t\}$
satisfying $\int_{\{\psi<-t\}}|f_{1}|^{2}e^{-\varphi}c(-\psi)=\int_{\{\psi<-t\}}|f_{2}|^{2}=G(t)$,
$(f_{1}-f)\in H^{0}(Z_0,(\mathcal{O}(K_{M})\otimes\mathcal{F})|_{Z_0})$ and $(f_{2}-f)\in H^{0}(Z_0,(\mathcal{O}(K_{M})\otimes\mathcal{F})|_{Z_0})$.
Note that
\begin{equation}
\begin{split}
&\int_{\{\psi<-t\}}\left|\frac{f_{1}+f_{2}}{2}\right|^{2}e^{-\varphi}c(-\psi)+\int_{\{\psi<-t\}}\left|\frac{f_{1}-f_{2}}{2}\right|^{2}e^{-\varphi}c(-\psi)
\\=&
\frac{\int_{\{\psi<-t\}}|f_{1}|^{2}e^{-\varphi}c(-\psi)+\int_{\{\psi<-t\}}|f_{2}|^{2}e^{-\varphi}c(-\psi)}{2}=G(t),
\end{split}
\end{equation}
then we obtain that
$$\int_{\{\psi<-t\}}\left|\frac{f_{1}+f_{2}}{2}\right|^{2}e^{-\varphi}c(-\psi)<G(t),$$
and $(\frac{f_{1}+f_{2}}{2}-f)\in H^{0}(Z_0,
(\mathcal{O}(K_{M})\otimes\mathcal{F})|_{Z_0})$, which contradicts the definition of $G(t)$.

Finally, we prove equality \eqref{equ:20170913e}.
For any holomorphic $h$ on $\{\psi<-t\}$ satisfying $\int_{\{\psi<-t\}}|h|^{2}e^{-\varphi}c(-\psi)<+\infty$
and $h\in H^{0}(Z_0,
(\mathcal{O}(K_{M})\otimes\mathcal{F})|_{Z_0})$,
it is clear that
for any complex number $\alpha$,
$F_{t}+\alpha h$ satisfying $((F_{t}+\alpha h)-f)\in H^{0}(Z_0,
(\mathcal{O}(K_{M})\otimes\mathcal{F})|_{Z_0})$,
and $\int_{\{\psi<-t\}}|F_{t}|^{2}e^{-\varphi}c(-\psi)\leq\int_{\{\psi<-t\}}|F_{t}+\alpha h|^{2}e^{-\varphi}c(-\psi)<+\infty$.
Note that
$$\int_{\{\psi<-t\}}|F_{t}+\alpha h|^{2}e^{-\varphi}c(-\psi)-\int_{\{\psi<-t\}}|F_{t}|^{2}e^{-\varphi}c(-\psi)\geq 0$$
implies
$$\Re\int_{\{\psi<-t\}}F_{t}\bar{h}e^{-\varphi}c(-\psi)=0$$
then
$$\int_{\{\psi<-t\}}|F_{t}+h|^{2}e^{-\varphi}c(-\psi)=\int_{\{\psi<-t\}}(|F_{t}|^{2}+|h|^{2})e^{-\varphi}c(-\psi).$$
Choosing $h=\hat{F}-F_{t}$, we obtain equality \eqref{equ:20170913e}.
\end{proof}

The following Lemma shows the monotonicity and lower semicontinuity property of $G(t)$.

\begin{Lemma}
\label{lem:B}
$G(t)$ is decreasing with respect to $t\in[T,+\infty)$,
such that
$\lim_{t\to t_{0}+0}G(t)=G(t_{0})$ for any $t_0\in[T,+\infty)$,
and if $G(t)<+\infty$ for some $t\geq T$, then $\lim_{t\to +\infty}G(t)=0$.
Especially $G(t)$ is lower semicontinuous on $[T,+\infty)$.
\end{Lemma}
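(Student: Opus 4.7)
The plan is to prove the four assertions of Lemma \ref{lem:B} in the order: monotonicity, right-continuity, limit at $+\infty$, lower semicontinuity, since the last one is a direct consequence of the first two.

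\textbf{Monotonicity.} For $T\leq t_1<t_2$, the inclusion $\{\psi<-t_2\}\subset\{\psi<-t_1\}$ shows that any admissible $\tilde f$ in the infimum defining $G(t_1)$ restricts to an admissible competitor for $G(t_2)$ with integral no larger than the original (the integrand is nonnegative), and the germ constraint on $Z_0\subset\{\psi=-\infty\}\subset\{\psi<-t_2\}$ is preserved. Hence $G(t_2)\le G(t_1)$.

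\textbf{Right-continuity at $t_0\in[T,+\infty)$.} Monotonicity gives $\lim_{t\to t_0+0}G(t)\le G(t_0)$. For the reverse inequality, pick $t_n\downarrow t_0$; we may assume $\lim_n G(t_n)<+\infty$, otherwise nothing to prove. By Lemma \ref{lem:A} there is, for each $n$, a holomorphic $(n,0)$ form $F_n$ on $\{\psi<-t_n\}$ with $(F_n-f)\in H^0(Z_0,(\mathcal O(K_M)\otimes\mathcal F)|_{Z_0})$ and $\int_{\{\psi<-t_n\}}|F_n|^2e^{-\varphi}c(-\psi)=G(t_n)$. Restricting to the smaller set $\{\psi<-t_0\}$, the same integral is bounded by $G(t_n)$. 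Since $c\in\mathcal P_T$ gives a closed set $E$ (contained in an analytic subset $Z$ of $M$) on whose complement $e^{-\varphi}c(-\psi)$ is locally bounded below, the hypothesis of Lemma \ref{l:converge} is met (with $g_j\equiv e^{-\varphi}c(-\psi)$, $S=Z$, $s_K$ arbitrary). Extracting a subsequence, we obtain a holomorphic $(n,0)$ form $F$ on $\{\psi<-t_0\}$, locally uniform limit of $F_n$, with $\int_{\{\psi<-t_0\}}|F|^2e^{-\varphi}c(-\psi)\le\lim_n G(t_n)$. Applying Lemma \ref{closedness} stalkwise at each point of $Z_0$ to the germs $F_n-f$ yields $(F-f)\in H^0(Z_0,(\mathcal O(K_M)\otimes\mathcal F)|_{Z_0})$, so $F$ is a valid competitor for $G(t_0)$ and $G(t_0)\le\lim_n G(t_n)$.

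\textbf{Limit at $+\infty$.} Assuming $G(t_0)<+\infty$ for some $t_0$, take $F_{t_0}$ from Lemma \ref{lem:A}. For $t>t_0$, the restriction $F_{t_0}|_{\{\psi<-t\}}$ is admissible for $G(t)$, so $G(t)\le\int_{\{\psi<-t\}}|F_{t_0}|^2e^{-\varphi}c(-\psi)$. The sets $\{\psi<-t\}$ decrease as $t\to+\infty$ to $\{\psi=-\infty\}$, a pluripolar set of Lebesgue measure zero, and the integrand is dominated by the integrable function $\mathbb I_{\{\psi<-t_0\}}|F_{t_0}|^2e^{-\varphi}c(-\psi)$; hence dominated convergence yields $\lim_{t\to+\infty}G(t)=0$.

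\textbf{Lower semicontinuity.} At any $t_0\in[T,+\infty)$, approaches from the right give $\lim_{t\to t_0+0}G(t)=G(t_0)$ by the previous step, while approaches from the left give $G(t)\ge G(t_0)$ by monotonicity, so $\liminf_{t\to t_0}G(t)\ge G(t_0)$.

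The main obstacle is the right-continuity argument: one must verify that the locally-lower-bound condition on $e^{-\varphi}c(-\psi)$ supplied by $\mathcal P_T$ translates into the integrability hypothesis demanded by Lemma \ref{l:converge}, and then safely pass the stalk-wise membership of $F_n-f$ in the coherent submodule to the limit via Lemma \ref{closedness}; the other three assertions are essentially formal once these normal-family and closedness tools are in place.
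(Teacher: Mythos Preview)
Your proposal follows the same strategy as the paper (monotonicity from the definition, the limit at $+\infty$ by dominated convergence, and right-continuity via Lemma~\ref{lem:A}, Lemma~\ref{l:converge}, and Lemma~\ref{closedness}), and the logic is sound except for one slip in the right-continuity step.

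You write ``Restricting to the smaller set $\{\psi<-t_0\}$'', but for $t_n>t_0$ the inclusion goes the other way: $\{\psi<-t_n\}\subset\{\psi<-t_0\}$, and the domains $\{\psi<-t_n\}$ \emph{increase} to $\{\psi<-t_0\}$ as $t_n\downarrow t_0$. Thus the minimizers $F_n$ are not all defined on the common manifold $\{\psi<-t_0\}$, and Lemma~\ref{l:converge} cannot be invoked there directly. The paper handles this by fixing an auxiliary $t_1>t_0$, applying Lemma~\ref{l:converge} on $\{\psi<-t_1\}$ (where all $F_t$ with $t\in(t_0,t_1]$ are defined and have uniformly bounded weighted $L^2$ norm), and then running a diagonal extraction over $t_1\downarrow t_0$ to produce a limit on all of $\{\psi<-t_0\}$. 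Once you insert this diagonal step, your argument is complete and matches the paper's.
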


\begin{proof}
By the definition of $G(t)$,
it is clear that $G(t)$ is decreasing on $[T,+\infty)$. And it follows from the dominated convergence theorem that if $G(t)<+\infty$ for some $t\geq T$, then $\lim_{t\to +\infty}G(t)=0$.
Then it suffices to prove $\lim_{t\to t_{0}+0}G(t)=G(t_{0}).$
We prove it by contradiction:
if not,
then $\lim_{t\to t_{0}+0}G(t)<G(t_{0})$.

By Lemma \ref{lem:A}, there exists a unique holomorphic (n,0) form $F_{t}$ on
$\{\psi<-t\}$ satisfying $(F_{t}-f)\in H^{0}(Z_0,
(\mathcal{O}(K_{M})\otimes\mathcal{F})|_{Z_0})$ and
$\int_{\{\psi<-t\}}|F_{t}|^{2}e^{-\varphi}c(-\psi)=G(t)$.
Note that $G(t)$ is decreasing implies that
$\int_{\{\psi<-t\}}|F_{t}|^{2}e^{-\varphi}c(-\psi)\leq\lim_{t\to t_{0}+0}G(t)$ for any $t>t_{0}$. If $\lim_{t\to t_{0}+0}G(t)=+\infty$, the equality $\lim_{t\to t_{0}+0}G(t)=G(t_{0})$ is clear, thus it suffices to prove the case $\lim_{t\to t_{0}+0}G(t)<+\infty$.
As $e^{-\varphi}c(-\psi)$ has a positive lower bound on any compact subset of $M\backslash Z$, where $Z$ is some analytic subset of $M$,
and $\int_{\{\psi<-t_1\}}|F_t|e^{-\varphi}c(-\psi)\leq\lim_{t\to t_{0}+0}G(t)<+\infty$ holds for any $t\in(t_0,t_1]$, where $t_1>t_0$ is a  fixed number,
it follows from Lemma \ref{l:converge} that there exists $\{F_{t_{j}}\}$ $(t_{j}\to t_{0}+0,$ as $j\to+\infty)$
uniformly convergent on any compact subset of $\{\psi<-t_1\}$. Using the diagonal method, we obtain
a subsequence of $\{F_{t}\}$ (also denoted by $\{F_{t_j}\}$), which is convergent on
any compact subset of $\{\psi<-t_{0}\}$.

Let $\hat{F}_{t_{0}}=\lim_{j\to+\infty}F_{t_{j}}$, which is a holomorphic $(n,0)$ form on $\{\psi<-t_{0}\}$.
Then it follows from the decreasing property of $G(t)$ that
$$\int_{K}|\hat{F}_{t_{0}}|^{2}e^{-\varphi}c(-\psi)\leq \lim_{j\to+\infty}\int_{K}|F_{t_{j}}|^{2}e^{-\varphi}c(-\psi)\leq
\lim_{j\to+\infty}G(t_{j})\leq\lim_{t\to t_{0}+0}G(t)$$
for any
compact set $K\subset \{\psi<-t_{0}\}$.
It follows from Levi's Theorem that
$$\int_{\{\psi<-t_0\}}|\hat{F}_{t_{0}}|^{2}e^{-\varphi}c(-\psi)\leq \lim_{t\to t_{0}+0}G(t).$$
It follows from Lemma \ref{closedness} that $\hat{F}_{t_0}\in H^{0}(Z_0,
(\mathcal{O}(K_{M})\otimes\mathcal{F})|_{Z_0})$.
Then we obtain that $G(t_0)\leq\int_{\{\psi<-t_0\}}|\hat{F}_{t_{0}}|^{2}e^{-\varphi}c(-\psi)\leq \lim_{t\to t_{0}+0}G(t)$,
which contradicts $\lim_{t\to t_{0}+0}G(t)<G(t_{0})$.
\end{proof}

We consider the derivatives of $G(t)$ in the following lemma.

\begin{Lemma}
\label{lem:C}
Assume that $G(t_1)<\infty$, where $t_1\in(T,+\infty)$,
then for any $t_0>t_1$ we have
$$\frac{G(t_1)-G(t_0)}{\int_{t_1}^{t_0}c(t)e^{-t}dt}\leq
\liminf_{B\to0+0}\frac{G(t_0)-G(t_0+B)}{\int_{t_0}^{t_0+B}c(t)e^{-t}dt},$$
i.e.
\begin{displaymath}
	\frac{G(t_0)-G(t_1)}{\int_{T_1}^{t_0}c(t)e^{-t}dt-\int_{T_1}^{t_1}c(t)e^{-t}dt}
		\geq\limsup_{B\to0+0}\frac{G(t_0+B)-G(t_0)}{\int_{T_1}^{t_0+B}c(t)e^{-t}dt-\int_{T_1}^{t_0+B}c(t)e^{-t}dt}.
\end{displaymath}
\end{Lemma}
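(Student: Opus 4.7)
The plan is to apply the twisted $L^{2}$ extension of Lemma \ref{lem:GZ_sharp2} to a minimizer of $G(t_{0})$, certify the resulting form as a candidate for $G(t_{1})$, and then compare the two integrals via the Pythagorean identity of Lemma \ref{lem:A}. Since $G$ is decreasing (Lemma \ref{lem:B}) and $G(t_{1})<+\infty$, we have $G(t_{0})<+\infty$, so Lemma \ref{lem:A} produces $F_{t_{0}}$ realizing $G(t_{0})$. Put $C_{B}:=\int_{M}B^{-1}\mathbb{I}_{\{-t_{0}-B<\psi<-t_{0}\}}|F_{t_{0}}|^{2}e^{-\varphi-\psi}$; since $c(t)e^{-t}$ is bounded below on the strip by $c(t_{0}+B)e^{-t_{0}-B}>0$, $C_{B}<+\infty$. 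Apply Lemma \ref{lem:GZ_sharp2} with $F=F_{t_{0}}$ to obtain $\tilde F_{B}$ on $\{\psi<-t_{1}\}$ satisfying the weighted estimate. Near any $z\in Z_{0}\subset\{\psi=-\infty\}$, upper semicontinuity of $\psi$ provides a neighborhood on which $\psi<-t_{0}-B$, so $b_{t_{0},B}(\psi)\equiv 0$ and $v_{t_{0},B}(\psi)$ is constant; the estimate then yields $\int|\tilde F_{B}-F_{t_{0}}|^{2}e^{-(\varphi+\psi)}<+\infty$ locally, placing $\tilde F_{B}-F_{t_{0}}$ in $\mathcal{O}(K_{M})\otimes\mathcal{I}(\varphi+\psi)\subset\mathcal{O}(K_{M})\otimes\mathcal{F}$ at $z$ by definition of the multiplier ideal sheaf. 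Combined with $F_{t_{0}}-f\in H^{0}(Z_{0},(\mathcal{O}(K_{M})\otimes\mathcal{F})|_{Z_{0}})$ and a global finiteness check via the weight dominance $e^{-\varphi}c(-\psi)\leq e^{-\varphi-\psi+v_{t_{0},B}(\psi)}c(-v_{t_{0},B}(\psi))$ (which follows from $v_{t_{0},B}(\psi)\geq\psi$ and the monotonicity of $c(t)e^{-t}$), this certifies $\tilde F_{B}$ as a candidate for $G(t_{1})$.

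Because $\tilde F_{B}|_{\{\psi<-t_{0}\}}$ is also a candidate for $G(t_{0})$, Lemma \ref{lem:A} gives $\int_{\{\psi<-t_{0}\}}|\tilde F_{B}|^{2}e^{-\varphi}c(-\psi)=G(t_{0})+\int_{\{\psi<-t_{0}\}}|\tilde F_{B}-F_{t_{0}}|^{2}e^{-\varphi}c(-\psi)$, so splitting $\{\psi<-t_{1}\}$ along $\{\psi=-t_{0}\}$ yields $G(t_{1})-G(t_{0})\leq\int_{\{\psi<-t_{0}\}}|\tilde F_{B}-F_{t_{0}}|^{2}e^{-\varphi}c(-\psi)+\int_{\{-t_{0}\leq\psi<-t_{1}\}}|\tilde F_{B}|^{2}e^{-\varphi}c(-\psi)$. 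On $\{-t_{0}\leq\psi<-t_{1}\}$ (where $v_{t_{0},B}(\psi)=\psi$ and $(1-b_{t_{0},B}(\psi))F_{t_{0}}=0$) and on $\{\psi<-t_{0}-B\}$ (where $(1-b_{t_{0},B}(\psi))F_{t_{0}}=F_{t_{0}}$), the weight dominance identifies both pieces with portions of the Lemma bound, summing to at most $C_{B}\int_{t_{1}}^{t_{0}+B}c(t)e^{-t}dt$. On the thin strip $\{-t_{0}-B\leq\psi<-t_{0}\}$ decompose $\tilde F_{B}-F_{t_{0}}=(\tilde F_{B}-(1-b_{t_{0},B}(\psi))F_{t_{0}})-b_{t_{0},B}(\psi)F_{t_{0}}$ and apply $|a-b|^{2}\leq(1+\varepsilon)|a|^{2}+(1+\varepsilon^{-1})|b|^{2}$ with $\varepsilon=\sqrt{B}$: the first part merges into the Lemma bound with a $(1+\sqrt{B})$ factor, and the $b_{t_{0},B}(\psi)F_{t_{0}}$ part is dominated by $(1+B^{-1/2})\cdot c(t_{0})e^{-t_{0}}\cdot BC_{B}=O(\sqrt{B}\,C_{B})$. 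Altogether $G(t_{1})-G(t_{0})\leq(1+\sqrt{B})\,C_{B}\int_{t_{1}}^{t_{0}+B}c(t)e^{-t}dt+O(\sqrt{B}\,C_{B})$.

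Dividing by $\int_{t_{1}}^{t_{0}}c(t)e^{-t}dt$ and letting $B\to 0+$ produces $\frac{G(t_{1})-G(t_{0})}{\int_{t_{1}}^{t_{0}}c(t)e^{-t}dt}\leq\liminf_{B\to 0+}C_{B}$. To match the right-hand side of the statement, use that $F_{t_{0}}|_{\{\psi<-t_{0}-B\}}$ is a candidate for $G(t_{0}+B)$: $G(t_{0})-G(t_{0}+B)\geq\int_{\{-t_{0}-B<\psi<-t_{0}\}}|F_{t_{0}}|^{2}e^{-\varphi}c(-\psi)\geq c(t_{0}+B)e^{-t_{0}-B}\cdot BC_{B}$, where the second bound uses $c(-\psi)e^{\psi}\geq c(t_{0}+B)e^{-t_{0}-B}$ on the strip. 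Combined with $\int_{t_{0}}^{t_{0}+B}c(t)e^{-t}dt\leq Bc(t_{0})e^{-t_{0}}$ and another $\liminf_{B\to 0+}$, one arrives at $\liminf_{B\to 0+}C_{B}\leq\liminf_{B\to 0+}\frac{G(t_{0})-G(t_{0}+B)}{\int_{t_{0}}^{t_{0}+B}c(t)e^{-t}dt}$. The main obstacle is the strip estimate: a naive triangle inequality $|a-b|^{2}\leq 2|a|^{2}+2|b|^{2}$ would introduce a spurious factor of $2$ that survives in the limit; the two-parameter splitting with $\varepsilon=\sqrt{B}$ is precisely what balances the two error contributions so that both tend to zero faster than the leading term $C_{B}\int_{t_{1}}^{t_{0}}c(t)e^{-t}dt$, producing the sharp constant $1$.
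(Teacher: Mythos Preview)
Your argument is correct and takes a different route from the paper. The paper chooses a subsequence $B_{j}\to 0$ along which the right-hand difference quotient converges, applies Lemma~\ref{lem:GZ_sharp2} to get $\tilde F_{j}$, shows $\int|\tilde F_{j}|^{2}e^{-\varphi}c(-\psi)$ is uniformly bounded, and then invokes the compactness criterion Lemma~\ref{l:converge} together with Fatou's lemma to extract a limiting holomorphic form $F_{1}$ on $\{\psi<-t_{1}\}$; only after this limit is the Pythagorean identity of Lemma~\ref{lem:A} applied, to $F_{1}$. You instead apply the Pythagorean identity directly to each $\tilde F_{B}$, and your $\varepsilon=\sqrt{B}$ splitting on the strip $\{-t_{0}-B\le\psi<-t_{0}\}$ is precisely what makes the finite-$B$ error terms vanish in the limit without ever passing to a subsequential limit form. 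Your route is more elementary (no compactness step, no Fatou) and yields a quantitative inequality for each $B$; the paper's route has the advantage that its limiting form $F_{1}$ is reused in the proof of Corollary~\ref{thm:1}, where linearity of $G(\hat h^{-1}(r))$ forces the chain of inequalities to collapse to equalities and hence $F_{1}=F_{t_{0}}$ on $\{\psi<-t_{0}\}$, producing the uniqueness of the minimizer there.

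One minor correction: in your final step, the bound $\int_{t_{0}}^{t_{0}+B}c(t)e^{-t}dt\le Bc(t_{0})e^{-t_{0}}$ points the wrong way for what you need. The correct reason that $\liminf_{B\to 0+}C_{B}\le\liminf_{B\to 0+}\frac{G(t_{0})-G(t_{0}+B)}{\int_{t_{0}}^{t_{0}+B}c(t)e^{-t}dt}$ is that both $B^{-1}\int_{t_{0}}^{t_{0}+B}c(t)e^{-t}dt$ and $c(t_{0}+B)e^{-(t_{0}+B)}$ converge to the common right limit $\lim_{t\to t_{0}+}c(t)e^{-t}$ (monotonicity of $c(t)e^{-t}$), so the two quotients $\frac{G(t_{0})-G(t_{0}+B)}{Bc(t_{0}+B)e^{-(t_{0}+B)}}$ and $\frac{G(t_{0})-G(t_{0}+B)}{\int_{t_{0}}^{t_{0}+B}c(t)e^{-t}dt}$ have identical $\liminf$.
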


\begin{proof}
It follows from Lemma \ref{lem:B} that $G(t)<+\infty$ for any $t\geq t_1$.
By Lemma \ref{lem:A},
there exists a
holomorphic $(n,0)$ form $F_{t_0}$ on $\{\psi<-t_0\}$, such that
$(F_{t_0}-f)\in H^{0}(Z_0,(\mathcal{O}(K_{M})\otimes\mathcal{F})|_{Z_0})$ and
$\int_{\{\psi<-t_0\}}|F_{t_0}|^{2}e^{-\varphi}c(-\psi)=G(t_0)$.

It suffices to consider that $\liminf_{B\to0+0}\frac{G(t_0)-G(t_0+B)}{\int_{t_0}^{t_0+B}c(t)e^{-t}dt}\in[0.+\infty)$
because of the decreasing property of $G(t)$.
Then there exists $B_{j}\to 0+0$ $(j\to+\infty)$ such that
\begin{equation}
	\label{eq:211106d}\lim_{j\to+\infty}\frac{G(t_0)-G(t_0+B_j)}{\int_{t_0}^{t_0+B_j}c(t)e^{-t}dt}=\liminf_{B\to0+0}\frac{G(t_0)-G(t_0+B)}{\int_{t_0}^{t_0+B}c(t)e^{-t}dt}
\end{equation}
and $\{\frac{G(t_0)-G(t_0+B_j)}{\int_{t_0}^{t_0+B_j}c(t)e^{-t}dt}\}_{j\in\mathbb{N}^{+}}$ is bounded. As $c(t)e^{-t}$ is decreasing  and positive on $(T,+\infty)$, then
\begin{equation}
\label{eq:202141a}
\begin{split}
\lim_{j\to+\infty}\frac{G(t_0)-G(t_0+B_j)}{\int_{t_0}^{t_0+B_j}c(t)e^{-t}dt}=&\left(\lim_{j\to+\infty}\frac{G(t_0)-G(t_0+B_j)}{B_j}\right)\left(\frac{1}{\lim_{t\rightarrow t_0+0}c(t)e^{-t}}\right)\\
=&\left(\lim_{j\to+\infty}\frac{G(t_{0})-G(t_{0}+B_j)}{B_j}\right)\left(\frac{e^{t_0}}{\lim_{t\rightarrow t_0+0}c(t)}\right).	
\end{split}	
\end{equation}
Hence $\left\{\frac{G(t_0)-G(t_0+B_j)}{B_j}\right\}_{j\in\mathbb{N}^{+}}$ is bounded with respect to $j$.

As $t\leq v_{t_0,B_j}(t)$, the decreasing property of $c(t)e^{-t}$ shows that
$$e^{-\psi+v_{t_0,B_j}(\psi)}c(-v_{t_0,B_j}(\psi))\geq c(-\psi).$$
Lemma \ref{lem:GZ_sharp2}
shows that for any $B_{j}$,
there exists
holomorphic $(n,0)$ form $\tilde{F}_{j}$ on $\{\psi<-t_1\}$, such that
$(\tilde{F}_{j}-F_{t_{0}})\in H^{0}(Z_0,(\mathcal{O}(K_M)\otimes\mathcal{I}(\varphi+\psi))|_{Z_0})
\subseteq H^{0}(Z_0,(\mathcal{O}(K_{M})\otimes\mathcal{F})|_{Z_0})$
and
\begin{equation}
\label{equ:GZc}
\begin{split}
&\int_{\{\psi<-t_1\}}|\tilde{F}_{j}-(1-b_{t_{0},B_{j}}(\psi))F_{t_{0}}|^{2}e^{-\varphi}c(-\psi)
\\\leq&\int_{\{\psi<-t_1\}}|\tilde{F}_{j}-(1-b_{t_{0},B_{j}}(\psi))F_{t_{0}}|^{2}e^{-\varphi}e^{-\psi+v_{t_0,B_j}(\psi)}c(-v_{t_0,B_j}(\psi))
\\\leq&
\int_{t_1}^{t_{0}+B_{j}}c(t)e^{-t}dt
\int_{\{\psi<-t_1\}}\frac{1}{B_{j}}(\mathbb{I}_{\{-t_{0}-B_{j}<\psi<-t_{0}\}})|F_{t_{0}}|^{2}e^{-\varphi-\psi}
\\\leq&
\frac{e^{t_{0}+B_{j}}\int_{t_1}^{t_{0}+B_{j}}c(t)e^{-t}dt}{\inf_{t\in(t_{0},t_{0}+B_{j})}c(t)}
\int_{\{\psi<-t_1\}}\frac{1}{B_{j}}(\mathbb{I}_{\{-t_{0}-B_{j}<\psi<-t_{0}\}})|F_{t_{0}}|^{2}e^{-\varphi}c(-\psi)
\\\leq&
\frac{e^{t_{0}+B_{j}}\int_{t_1}^{t_{0}+B_{j}}c(t)e^{-t}dt}{\inf_{t\in(t_{0},t_{0}+B_{j})}c(t)}
\times\Bigg(\int_{\{\psi<-t_1\}}\frac{1}{B_{j}}\mathbb{I}_{\{\psi<-t_{0}\}}|F_{t_{0}}|^{2}e^{-\varphi}c(-\psi)
\\&-\int_{\{\psi<-t_1\}}\frac{1}{B_{j}}\mathbb{I}_{\{\psi<-t_{0}-B_{j}\}}|F_{t_{0}}|^{2}e^{-\varphi}c(-\psi)\Bigg)
\\\leq&
\frac{e^{t_{0}+B_{j}}\int_{t_1}^{t_{0}+B_{j}}c(t)e^{-t}dt}{\inf_{t\in(t_{0},t_{0}+B_{j})}c(t)}
\times\frac{G(t_{0})-G(t_{0}+B_{j})}{B_{j}}.
\end{split}
\end{equation}

Firstly, we will prove that $\int_{\{\psi<-t_1\}}|\tilde{F}_{j}|^{2}e^{-\varphi}c(-\psi)$ is bounded with respect to $j$.

Note that
\begin{equation}
\label{equ:GZd}
\begin{split}
&\left(\int_{\{\psi<-t_1\}}|\tilde{F}_{j}-(1-b_{t_{0},B_{j}}(\psi))F_{t_{0}}|^{2}e^{-\varphi}c(-\psi)\right)^{\frac{1}{2}}
\\\geq&\left(\int_{\{\psi<-t_1\}}|\tilde{F}_{j}|^{2}e^{-\varphi}c(-\psi)\right)^{\frac{1}{2}}-\left(\int_{\{\psi<-t_1\}}|(1-b_{t_{0},B_{j}}(\psi))F_{t_{0}}|^{2}e^{-\varphi}c(-\psi)\right)^{\frac{1}{2}},
\end{split}
\end{equation}
then it follows from inequality \eqref{equ:GZc} that
\begin{equation}
\label{equ:GZe}
\begin{split}
&\left(\int_{\{\psi<-t_1\}}|\tilde{F}_{j}|^{2}e^{-\varphi}c(-\psi)\right)^{\frac{1}{2}}
\\\leq&\left(\frac{e^{t_{0}+B_{j}}\int_{t_1}^{t_{0}+B_{j}}c(t)e^{-t}dt}{\inf_{t\in(t_{0},t_{0}+B_{j})}c(t)}\right)^{\frac{1}{2}}
\left(\frac{G(t_{0})-G(t_{0}+B_{j})}{B_{j}}\right)^{\frac{1}{2}}
\\&+\left(\int_{\{\psi<-t_1\}}|(1-b_{t_{0},B_{j}}(\psi))F_{t_{0}}|^{2}e^{-\varphi}c(-\psi)\right)^{\frac{1}{2}}.
\end{split}
\end{equation}
Since $\left\{\frac{G(t_{0}+B_{j})-G(t_{0})}{B_{j}}\right\}_{j\in\mathbb{N}^{+}}$ is bounded, $\lim_{j\rightarrow+\infty}\inf_{t\in(t_{0},t_{0}+B_{j})}c(t)=\lim_{t\rightarrow t_0+0}c(t)\in(0,+\infty)$ and
$$\int_{\{\psi<-t_1\}}|(1-b_{t_{0},B_{j}}(\psi))F_{t_{0}}|^{2}e^{-\varphi}c(-\psi)\leq\int_{\{\psi<-t_0\}}|F_{t_{0}}|^{2}e^{-\varphi}c(-\psi)<+\infty,$$
then $\int_{\{\psi<-t_1\}}|\tilde{F}_{j}|^{2}e^{-\varphi}c(-\psi)$ is bounded with respect to $j$.

\

Secondly, we will prove the main result.

It follows from $\int_{\{\psi<-t_1\}}|\tilde{F}_{j}|^{2}e^{-\varphi}c(-\psi)$ is bounded with respect to $j$ and Lemma \ref{l:converge} that there exists a subsequence of $\{\tilde{F}_j\}$, denoted by $\{\tilde{F}_{j_k}\}_{k\in\mathbb{N}^+}$, which is uniformly convergent to a holomorphic $(n,0)$ form $F_1$ on $\{\psi<-t_1\}$ on any compact subset of $\{\psi<-t_1\}$ when $k\rightarrow+\infty$,  such that
$$\int_{\{\psi<-t_1\}}|F_1|^2e^{-\varphi}c(-\psi)\le\liminf_{j\rightarrow+\infty}\int_{\{\psi<-t_1\}}|\tilde{F}_{j}|^{2}e^{-\varphi}c(-\psi)<+\infty.$$  As $(\tilde{F}_j-F_{t_{0}})\in H^{0}(Z_0,(\mathcal{O}(K_{M})\otimes\mathcal{F})|_{Z_0})$ for any $j$, we have $(F_1-F_{t_{0}})\in H^{0}(Z_0,(\mathcal{O}(K_{M})\otimes\mathcal{F})|_{Z_0})$.
Note that 
\begin{equation*}
	\lim_{j\rightarrow+\infty}b_{t_0,B_j}(t)=\lim_{j\rightarrow+\infty}\int_{-\infty}^{t}\frac{1}{B_j}\mathbb{I}_{\{-t_0-B_j<s<-t_0\}}ds=\left\{ \begin{array}{lcl}
	0 & \mbox{if}& x\in(-\infty,-t_0)\\
	1 & \mbox{if}& x\in[-t_0,+\infty)
    \end{array} \right.
\end{equation*}
and
\begin{equation*}
	\lim_{j\rightarrow+\infty}v_{t_0,B_j}(t)=\lim_{j\rightarrow+\infty}\int_{-t_0}^{t}b_{t_0,B_j}ds-t_0=\left\{ \begin{array}{lcl}
	-t_0 & \mbox{if}& x\in(-\infty,-t_0)\\
	t & \mbox{if}& x\in[-t_0,+\infty)
    \end{array}. \right.
\end{equation*}
Following from equality \eqref{eq:202141a}, inequality \eqref{equ:GZc} and the Fatou's Lemma, we have 
\begin{equation}
	\label{eq:211106b}\begin{split}
		&\int_{\{\psi<-t_0\}}|F_1-F_{t_0}|e^{-\varphi-\psi-t_0}c(t_0)+\int_{\{-t_0\le\psi<-t_1\}}|F_1|^2e^{-\varphi}c(-\psi)\\
		=&\int_{\{\psi<-t_1\}}\lim_{k\rightarrow+\infty}|\tilde{F}_{j_k}-(1-b_{t_{0},B_{j_k}}(\psi))F_{t_{0}}|^{2}e^{-\varphi}e^{-\psi+v_{t_0,B_{j_k}}(\psi)}c(-v_{t_0,B_{j_k}}(\psi))
\\
\le&\liminf_{k\rightarrow+\infty}\int_{\{\psi<-t_1\}}|\tilde{F}_{j_k}-(1-b_{t_{0},B_{j_k}}(\psi))F_{t_{0}}|^{2}e^{-\varphi}e^{-\psi+v_{t_0,B_{j_k}}(\psi)}c(-v_{t_0,B_{j_k}}(\psi))
\\
\leq&
\liminf_{k\rightarrow+\infty}\left(\frac{e^{t_{0}+B_{j_k}}\int_{t_1}^{t_{0}+B_{j_k}}c(t)e^{-t}dt}{\inf_{t\in(t_{0},t_{0}+B_{j_k})}c(t)}
\times\frac{G(t_{0})-G(t_{0}+B_{j_k})}{B_{j_k}}\right)\\
=&\frac{e^{t_0}\int_{t_1}^{t_0}c(t)e^{-t}dt}{\lim_{t\rightarrow t_0+0}c(t)}\lim_{j\rightarrow +\infty}\frac{G(t_0)-G(t_0+B_j)}{B_j}\\
=&\int_{t_1}^{t_0}c(t)e^{-t}dt\lim_{j\rightarrow +\infty}\frac{G(t_0)-G(t_0+B_j)}{\int_{t_0}^{t_0+B_j}c(t)e^{-t}dt}.
	\end{split}
\end{equation}

As $e^{\psi}c(-\psi)\le e^{-t_0}c(t_0)$ on $\{\psi<-t_0\}$, it follows Lemma \ref{lem:A}, equality \eqref{eq:211106d} and inequality \eqref{eq:211106b} that 
\begin{equation}
	\label{eq:211106c}
	\begin{split}&\int_{t_1}^{t_0}c(t)e^{-t}dt\liminf_{B\rightarrow 0+0}\frac{G(t_0)-G(t_0+B)}{\int_{t_0}^{t_0+B}c(t)e^{-t}dt}\\
	=&\int_{t_1}^{t_0}c(t)e^{-t}dt\lim_{j\rightarrow +\infty}\frac{G(t_0)-G(t_0+B_j)}{\int_{t_0}^{t_0+B_j}c(t)e^{-t}dt}\\
	\ge&	\int_{\{\psi<-t_0\}}|F_1-F_{t_0}|e^{-\varphi-\psi-t_0}c(t_0)+\int_{\{-t_0\le\psi<-t_1\}}|F_1|^2e^{-\varphi}c(-\psi)\\
	\ge&\int_{\{\psi<-t_0\}}|F_1-F_{t_0}|e^{-\varphi}c(-\psi)+\int_{\{-t_0\le\psi<-t_1\}}|F_1|^2e^{-\varphi}c(-\psi)\\
	=&\int_{\{\psi<-t_1\}}|F_1|^2e^{-\varphi}c(-\psi)-\int_{\{\psi<-t_0\}}|F_{t_0}|^2e^{-\varphi}c(-\psi)\\
	\ge& G(t_1)-G(t_0).
	\end{split}
\end{equation}
This proves Lemma \ref{lem:C}.
\end{proof}

The following well-known property of concave functions will be used in the proof of Theorem \ref{thm:general_concave}.
\begin{Lemma}
\label{lem:Ea}
Let $a(r)$ be a lower semicontinuous function on $(A,B)$ ( $-\infty\leq A<B\leq +\infty$).
Then $a(r)$ is concave if and only if
\begin{equation}
	\label{eq:concave}
	\frac{a(r_{2})-a(r_{1})}{r_{2}-r_{1}}\geq \limsup_{r\to r_{2}+0}\frac{a(r)-a(r_{2})}{r-r_{2}},
\end{equation}
holds for any $A<r_{1}<r_{2}<B$.
\end{Lemma}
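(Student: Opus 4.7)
The plan is to prove the two implications separately. For the forward direction, assume $a$ is concave on $(A,B)$. A standard consequence of concavity is the chord-slope monotonicity: for any $r_{1}<r_{2}<r$ in $(A,B)$, the slope of the chord over $[r_{1},r_{2}]$ dominates the slope over $[r_{2},r]$, i.e.\
\[
\frac{a(r_{2})-a(r_{1})}{r_{2}-r_{1}}\geq \frac{a(r)-a(r_{2})}{r-r_{2}}.
\]
Taking $\limsup$ as $r\to r_{2}+0$ on the right-hand side yields the stated inequality \eqref{eq:concave}.

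For the converse, I will argue by contradiction. Suppose $a$ satisfies \eqref{eq:concave} but is not concave on $(A,B)$. Then there exist $A<r_{1}<r_{3}<r_{2}<B$ such that $a(r_{3})$ lies strictly below the chord joining $(r_{1},a(r_{1}))$ and $(r_{2},a(r_{2}))$. Let $L$ be the affine function with $L(r_{1})=a(r_{1})$ and $L(r_{2})=a(r_{2})$, and set $f:=a-L$. Then $f$ is lower semicontinuous on the compact interval $[r_{1},r_{2}]$ with $f(r_{1})=f(r_{2})=0$ and $f(r_{3})<0$, so $f$ attains its minimum at some point $r_{*}\in [r_{1},r_{2}]$. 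Since $f(r_{3})<0$, we must have $r_{*}\in(r_{1},r_{2})$ and $f(r_{*})<0$.

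The key observation is that the slope inequality \eqref{eq:concave} is invariant under subtraction of an affine function, because both sides simply decrease by the constant slope of $L$. Hence $f$ also satisfies \eqref{eq:concave}. Applying it with the pair $r_{1}<r_{*}$ gives
\[
\frac{f(r_{*})-f(r_{1})}{r_{*}-r_{1}}=\frac{f(r_{*})}{r_{*}-r_{1}}\geq \limsup_{r\to r_{*}+0}\frac{f(r)-f(r_{*})}{r-r_{*}}.
\]
The left side is strictly negative. On the other hand, since $r_{*}<r_{2}$ and $r_{*}$ is a minimizer of $f$ on $[r_{1},r_{2}]$, for every $r\in(r_{*},r_{2}]$ we have $f(r)\geq f(r_{*})$, so $\frac{f(r)-f(r_{*})}{r-r_{*}}\geq 0$, and hence the right-hand $\limsup$ is nonnegative. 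This contradiction completes the proof.

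The main (modest) obstacle is simply handling the lower-semicontinuity correctly: one must use it precisely to guarantee that $f$ attains its minimum on the compact interval $[r_{1},r_{2}]$, and then verify that this minimum lies in the open subinterval $(r_{1},r_{2})$ so that the one-sided $\limsup$ from the right of $r_{*}$ genuinely sees points where $f\geq f(r_{*})$. All remaining manipulations are elementary.
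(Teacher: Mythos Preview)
Your proof is correct and follows essentially the same approach as the paper: for the nontrivial converse direction you subtract the affine chord, use lower semicontinuity to locate an interior minimizer of the resulting function on a compact subinterval, and derive a sign contradiction between the negative left-hand slope and the nonnegative right-hand $\limsup$. The paper omits the easy forward direction and is slightly terser, but the argument is the same.
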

\begin{proof}
	For the convenience of the reader, we recall the proof.
	
	It suffices to prove that inequality \eqref{eq:concave} implies the concavity of $a(r)$. We prove by contradiction: if not, there exists $A< r_3<r_4<r_5<B$ such that
	\begin{equation}
		\label{eq:202142c}
		\frac{a(r_4)-a(r_3)}{r_4-r_3}<\frac{a(r_5)-a(r_3)}{r_5-r_3}<\frac{a(r_5)-a(r_4)}{r_5-r_4}.
	\end{equation}
	Consider $\tilde a(r)=a(r)-a(r_5)-\frac{a(r_5)-a(r_3)}{r_5-r_3}(r-r_5)$ on $(A,B)$. As $a(r)$ is lower semicontinuous  on $(A,B)$, then $\tilde a(r)$ is lower semicontinuous  on $(A,B)$. Note that $\tilde{a}(r_3)=\tilde{a}(r_5)=0$ and $\tilde{a}(r_4)<0$, then it follows from the lower semicontinuity of $\tilde{a}(r)$ that there exists $r_6\in(r_3,r_5)$ such that $\tilde{a}(r_6)=inf_{r\in[r_3,r_5]}\tilde{a}(r)<0$. It clear that $\frac{\tilde{a}(r_6)-\tilde{a}(r_3)}{r_6-r_3}<0$ and $\limsup_{r\rightarrow r_6+0}\frac{\tilde{a}(r)-\tilde{a}(r_6)}{r-r_6}\geq0$. Then we obtain that $\frac{{a}(r_6)-{a}(r_3)}{r_6-r_3}<\frac{a(r_5)-a(r_3)}{r_5-r_3}\leq\limsup_{r\rightarrow r_6+0}\frac{{a}(r)-{a}(r_6)}{r-r_6}$, which contradict inequality \eqref{eq:concave}.
\end{proof}

\subsection{Some results used in the proofs of applications}
In this section, we give some results which will be used in the proofs of applications in Section \ref{sec:app}.

\begin{Lemma}
	\label{l:c'}
	If $c(t)$ is a positive measurable function on $(T,+\infty)$ such that $c(t)e^{-t}$ is  decreasing on $(T,+\infty)$ and $\int_{T_1}^{+\infty}c(t)e^{-t}dt<+\infty$ for some $T_1>T$, then there exists a positive measurable function $\tilde{c}$ on $(T,+\infty)$, satisfying the following statements:
	
	$(1)$ $\tilde{c}\geq c$ on $(T,+\infty)$;
	
	$(2)$ $\tilde{c}(t)e^{-t}$ is strictly decreasing on $(T,+\infty)$ and $\tilde{c}$ is  increasing on $(a,+\infty)$, where $a>T$ is a real number;
	
	$(3)$ $\int_{T_1}^{+\infty}\tilde{c}(t)e^{-t}dt<+\infty$.
	
	Moreover, if $\int_{T}^{+\infty}c(t)e^{-t}dt<+\infty$ and $c\in\mathcal{P}_T$, we can choose $\tilde{c}$ satisfying the above conditions, $\int_{T}^{+\infty}\tilde{c}(t)e^{-t}dt<+\infty$ and $\tilde{c}\in\mathcal{P}_T$.
\end{Lemma}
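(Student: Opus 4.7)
The plan is to construct $\tilde c$ in two pieces: on $(T,T_1]$ I will simply add a fast-decaying exponential to $c$, while on $(T_1,+\infty)$ I will first replace $c$ by the nondecreasing running supremum $u(t):=\sup_{s\in[T_1,t]}c(s)$ and then add a slowly growing exponential. The supremum monotonization is essentially forced, since $c$ itself may oscillate arbitrarily (subject only to $c(t)e^{-t}$ being decreasing), so one cannot hope to make $c$ plus a smooth term increasing. The bound $c(s)e^{-s}\leq c(T_1)e^{-T_1}$ for $s\geq T_1$ yields $c(s)\leq c(T_1)e^{s-T_1}$, so $u$ is finite-valued; being nondecreasing it is Borel measurable, and $u\geq c$ pointwise on $[T_1,+\infty)$. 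For $t_1<t_2$ the estimate $u(t_2)\leq\max(u(t_1),c(t_1)e^{t_2-t_1})\leq u(t_1)e^{t_2-t_1}$ shows that $u(t)e^{-t}$ is still nonincreasing.

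The main technical step will be the integrability of $u(t)e^{-t}$ on $(T_1,+\infty)$. Setting $d(t):=c(t)e^{-t}$, one rewrites
\[
u(t)e^{-t}=\sup_{\delta\in[0,t-T_1]}d(t-\delta)\,e^{-\delta}.
\]
Splitting the supremum at $\delta=(t-T_1)/2$ and using monotonicity of $d$ yields the pointwise estimate
\[
u(t)e^{-t}\leq d\!\left(\tfrac{t+T_1}{2}\right)+d(T_1)\,e^{-(t-T_1)/2};
\]
the first summand integrates to $2\int_{T_1}^{+\infty}d(v)\,dv<+\infty$ by the substitution $v=(t+T_1)/2$ and the hypothesis, and the second is trivially integrable. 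This splitting argument is the step I expect to be the main obstacle.

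Fixing $\beta\in(0,1)$, $B>0$, and $A\geq Be^{(\beta+1)T_1}$, I then define
\[
\tilde c(t):=\begin{cases}c(t)+Ae^{-t},&t\in(T,T_1],\\u(t)+Be^{\beta t},&t\in(T_1,+\infty).\end{cases}
\]
Property $(1)$ follows from $u\geq c$ and the positivity of the added terms. For $(2)$, on each piece $\tilde c(t)e^{-t}$ is a nonincreasing plus a strictly decreasing function, hence strictly decreasing; the choice of $A$ guarantees that for any $t>T_1$,
\[
\tilde c(T_1)e^{-T_1}=c(T_1)e^{-T_1}+Ae^{-2T_1}\geq c(T_1)e^{-T_1}+Be^{(\beta-1)T_1}>\tilde c(t)e^{-t},
\]
so strict monotonicity persists through the junction. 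With $a:=T_1$, the formula $\tilde c=u+Be^{\beta t}$ on $(a,+\infty)$ exhibits $\tilde c$ as a sum of two increasing functions. Property $(3)$ follows from the integrability of $u(t)e^{-t}$ proved above together with $\int_{T_1}^{+\infty}Be^{(\beta-1)t}\,dt<+\infty$.

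For the moreover statement, $\int_T^{+\infty}c(t)e^{-t}\,dt<+\infty$ forces $T>-\infty$ (since $c(t)e^{-t}$ is positive and decreasing), so $\int_T^{T_1}\tilde c(t)e^{-t}\,dt\leq\int_T^{T_1}c(t)e^{-t}\,dt+A\int_T^{T_1}e^{-2t}\,dt<+\infty$, yielding $\int_T^{+\infty}\tilde c(t)e^{-t}\,dt<+\infty$. For $\tilde c\in\mathcal{P}_T$, condition $(1)$ of Definition \ref{def:gain} has already been verified, while for condition $(2)$ the pointwise inequality $\tilde c\geq c$ gives $e^{-\varphi}\tilde c(-\psi)\geq e^{-\varphi}c(-\psi)$, so the same closed set $E$ that witnesses $c\in\mathcal{P}_T$ also works for $\tilde c$.
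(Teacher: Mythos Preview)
Your proof is correct and takes a genuinely different route from the paper's. The paper proceeds discretely: it samples $a_n=c(n)e^{-n}$ at integers, builds the modified sequence $b_n=\max\{b_{n-1}/e,\,a_n\}$, interpolates geometrically between consecutive integers to produce $\tilde a(t)$, and proves $\sum b_n<+\infty$ by grouping the integers into blocks bounded by the indices where $a_{n_i}=b_{n_i}$ (between such indices $b_n$ decays geometrically); strict monotonicity is obtained at the end by adding the constant $1$. Your approach replaces this discrete scaffolding by the continuous running supremum $u(t)=\sup_{[T_1,t]}c$, which is the minimal nondecreasing majorant of $c$ on $[T_1,\infty)$. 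Your splitting estimate
\[
u(t)e^{-t}=\sup_{0\le\delta\le t-T_1}d(t-\delta)e^{-\delta}\le d\!\left(\tfrac{t+T_1}{2}\right)+d(T_1)e^{-(t-T_1)/2}
\]
is the continuous analogue of the paper's geometric-block summation and is arguably cleaner, since it yields integrability in one line without interpolation. The additive corrections $Ae^{-t}$ and $Be^{\beta t}$ (in place of the paper's $+1$) handle strict monotonicity and the junction at $T_1$ simultaneously; your choice $A\ge Be^{(\beta+1)T_1}$ is exactly what is needed there. Both arguments are elementary; yours is more direct, while the paper's has the minor advantage of producing a piecewise explicit $\tilde c$.
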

\begin{proof}
	Without loss of generality, we can assume that $T<0$.
Let $a_n=c(n)e^{-n}$, where $n\in \mathbb{N}^+$. Take $b_1=a_1$, and we can define $b_n=\max{\{\frac{b_{n-1}}{e},a_n\}}$  for $n>1$, inductively. Since $a(n)$ is decreasing with respect to $n$, we have $b_n\geq b_{n+1}\geq \frac{b_{n}}{e}$ and $b_n\geq a_n$ for any $n\in\mathbb{N}^{+}$.

Let \begin{displaymath}
	\tilde a(t)=\left\{\begin{array}{lcl}
	
		eb_n(\frac{b_{n+1}}{b_n})^{t-n} & \mbox{if} & t\in[n,n+1),\\
		c(t)e^{-t+1} &\mbox{if} & t\in(T,1].
		\end{array}
	\right.
\end{displaymath}
It clear that $\tilde a(t)\geq c(t)e^{-t}$,  $\tilde a(t)$ is decreasing and continuous on $(T,+\infty)$. Let $\tilde c(t)=\tilde a(t)e^{t}$. When $t\in[n,n+1)$, as $eb_{n+1}\geq b_n$, we have $\tilde c(t)$ is increasing on $[n,n+1)$, which implies that $\tilde c(t)$ is increasing on $(1,+\infty)$.

As $\int_{0}^{+\infty}c(t)e^{-t}dt<+\infty$, then $\sum_{n=1}^{+\infty}a_n<+\infty$. In the following, we will prove $\int_{0}^{+\infty}\tilde c(t)e^{-t}<+\infty$. By the definition of $\tilde c(t)$, we have
\begin{equation}
\label{eq:210618c}
	\int_{0}^{+\infty}\tilde c(t)e^{-t}=\int_{0}^{1}\tilde a(t)dt+\sum_{n=1}^{+\infty}\int_{n}^{n+1}\tilde a(t)dt \leq c(0)e+e\sum_{n=1}^{+\infty}b_n.\end{equation}
Take $I=\{n_i:n_i$ is the $i-$th positive integer such that $a_{n_i}=b_{n_i}$$\}\in\mathbb{N}^+$. Note that if $a_{n+1}\not=b_{n+1}$, then $b_{n+1}=\frac{b_{n}}{e}$, thus we have
\begin{equation}
	\label{eq:210618d}
	\begin{split}
		\sum_{n=1}^{+\infty}b_n=&\sum_{i=1}\sum_{j=0}^{n_{i+1}-n_i-1}b_{n_i+j}\\
		=&\sum_{i=1}\sum_{j=0}^{n_{i+1}-n_i-1}b_{n_i}e^{-j}\\
		\leq&\sum_{i=1}a_{n_i}\frac{e}{e-1}\\
		<&+\infty ,
	\end{split}
\end{equation}
where if $n_i$ is the largest integer such that $a_{n_i}=b_{n_i}$, take $n_{i+1}=+\infty$. Combining inequality \eqref{eq:210618c} and \eqref{eq:210618d}, we obtain $\int_{0}^{+\infty}\tilde c(t)e^{-t}dt<+\infty$. Let $\tilde{c}(t)=\tilde c(t)+1$,  we have $\tilde{c}\geq c$, $\tilde{c}$ is  increasing on $(1,+\infty)$, $c(t)e^{-t}$ is strictly decreasing on $(T,+\infty)$ and $\int_{0}^{+\infty}\tilde{c}(t)e^{-t}dt<+\infty$.

Moreover, if $\int_{T}^{+\infty}c(t)e^{-t}dt<+\infty$ and $c\in\mathcal{P}_T$, as $\tilde{c}(t)\geq c(t)$ on $(T,+\infty)$ and $\tilde{c}(t)=ec(t)+1$ on $(T,1)$, we have $\int_{T}^{+\infty}\tilde{c}(t)e^{-t}dt<+\infty$ and $c\in\mathcal{P}_T$. Thus, Lemma \ref{l:c'} holds.
\end{proof}

Let $\Omega$ be an open Riemann surface admitted a nontrivial Green function $G_{\Omega}$.
 Let $w$ be a local coordinate on a neighborhood $V_{z_0}$ of $z_0\in\Omega$ satisfying $w(z_0)=0$.

 \begin{Lemma}[see \cite{S-O69}, see also \cite{Tsuji}]
 	\label{l:green}
 	$G_{\Omega}(z,z_0)=\sup_{v\in\Delta_0(z_0)}v(z)$, where $\Delta_0(z_0)$ is the set of negative subharmonic functions on $\Omega$ satisfying that $v-\log|w|$ has a locally finite upper bound near $z_0$.
 \end{Lemma}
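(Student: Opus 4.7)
The plan is to prove the two inclusions $G_{\Omega}(z,z_0) \in \Delta_0(z_0)$ and $v(z) \le G_{\Omega}(z,z_0)$ for every $v \in \Delta_0(z_0)$ separately, which together give the claimed equality.

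For the first step I would directly verify that $G_{\Omega}(\cdot,z_0)$ itself lies in $\Delta_0(z_0)$. Indeed, by the defining properties of a Green function on a hyperbolic Riemann surface, $G_{\Omega}(\cdot,z_0)$ is harmonic on $\Omega\setminus\{z_0\}$, is strictly negative, and satisfies $G_{\Omega}(z,z_0)-\log|w| = O(1)$ (in fact harmonic) near $z_0$. Removing the logarithmic singularity yields a globally subharmonic function, so $G_{\Omega}(\cdot,z_0)\in\Delta_0(z_0)$ and hence $G_{\Omega}(z,z_0)\le\sup_{v\in\Delta_0(z_0)}v(z)$.

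For the reverse inequality, I would fix an arbitrary $v\in\Delta_0(z_0)$ and compare it with $G_{\Omega}(\cdot,z_0)$ through an exhaustion argument. Choose a regular exhaustion $\Omega_n\Subset\Omega_{n+1}$ of $\Omega$ with $\bigcup_n\Omega_n=\Omega$ and $z_0\in\Omega_1$, and let $G_n:=G_{\Omega_n}(\cdot,z_0)$; each $G_n$ is negative, harmonic on $\Omega_n\setminus\{z_0\}$, continuous up to $\partial\Omega_n$ with boundary values $0$, and satisfies $G_n\nearrow G_{\Omega}$ as $n\to\infty$ (this being the standard monotone-exhaustion construction of the Green function on a hyperbolic surface). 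On $\Omega_n\setminus\{z_0\}$ the function $v-G_n$ is subharmonic; the estimates $v-\log|w|=O(1)$ (from the definition of $\Delta_0(z_0)$) and $G_n-\log|w|$ harmonic near $z_0$ show $v-G_n$ is bounded above near $z_0$, so it extends as a subharmonic function across $z_0$ to all of $\Omega_n$. On $\partial\Omega_n$ we have $G_n=0$ and $v\le 0$, giving $\limsup(v-G_n)\le 0$ along $\partial\Omega_n$, and the standard maximum principle for subharmonic functions on $\Omega_n$ yields $v\le G_n$ on $\Omega_n$. Letting $n\to\infty$ we conclude $v\le G_{\Omega}(\cdot,z_0)$ on $\Omega$, and hence $\sup_{v\in\Delta_0(z_0)}v(z)\le G_{\Omega}(z,z_0)$.

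The main obstacle is the combination of the removable singularity argument at $z_0$ with the boundary behaviour on $\partial\Omega_n$: one must choose the exhaustion regular enough that $G_n$ is continuous up to $\partial\Omega_n$ with zero boundary values so that $\limsup(v-G_n)\le 0$ there, and one must justify that the negative subharmonic $v$, although possibly unbounded below, does not obstruct applying the maximum principle to $v-G_n$. Once the regular exhaustion and the monotone convergence $G_n\nearrow G_{\Omega}$ are in place, the rest of the argument is a routine application of removable singularities and the maximum principle.
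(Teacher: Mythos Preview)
The paper does not prove this lemma; it is quoted from the classical references \cite{S-O69} and \cite{Tsuji} without argument. Your proof is correct and is essentially the standard one found in those references: membership $G_{\Omega}(\cdot,z_0)\in\Delta_0(z_0)$ is immediate from the defining properties of the Green function, and maximality follows from the exhaustion/maximum-principle comparison you describe. One small remark on phrasing: in the first step you do not need to ``remove the logarithmic singularity'' to see that $G_{\Omega}(\cdot,z_0)$ is subharmonic on all of $\Omega$; a function harmonic on $\Omega\setminus\{z_0\}$ with value $-\infty$ at $z_0$ is automatically subharmonic across $z_0$.
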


\begin{Lemma}\label{l:G-compact}
For any open neighborhood $U$ of $z_0$, there exists $t>0$ such that $\{G_{\Omega}(z,z_0)<-t\}$ is a relatively compact subset of $U$.
\end{Lemma}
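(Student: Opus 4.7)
The plan is to split the argument into two pieces: the relative compactness of sublevel sets of $G_{\Omega}(\cdot,z_0)$ in $\Omega$, a classical property of the Green function of an open Riemann surface admitting a nontrivial one, together with an upper-semicontinuity argument to promote this to containment in the given neighborhood $U$. First I would reduce to the case that $U$ is relatively compact in $\Omega$: any relatively compact coordinate neighborhood $U'\subset\subset U$ of $z_0$ works just as well for the conclusion (relative compactness in $U'$ is stronger than in $U$). With $\overline{U}$ compact in $\Omega$, it suffices to find $t>0$ with $\overline{\{G_\Omega(\cdot,z_0)<-t\}}\subset U$, since the closure is then automatically compact.

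The main obstacle is the relative compactness of sublevel sets: for every $s>0$ one has
$$K_s:=\{z\in\Omega:G_\Omega(z,z_0)\le -s\}\subset\subset \Omega.$$
This is a standard property of the Green function (see \cite{S-O69, Tsuji}), and it is a consequence of the extremal/Perron characterization in Lemma~\ref{l:green}. A direct argument would pick an exhaustion $\Omega_1\subset\subset\Omega_2\subset\subset\cdots$ of $\Omega$ with $z_0\in\Omega_1$, let $g_n$ be the Green function of $\Omega_n$ with pole at $z_0$ (so $g_n\le 0$, $g_n=0$ on $\partial\Omega_n$, $g_n$ has the logarithmic pole at $z_0$), and use the standard facts $g_n\ge G_\Omega$ and $g_n\searrow G_\Omega$ pointwise. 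Each $\{g_n\le -s\}$ is compact in $\Omega_n$, and the extremality of $G_\Omega$ from Lemma~\ref{l:green} rules out a sequence in $K_s$ that escapes to the ideal boundary of $\Omega$: such a sequence would permit the construction of a competitor in $\Delta_0(z_0)$ strictly exceeding $G_\Omega$ somewhere (by capping $G_\Omega$ from below away from $z_0$ on the offending component while preserving the logarithmic singularity at $z_0$), contradicting Lemma~\ref{l:green}.

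Granted the relative compactness of $K_s$, the lemma follows by contradiction. Assume no $t>0$ works for $U$. Then for each $n\in\mathbb{N}^{+}$ I can choose a point
$$z_n\in\{G_\Omega(\cdot,z_0)<-n\}\setminus U.$$
For every $n\ge 1$ the point $z_n$ lies in $K_1$, which is relatively compact in $\Omega$, so after passing to a subsequence $z_n\to z_\star\in\Omega$. Since $\Omega\setminus U$ is closed, $z_\star\notin U$. On the other hand, $G_\Omega(\cdot,z_0)$ is subharmonic, hence upper semicontinuous, so
$$G_\Omega(z_\star,z_0)\le \limsup_{n\to\infty}G_\Omega(z_n,z_0)\le \limsup_{n\to\infty}(-n)=-\infty,$$
i.e.\ $G_\Omega(z_\star,z_0)=-\infty$. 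But $G_\Omega(\cdot,z_0)$ is harmonic, and in particular finite, on $\Omega\setminus\{z_0\}$, so its only $-\infty$ point is $z_0$. Hence $z_\star=z_0\in U$, contradicting $z_\star\notin U$, and the lemma is proved.
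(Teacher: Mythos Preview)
Your proof has a genuine gap: the claim that $K_s=\{G_\Omega(\cdot,z_0)\le -s\}\subset\subset\Omega$ for \emph{every} $s>0$ is false for general open Riemann surfaces admitting a Green function. Take $\Omega=\Delta\setminus\{0\}$ and any $z_0\in\Omega$. Since a point is polar, the Perron family $\Delta_0(z_0)$ coincides with that of $\Delta$, and hence $G_{\Delta\setminus\{0\}}(\cdot,z_0)=G_\Delta(\cdot,z_0)|_{\Delta\setminus\{0\}}$. As $z\to 0$ this tends to the finite negative value $\log|z_0|$, so for $0<s<-\log|z_0|$ the set $K_s$ contains a punctured neighborhood of $0$ and is not relatively compact in $\Omega$. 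Your sketched justification (``cap $G_\Omega$ from below away from $z_0$'') does not produce a subharmonic competitor unless the capping level is chosen so that the two pieces match along the gluing locus, and that requires precisely choosing $t$ large; for small $s$ no such competitor exists, consistent with the counterexample. Consequently your sequential argument breaks down: the sequence $z_n$ need not lie in any relatively compact set, so you cannot extract a convergent subsequence in $\Omega$.

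The paper's proof avoids this by never asserting compactness of all sublevel sets. It works locally: in a coordinate chart $V_{z_0}\subset\subset U$ one has $G_\Omega=\log|w|+v$ with $v$ bounded, so for $t$ large the \emph{local} sublevel set $\{z\in V_{z_0}:G_\Omega<-t\}$ is compactly contained in $V_{z_0}$. This is exactly what makes the capping $\tilde G=\max\{G_\Omega,-t\}$ on $\Omega\setminus V_{z_0}$ glue subharmonically with $G_\Omega$ on $V_{z_0}$, and then Lemma~\ref{l:green} forces $\tilde G\le G_\Omega$, i.e.\ $G_\Omega\ge -t$ outside $V_{z_0}$. Your ``capping'' idea is the right intuition, but it must be executed at a specific large level $t$ tied to a fixed coordinate neighborhood, not at an arbitrary level $s$.
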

\begin{proof}
Let $w$ be a coordinate on a neighborhood $V_{z_0}\subset\subset U$ of $z_0$, such that $w(z_0)=0$ and $G_{\Omega}(z,z_0)=\log|w(z)|+v(w(z))$, where $v$ is a harmonic function on $V_{z_0}$ and  $\sup_{V_{z_0}}|v(w(z))|<+\infty$. Then there exists $t>0$ such that $\{z\in V_{z_0}:\log|w(z)|+v(w(z))<-t\}\subset\subset V_{z_0}$.

We claim that $\{z\in\Omega:G_{\Omega}(z,z_0)<-t\}\subset\subset V_{z_0}$, therefore Lemma \ref{l:G-compact} holds. In fact, set
$$\tilde{G}(z) = \left\{ \begin{array}{lcl}
G_{\Omega}(z,z_0) & \mbox{if}
& z \in V_{z_0}, \\ \max\{G_{\Omega}(z,z_0),-t\} & \mbox{if} &
z\in \Omega\backslash V_{z_0}.
\end{array}\right.
$$
As $\{z\in V_{z_0}:\log|w(z)|+v(w(z))<-t\}\subset\subset V_{z_0}$, we know $\tilde G(z)$ is subharmonic on $\Omega$. Lemma \ref{l:green} tells us $\tilde G(z)\leq G_{\Omega}(z,z_0)$, therefore $\{z\in\Omega:G_{\Omega}(z,z_0)<-t\}=\{z\in V_{z_1}:G_{\Omega}(z,z_0)<-t\}\subset\subset V_{z_0}$.
\end{proof}

\begin{Lemma}\label{l:lo}
 For any
$z_0\in \Omega$ and open subsets $V_1$ and $U_1$ of $\Omega$ satisfying $z_0\in V_1\subset\subset U_1\subset\subset\Omega$,  there exists a constant $N>0$ such that
$$G_{\Omega}(z,z_1)\ge NG_{\Omega}(z,z_0)$$
holds for any $(z,z_1)\in
(\Omega\backslash U_1)\times V_1$.
\end{Lemma}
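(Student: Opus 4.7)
The plan is to apply Harnack's inequality to the positive harmonic function $z_1 \mapsto -G_\Omega(z,z_1)$ on $U_1$, uniformly in the parameter $z \in \Omega\setminus U_1$. The key observation is the symmetry $G_\Omega(z,z_1) = G_\Omega(z_1,z)$: for fixed $z \in \Omega\setminus U_1$, the map $z_1 \mapsto G_\Omega(z_1,z)$ has its logarithmic pole at $z$, which lies outside $U_1$, so this map is harmonic on $U_1$. Because $\Omega$ admits a nontrivial Green function, $G_\Omega(\cdot,z)$ is strictly negative on $\Omega\setminus\{z\}$, so $u_z(z_1) := -G_\Omega(z,z_1)$ is a strictly positive harmonic function on $U_1$.

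Next I would invoke Harnack's inequality on the compact set $\overline{V_1} \subset U_1$. Since $V_1 \subset\subset U_1$, one can choose a connected relatively compact open set $W$ with $\overline{V_1} \subset W \subset\subset U_1$ (working component-by-component if needed, using only finitely many components of $W$ since $\overline{V_1}$ is compact). Harnack then supplies a constant $C = C(V_1,U_1) > 0$, depending only on the pair of sets and not on the particular positive harmonic function, such that
\[
\sup_{z_1 \in \overline{V_1}} u_z(z_1) \;\le\; C\, u_z(z_0)
\]
for every positive harmonic $u_z$ on $U_1$. Applying this to our family $u_z$ gives $-G_\Omega(z,z_1) \le C\,(-G_\Omega(z,z_0))$, equivalently $G_\Omega(z,z_1) \ge C\, G_\Omega(z,z_0)$, for all $(z,z_1) \in (\Omega\setminus U_1) \times V_1$. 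Setting $N := C$ finishes the proof.

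The main technical point, and the only place where care is needed, is to ensure that the Harnack constant $C$ depends only on the pair $(V_1,U_1)$ and not on $z$. This is precisely what the standard Harnack inequality delivers, once one has fixed a connected intermediate domain $W$ sandwiched between $\overline{V_1}$ and $U_1$; the uniformity across the parameter $z$ is automatic because $u_z$ is positive harmonic on the same domain $W$ for every $z \in \Omega\setminus U_1$. No deeper input (e.g., potential theory on $\Omega$ beyond the existence and symmetry of $G_\Omega$, strict negativity, and Harnack) is required.
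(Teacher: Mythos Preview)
Your proposal is correct and follows essentially the same approach as the paper: fix $z\in\Omega\setminus U_1$, note that $z_1\mapsto -G_\Omega(z,z_1)$ is positive harmonic on a neighborhood of $\overline{V_1}$, and apply Harnack's inequality on the compact set $\overline{V_1}\subset U_1$ to obtain a constant $N$ independent of $z$. Your write-up is in fact slightly more careful than the paper's (you make the symmetry of $G_\Omega$ and the connectedness issue for Harnack explicit), but the argument is the same.
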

\begin{proof}
As $V_1\subset\subset U_1\subset\subset\Omega$, fixed $z\in\Omega\backslash U_1$, $G_{\Omega}(z,z_1)$ is harmonic with respect to $z_1$ on a open neighborhood of $\overline {V_1}$. The Harnack inequality shows that there exists a constant $N>0$ such that
\begin{equation}
	\label{eq:210623g}
	\sup_{z_1\in\overline {V_1}}(-G_{\Omega}(z,z_1))\leq N\inf_{z_1\in\overline {V_1}}(-G_{\Omega}(z,z_1))
\end{equation}
holds of any $z\in\Omega\backslash U_1$. As $z_0\in V_1$, it follows from inequality \eqref{eq:210623g} that
$$G_{\Omega}(z,z_1)\ge NG(z,z_0)$$
holds for any $(z,z_1)\in
(\Omega\backslash U_1)\times V_1$.
\end{proof}

The following lemma (proof can be referred to Section \ref{sec:cu}) will be used in the proof of Theorem \ref{thm:e2}.
 \begin{Lemma}
 	\label{l:cu}
 	Let $T$ be a closed positive $(1,1)$ current on $\Omega$. For any open set $U\subset\subset \Omega$ satisfying $U\cap supp T\not=\emptyset$,  there exists a subharmonic function $\Phi<0$ on $\Omega$, which satisfies the following properties:
 	
 	$(1)$ $i\partial\bar\partial\Phi\leq T$ and $i\partial\bar\partial\Phi\not\equiv0$;
 	
 	$(2)$ $\lim_{t\rightarrow0+0}(\inf_{\{G_{\Omega}(z,z_0)\geq-t\}}\Phi(z))=0$;
 	
 	$(3)$ $supp (i\partial\bar\partial\Phi)\subset U$ and $\inf_{\Omega\backslash U}\Phi>-\infty$.
 \end{Lemma}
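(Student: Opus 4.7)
The plan is to build $\Phi$ as a suitably scaled Green potential of a restriction of $T$. Choose a point $p\in U\cap\operatorname{supp} T$ and a small closed coordinate disk $K\subset U$ centered at $p$. Let $\nu$ be the restriction of the positive Radon measure associated to $T$ to $K$; since $p\in\operatorname{supp} T$, the measure $\nu$ is nonzero and finite with $\operatorname{supp}\nu\subset K\subset U$. Define
\[
\Phi(z):=\lambda\int_\Omega G_\Omega(z,w)\,d\nu(w),\qquad z\in\Omega,
\]
where $\lambda>0$ is the normalizing constant for which $i\partial\bar\partial_z G_\Omega(z,w)=\lambda^{-1}\delta_w$ in the sense of currents. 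Standard Green-potential theory yields that $\Phi$ is subharmonic on $\Omega$, that $\Phi<0$ (since $G_\Omega<0$ off the diagonal and $\nu\not\equiv 0$, with $-\infty$ possibly attained on $\operatorname{supp}\nu$), and that $i\partial\bar\partial\Phi=\nu\leq T$ with $\operatorname{supp}(i\partial\bar\partial\Phi)=\operatorname{supp}\nu\subset U$. This takes care of (1) and the first half of (3).

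For the uniform lower bound in (3), apply Lemma \ref{l:lo} with the base point replaced by $p$, taking $V_1$ open with $K\subset V_1\subset\subset U$ and $U_1=U$, to obtain $N>0$ with $G_\Omega(z,w)\geq NG_\Omega(z,p)$ for all $(z,w)\in(\Omega\setminus U)\times V_1$. Integration against $\nu$ gives $\Phi(z)\geq\lambda N\nu(\Omega)G_\Omega(z,p)$ on $\Omega\setminus U$. Lemma \ref{l:G-compact} applied to the open neighborhood $U$ of $p$ produces $t_0>0$ with $\{G_\Omega(\cdot,p)<-t_0\}\subset\subset U$, so $G_\Omega(\cdot,p)\geq -t_0$ on $\Omega\setminus U$, and therefore $\Phi\geq -\lambda N\nu(\Omega)\,t_0>-\infty$ uniformly there.

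For (2), choose a connected relatively compact open set $V_1'$ containing both $z_0$ and $K$ with $V_1'\subset\subset U_1'\subset\subset\Omega$ (possible by connectedness of $\Omega$). Lemma \ref{l:lo} applied to this $V_1'\ni z_0$ produces $N'>0$ with $G_\Omega(z,z_1)\geq N'G_\Omega(z,z_0)$ on $(\Omega\setminus U_1')\times V_1'$; since $K\subset V_1'$, this gives $\Phi(z)\geq\lambda N'\nu(\Omega)G_\Omega(z,z_0)$ on $\Omega\setminus U_1'$. Next observe that $G_\Omega(\cdot,z_0)$ is upper semicontinuous on the compact set $\overline{U_1'}$ and strictly less than $0$ there (being $-\infty$ only at $z_0$), so $\sigma:=-\sup_{\overline{U_1'}}G_\Omega(\cdot,z_0)>0$. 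Consequently $\{G_\Omega(\cdot,z_0)\geq -t\}\cap\overline{U_1'}=\emptyset$ for every $t\in(0,\sigma)$, hence $\{G_\Omega(\cdot,z_0)\geq -t\}\subset\Omega\setminus U_1'$, where the pointwise inequality yields $\inf_{\{G_\Omega(\cdot,z_0)\geq-t\}}\Phi\geq -\lambda N'\nu(\Omega)\,t$. Combined with $\Phi\leq 0$ this proves (2).

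The main obstacle is reconciling the potentially distant reference point $z_0$ appearing in (2) with the support of $\nu$ concentrated near $p$: Lemma \ref{l:lo} as stated requires $V_1$ to contain $z_0$, so one must enlarge $V_1$ to a connected set housing both $z_0$ and $K$ and verify that the resulting Harnack comparison applies uniformly outside a relatively compact $U_1'$. Connectedness of $\Omega$ makes such a choice available, and once the comparison $G_\Omega(z,w)\gtrsim G_\Omega(z,z_0)$ holds for $(z,w)\in(\Omega\setminus U_1')\times V_1'$, the remaining estimates are routine compactness-plus-continuity arguments.
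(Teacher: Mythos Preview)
Your proposal is correct and follows essentially the same route as the paper: both take $\Phi$ to be the Green potential of a compactly supported piece of $T$ inside $U$ and derive properties (2) and (3) from Harnack-type comparisons of $G_\Omega(z,\cdot)$ against $G_\Omega(z,z_0)$ and $G_\Omega(z,p)$ (Lemma~\ref{l:lo} together with Lemma~\ref{l:G-compact}). The only cosmetic difference is that the paper multiplies $T$ by a smooth cutoff $\theta$ and passes through a convolution approximation $T_n=\tilde T\ast\rho_n$, defining $u_n(z)=\langle T_n(z_1),\tfrac{1}{\pi}G_\Omega(z,z_1)\rangle$ and letting $\Phi=\lim_n u_n$, so that subharmonicity, $L^1_{loc}$-integrability and the identity $i\partial\bar\partial\Phi=\tilde T$ are verified by hand, whereas you restrict $T$ to a closed disk and invoke standard Green-potential theory directly.
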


Now, we recall some notations. Let $c_{\beta}(z)$ be the logarithmic capacity which is locally defined by
 $$c_{\beta}(z_0):=\exp\lim_{z\rightarrow z_0}(G_{\Omega}(z,z_0)-\log|w(z)|)$$
 on $\Omega$ (see \cite{S-O69}).
 The weighted Bergman kernel $\kappa_{\Omega,\rho}$ with weight $\rho$ of holomorphic $(1,0)$ form on $\Omega$ is defined by $\kappa_{\Omega,\rho}:=\sum_{i}e_i\otimes\bar{e}_i$, where $\{e_i\}_{i=1,2,...}$ are holomorphic $(1,0)$ forms on $\Omega$ and satisfy $\sqrt{-1}\int_{\Omega}\rho\frac{e_i}{\sqrt{2}}\wedge\frac{\bar{e}_j}{\sqrt{2}}=\delta_i^{j}$. Let $B_{\Omega,\rho}(z):=\frac{\kappa_{\Omega,\rho}(z)}{|dw|^{2}}$ on $V_{z_0}$.

\begin{Theorem}[\cite{guan-zhou13ap}](A solution of the extended Suita Conjecture)
	\label{thm:suita}Let $u$ be a harmonic function on $\Omega$.
	$c_{\beta}^2(z_0)\leq\pi e^{-2u(z_0)}B_{\Omega,e^{-2u}}(z_0)$ holds, and the equality holds if and only if $\chi_{-u}=\chi_{z_0}.$
\end{Theorem}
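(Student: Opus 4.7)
The plan is to deduce the inequality from the optimal $L^2$ extension theorem (Theorem \ref{thm:L2}) and read off the equality case from Corollary \ref{c:ll3}. I specialize the framework of Section \ref{sec:L2} to $M=\Omega$, $S=\{z_0\}$, $\psi(z)=2G_{\Omega}(z,z_0)$, $\varphi(z)=2u(z)$, $c\equiv 1$, $T=0$, and $f=dw|_{z_0}$. Then $\psi<0$ on $\Omega$, and $\psi\in A(z_0)$ because $\psi-\log|w|^2\to 2\log c_{\beta}(z_0)$ near $z_0$; also $\varphi+\psi=2u+2G_{\Omega}$ is subharmonic, and $\int_0^{+\infty}c(t)e^{-t}dt=1$. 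The key local computation is the value of $dV_{\Omega}[\psi]$ at $z_0$: parametrizing $\{-t-1<\psi<-t\}$ near $z_0$ as the annulus $e^{-(t+1)/2}c_{\beta}^{-1}(z_0)\leq|w|\leq e^{-t/2}c_{\beta}^{-1}(z_0)$ and integrating in polar coordinates with $e^{-\psi}\sim|w|^{-2}c_{\beta}^{-2}(z_0)$ and $\sigma_1=2\pi$ yields $dV_{\Omega}[\psi]|_{z_0}=c_{\beta}^{-2}(z_0)\,\delta_{z_0}$, so that
\begin{equation*}
\|f\|_{z_0}=\pi\cdot\frac{|f|^2}{dV_{\Omega}}(z_0)\cdot e^{-2u(z_0)}\cdot c_{\beta}^{-2}(z_0)=\frac{2\pi e^{-2u(z_0)}}{c_{\beta}^2(z_0)}.
\end{equation*}

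Next, Theorem \ref{thm:L2} produces a holomorphic $(1,0)$ form $F$ on $\Omega$ with $F(z_0)=dw|_{z_0}$ and $\int_{\Omega}|F|^2e^{-2u}\leq 2\pi e^{-2u(z_0)}/c_{\beta}^2(z_0)$. The reproducing property of $\kappa_{\Omega,e^{-2u}}$ under the normalization $\tfrac{\sqrt{-1}}{2}\int_{\Omega}e^{-2u}\,e_i\wedge\bar e_j=\delta_i^j$ yields the pointwise Cauchy--Schwarz bound
\begin{equation*}
\frac{|F(z_0)|^2/|dw|^2(z_0)}{\tfrac{1}{2}\int_{\Omega}e^{-2u}|F|^2}\leq B_{\Omega,e^{-2u}}(z_0),
\end{equation*}
and since $|F(z_0)|^2/|dw|^2(z_0)=1$, this delivers $B_{\Omega,e^{-2u}}(z_0)\geq c_{\beta}^2(z_0)/(\pi e^{-2u(z_0)})$, i.e.\ the desired Suita inequality. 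Because $\mathcal{I}(2G_{\Omega})_{z_0}$ is the maximal ideal at $z_0$, the side condition in the definition of $G(0)$ is exactly $F(z_0)=dw|_{z_0}$, and the extremal property of the reproducing kernel identifies $G(0)=2/B_{\Omega,e^{-2u}}(z_0)$. Thus equality in Suita is equivalent to the extension estimate being attained, i.e.\ $G(0)=\|f\|_{z_0}$.

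Finally, I apply Corollary \ref{c:ll3} to characterize $G(0)=\|f\|_{z_0}$: it forces a decomposition $\varphi=2\log|g|+2u_1$ with $g$ holomorphic on $\Omega$, $g(z_0)\neq 0$, $u_1$ harmonic on $\Omega$, together with $\psi=2G_{\Omega}(\cdot,z_0)$ (already built into our setup) and $\chi_{-u_1}=\chi_{z_0}$. Since $\varphi=2u$ is globally harmonic, $\log|g|=u-u_1$ is harmonic too, so $g$ vanishes nowhere on $\Omega$, and the paper's rule $\chi_{v_1}=\chi_{v_2}$ whenever $v_1-v_2=\log|h|$ for a holomorphic $h$ then gives $\chi_{-u_1}=\chi_{-u}$; hence the equality case reduces to $\chi_{-u}=\chi_{z_0}$. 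For the converse, if $\chi_{-u}=\chi_{z_0}$ then $g\equiv 1$ and $u_1=u$ satisfy all three conditions of Corollary \ref{c:ll3}, so equality holds. The single delicate step is the polar-coordinate computation of $dV_{\Omega}[\psi]|_{z_0}$: it pins down the sharp constant $c_{\beta}^{-2}(z_0)$ and is what makes the $L^2$ extension estimate reproduce exactly the Suita bound.
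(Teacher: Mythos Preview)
The paper does not prove Theorem \ref{thm:suita} at all: it is quoted from \cite{guan-zhou13ap} as an input, and is then used as a black box in the proof of Theorem \ref{thm:e2} (both for the sufficiency and in Step 3 of the necessity, where the conclusion $\chi_{-u}=\chi_{z_0}$ is obtained by invoking Theorem \ref{thm:suita}). So there is no ``paper's own proof'' to compare against.

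Your derivation of the inequality is fine and is essentially the standard one: Theorem \ref{thm:L2} with $\psi=2G_\Omega(\cdot,z_0)$, $\varphi=2u$, $c\equiv1$ gives an extension $F$ with $\int_\Omega|F|^2e^{-2u}\le 2\pi e^{-2u(z_0)}/c_\beta^2(z_0)$, and the Bergman extremal property turns this into $c_\beta^2(z_0)\le\pi e^{-2u(z_0)}B_{\Omega,e^{-2u}}(z_0)$. That part does not depend on Theorem \ref{thm:suita}.

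The equality case, however, is circular. You appeal to Corollary \ref{c:ll3}, but in this paper Corollary \ref{c:ll3} is proved by reducing to Theorem \ref{thm:e1} (for necessity) and Theorem \ref{thm:e2} (for sufficiency); Theorem \ref{thm:e1} in turn feeds back into Theorem \ref{thm:e2} for its last step; and the proof of Theorem \ref{thm:e2} explicitly invokes Theorem \ref{thm:suita} both to establish sufficiency and to conclude $\chi_{-u}=\chi_{z_0}$ in Step 3. So by using Corollary \ref{c:ll3} you are, through this chain, assuming the very equality characterization you claim to prove. If you want a self-contained argument for the equality case within this paper's framework, you would need to replace the appeal to Theorem \ref{thm:suita} inside the proof of Theorem \ref{thm:e2} by something independent; the paper does not do this, which is exactly why Theorem \ref{thm:suita} is imported as a citation rather than proved.
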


\section{proofs of Theorem \ref{thm:general_concave}, Corollary \ref{infty}, Corollary \ref{thm:linear} and Corollary \ref{thm:1}}

In this section, we prove Theorem \ref{thm:general_concave}, Corollary \ref{infty}, Corollary \ref{thm:linear} and Corollary \ref{thm:1}.

\subsection{Proof of Theorem \ref{thm:general_concave}}
Firstly, we prove that if $G(t_0)<+\infty$ for some $t_0>T$, then $G(t_1)<+\infty$ for any $t_1\in(T,t_0)$.  It follows from Lemma \ref{lem:A} that there exists a holomorphic $(n,0)$ form $F_{t_0}$ on $\{\psi<-t_0\}$ satisfying $(F_{t_0}-f)\in H^0(Z_0,(\mathcal{O}(F_M)\otimes\mathcal{F})|_{Z_0})$ and $\int_{\{\psi<-t_0\}}|F_{t_0}|^2e^{-\varphi}c(-\psi)=G(t_0)<+\infty$. Using Lemma \ref{lem:GZ_sharp2}  we get a holomorphic $(n,0)$ form $\tilde F$ on $\{\psi<-t_1\}$, such that
$$(\tilde F-F_{t_0})\in H^0(Z_0,(\mathcal{O}(F_M)\otimes\mathcal{I}(\varphi+\psi))|_{Z_0})\subset H^0(Z_0,(\mathcal{O}(F_M)\otimes\mathcal{F})|_{Z_0})$$
and
\begin{equation}
	\label{eq:202142a}
	\begin{split}
		&\int_{\{\psi<-t_1\}}|\tilde F-(1-b_{t_0,B}(\psi))F_{t_0}|^2e^{-\varphi}c(-\psi)\\
		\leq&\int_{\{\psi<-t_1\}}|\tilde F-(1-b_{t_0,B}(\psi))F_{t_0}|^2e^{-\varphi-\psi+v_{t_0,B}(\psi)}c(-v_{t_0,B}(\psi))\\
		\leq&\left(\int_{t_1}^{t_0+B}c(t)e^{-t}dt\right)\int_{\{\psi<-t_1\}}\frac{1}{B}\mathbb{I}_{\{-t_0+B<\psi<-t_0\}}|F_{t_0}|^2e^{-\varphi-\psi}.
	\end{split}
\end{equation}
Note that
\begin{displaymath}
	\begin{split}
		&\left(\int_{\{\psi<-t_1\}}|\tilde F|^2e^{-\varphi}c(-\psi)\right)^{\frac{1}{2}}-\left(\int_{\{\psi<-t_1\}}|(1-b_{t_0,B}(\psi))F_{t_0}|^2e^{-\varphi}c(-\psi)\right)^{\frac{1}{2}}\\
		\leq&\left(\int_{\{\psi<-t_1\}}|\tilde F-(1-b_{t_0,B}(\psi))F_{t_0}|^2e^{-\varphi}c(-\psi)\right)^{\frac{1}{2}},
	\end{split}
\end{displaymath}
combining with inequality \eqref{eq:202142a}, we obtain
\begin{equation}
	\label{eq:202142b}
	\begin{split}
		&\left(\int_{\{\psi<-t_1\}}|\tilde F|^2e^{-\varphi}c(-\psi)\right)^{\frac{1}{2}}\\
		\leq&\left(\left(\int_{t_1}^{t_0+B}c(t)e^{-t}dt\right)\int_{\{\psi<-t_1\}}\frac{1}{B}\mathbb{I}_{\{-t_0-B<\psi<-t_0\}}|F_{t_0}|^2e^{-\varphi-\psi}\right)^{\frac{1}{2}}\\
		&+\left(\int_{\{\psi<-t_1\}}|(1-b_{t_0,B}(\psi))F_{t_0}|^2e^{-\varphi}c(-\psi)\right)^{\frac{1}{2}}.
	\end{split}
\end{equation}
As $b_{t_0,B}(\psi)=1$ on $\{\psi\geq t_0\}$, $0\leq b_{t_0,B}(\psi)\leq1$, $\int_{\{\psi<-t_0\}}|F_{t_0}|^2e^{-\varphi}c(-\psi)<+\infty$, and $c(t)$ has a positive lower bound on any compact subset of $(T,+\infty)$, then
$$\left(\int_{\{\psi<-t_1\}}|(1-b_{t_0,B}(\psi)) F_{t_0}|^2e^{-\varphi}c(-\psi)\right)^{\frac{1}{2}}<+\infty$$
and
\begin{displaymath}
	\begin{split}
	&\left(\int_{t_1}^{t_0+B}c(t)e^{-t}dt\right)\int_{\{\psi<-t_1\}}\frac{1}{B}\mathbb I_{\{-t_0-B<\psi<-t_0\}}|F_{t_0}|^2e^{-\varphi-\psi}\\
	\leq&\frac{e^{t_0+B}\int_{t_1}^{t_0+B}c(t)e^{-t}dt}{\inf_{t\in(t_0,t_0+B)}c(t)}\int_{\{\psi<-t_1\}}\frac{1}{B}\mathbb I_{\{-t_0-B<\psi<-t_0\}}|F_{t_0}|^2e^{-\varphi}c(-\psi)\\
	<&+\infty,
	\end{split}
\end{displaymath}
 which implies that $$\int_{\{\psi<-t_1\}}|\widetilde F|^2e^{-\varphi}c(-\psi)<+\infty.$$
 Then we obtain $G(t_1)\leq\int_{\{\psi<-t_1\}}|\widetilde F|^2e^{-\varphi}c(-\psi)<+\infty$.

Now, assume that $G(t_0)<+\infty$ for some $t_0\geq T$ (otherwise it is clear that $G(t)\equiv+\infty$).
As $G(h^{-1}(r))$ is lower semicontinuous (Lemma \ref{lem:B}),
then Lemma \ref{lem:C} and Lemma \ref{lem:Ea} imply the concavity of $G(h^{-1}(r))$.
It follows from Lemma \ref{lem:B} that $\lim_{t\rightarrow T+0}G(t)=G(T)$ and $\lim_{t\rightarrow +\infty}G(t)=0$, hence we prove Theorem \ref{thm:general_concave}.
\subsection{Proof of Corollary \ref{infty}}
Note that if there exists a positive decreasing concave function $g(t)$ on $(a,b)\subset\mathbb{R}$ and $g(t)$ is not a constant function, then $b<+\infty$. We prove Corollary \ref{infty} by contradiction: if $G(t)<+\infty$ for some $t\geq T$, as $f\not\in H^0(Z_0,(\mathcal{O}(K_M)\otimes\mathcal{F})|_{Z_0})$, Lemma \ref{lem:0} shows that $G(t)\in(0,+\infty)$. Following from Theorem \ref{thm:general_concave} we know $G({h}^{-1}(r))$ is concave with respect to $r\in(\int_{T_1}^{T}c(t)e^{-t}dt,\int_{T_1}^{+\infty}c(t)e^{-t}dt)$ and $G({h}^{-1}(r))$ is not a constant function, therefore we obtain $\int_{T_1}^{+\infty}c(t)e^{-t}dt<+\infty$, which contradicts to $\int_{T_1}^{+\infty}c(t)e^{-t}dt=+\infty$. Thus Corollary \ref{infty} holds.
\subsection{Proof of Corollary \ref{thm:linear}}

 If $G(t)\in(0,+\infty)$ for some $t\geq T$, Corollary \ref{infty} and Lemma \ref{lem:0} show that $\int_{T_1}^{+\infty}c(t)e^{-t}dt<+\infty$.
	 As $\lim_{t\rightarrow+\infty}G(t)=0$, then $G(h^{-1}(r))$ is concave on $(\int_{T_1}^{T}c(t)e^{-t}dt,\int_{T_1}^{+\infty}c(t)e^{-t}dt]$ by defining $G(+\infty)=0$. Then the concavity of $G(h^{-1}(r))$  implies that the three statements are equivalent.

\subsection{Proof of Corollary \ref{thm:1}}
It follows from Corollary \ref{thm:linear} that $G(t)=\frac{G(T_1)}{\int_{T_1}^{+\infty}c(s)e^{-s}ds}\int_t^{+\infty}c(s)e^{-s}ds$ for any  $t\in[T,+\infty)$.

Firstly, we prove the existence and uniqueness of $F$.

Following the notations in Lemma \ref{lem:C}, as $G(t)=\frac{G(T_1)}{\int_{T_1}^{+\infty}c(s)e^{-s}ds}\int_t^{+\infty}c(s)e^{-s}ds\in(0,+\infty)$ for any  $t\in(T,+\infty)$,  by choosing $t_1\in(T,+\infty)$ and $t_0>t_1$, we know that the inequality  \eqref{eq:211106c} must be equality, which implies that
\begin{equation}
	\label{eq:20210412c}
	\int_{\{\psi<-t_0\}}|F_1-F_{t_0}|^2e^{-\varphi}(e^{-\psi-t_0}c(t_0)-c(-\psi))=0,
\end{equation}
where $F_1$ is a holomorphic $(n,0)$ form on $\{\psi<-t_1\}$ such that $(F_1-f)\in H^{0}(Z_0,(\mathcal{O}(K_{M})\otimes\mathcal{F})|_{Z_0})$  and $F_{t_0}$ is a holomorphic $(n,0)$ form on $\{\psi<-t_0\}$ such that $(F_{t_0}-f)\in H^{0}(Z_0,(\mathcal{O}(K_{M})\otimes\mathcal{F})|_{Z_0})$.
As $\int_{T_1}^{+\infty}c(t)e^{-t}<+\infty$ and $c(t)e^{-t}$ is decreasing, then there exists $t_2>t_0$ such that $c(t)e^{-t}<c(t_0)e^{-t_0}-\delta$ for any $t\geq t_2$, where $\delta$ is a positive constant. Then equality \eqref{eq:20210412c} implies that
\begin{displaymath}
	\begin{split}
		&\delta\int_{\{\psi<-t_2\}}|F_1-F_{t_0}|^2e^{-\varphi}e^{-\psi}\\
		\le&\int_{\{\psi<-t_2\}}|F_1-F_{t_0}|^2e^{-\varphi}(e^{-\psi-t_0}c(t_0)-c(-\psi))\\
		\le&\int_{\{\psi<-t_0\}}|F_1-F_{t_0}|^2e^{-\varphi}(e^{-\psi-t_0}c(t_0)-c(-\psi))\\
		=&0
	\end{split}
\end{displaymath}
It follows from $\varphi+\psi$ is plurisubharmonic function and $F_1$ and $F_{t_0}$ are holomorphic $(n,0)$ forms that $F_1=F_{t_0}$ on $\{\psi<-t_0\}$. As $\int_{\{\psi<-t_0\}}|F_{t_0}|^2e^{-\varphi}c(-\psi)=G(t_0)$ and the inequality  \eqref{eq:211106c} becomes equality, we have
$$\int_{\{\psi<-t_1\}}|F_1|^2e^{-\varphi}c(-\psi)=G(t_1).$$
Following from Lemma \ref{lem:A},  there exists a unique holomorphic $(n,0)$ form $F_t$ on $\{\psi<-t\}$ satisfying $(F_t-f)\in H^{0}(Z_0,(\mathcal{O}(K_{M})\otimes\mathcal{F})|_{Z_0})$ and
$\int_{\{\psi<-t\}}|F_t|^{2}e^{-\varphi}c(-\psi)=G(t)$ for any $t>T$. By discussion in the above, we know $F_{t}=F_{t'}$ on $\{\psi<-\max{\{t,t'\}}\}$ for any $t\in(T,+\infty)$ and $t'\in(T,+\infty)$. Hence combining $\lim_{t\rightarrow T+0}G(t)=G(T)$, we obtain that there  exists a unique holomorphic $(n,0)$ form $F$ on $M$ satisfying $(F-f)\in H^{0}(Z_0,(\mathcal{O}(K_{M})\otimes\mathcal{F})|_{Z_0})$ and
$\int_{\{\psi<-t\}}|F|^{2}e^{-\varphi}c(-\psi)=G(t)$ for any $t\geq T$.

Secondly, we prove equality \eqref{eq:20210412b}.
As $a(t)$ is nonnegative measurable function on $(T,+\infty)$, then there exists a sequence of functions $\{\sum_{j=1}^{n_i}a_{ij}\mathbb I_{E_{ij}}\}_{i\in\mathbb{N}^+}$ ($n_i<+\infty$ for any $i\in\mathbb{N}^+$) satisfying $\sum_{j=1}^{n_i}a_{ij}\mathbb I_{E_{ij}}$ is increasing with respect to $i$ and $\lim_{i\rightarrow +\infty}\sum_{j=1}^{n_i}a_{ij}\mathbb I_{E_{ij}}(t)=a(t)$ for any $t\in(T,+\infty)$, where $E_{ij}$ is a Lebesgue measurable subset
of $(T,+\infty)$ and $a_{ij}\geq 0$ is a constant. It follows from Levi's Theorem that it suffices to prove the case that $a(t)=\mathbb I_{E}(t)$, where $E\subset\subset (T,+\infty)$ is a Lebesgue measurable set .

Note that $G(t)=\int_{\{\psi<-t\}}|F|^2e^{-\varphi}c(-\psi)=\frac{G(T_1)}{\int_{T_1}^{+\infty}c(s)e^{-s}ds}\int_{t}^{+\infty}c(s)e^{-s}ds$, then
\begin{equation}
	\label{eq:20210412d}
	\int_{\{-t_1\leq\psi<-t_2\}}|F|^2e^{-\varphi}c(-\psi)=\frac{G(T_1)}{\int_{T_1}^{+\infty}c(s)e^{-s}ds}\int_{t_2}^{t_1}c(s)e^{-s}ds
\end{equation}
holds for any $T\leq t_2<t_1<+\infty$. It follows from the dominated convergence theorem and inequality \eqref{eq:20210412d} that
\begin{equation}
	\label{eq:20210412g}
	\int_{\{z\in M:-\psi(z)\in N\}}|F|^2e^{-\varphi}=0
\end{equation}
holds for any $N\subset\subset(T,+\infty)$ such that $\mu(N)=0$, where $\mu$ is Lebesgue measure.

As $c(t)e^{-t}$ is decreasing on $(T,+\infty)$, there are at most countable points denoted by $\{s_j\}_{j\in\mathbb{N}^+}$ such that $c(t)$ is not continuous at $s_j$. Then there is a decreasing sequence open sets $\{U_k\}$, such that $\{s_j\}_{j\in\mathbb{N}^+}\subset U_k\subset(T,+\infty)$ for any $j$, and $\lim_{k\rightarrow+\infty}\mu(U_k)=0$.
Choosing any closed interval $[t_2',t_1']\subset (T,+\infty)$. Then we have
\begin{equation}
	\label{eq:20210412e}
	\begin{split}
		&\int_{\{-t_1'\leq\psi<-t_2'\}}|F|^2e^{-\varphi}\\
		=&\int_{\{z\in M:-\psi(z)\in(t_2',t_1']\backslash U_k\}}|F|^2e^{-\varphi}+\int_{\{z\in M:-\psi(z)\in[t_2',t_1']\cap U_k\}}|F|^2e^{-\varphi}		
		\\=&\lim_{n\rightarrow+\infty}\sum_{i=0}^{n-1}\int_{\{z\in M:-\psi(z)\in I_{n,i}\backslash U_k\}}|F|^2e^{-\varphi}+\int_{\{z\in M:-\psi(z)\in[t_2',t_1']\cap U_k\}}|F|^2e^{-\varphi}	,
	\end{split}
\end{equation}
where $I_{n,i}=(t_1'-(i+1)\alpha_{n},t_1'-i\alpha_{n}]$ and $\alpha_n=\frac{t_1'-t_2'}{n}$. Note that
\begin{equation}
	\label{eq:2106a}
	\begin{split}
		&\lim_{n\rightarrow+\infty}\sum_{i=0}^{n-1}\int_{\{z\in M:-\psi(z)\in I_{n,i}\backslash U_k\}}|F|^2e^{-\varphi}\\
		\leq&\limsup_{n\rightarrow+\infty}\sum_{i=0}^{n-1}\frac{1}{\inf_{I_{n,i}\backslash U_k}c(t)}\int_{\{z\in M:-\psi(z)\in I_{n,i}\backslash U_k\}}|F|^2e^{-\varphi}c(-\psi).	\end{split}
\end{equation}
It follows from equality \eqref{eq:20210412d} that inequality \eqref{eq:2106a} becomes
\begin{equation}
	\label{eq:2106b}
	\begin{split}
		&\lim_{n\rightarrow+\infty}\sum_{i=0}^{n-1}\int_{\{z\in M:-\psi(z)\in I_{n,i}\backslash U_k\}}|F|^2e^{-\varphi}\\
		\leq&\frac{G(T_1)}{\int_{T_1}^{+\infty}c(s)e^{-s}ds}\limsup_{n\rightarrow+\infty}\sum_{i=0}^{n-1}\frac{1}{\inf_{I_{n,i}\backslash U_k}c(t)}\int_{I_i\backslash U_k}c(s)e^{-s}ds.	\end{split}
\end{equation}
It is clear that $c(t)$ is uniformly continuous  and has a positive lower bound and upper bound on $[t_2',t_1']\backslash U_k$. Then we have
\begin{equation}
		\label{eq:2106c}
	\begin{split}
		&\limsup_{n\rightarrow+\infty}\sum_{i=0}^{n-1}\frac{1}{\inf_{I_{n,i}\backslash U_k}c(t)}\int_{I_{n,i}\backslash U_k}c(s)e^{-s}ds\\
		\leq&\limsup_{n\rightarrow+\infty}\sum_{i=0}^{n-1}\frac{\sup_{I_{n,i}\backslash U_k}c(t)}{\inf_{I_{n,i}\backslash U_k}c(t)}\int_{I_{n,i}\backslash U_k}e^{-s}ds\\
		=&\int_{(t_2',t_1']\backslash U_k}e^{-s}ds.
		\end{split}
\end{equation}
Combining inequality \eqref{eq:20210412e}, \eqref{eq:2106b} and \eqref{eq:2106c}, we have
\begin{equation}
	\label{eq:2106d}
	\begin{split}
		&\int_{\{-t_1'\leq\psi<-t_2'\}}|F|^2e^{-\varphi}\\
		=&\int_{\{z\in M:-\psi(z)\in(t_2',t_1']\backslash U_k\}}|F|^2e^{-\varphi}+\int_{\{z\in M:-\psi(z)\in[t_2',t_1']\cap U_k\}}|F|^2e^{-\varphi}		
\\\leq&\frac{G(T_1)}{\int_{T_1}^{+\infty}c(s)e^{-s}ds}\int_{(t_2',t_1']\backslash U_k}e^{-s}ds+\int_{\{z\in M:-\psi(z)\in[t_2',t_1']\cap U_k\}}|F|^2e^{-\varphi}.
	\end{split}
\end{equation}
Let $k\rightarrow+\infty$, following from equality \eqref{eq:20210412g} and inequality
\eqref{eq:2106d}, we obtain that
\begin{equation}
	\label{eq:20210414a}
	\int_{\{-t_1'\leq\psi<-t_2'\}}|F|^2e^{-\varphi}\leq\frac{G(T_1)}{\int_{T_1}^{+\infty}c(s)e^{-s}ds}\int_{t_2'}^{t_1'}e^{-s}ds.
	\end{equation}
Following from a similar discussion, we  obtain
$$\int_{\{-t_1'\leq\psi<-t_2'\}}|F|^2e^{-\varphi}\geq\frac{G(T_1)}{\int_{T_1}^{+\infty}c(s)e^{-s}ds}\int_{t_2'}^{t_1'}e^{-s}ds,$$
then combining inequality \eqref{eq:20210414a}, we know that
\begin{equation}
	\label{eq:20210412f}
	\int_{\{-t_1'\leq\psi<-t_2'\}}|F|^2e^{-\varphi}=\frac{G(T_1)}{\int_{T_1}^{+\infty}c(s)e^{-s}ds}\int_{t_2'}^{t_1'}e^{-s}ds.
\end{equation}
Then it is clear that for any open set $U\subset(T,+\infty)$ and compact set $V\subset(T,+\infty)$
$$\int_{\{z\in M:-\psi(z)\in U\}}|F|^2e^{-\varphi}=\frac{G(T_1)}{\int_{T_1}^{+\infty}c(s)e^{-s}ds}\int_{U}e^{-s}ds$$
and
$$\int_{\{z\in M:-\psi(z)\in V\}}|F|^2e^{-\varphi}=\frac{G(T_1)}{\int_{T_1}^{+\infty}c(s)e^{-s}ds}\int_{V}e^{-s}ds.$$

As $E\subset\subset(T,+\infty)$, then $E\cap(t_2,t_1]$ is Lebesgue measurable subset of $(T+\frac{1}{n},n)$ for some large $n$, where $T\leq t_2<t_1\leq+\infty$. Then there exist  a sequence of compact sets $\{V_j\}$ and a sequence of open sets $\{V'_j\}$  satisfying $V_1\subset...\subset V_{j}\subset V_{j+1}\subset...\subset E\cap(t_2,t_1]\subset...\subset V'_{j+1}\subset V'_{j}\subset...\subset V'_1\subset\subset(T,+\infty)$ and $\lim_{j\rightarrow+\infty}\mu(V'_j-V_j)=0$, where $\mu$ is Lebesgue measure. Then we have
\begin{displaymath}
	\begin{split}
		\int_{\{-t_1\leq\psi<-t_2\}}|F|^2e^{-\varphi}\mathbb I_{E}(-\psi)
		=&\int_{\{z\in M:-\psi(z)\in E\cap(t_2,t_1]\}}|F|^2e^{-\varphi}\\
		\leq&\liminf_{j\rightarrow+\infty}\int_{\{z\in M:-\psi(z)\in V'_j\}}|F|^2e^{-\varphi}	\\	\leq&\liminf_{j\rightarrow+\infty}\frac{G(T_1)}{\int_{T_1}^{+\infty}c(s)e^{-s}ds}\int_{V'_j}e^{-s}\\
		=&\frac{G(T_1)}{\int_{T_1}^{+\infty}c(s)e^{-s}ds}\int_{ E\cap(t_2,t_1]}e^{-s}ds\\
		=&\frac{G(T_1)}{\int_{T_1}^{+\infty}c(s)e^{-s}ds}\int_{t_2}^{t_1}e^{-s}\mathbb I_{E}(s)ds	\end{split}
\end{displaymath}
and
\begin{displaymath}
	\begin{split}
		\int_{\{-t_1\leq\psi<-t_2\}}|F|^2e^{-\varphi}\mathbb I_{E}(-\psi)
		\geq&\liminf_{j\rightarrow+\infty}\int_{\{z\in M:-\psi(z)\in V_j\}}|F|^2e^{-\varphi}	\\	\geq&\liminf_{j\rightarrow+\infty}\frac{G(T_1)}{\int_{T_1}^{+\infty}c(s)e^{-s}ds}\int_{V_j}e^{-s}\\
		=&\frac{G(T_1)}{\int_{T_1}^{+\infty}c(s)e^{-s}ds}\int_{t_2}^{t_1}e^{-s}\mathbb I_{E}(s)ds,
	\end{split}
\end{displaymath}
which implies that $\int_{\{-t_1\leq\psi<-t_2\}}|F|^2e^{-\varphi}\mathbb I_{E}(-\psi)=\frac{G(T_1)}{\int_{T_1}^{+\infty}c(s)e^{-s}ds}\int_{t_2}^{t_1}e^{-s}\mathbb I_{E}(s)ds$. Hence we obtain that equality \eqref{eq:20210412b} holds.

Finally, we prove equality \eqref{eq:20210412a}.

By the definition of $G(t_0;\tilde{c})$, we have $G(t_0;\tilde{c})\leq\int_{\{\psi<-t_0\}}|F|^2e^{-\varphi}\tilde{c}(-\psi)$, then we only consider the case $G(t_0;\tilde{c})<+\infty$.

By the definition of $G(t_0;\tilde{c})$, we can choose a  holomorphic $(n,0)$ form $F_{t_0,\tilde{c}}$ on
$\{\psi<-t_0\}$ satisfying $(F_{t_0,\tilde{c}}-f)\in H^{0}(Z_0,(\mathcal{O}(K_{M})\otimes\mathcal{F})|_{Z_0})$ and $\int_{\{\psi<-t_0\}}|F_{t_0,\tilde{c}}|^{2}e^{-\varphi}\tilde{c}(-\psi)<+\infty$. As $\mathcal H^2(\tilde{c},t_0)\subset\mathcal H^2(c,t_0)$, we have $\int_{\{\psi<-t_0\}}|F_{t_0,\tilde{c}}|^2e^{-\varphi}c(-\psi)<+\infty$. By using Lemma \ref{lem:A}, we obtain that
\begin{displaymath}
\begin{split}
\int_{\{\psi<-t\}}|F_{t_0,\tilde{c}}|^2e^{-\varphi}c(-\psi)=&\int_{\{\psi<-t\}}|F|^2e^{-\varphi}c(-\psi)\\
	&+\int_{\{\psi<-t\}}|F_{t_0,\tilde{c}}-F|^2e^{-\varphi}c(-\psi)	
\end{split}	
\end{displaymath}
for any $t\geq t_0$, then
\begin{equation}
	\label{eq:20210413a}
	\begin{split}
	\int_{\{-t_3\leq\psi<-t_4\}}|F_{t_0,\tilde{c}}|^2e^{-\varphi}c(-\psi)=&\int_{\{-t_3\leq\psi<-t_4\}}|F|^2e^{-\varphi}c(-\psi)\\
	&+\int_{\{-t_3\leq\psi<-t_4\}}|F_{t_0,\tilde{c}}-F|^2e^{-\varphi}c(-\psi)
		\end{split}
\end{equation}
holds for any $t_3>t_4\geq t_0$.
It follows from the dominant convergence theorem, equality \eqref{eq:20210413a}, equality \eqref{eq:20210412g} and $c(t)>0$ for any $t>T$, that
\begin{equation}
	\label{eq:20210413d}
	\int_{\{z\in M:-\psi(z)=t\}}|F_{t_0,\tilde{c}}|^2e^{-\varphi}=\int_{\{z\in M:-\psi(z)=t\}}|F_{t_0,\tilde{c}}-F|^2e^{-\varphi}
\end{equation}
holds for any $t>t_0$.

Choosing any closed interval $[t_4',t_3']\subset (t_0,+\infty)\subset(T,+\infty)$. Note that  $c(t)$ is uniformly continuous  and have positive lower bound and upper bound on $[t_4',t_3']\backslash U_k$, where $\{U_k\}_k$ is a decreasing sequence of open subsets of $(T,+\infty)$, such that $c$ is continuous on $(T,+\infty)\backslash U_k$ and $\lim_{k\rightarrow+\infty}\mu(U_k)=0$. Take $N=\cap_{k=1}^{+\infty}U_k$. Note that
\begin{equation}
	\label{eq:20210413b}
	\begin{split}
		&\int_{\{-t_3'\leq\psi<-t_4'\}}|F_{t_0,\tilde{c}}|^2e^{-\varphi}\\
		=&\lim_{n\rightarrow+\infty}\sum_{i=0}^{n-1}\int_{\{z\in M:-\psi(z)\in I_{n,i}\backslash U_k\}}|F_{t_0,\tilde{c}}|^2e^{-\varphi}+\int_{\{z\in M:-\psi(z)\in (t_4',t_3']\cap U_k\}}|F_{t_0,\tilde{c}}|^2e^{-\varphi}\\
		\leq&\limsup_{n\rightarrow+\infty}\sum_{i=0}^{n-1}\frac{1}{\inf_{I_{n,i}\backslash U_k}c(t)}\int_{\{z\in M:-\psi(z)\in I_{n,i}\backslash U_k\}}|F_{t_0,\tilde{c}}|^2e^{-\varphi}c(-\psi)\\
		&+\int_{\{z\in M:-\psi(z)\in (t_4',t_3']\cap U_k\}}|F_{t_0,\tilde{c}}|^2e^{-\varphi},
	\end{split}
\end{equation}
where $I_{n,i}=(t_4'-(i+1)\alpha_{n},t_3'-i\alpha_{n}]$ and $\alpha_n=\frac{t_3'-t_4'}{n}$.
It following from equality \eqref{eq:20210413a}, \eqref{eq:20210413d}, \eqref{eq:20210412g},  and the dominated theorem that
\begin{equation}
	\label{eq:2106e}
	\begin{split}
		&\int_{\{z\in M:-\psi(z)\in I_{n,i}\backslash U_k\}}|F_{t_0,\tilde{c}}|^2e^{-\varphi}c(-\psi)\\
		=&\int_{\{z\in M:-\psi(z)\in I_{n,i}\backslash U_k)\}}|F|^2e^{-\varphi}c(-\psi)+\int_{\{z\in M:-\psi(z)\in I_{n,i}\backslash U_k)\}}|F_{t_0,\tilde{c}}-F|^2e^{-\varphi}c(-\psi).
	\end{split}
\end{equation}
As $c(t)$ is uniformly continuous  and have positive lower bound and upper bound on $[t_4',t_3']\backslash U_k$, combining equality \eqref{eq:2106e}, we have
\begin{equation}
	\label{eq:2106f}\begin{split}
		&\limsup_{n\rightarrow+\infty}\sum_{i=0}^{n-1}\frac{1}{\inf_{I_{n,i}\backslash U_k}c(t)}\int_{\{z\in M:-\psi(z)\in I_{n,i}\backslash U_k\}}|F_{t_0,\tilde{c}}|^2e^{-\varphi}c(-\psi)\\
		=&\limsup_{n\rightarrow+\infty}\sum_{i=0}^{n-1}\frac{1}{\inf_{I_{n,i}\backslash U_k}c(t)}(\int_{\{z\in M:-\psi(z)\in I_{n,i}\backslash U_k)\}}|F|^2e^{-\varphi}c(-\psi)\\
		&+\int_{\{z\in M:-\psi(z)\in I_{n,i}\backslash U_k)\}}|F_{t_0,\tilde{c}}-F|^2e^{-\varphi}c(-\psi))\\
		\leq&\limsup_{n\rightarrow+\infty}\sum_{i=0}^{n-1}\frac{\sup_{I_{n,i}\backslash U_k}c(t)}{\inf_{I_{n,i}\backslash U_k}c(t)}(\int_{\{z\in M:-\psi(z)\in I_{n,i}\backslash U_k\}}|F|^2e^{-\varphi}\\
		&+\int_{\{z\in M:-\psi(z)\in I_{n,i}\backslash U_k\}}|F_{t_0,\tilde{c}}-F|^2e^{-\varphi})\\
		=&\int_{\{z\in M:-\psi(z)\in (t_4',t_3']\backslash U_k\}}|F|^2e^{-\varphi}+\int_{\{z\in M:-\psi(z)\in (t_4',t_3']\backslash U_k\}}|F_{t_0,\tilde{c}}-F|^2e^{-\varphi}.
	\end{split}
\end{equation}
It follows from inequality \eqref{eq:20210413b} and \eqref{eq:2106f}, we obtain that
\begin{equation}
	\label{eq:2106g}
	\begin{split}
		&\int_{\{-t_3'\leq\psi<-t_4'\}}|F_{t_0,\tilde{c}}|^2e^{-\varphi}\\
		\leq&\int_{\{z\in M:-\psi(z)\in (t_4',t_3']\backslash U_k\}}|F|^2e^{-\varphi}+\int_{\{z\in M:-\psi(z)\in (t_4',t_3']\backslash U_k\}}|F_{t_0,\tilde{c}}-F|^2e^{-\varphi}\\&+\int_{\{z\in M:-\psi(z)\in (t_4',t_3']\cap U_k\}}|F_{t_0,\tilde{c}}|^2e^{-\varphi}.
	\end{split}
\end{equation}
It follows from $F_{t_0,\tilde{c}}\in\mathcal{H}^2(c,t_0)$ that $\int_{\{-t_3'\leq\psi<-t_4'\}}|F_{t_0,\tilde{c}}|^2e^{-\varphi}<+\infty$. Let $k\rightarrow+\infty$, following from equality \eqref{eq:20210412g}, inequality \eqref{eq:2106g} and the dominated theorem,  we have
\begin{equation}
\label{eq:20210414b}
	\begin{split}
		\int_{\{-t_3'\leq\psi<-t_4'\}}|F_{t_0,\tilde{c}}|^2e^{-\varphi}\leq&\int_{\{-t_3'\leq\psi<-t_4'\}}|F|^2e^{-\varphi}\\
		&+\int_{\{z\in M:-\psi(z)\in (t_4',t_3']\backslash N\}}|F_{t_0,\tilde{c}}-F|^2e^{-\varphi}\\&+\int_{\{z\in M:-\psi(z)\in (t_4',t_3']\cap N\}}|F_{t_0,\tilde{c}}|^2e^{-\varphi},
	\end{split}
\end{equation}
 Following from a similar discussion, we can obtain that
\begin{displaymath}
	\begin{split}
		\int_{\{-t_3'\leq\psi<-t_4'\}}|F_{t_0,\tilde{c}}|^2e^{-\varphi}\geq&\int_{\{-t_3'\leq\psi<-t_4'\}}|F|^2e^{-\varphi}\\
		&+\int_{\{z\in M:-\psi(z)\in (t_4',t_3']\backslash N\}}|F_{t_0,\tilde{c}}-F|^2e^{-\varphi}\\&+\int_{\{z\in M:-\psi(z)\in (t_4',t_3']\cap N\}}|F_{t_0,\tilde{c}}|^2e^{-\varphi},
	\end{split}
\end{displaymath}
then combining inequality \eqref{eq:20210414b} we have
\begin{equation}
	\label{eq:20210413c}
	\begin{split}
		\int_{\{-t_3'\leq\psi<-t_4'\}}|F_{t_0,\tilde{c}}|^2e^{-\varphi}=&\int_{\{-t_3'\leq\psi<-t_4'\}}|F|^2e^{-\varphi}\\
		&+\int_{\{z\in M:-\psi(z)\in (t_4',t_3']\backslash N\}}|F_{t_0,\tilde{c}}-F|^2e^{-\varphi}\\&+\int_{\{z\in M:-\psi(z)\in (t_4',t_3']\cap N\}}|F_{t_0,\tilde{c}}|^2e^{-\varphi}.
	\end{split}\end{equation}
Using equality \eqref{eq:20210412g}, \eqref{eq:20210413d}, \eqref{eq:20210413c} and  Levi's Theorem, we have
\begin{equation}
\label{eq:2021525a}
	\begin{split}
		\int_{\{z\in M:-\psi(z)\in U\}}|F_{t_0,\tilde{c}}|^2e^{-\varphi}=&\int_{\{z\in M:-\psi(z)\in U\}}|F|^2e^{-\varphi}\\
		&+\int_{\{z\in M:-\psi(z)\in  U\backslash N\}}|F_{t_0,\tilde{c}}-F|^2e^{-\varphi}\\&+\int_{\{z\in M:-\psi(z)\in U\cap N\}}|F_{t_0,\tilde{c}}|^2e^{-\varphi}
	\end{split}
\end{equation}
holds for any open set $U\subset\subset(t_0,+\infty)$, and
\begin{equation}
\label{eq:2021525b}
	\begin{split}
		\int_{\{z\in M:-\psi(z)\in V\}}|F_{t_0,\tilde{c}}|^2e^{-\varphi}=&\int_{\{z\in M:-\psi(z)\in V\}}|F|^2e^{-\varphi}\\
		&+\int_{\{z\in M:-\psi(z)\in  V\backslash N\}}|F_{t_0,\tilde{c}}-F|^2e^{-\varphi}\\&+\int_{\{z\in M:-\psi(z)\in V\cap N\}}|F_{t_0,\tilde{c}}|^2e^{-\varphi}
	\end{split}
\end{equation}
holds for any compact set $V\subset(t_0,+\infty)$. For any  measurable set $E\subset\subset(t_0,+\infty)$, there exists a sequence of compact sets $\{V_l\}$, such that $V_l\subset V_{l+1}\subset E$ for any $l$ and $\lim_{l\rightarrow}\mu(V_l)=\mu(E)$, hence
\begin{equation}
\label{eq:2021525c}
	\begin{split}
		\int_{\{\psi<-t_0\}}|F_{t_0,\tilde{c}}|^2e^{-\varphi}\mathbb I_{E}(-\psi)\geq&\lim_{l\rightarrow+\infty}\int_{\{\psi<-t_0\}}|F_{t_0,\tilde{c}}|^2e^{-\varphi}\mathbb I_{V_{j}}(-\psi)
		\\\geq&\lim_{j\rightarrow+\infty}\int_{\{\psi<-t_0\}}|F|^2e^{-\varphi}\mathbb I_{V_j}(-\psi)
		\\=&\int_{\{\psi<-t_0\}}|F|^2e^{-\varphi}\mathbb I_{E}(-\psi).
	\end{split}
\end{equation}

It is clear that for any $t>t_0$, there exists a sequence of functions $\{\sum_{j=1}^{n_i}\mathbb I_{E_{ij}}\}_{i=1}^{+\infty}$ defined on $(t,+\infty)$, satisfying $E_{ij}\subset\subset(t,+\infty)$, $\sum_{j=1}^{n_{i+1}}\mathbb I_{E_{i+1j}}(s)\geq\sum_{j=1}^{n_i}\mathbb I_{E_{ij}}(s)$, and $\lim_{i\rightarrow+\infty}\sum_{j=1}^{n_i}\mathbb I_{E_{ij}}(s)=\tilde{c}(s)$  for any $s>t$.
Combining Levi's Theorem and inequality \eqref{eq:2021525c}, we have
\begin{equation}
\label{eq:2021525d}
		\int_{\{\psi<-t_0\}}|F_{t_0,\tilde{c}}|^2e^{-\varphi}\tilde{c}(-\psi)\geq\int_{\{\psi<-t_0\}}|F|^2e^{-\varphi}\tilde{c}(-\psi).
\end{equation}
By the definition of $G(t_0,\tilde{c})$, we have $G(t_0,\tilde{c})=\int_{\{\psi<-t_0\}}|F|^2e^{-\varphi}\tilde{c}(-\psi)$. Then equality \eqref{eq:20210412a} holds.

\section{Proofs of Theorem \ref{thm:L2}, Theorem \ref{thm:ll1}, Corollary \ref{c:L2b} and Corollary \ref{thm:ll2}}
In this section, we prove Theorem \ref{thm:L2}, Theorem \ref{thm:ll1}, Corollary \ref{c:L2b} and Corollary \ref{thm:ll2}.

\subsection{Proof of Theorem \ref{thm:L2}}

The following remark  shows that it suffices to consider Theorem \ref{thm:L2} for the case $c(t)$ has a positive lower bound and upper bound on $(t',+\infty)$ for any $t'>T$.

\begin{Remark}\label{r:L2c}
Take $c_j$ is a positive measurable function on $(T,+\infty)$, such that $c_{j}(t)=c(t)$ when $t<T+j$, $c_j(t)=\min\{c(T+j),\frac{1}{j}\}$ when $t\geq T+j$. It is clear that $c_j(t)e^{-t}$ is decreasing with respect to $t$, and $\int_{T}^{+\infty}c_j(t)e^{-t}<+\infty$. As
$$\lim_{j\rightarrow+\infty}\int_{T+j}^{+\infty}c_n(t)e^{-t}=0,$$
we have
$$\lim_{j\rightarrow+\infty}\int_{T}^{+\infty}c_j(t)e^{-t}=\int_{T}^{+\infty}c(t)e^{-t}.$$

	If Theorem \ref{thm:L2} holds in this case, then there exists a holomorphic $(n,0)$ form $F_j$ on $M$ such that $F_j|_S=f$ and
$$\int_{M}|F_j|^2e^{-\varphi}c_j(-\psi)\leq\left(\int_T^{+\infty}c_j(t)e^{-t}dt\right)\sum_{k=1}^{n}\frac{\pi^k}{k!}\int_{S_{n-k}}|f|^2e^{-\varphi}dV_{M}[\psi].$$
Note that $\psi$ has locally lower bound on $M\backslash\psi^{-1}(-\infty)$ and $\psi^{-1}(-\infty)$ is a closed subset of some analytic subset of $M$, it follows from Lemma \ref{l:converge} that there exists a subsequence of $\{F_j\}$, denoted still by $\{F_j\}$, which is uniformly convergent to a holomorphic $(n,0)$ form $F$ on any compact subset of $M$ and
\begin{displaymath}
	\begin{split}
		\int_{M}|F|^2e^{-\varphi}c(-\psi)&\leq\lim_{j\rightarrow+\infty}\left(\int_T^{+\infty}c_j(t)e^{-t}dt\right)\sum_{k=1}^{n}\frac{\pi^k}{k!}\int_{S_{n-k}}|f|^2e^{-\varphi}dV_{M}[\psi]\\
		&=\left(\int_T^{+\infty}c(t)e^{-t}dt\right)\sum_{k=1}^{n}\frac{\pi^k}{k!}\int_{S_{n-k}}|f|^2e^{-\varphi}dV_{M}[\psi].	\end{split}
\end{displaymath}
 Since $F_j|_S=f$ for any $j$, we have $F|_S=f$.

\end{Remark}

By the definition of condition $(ab)$, $\liminf_{t\rightarrow+\infty}c(t)>0$, it suffices to prove the case that $M$ is Stein manifold and $S_{reg}=S$. Without loss of generality, we can assume that $supp(\mathcal{O}_M\slash\mathcal{I}(\psi))=S_{reg}$ (if $supp(\mathcal{O}_M\slash\mathcal{I}(\psi))\not=S_{reg}$, there exists a analytic subset $X$ of $M$ such that $(M,X)$ satisfies condition $(ab)$ and $supp(\mathcal{O}_M\slash\mathcal{I}(\psi))\backslash S_{reg}\in X$).

Since $M$ is Stein, we can find a sequence of Stein manifolds $\{D_m\}_{m=1}^{+\infty}$ satisfying $D_m\subset\subset D_{m+1}$ for any $m$ and $\cup_{m=1}^{+\infty}D_m=M$, and there is a holomorphic $(n,0)$ form $\tilde F$ on $M$ such that $\tilde{F}|_S=f$.

Note that $\int_{D_m}|\tilde F|^2<+\infty$ for any $m$ and
$$\int_{D_m}\mathbb{I}_{\{-t_0-1<\psi<-t_0\}}|\tilde{F}|^2e^{-\varphi-\psi}<+\infty$$
for any $m$ and $t_0>T$.
Using Lemma \ref{lem:GZ_sharp}, for any $D_m$ and $t_0>T$, there exists a holomorphic $(n,0)$ form $F_{m,t_0}$ on $D_m$, such that
\begin{equation}
\label{eq:210622a}
\begin{split}
&\int_{D_m}|F_{m,t_0}-(1-b_{t_0,1}(\psi))\tilde{F}|^{2}e^{-\varphi-\psi+v_{t_0,1}(\psi)}c(-v_{t_0,1}(\psi))\\
\leq& \left(\int_{T}^{t_{0}+1}c(t)e^{-t}dt\right) \int_{D_m}\mathbb{I}_{\{-t_0-1<\psi<-t_0\}}|\tilde{F}|^2e^{-\varphi-\psi}
\end{split}
\end{equation}
where
$b_{t_0,1}(t)=\int_{-\infty}^{t}\mathbb{I}_{\{-t_{0}-1< s<-t_{0}\}}ds$,
$v_{t_0,1}(t)=\int_{0}^{t}b_{t_0,1}(s)ds$. Note that $e^{-\psi}$ is not locally integrable along $S$, and $b_{t_0,1}(t)=0$ for  large enough $t$, then $(F_{m,t_0}-(1-b_{t_0,1}(\psi))\tilde{F})|_{D_m\cap S}=0$, and therefore $F_{m,t_0}|_{D_m\cap S}=f$.

Note that $v_{t_0,1}(\psi)\geq\psi$ and $c(t)e^{-t}$ is decreasing, then the inequality \eqref{eq:210622a} becomes
\begin{equation}
\label{eq:210622c}
\begin{split}
&\int_{D_m}|F_{m,t_0}-(1-b_{t_0,1}(\psi))\tilde{F}|^{2}e^{-\varphi}c(-\psi)\\
\leq& \left(\int_{T}^{t_{0}+1}c(t)e^{-t}dt\right) \int_{D_m}\mathbb{I}_{\{-t_0-1<\psi<-t_0\}}|\tilde{F}|^2e^{-\varphi-\psi}.
\end{split}
\end{equation}

As $\sum_{k=1}^{n}\frac{\pi^k}{k!}\int_{S_{n-k}}\frac{|f|^2}{dV_{M}}e^{-\varphi}dV_{M}[\psi]<+\infty$, by definition of $dV_M[\psi]$ and $supp(\mathcal{O}_M\slash\mathcal{I}(\psi))=S_{reg}$, we have
\begin{equation}
	\label{eq:210622b}
	\begin{split}
	&\limsup_{t_0\rightarrow+\infty}\left(\int_{T}^{t_{0}+1}c(t)e^{-t}dt\right) \int_{D_m}\mathbb{I}_{\{-t_0-1<\psi<-t_0\}}|\tilde{F}|^2e^{-\varphi-\psi}\\
	\leq&\left(\int_{T}^{+\infty}c(t)e^{-t}dt\right) \sum_{k=1}^{n}\frac{\pi^k}{k!}\int_{S_{n-k}\cap D_m}\frac{|f|^2}{dV_{M}}e^{-\varphi}dV_{M}[\psi]\\
	<&+\infty.	
	\end{split}
\end{equation}
Note that $e^{-\varphi}c(-\psi)$ has a positive lower bound on $D_m$, then it follows from inequality \eqref{eq:210622c} and \eqref{eq:210622b} that
$\sup_{t_0}\int_{D_m}|F_{m,t_0}-(1-b_{t_0,1}(\psi))\tilde{F}|^{2}<+\infty.$

Combining with
\begin{equation}
\label{eq:210622d}
\sup_{t_0}\int_{D_m}|(1-b_{t_0,1}(\psi))\tilde{F}|^{2}\leq\sup_{t_0}\int_{D_m}\mathbb{I}_{\{\psi<-t_{0}\}}|\tilde F|^{2}<+\infty,
\end{equation}
one can obtain that $\sup_{t_0}\int_{D_m}|F_{m,t_0}|^{2}<+\infty$,
which implies that
there exists a subsequence of $\{F_{m,t_0}\}_{t_0\rightarrow+\infty}$ (also denoted by $\{F_{m,t_0}\}_{t_0\rightarrow+\infty}$)
compactly convergent to a holomorphic (n,0) form on $D_m$ denoted by $F_{m}$. Then it follows from inequality \eqref{eq:210622c}, inequality \eqref{eq:210622b} and the Fatou's Lemma that
\begin{equation}
\label{eq:210622f}
\begin{split}
\int_{D_m}|F_{m}|^{2}e^{-\varphi}c(-\psi)
=&\int_{D_m}\liminf_{t_0\rightarrow+\infty}|F_{m,t_0}-(1-b_{t_0,1}(\psi))\tilde{F}|^{2}e^{-\varphi}c(-\psi)\\
\leq&\liminf_{t_0\rightarrow+\infty}\int_{D_m}|F_{m,t_0}-(1-b_{t_0,1}(\psi))\tilde{F}|^{2}e^{-\varphi}c(-\psi)\\
\leq&\limsup_{t_0\rightarrow+\infty}\left(\int_{T}^{t_{0}+1}c(t)e^{-t}dt \right)\int_{D_m}\mathbb{I}_{\{-t_0-1<\psi<-t_0\}}|\tilde{F}|^2e^{-\varphi-\psi}\\
	\leq&\left(\int_{T}^{+\infty}c(t)e^{-t}dt\right) \sum_{k=1}^{n}\frac{\pi^k}{k!}\int_{S_{n-k}\cap D_m}\frac{|f|^2}{dV_{M}}e^{-\varphi}dV_{M}[\psi]\\
	\leq&\left(\int_{T}^{+\infty}c(t)e^{-t}dt\right) \sum_{k=1}^{n}\frac{\pi^k}{k!}\int_{S_{n-k}}\frac{|f|^2}{dV_{M}}e^{-\varphi}dV_{M}[\psi],
\end{split}
\end{equation}
and $F_m|_{D_m\cap S}=f$. Inequality \eqref{eq:210622f} implies that
$$\int_{D_m}|F_{m'}|^{2}e^{-\varphi}c(-\psi)\leq \left(\int_{T}^{+\infty}c(t)e^{-t}dt\right) \sum_{k=1}^{n}\frac{\pi^k}{k!}\int_{S_{n-k}}\frac{|f|^2}{dV_{M}}e^{-\varphi}dV_{M}[\psi]$$
holds for any $m'\geq m$.
 As $e^{-\varphi}c(-\psi)$ has a positive lower bound on any $D_m$, by the diagonal method, we obtain a subsequence of $\{F_m\}$, denoted also by $\{F_m\}$, which is uniformly convergent to a holomorphic $(n,0)$ form $F$ on $M$ on any compact subset of $M$ satisfying that $F|_{S}=f$ and
 $$\int_{M}|F|^{2}e^{-\varphi}c(-\psi)\leq \left(\int_{T}^{+\infty}c(t)e^{-t}dt\right) \sum_{k=1}^{n}\frac{\pi^k}{k!}\int_{S_{n-k}}\frac{|f|^2}{dV_{M}}e^{-\varphi}dV_{M}[\psi].$$
Thus Theorem \ref{thm:L2} holds.

\subsection{Proof of Theorem \ref{thm:ll1}}
If $\sum_{k=1}^{n}\frac{\pi^k}{k!}\int_{S_{n-k}}\frac{|f|^2}{dV_{M}}e^{-\varphi}dV_{M}[\psi]=0$, it is clear that $F\equiv0$ satisfying all requirements in Theorem \ref{thm:ll1}. In the following part, we consider the case $\sum_{k=1}^{n}\frac{\pi^k}{k!}\int_{S_{n-k}}\frac{|f|^2}{dV_{M}}e^{-\varphi}dV_{M}[\psi]\in(0,+\infty)$.

Using Theorem \ref{thm:L2}, for any $t>T$, there exists a holomorphic $(n,0)$ form $F_t$ on $\{\psi<-t\}$ such that $F_t|_S=f$ and
$$\int_{\{\psi<-t\}}|F_t|^2e^{-\varphi}c(-\psi)\leq\left(\int_t^{+\infty}c(l)e^{-l}dl\right)\sum_{k=1}^{n}\frac{\pi^k}{k!}\int_{S_{n-k}}\frac{|f|^2}{dV_{M}}e^{-\varphi}dV_{M}[\psi].$$
Then we have inequality
\begin{equation}
	\label{eq:210622j}
	\frac{G(t)}{\int_t^{+\infty}c(l)e^{-l}dl}\leq	\frac{G(T)}{\int_T^{+\infty}c(t)e^{-t}dt}\end{equation}
holds for any $t>T$.
As $(M,S)$ satisfies condition $(ab)$, and $\psi\in A(S)$, Theorem \ref{thm:general_concave} tells us $G(\hat{h}^{-1}(r))$ is concave with respect to $r$.
Combining inequality \eqref{eq:210622j} and Corollary \ref{thm:linear}, we obtain that $G(\hat{h}^{-1}(r))$ is linear with respect to $r$. Note that $\frac{G(T)}{\int_{T}^{+\infty}c(t)e^{-t}dt}=\|f\|_M$, Corollary \ref{thm:1} shows that the rest results of Theorem \ref{thm:ll1} hold.

\subsection{Proof of Corollary \ref{c:L2b}}In this section, we prove Corollary \ref{c:L2b} by using Theorem \ref{thm:L2}.

Since $M$ is Stein, we can find a sequence of Stein manifolds $\{D_l\}_{l=1}^{+\infty}$ satisfying $D_l\subset\subset D_{l+1}$ for any $l$ and $\cup_{l=1}^{+\infty}D_l=M$. Since  $\psi_2$ and $\psi_2+\varphi$ are plurisubharmonic functions on $M$, there exist smooth plurisubharmonic functions $\Psi_m$ and $\Phi_{m'}$, which are decreasingly convergent to $\psi_2$ and $\psi_2+\varphi$, respectively.

Fixed $D_l$, we can choose large enough $m$ such that $\Psi_{m}+\psi_1<-T$ on $D_l$.
Note that $dV_M[\Psi_m+\psi_1]=e^{-\Psi_m}dV_M[\psi_1]$ and
$$\sum_{k=1}^{n}\frac{\pi^k}{k!}\int_{S_{n-k}}\frac{|f|^2}{dV_{M}}e^{-\Phi_{m'}}dV_{M}[\psi_1]\leq\sum_{k=1}^{n}\frac{\pi^k}{k!}\int_{S_{n-k}}\frac{|f|^2}{dV_{M}}e^{-\varphi-\psi_2}dV_{M}[\psi_1]<+\infty,$$
Using Theorem \ref{thm:L2}, for any $D_l$, there exists a holomorphic $(n,0)$ form $F_{l,m'}$ on $D_l$, satisfying $F_{l,m'}|_S=f$ and
\begin{equation}
\label{eq:210622g}
\int_{D_l}|F_{l,m'}|^{2}e^{-\Phi_{m'}+\Psi_m}c(-\Psi_m-\psi_1)\leq \left(\int_{T}^{+\infty}c(t)e^{-t}dt\right) \sum_{k=1}^{n}\frac{\pi^k}{k!}\int_{S_{n-k}}\frac{|f|^2}{dV_{M}}e^{-\Phi_{m'}}dV_{M}[\psi_1].
\end{equation}
As $e^{-\Phi_{m'}+\Psi_m}c(-\Psi_m-\psi_1)$ has locally uniformly positive lower bound for any $m'$ on $D_m\backslash Z$, where $Z$ is some analytic subset of $M$, it follows from Lemma \ref{l:converge} that there exists a subsequence of $\{F_{l,m'}\}_{m'\rightarrow+\infty}$, also denoted by  $\{F_{l,m'}\}_{m'\rightarrow+\infty}$, which satisfies that $\{F_{l,m'}\}_{m'\rightarrow+\infty}$ is uniformly convergent to a holomorphic $(n,0)$ form $F_{l}$ on any compact subset of $D_l$. Following from inequality \eqref{eq:210622g}, Fatou's Lemma and $c(t)e^{-t}$ is decreasing, we have
\begin{equation}
	\label{eq:210622h}
	\begin{split}
	\int_{D_l}|F_{l}|^{2}e^{-\varphi}c(-\psi_2-\psi_1)
\leq&\int_{D_l}|F_{l}|^{2}e^{-\varphi-\psi_2+\Psi_m}c(-\Psi_m-\psi_1)\\
=&\int_{D_l}\lim_{m'\rightarrow+\infty}|F_{l}|^{2}e^{-\Phi_{m'}+\Psi_m}c(-\Psi_m-\psi_1)\\
\leq&\liminf_{m'\rightarrow+\infty}\int_{D_l}|F_{l}|^{2}e^{-\Phi_{m'}+\Psi_m}c(-\Psi_m-\psi_1) \\
\leq&\left(\int_{T}^{+\infty}c(t)e^{-t}dt\right) \sum_{k=1}^{n}\frac{\pi^k}{k!}\int_{S_{n-k}}\frac{|f|^2}{dV_{M}}e^{-\varphi-\psi_2}dV_{M}[\psi_1].	
	\end{split}\end{equation}
Note that $e^{-\varphi}c(-\psi_2-\psi_1)$ has locally a positive lower bound  on $M\backslash Z$, where $Z$ is some analytic subset of $M$, by using Lemma \ref{l:converge} and the diagonal method, we obtain that there exists a subsequence of $\{F_{l}\}$, also denoted by  $\{F_{l}\}$, which satisfies that $\{F_{l}\}$ is uniformly convergent to a holomorphic $(n,0)$ form $F$ on $M$ on any compact subset of $M$. Following from inequality \eqref{eq:210622h} and  Fatou's Lemma, we have
\begin{equation}
	\label{eq:210622i}
	\begin{split}
	\int_{M}|F|^{2}e^{-\varphi}c(-\psi_2-\psi_1)
=&\int_{M}\lim_{l\rightarrow+\infty}\mathbb{I}_{D_l}|F_{l}|^{2}e^{-\varphi}c(-\psi_2-\psi_1)\\
\leq&\liminf_{l\rightarrow+\infty}\int_{D_l}|F_{l}|^{2}e^{-\varphi}c(-\psi_2-\psi_1) \\
\leq&\left(\int_{T}^{+\infty}c(t)e^{-t}dt\right) \sum_{k=1}^{n}\frac{\pi^k}{k!}\int_{S_{n-k}}\frac{|f|^2}{dV_{M}}e^{-\varphi-\psi_2}dV_{M}[\psi_1].	
	\end{split}\end{equation}
This proves Corollary \ref{c:L2b}.

\subsection{Proof of Corollary \ref{thm:ll2}} 	If $\|f\|_S^*=0$, it is clear that $F\equiv0$ satisfying all requirements in Corollary \ref{thm:ll2}. In the following part, we consider the case $\|f\|_S^*\in(0,+\infty)$.

Using Corollary \ref{c:L2b}, for any $t>T$, there exists a holomorphic $(n,0)$ form $F_t$ on $\{\psi<-t\}$ such that $F_t|_S=f$ and
$$\int_{\{\psi<-t\}}|F_t|^2e^{-\varphi}c(-\psi)\leq\left(\int_t^{+\infty}c(l)e^{-l}dl\right)\|f\|_S^*.$$
Then we have inequality
\begin{equation}
	\label{eq:210729a}
	\frac{G(t)}{\int_t^{+\infty}c(l)e^{-l}dl}\leq	\frac{G(T)}{\int_T^{+\infty}c(t)e^{-t}dt}\end{equation}
holds for any $t>T$.
 Theorem \ref{thm:general_concave} tells us $G(\hat{h}^{-1}(r))$ is concave with respect to $r$.
Combining inequality \eqref{eq:210729a} and Corollary \ref{thm:linear}, we obtain that $G(\hat{h}^{-1}(r))$ is linear with respect to $r$. Note that $\frac{G(T)}{\int_{T}^{+\infty}c(t)e^{-t}dt}=\|f\|^*_M$, Corollary \ref{thm:1} shows that the rest results of Theorem \ref{thm:ll2} hold.

\section{Proofs of Theorem \ref{thm:n1}, Theorem \ref{thm:n4} and Theorem \ref{thm:n2}}
In this section, we prove Theorem \ref{thm:n1}, Theorem \ref{thm:n4} and Theorem \ref{thm:n2}.
\subsection{Proof of Theorem \ref{thm:n1}}
We prove the theorem by comparing $G(t;\varphi)$ and $G(t;\tilde\varphi)$. Let us assume that $G(\hat{h}^{-1}(r);\varphi)$ is linear with respect to $r$ to get a contradiction.

As $G(\hat{h}^{-1}(r);\varphi)$ is linear with respect to $r$, it follows from Corollary \ref{thm:1} that there exists a holomorphic $(n,0)$ form $F$ on $M$ such that $(F-f)\in H^0(Z_0,(\mathcal{O}(K_{M})\otimes\mathcal{F})|_{Z_0})$ and $\forall t\geq T$  equality
$$G(t;\varphi)=\int_{\{\psi<-t\}}|F|^2e^{-\varphi}c(-\psi)$$
holds.
 As $\tilde\varphi+\psi$ is plurisubharmonic and $\tilde\varphi-\varphi$ is bounded on $M$, it follows from Theorem \ref{thm:general_concave} that $G(\hat{h}^{-1}(r);\tilde\varphi)$ is concave with respect to $r$.

 As $\tilde\varphi+\psi\geq\varphi+\psi$, $\tilde\varphi+\psi\not=\varphi+\psi$ and both of them are plurisubharmonic functions on $M$, then there exists a subset $U$ of $M$ such that
  $e^{-\tilde\varphi}<e^{-\varphi}$ on a subset $U$ and $\mu(U)>0$, where $\mu$ is Lebesgue measure on $M$. As $F\not\equiv0$, inequality
\begin{equation}
	\label{eq:210615a}
	\frac{G(T_0;\tilde\varphi)}{\int_{T_0}^{+\infty}c(s)e^{-s}ds}\leq\frac{\int_{\{\psi<-T_0\}}|F|^2e^{-\tilde\varphi}c(-\psi)}{\int_{T_0}^{+\infty}c(s)e^{-s}ds}<\frac{G(T_0;\varphi)}{\int_{T_0}^{+\infty}c(s)e^{-s}ds}
\end{equation}
holds for some $T_0>T$.
For $t>T$, there exists a holomorphic $(n,0)$ form $F_t$ on $\{\psi<-t\}$ such that $(F_t-f)\in H^0(Z_0,(\mathcal{O}(K_{M})\otimes\mathcal{F})|_{Z_0})$ and
$$G(t;\tilde\varphi)=\int_{\{\psi<-t\}}|F_t|^2e^{-\tilde\varphi}c(-\psi)<+\infty.$$

As $\tilde\varphi-\varphi$ is bounded on $M$, we have $\int_{\{\psi<-t\}}|F_t|^2e^{-\varphi}c(-\psi)<+\infty.$ It follows from Lemma \ref{lem:A} that
\begin{equation}
	\label{eq:210615b}
	\begin{split}
	G(t_1;\tilde\varphi)-G(t_2;\tilde\varphi)&\geq \int_{\{-t_2\leq\psi<-t_1\}}|F_{t_1}|^2e^{-\tilde\varphi}c(-\psi)\\
		&\geq \left(\inf_{\{-t_2\leq\psi\}}e^{\varphi-\tilde\varphi}\right)\int_{\{-t_2\leq\psi<-t_1\}}|F_{t_1}|^2e^{-\varphi}c(-\psi) \\
		&\geq \left(\inf_{\{-t_2\leq\psi\}}e^{\varphi-\tilde\varphi}\right)\int_{\{-t_2\leq\psi<-t_1\}}|F|^2e^{-\varphi}c(-\psi)
	\end{split}
\end{equation}
holds for $T<t_1<t_2<+\infty$.
As  $\lim_{t\rightarrow T+0}\sup_{z\in\{\psi\geq-t\}}((\tilde\varphi-\varphi)(z))=0$, it follows from inequality \eqref{eq:210615a} and \eqref{eq:210615b} that
\begin{displaymath}
	\begin{split}
	\liminf_{t_2\rightarrow T+0}\frac{G(t_1;\tilde\varphi)-G(t_2;\tilde\varphi)}{\int_{t_1}^{t_2}c(s)e^{-s}ds}
	\geq& \liminf_{t_2\rightarrow T+0}\left(\inf_{z\in\{-t_2\leq\psi\}}e^{\varphi-\tilde\varphi}\right) \frac{\int_{\{-t_2\leq\psi<-t_1\}}|F|^2e^{-\varphi}c(-\psi)}{\int_{t_1}^{t_2}c(s)e^{-s}ds}	\\
	=&\frac{G(T_0;\varphi)}{\int_{T_0}^{+\infty}c(s)e^{-s}ds} \\
	>&\frac{G(T_0;\tilde\varphi)}{\int_{T_0}^{+\infty}c(s)e^{-s}ds},
	\end{split}
\end{displaymath}
which  contradicts the concavity of $G(\hat{h}^{-1}(r);\tilde\varphi)$. Thus the assumption doesn't hold, i.e., $G(\hat{h}^{-1}(r);\varphi)$ is not linear with respect to $r$.

Especially, if $\varphi+\psi$ is strictly plurisubharmonic at $z_1\in M$, we can construct a $\tilde\varphi\geq\varphi$ satisfying the three statements in Theorem \ref{thm:n1}, which implies $G(\hat{h}^{-1}(r);\varphi)$ is not linear with respect to $r$. In fact, there is a small open neighborhood $(U,w)$ of $z_1$ and $w=(w_1,...,w_n)$ is the local coordinate on $U$ such that $i\partial\bar\partial(\varphi+\psi)>\epsilon\omega$ for some $\epsilon>0$, where $\omega=i\sum_{j=1}^{n}dw_j\wedge d\bar w_j$ on $U$. Let $\rho$ be a smooth nonnegative function on $M$ satisfying $\rho\not\equiv0$ and $supp\rho\subset\subset U$. It is clear that there exists a positive number $\delta$ such that
$$i\partial\bar\partial(\varphi+\psi+\delta\rho)>0$$
holds on $U$. Let $\tilde\varphi=\varphi+\delta\rho$, it is clear that $\tilde\varphi$ satisfies the three statements in Theorem \ref{thm:n1}. Thus we complete the proof of Theorem \ref{thm:n1}.

\subsection{Proof of Theorem \ref{thm:n4}}

Let $\tilde\varphi=\varphi+\psi-\tilde\psi$, then $\tilde\varphi+\tilde\psi=\varphi+\psi$ is a plurisubharmonic function on $M$. We prove the theorem by comparing $G(t;\varphi,\psi)$ and $G(t;\tilde\varphi,\tilde\psi)$. Let us assume that $G(\hat{h}^{-1}(r);\varphi,\psi)$ is linear with respect to $r$ to get a contradiction.

 Since $G(T;\varphi,\psi)\in(0,+\infty)$, $G(\hat{h}^{-1}(r))$ is linear and Corollary \ref{thm:1}, we have $\int_{T}^{+\infty}c(t)e^{-t}dt<+\infty$.
As $G(\hat{h}^{-1}(r);\varphi,\psi)$ is linear with respect to $r$, it follows from Corollary \ref{thm:1}, Remark \ref{r:c} and Lemma \ref{l:c'} that we can assume $c(t)e^{-t}$ is strictly decreasing on $(T,+\infty)$ and $c(t)$  is  increasing on $(a,+\infty)$ for some $a>T$.

Using Corollary \ref{thm:1}, there exists a holomorphic $(n,0)$ form $F$ on $M$, such that $(F-f)\in H^0(Z_0,(\mathcal{O}(K_{M})\otimes\mathcal{F})|_{Z_0})$ and $\forall t\geq T$  equality
$$G(t;\varphi,\psi)=\int_{\{\psi<-t\}}|F|^2e^{-\varphi}c(-\psi)$$
holds.

 Since $\lim_{t\rightarrow+\infty}\sup_{\{\psi<-t\}}(\tilde\psi-\psi)=0$, we have $Z_0\subset\{\psi=-\infty\}=\{\tilde\psi=-\infty\}$. As $c(t)e^{-t}$ is decreasing and $\tilde\psi\geq\psi$, we have $e^{-\varphi}c(-\psi)=e^{-\varphi-\psi}e^{\psi}c(-\psi)\leq e^{-\tilde\varphi-\tilde\psi}e^{\tilde\psi}c(-\tilde\psi)=e^{-\tilde\varphi}c(-\tilde\psi)$. It follows from Theorem \ref{thm:general_concave} that $G(\hat{h}^{-1}(r);\tilde\varphi,\tilde\psi)$ is concave with respect to $r$.

We claim that
 \begin{equation}
 	\label{eq:210618a}\lim_{t\rightarrow T+0}\frac{G(t;\tilde\varphi,\tilde\psi)}{\int_{t}^{+\infty}c(s)e^{-s}ds}>\frac{G(T;\varphi,\psi)}{\int_{T}^{+\infty}c(s)e^{-s}ds}. \end{equation}
 In fact, we just need to prove the inequality for the case $G(T;\tilde\varphi,\tilde\psi)<+\infty$.
  It follows from Lemma \ref{lem:A} that there exists a holomorphic $(n,0)$ form $F_T$ on $M$ such that $(F_T-f)\in H^0(Z_0,(\mathcal{O}(K_{M})\otimes\mathcal{F})|_{Z_0})$ and
$$G(T;\tilde\varphi,\tilde\psi)=\int_M|F_T|^2e^{-\varphi}c(-\tilde\psi)\in(0,+\infty),$$
where $G(T;\tilde\varphi,\tilde\psi)>0$ follows from $G(T;\varphi,\psi)>0$.
   As $\tilde\psi\geq\psi$, $\tilde\psi\not=\psi$ and both of them are plurisubharmonic functions on $M$, then there exists a subset $U$ of $M$ such that
  $\tilde\psi>\psi$ on a subset $U$ and $\mu(U)>0$, where $\mu$ is Lebesgue measure on $M$. As  $F_T\not\equiv0$ and $c(t)e^{-t}$ is strictly decreasing on $(T,+\infty)$, we have
\begin{displaymath}
	\begin{split}
	\frac{G(T;\tilde\varphi,\tilde\psi)}{\int_{T}^{+\infty}c(s)e^{-s}ds}=&\frac{\int_{M}|F_T|^2e^{-\tilde\varphi}c(-\tilde\psi)}{\int_{T}^{+\infty}c(s)e^{-s}ds}\\
	>&\frac{\int_{M}|F_T|^2e^{-\varphi}c(-\psi)}{\int_{T}^{+\infty}c(s)e^{-s}ds}\\
	\geq&\frac{G(T;\varphi,\psi)}{\int_{T}^{+\infty}c(s)e^{-s}ds}.		\end{split}	
\end{displaymath}
Then the claim holds.

As $c(t)$ is increasing on $(a,+\infty)$ and $\lim_{t\rightarrow+\infty}\sup_{\{\psi<-t\}}(\tilde\psi-\psi)=0$, we obtain that
\begin{equation}
	\label{eq:210618b}
	\begin{split}
		\lim_{t\rightarrow+\infty}\frac{G(t;\tilde\varphi,\tilde\psi)}{\int_{t}^{+\infty}c(s)e^{-s}ds}\leq&\lim_{t\rightarrow+\infty}\frac{\int_{\{\tilde\psi<-t\}}|F|^2e^{-\tilde\varphi}c(-\tilde\psi)}{\int_{t}^{+\infty}c(s)e^{-s}ds}\\
		\leq&\lim_{t\rightarrow+\infty}\frac{\int_{\{\psi<-t\}}|F|^2e^{-\varphi-\psi}e^{\tilde\psi}c(-\psi)}{\int_{t}^{+\infty}c(s)e^{-s}ds}\\
		\leq&\lim_{t\rightarrow+\infty}\left(\sup_{\{\psi<-t\}}e^{\tilde\psi-\psi}\right)\frac{\int_{\{\psi<-t\}}|F|^2e^{-\varphi}c(-\psi)}{\int_{t}^{+\infty}c(s)e^{-s}ds}\\
		=&\frac{\int_{\{\psi<-T\}}|F|^2e^{-\varphi}c(-\psi)}{\int_{T}^{+\infty}c(s)e^{-s}ds}	.		\end{split}
\end{equation}

Combining inequality \eqref{eq:210618a} and \eqref{eq:210618b}, we have
$$\lim_{t\rightarrow+\infty}\frac{G(t;\tilde\varphi,\tilde\psi)}{\int_{t}^{+\infty}c(s)e^{-s}ds}<\lim_{t\rightarrow T+0}\frac{G(t;\tilde\varphi,\tilde\psi)}{\int_{t}^{+\infty}c(s)e^{-s}ds},$$
which contradicts the concavity of $G(\hat{h}^{-1}(r);\tilde\varphi,\tilde\psi)$. Thus the assumption doesn't hold, i.e., $G(\hat{h}^{-1}(r);\varphi,\psi)$ is not linear with respect to $r$.

Especially, if $\psi$ is strictly plurisubharmonic at $z_1\in M\backslash(\cap_{t}\overline{\{\psi<-t\}})$, we can construct a $\tilde\psi\geq\psi$ satisfying the three statements in Theorem \ref{thm:n4}, which implies $G(\hat{h}^{-1}(r);\varphi,\psi)$ is not linear with respect to $r$. In fact, there is a small open neighborhood $(U,w)$ of $z_1$ and $w=(w_1,...,w_n)$ is the local coordinate on $U$ such that $U\cap(\cap_{t}\overline{\{\psi<-t\}})=\emptyset$ and $i\partial\bar\partial\psi>\epsilon\omega$ for some $\epsilon>0$, where $\omega=i\sum_{j=1}^{n}dw_j\wedge d\bar w_j$ on $U$. Let $\rho$ be a smooth nonnegative function on $M$ satisfying $\rho\not\equiv0$ and $supp\rho\subset\subset U$. It is clear that there exists a positive number $\delta$ such that
$$i\partial\bar\partial(\psi+\delta\rho)>0$$
holds on $U$ and $\psi+\delta\rho<-T$ on $M$. Let $\tilde\psi=\psi+\delta\rho$, it is clear that $\tilde\psi$ satisfies the three statements in Theorem \ref{thm:n4}. Thus we complete the proof of Theorem \ref{thm:n4}.

\subsection{A limiting property of $G(t)$}
The following Proposition gives a limiting property of $G(t)$, which will be used in the proof of Theorem \ref{thm:n2} and Corollary \ref{c:ll3}.
\begin{Proposition}
	\label{p:G}Let $M$ be an $n-$dimensional Stein manifold, and let $S$ be an analytic subset of $M$. Let $c\in P_T$, and let $(\varphi,\psi)\in W$. Let $\mathcal{F}|_{Z_0}=\mathcal{I}(\psi_1)|_{S_{reg}}$. Assume that  $G(T)\in(0,+\infty)$ and  $\psi_2(z)>-\infty$ for almost every $z\in S_{reg}$.
	
	Assume that $c(t)$ is increasing on $(a,+\infty)$ for some $a>T$.
	Then we have
	\begin{equation}
		\label{eq:210626a}
		\lim_{t\rightarrow+\infty}\frac{G(t)}{\int_{t}^{+\infty}c(l)e^{-l}dl}=\sum_{k=1}^{n}\frac{\pi^{k}}{k!}\int_{S_{n-k}}\frac{|f|^2}{dV_M}e^{-\varphi-\psi_2}dV_M[\psi_1].
	\end{equation}
\end{Proposition}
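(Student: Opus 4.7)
I will establish matching upper and lower bounds on the ratio $G(t)/\int_{t}^{+\infty}c(s)e^{-s}ds$, after first noting that its limit exists. Applying Corollary \ref{c:L2b} on the Stein pseudoconvex sublevel set $\{\psi<-t\}$ (which contains $S$, since $S\subset\{\psi_1=-\infty\}\subset\{\psi=-\infty\}\subset\{\psi<-t\}$) with the same data $(\varphi,\psi_1,\psi_2,c,f)$ gives
$$G(t)\leq\left(\int_{t}^{+\infty}c(s)e^{-s}ds\right)\|f\|_S^{*}$$
for every $t>T$ (note that $\int_{T}^{+\infty}c(s)e^{-s}ds<+\infty$ follows from Corollary \ref{infty} combined with $G(T)\in(0,+\infty)$), so $\limsup_{t\to+\infty}G(t)/\int_{t}^{+\infty}c(s)e^{-s}ds\leq\|f\|_S^{*}$. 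Moreover, Theorem \ref{thm:general_concave} implies that $G(\hat h^{-1}(r))$ is concave on $[0,\int_{T}^{+\infty}c(s)e^{-s}ds)$ with $\lim_{r\to 0^{+}}G(\hat h^{-1}(r))=0$; for concave functions vanishing at the origin the quotient $\phi(r)/r$ is nonincreasing in $r$, so $G(t)/\int_{t}^{+\infty}c(s)e^{-s}ds$ is nondecreasing in $t$ and hence the limit $L$ exists.

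For the lower bound I localize near $S_{reg}$ using the minimizer $F_t$ on $\{\psi<-t\}$ from Lemma \ref{lem:A}; since $(F_t-f,z)\in\mathcal{I}(\psi_1)_z$ for $z\in S_{reg}$, we have $F_t|_{S_{reg}}=f$. Fix $x\in S_{n-k}\subset S_{reg}$ and local holomorphic coordinates $(z',z'')\in\mathbb{C}^{n-k}\times\mathbb{C}^{k}$ on a neighborhood $U\ni x$ with $S\cap U=\{z''=0\}$; by $\psi_1\in A'(S)$ we have $\psi_1=k\log|z''|^{2}+u_1$ with $u_1$ continuous. Because $\mathcal{I}(k\log|z''|^{2})=(z_{n-k+1},\ldots,z_n)\cdot\mathcal{O}$, one may write $F_t(z',z'')=f(z')+z''\cdot h_t(z',z'')$ with $h_t$ a vector of holomorphic functions, and decompose
$$|F_t|^{2}=|f|^{2}+2\Re\bigl(f\cdot\overline{z''\cdot h_t}\bigr)+|z''\cdot h_t|^{2}.$$
For the leading $|f|^{2}$-contribution on $U\cap\{\psi<-t\}$, passing to polar coordinates $r=|z''|$ and substituting $s=-\psi\approx-2k\log r-u_1(z',0)-\psi_2(z',0)$ yields the identity $r^{2k-1}dr=\frac{1}{2k}e^{-u_1(z',0)-\psi_2(z',0)}e^{-s}ds\,(1+o(1))$; combined with $\sigma_{2k-1}/(2k)=\pi^{k}/k!$ and the identification $dV_M[\psi_1]|_{S_{n-k}}=e^{-u_1}\,dV_{S_{n-k}}$ (which follows at once from the defining $\limsup$-characterization of $dV_M[\psi_1]$), this gives
$$\int_{U\cap\{\psi<-t\}}|f|^{2}e^{-\varphi}c(-\psi)=\frac{\pi^{k}}{k!}\int_{S_{n-k}\cap U}\frac{|f|^{2}}{dV_M}e^{-\varphi-\psi_2}dV_M[\psi_1]\cdot\int_{t}^{+\infty}c(s)e^{-s}ds\cdot(1+o(1)).$$

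The cross and quadratic terms are controlled by a symmetry-and-bootstrap argument. Writing $W=e^{-\varphi}c(-\psi)$, the leading-order part of $W$ in $z''$ is rotationally invariant (depending only on $|z''|$), so $\int_{|z''|<R_t(z')}\overline{z_j}\,W\,dV(z'')=0$ to leading order and the residual cross-term is of size $O(\int|z''|^{2}|h_t|\,W\,dV)$. Since $c(s)e^{-s}$ is decreasing, a direct computation yields $\int_{\{\psi<-t\}}|z''|^{2}W=O(e^{-t/k}\int_{t}^{+\infty}c(s)e^{-s}ds)$. Letting $a_t,c_t,b_t$ denote the three terms in the decomposition of $G(t)=a_t+c_t+b_t$, Cauchy--Schwarz gives $|c_t|=O(e^{-t/(2k)}\sqrt{b_t\int_{t}^{+\infty}c(s)e^{-s}ds})$, and inserting into the upper bound $G(t)\leq\|f\|_S^{*}\int_{t}^{+\infty}c(s)e^{-s}ds$ with the asymptotics $a_t\sim\|f\|_S^{*}\int_{t}^{+\infty}c(s)e^{-s}ds$ produces a quadratic inequality in $\sqrt{b_t/\int_{t}^{+\infty}c(s)e^{-s}ds}$ which forces $b_t=o(\int_{t}^{+\infty}c(s)e^{-s}ds)$, and hence $c_t=o(\int_{t}^{+\infty}c(s)e^{-s}ds)$ as well. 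Summing the local estimates over a locally finite cover of $S_{reg}$, and using that $c$ is increasing on $(a,+\infty)$ to bound the contribution from $\{\psi<-t\}$ outside a tubular neighborhood of $S_{reg}$, one deduces $L\geq\|f\|_S^{*}$ and concludes equation \eqref{eq:210626a}.

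\textbf{The main obstacle} is the absence of any pointwise bound on $h_t$ uniform in $t$; this is circumvented by the above bootstrap, in which the rotational-symmetry cancellation of the leading cross term conspires with the Step~1 upper bound on $G(t)$ to force the quadratic contribution $b_t$ into the lower-order error. A secondary technical point is handling the polar locus $\{\psi_2=-\infty\}\setminus S$, where the Taylor expansion of $\psi_2$ in $z''$ fails; this is dealt with using the hypothesis $\psi_2(z)>-\infty$ for a.e.\ $z\in S_{reg}$ together with the fact that the polar locus has Lebesgue measure zero.
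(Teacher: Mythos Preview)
Your upper bound via Corollary~\ref{c:L2b} on the sublevel sets and the existence of the limit via concavity are both correct and agree with the paper. The gap is in the lower bound.

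The rotational-symmetry cancellation of the cross term is not justified under the standing hypotheses. You assert that the leading part of $W=e^{-\varphi}c(-\psi)$ depends only on $|z''|$, so that $\int\overline{z_j}\,W\,dV(z'')$ vanishes to leading order and the residual carries an extra factor of $|z''|$. This tacitly requires $\varphi(z',z'')\to\varphi(z',0)$ and $\psi_2(z',z'')\to\psi_2(z',0)$ with rate $O(|z''|)$. But $\psi_2$ is only plurisubharmonic (hence merely upper semicontinuous), and $\varphi=(\varphi+\psi_2)-\psi_2$ is a difference of two psh functions; no such quantitative continuity is available. For general psh $\psi_2$ the discrepancy $W-W_{\mathrm{radial}}$ is of the same order as $W$, not $O(|z''|)W$. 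Without the extra $e^{-t/(2k)}$ factor your bootstrap only yields $b_t-2\sqrt{a_t b_t}\le o(I_t)$, hence $b_t=O(I_t)$, which does not close. The same regularity issue already contaminates the claimed asymptotic $a_t\sim\|f\|_S^*\,I_t$: both the substitution $s=-\psi$ as a function of $r=|z''|$ and the evaluation $e^{-\varphi(z',0)}$ presuppose continuity that can fail.

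The paper's proof of the lower bound avoids decomposing $F_t$ altogether. It writes $e^{-\varphi}=e^{-(\varphi+\psi_2)}e^{\psi_2}\ge e^{-\Phi_n}e^{\psi_2}$ with $\Phi_n\searrow\varphi+\psi_2$ smooth psh, freezes the continuous factors $\Phi_n$ and $u_1$ at $z''=0$, replaces $\psi_2$ inside the argument of $c$ by the $z''$-independent upper envelope $\psi_s(z')=\sup_{|w^\alpha|<s}\psi_2(z',w^\alpha)$ (this is where the monotonicity of $c$ on $(a,+\infty)$ is used), and then applies the sub-mean-value inequality to the log-psh function $\tfrac{|F_t|^2}{dV_M}e^{\psi_2}$ against the resulting \emph{radial} weight in $z''$. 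This directly gives the pointwise lower bound $|f(z',0)|^2 e^{\psi_2(z',0)}\times(\text{radial integral})$ with no need to control any remainder $h_t$. The hypothesis $\psi_2>-\infty$ a.e.\ on $S_{reg}$ enters to guarantee $\psi_s(z')>-\infty$ a.e., so that the ratio of radial integrals tends to $1$ via Fatou's lemma. If you want to salvage your decomposition approach, the only robust route is essentially this sub-mean-value step, at which point the decomposition becomes unnecessary.
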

\begin{proof}
	$\lim_{t\rightarrow+\infty}\frac{G(t)}{\int_{t}^{+\infty}c(l)e^{-l}dl}\leq\sum_{k=1}^{n}\frac{\pi^{k}}{k!}\int_{S_{n-k}}\frac{|f|^2}{dV_M}e^{-\varphi-\psi_2}dV_M[\psi_1]$ can be obtained by using Corollary \ref{c:L2b}. Thus, we just need to prove that
	 $$\lim_{t\rightarrow+\infty}\frac{G(t)}{\int_{t}^{+\infty}c(l)e^{-l}dl}\geq\sum_{k=1}^{n}\frac{\pi^{k}}{k!}\int_{S_{n-k}}\frac{|f|^2}{dV_M}e^{-\varphi-\psi_2}dV_M[\psi_1].$$

	For any $t\geq T$, there exists a holomorphic $(n,0)$ form $F_t$ on $\{\psi<-t\}$, such that $F_t|_S=f$ and $\int_{\{\psi<-t\}}|F_t|^{2}e^{-\varphi}c(-\psi)=G(t)$.
	
Let $\{U^{\alpha}\}_{\alpha\in\mathbb{N}}$ be a coordinate patches of $M\backslash S_{sing}$, biholomorphic to polydiscs, and admite the following property: if $U^{\alpha}\cap S_{reg}\not=\emptyset$, and we denote the corresponding coordinates by $(z^{\alpha},w^{\alpha})\in\Delta^{l}\times\Delta^{n-l}$, where $z^{\alpha}=(z_1^{\alpha},...,z_{l}^{\alpha})$ and $w^{\alpha}=(w_1^{\alpha},...,w_{n-l}^{\alpha})$ for some $l\in\{0,1,2...,n-1\}$, then $U^{\alpha}\cap S=U^{\alpha}\cap S_{l}=\{w^{\alpha}=0\}$. Let $\{v^{\alpha}\}$ be a partition of unity subbordinate to $\{U^{\alpha}\}$.

As $\varphi+\psi_2$ is plurisubharmonic, then there exist smooth plurisubharmonic functions $\Phi_n$ on $M$ decreasingly convergent to $\varphi+\psi_2$. Thus, we have
\begin{equation}
	\label{eq:210620a}\int_{\{\psi<-t\}}v^{\alpha}|F_t|^2e^{-\varphi}c(-\psi)\geq\int_{\{\psi<-t\}}v^{\alpha}|F_t|^2e^{-\Phi_n+\psi_2}c(-\psi)\end{equation}
for any $n\in\mathbb{N}$.

Firstly, we consider $\int_{\{\psi<-t\}}v^{\alpha}|F_t|^2e^{-\Phi_n+\psi_2}c(-\psi)$, where $U^{\alpha}\cap S_{l}\not=\emptyset$.

Note that $\psi=\psi_1+\psi_2$ and $\psi_1\in A'(S)$, then for small enough $s>0$, $\psi_1=(n-l)\log(|w^{\alpha}|^2)+h_1$ on $\Delta^{l}\times\{|w^{\alpha}|<s\}$ and $h_1$ is continuos on $\Delta^{l}\times\{|w^{\alpha}|<s\}$. For any $\epsilon>0$, there exists $s>0$ such that $v^{\alpha}(z^{\alpha},w^{\alpha})\geq \max{\{v^{\alpha}(z^{\alpha},0)-\epsilon,0\}}$, $\Phi_{n}(z^{\alpha},w^{\alpha})\leq\Phi_{n}(z^{\alpha},0)+\epsilon$, and $h_1(z^{\alpha},w^{\alpha})\leq h_1(z^{\alpha},0)+\epsilon$ on $\Delta^{l}\times\{|w^{\alpha}|<s\}$. Let $\psi_s(z^{\alpha})=\sup_{|w^{\alpha}|<s}\psi_2(z^{\alpha},w^{\alpha})$. As $\psi_2(z)>-\infty$ for almost every $z\in S_{reg}$, we know $\psi_s(z^{\alpha})>-\infty$ for almost every $z^{\alpha}\in\Delta^{l}$. Let $v^{\alpha}_{\epsilon}:=\max{\{v^{\alpha}(z^{\alpha},0)-\epsilon,0\}}$. As $c(t)$ is increasing for $t>a$, then we have
\begin{equation}
	\label{eq:210620b}
	\begin{split}
		&\int_{\{\psi<-t\}}v^{\alpha}|F_t|^2e^{-\Phi_n+\psi_2}c(-\psi)\\
		\geq&\int_{\{\psi_2+h_1+(n-l)\log(|w^{\alpha}|^2)<-t\}\cap\{|w^{\alpha}|<s\}}v^{\alpha}_{\epsilon}|F_t|^2e^{-\Phi_n(z^{\alpha},0)-\epsilon+\psi_2}c(-\psi)\\
		\geq&\int_{\{\psi_s+h_1(z^{\alpha},0)+\epsilon+(n-l)\log(|w^{\alpha}|^2)<-t\}\cap\{|w^{\alpha}|<s\}}v^{\alpha}_{\epsilon}|F_t|^2e^{-\Phi_n(z^{\alpha},0)-\epsilon+\psi_2}\\
		&\times c(-\psi_s-h_1(z^{\alpha},0)-\epsilon-(n-l)\log(|w^{\alpha}|^2))
	\end{split}
\end{equation}
for $t>a$.

Without loss of generality, assume that $dV_{M}=(\wedge_{k=1}^{l}idz_k^{\alpha}\wedge d\bar{z}_k^{\alpha})\wedge(\wedge_{k=1}^{n-l}idw_k^{\alpha}\wedge d\bar{w}_k^{\alpha})$, $dV_{\alpha}=\wedge_{k=1}^{l}idz_k^{\alpha}\wedge d\bar{z}_k^{\alpha}$ on $U^{\alpha}$ and $dV'_{\alpha}=\wedge_{k=1}^{n-l}idw_k^{\alpha}\wedge d\bar{w}_k^{\alpha}$. Let $h_2(z^{\alpha}):=\psi_s(z^{\alpha})+h_1(z^{\alpha},0)+\epsilon$. As $\frac{|F_t|^2}{dV_M}e^{\psi_2}$ is plurisubharmonic on $\Delta^{l}\times\{|w^{\alpha}|<s\}$, then we obtain that inequality
\begin{equation}
	\label{eq:210629a}
	\begin{split}
	&\int	_{\{h_2+(n-l)\log(|w^{\alpha}|^2)<-t\}\cap\{|w^{\alpha}|<s\}}\frac{|F_t|^{2}}{dV_M}e^{\psi_2}c(-h_2-(n-l)\log(|w^{\alpha}|^2))dV'_{\alpha}\\
	\geq&\frac{|f(z^{\alpha},0)|^2}{dV_M}e^{\psi_2(z^{\alpha},0)}\int_{\{h_2+(n-l)\log(|w^{\alpha}|^2)<-t\}\cap\{|w^{\alpha}|<s\}}c(-h_2-(n-l)\log(|w^{\alpha}|^2))dV'_{\alpha}\\
	=&	2^{n-l}\frac{\sigma_{2n-2l-1}}{2(n-l)}\frac{|f(z^{\alpha},0)|^2}{dV_M}e^{\psi_2(z^{\alpha},0)}\int_{\max{\{t,-h_2(z^{\alpha})-2(n-l)\log(s)\}}}^{+\infty}c(l)e^{-l}dl
	\end{split}
\end{equation}
holds for any $z^{\alpha}\in\Delta^l$.
It follows from inequality \eqref{eq:210620b} and \eqref{eq:210629a} that
\begin{equation}
	\label{eq:210620c}
	\begin{split}
		&\int_{\{\psi<-t\}}v^{\alpha}|F_t|^2e^{-\Phi_n+\psi_2}c(-\psi)\\
		\geq&\int_{\Delta^{l}}v^{\alpha}_{\epsilon}e^{-\Phi_n(z^{\alpha},0)-\epsilon}\\
		&\times\int	_{\{h_2+(n-l)\log(|w^{\alpha}|^2)<-t\}\cap\{|w^{\alpha}|<s\}}\frac{|F_t|^{2}}{dV_M}e^{\psi_2}c(-h_2-(n-l)\log(|w^{\alpha}|^2))dV_M\\
		\geq&2^{n-l}\frac{\sigma_{2n-2l-1}}{2(n-l)}\int_{\Delta^{l}}v^{\alpha}_{\epsilon}e^{-\Phi_n(z^{\alpha},0)-\epsilon}\frac{|f(z^{\alpha},0)|^2}{dV_M}e^{\psi_2(z^{\alpha},0)}e^{-h_2}\\
		&\times\left(\int_{\max{\{t,-h_2(z^{\alpha})-2(n-l)\log(s)\}}}^{+\infty}c(l)e^{-l}dl\right)dV_{\alpha}    \end{split}
	\end{equation}
for $t>a$.

Next, we prove that
\begin{equation}
	\label{eq:210620g}\liminf_{t\rightarrow+\infty}\frac{\int_{\{\psi<-t\}}v^{\alpha}|F_t|^2e^{-\varphi}c(-\psi)}{\int_{t}^{+\infty}c(l)e^{-l}dl}\geq\frac{\pi^{n-l}}{(n-l)!}\int_{S_l}v^{\alpha}e^{-\varphi-\psi_2}\frac{|f(z^{\alpha},0)|^2}{dV_M}dV_M[\psi_1].\end{equation}

It follows from  $\psi_s(z^{\alpha})>-\infty$ for almost every $z^{\alpha}\in\Delta^{l}$ that  $h_2(z^{\alpha})>-\infty$ for almost every $z^{\alpha}\in\Delta^{l}$. Thus we have
\begin{equation}
	\label{eq:210620d}
	\liminf_{t\rightarrow+\infty}\frac{\int_{\max{\{t,-h_2(z^{\alpha})-2(n-l)\log(s)\}}}^{+\infty}c(l)e^{-l}dl}{\int_{t}^{+\infty}c(l)e^{-l}dl}=1
\end{equation}
for almost every $z^{\alpha}\in\Delta^l$. Combining inequality \eqref{eq:210620c}, equality \eqref{eq:210620d} and Fatou's Lemma, we have
\begin{equation}
	\label{eq:210620e}
	\begin{split}
	&\liminf_{t\rightarrow+\infty}\frac{\int_{\{\psi<-t\}}v^{\alpha}|F_t|^2e^{-\Phi_n+\psi_2}c(-\psi)}{\int_{t}^{+\infty}c(l)e^{-l}dl}\\
	\geq&2^{n-l}\frac{\sigma_{2n-2l-1}}{2(n-l)}\int_{\Delta^{l}}v^{\alpha}_{\epsilon}e^{-\Phi_n(z^{\alpha},0)-\epsilon}\frac{|f(z^{\alpha},0)|^2}{dV_M}e^{\psi_2(z^{\alpha},0)}e^{-h_2}\\
		&\times\liminf_{t\rightarrow+\infty}\frac{\int_{\max{\{t,-h_2(z^{\alpha})-2(n-l)\log(s)\}}}^{+\infty}c(l)e^{-l}dl}{\int_{t}^{+\infty}c(l)e^{-l}dl}dV_{\alpha}\\
		=&2^{n-l}\frac{\sigma_{2n-2l-1}}{2(n-l)}\int_{\Delta^{l}}v^{\alpha}_{\epsilon}e^{-\Phi_n(z^{\alpha},0)-\epsilon}\frac{|f(z^{\alpha},0)|^2}{dV_M}e^{\psi_2(z^{\alpha},0)}e^{-h_2}	dV_{\alpha}
	\end{split}	
\end{equation}

 As $dV_{M}=dV_{\alpha}\wedge(\wedge_{k=1}^{n-l}idw_k^{\alpha}\wedge d\bar{w}_k^{\alpha})$ and $\psi_1=(n-l)\log(|w^{\alpha}|^2)+h_1$, by definition of $dV_M[\psi_1]$, we have $dV_M[\psi_1]=2^{n-l}e^{-h_1}dV_{\alpha}$ on $\Delta^{l}\subset S_l$. Then inequality \eqref{eq:210620e} becomes
 \begin{equation}
 	\label{eq:210620f}
 	\begin{split}
 		&\liminf_{t\rightarrow+\infty}\frac{\int_{\{\psi<-t\}}v^{\alpha}|F_t|^2e^{-\Phi_n+\psi_2}c(-\psi)}{\int_{t}^{+\infty}c(l)e^{-l}dl}\\
 		\geq&\frac{\sigma_{2n-2l-1}}{2(n-l)}\int_{S_l}v^{\alpha}_{\epsilon}e^{-\Phi_n(z^{\alpha},0)-\epsilon}\frac{|f(z^{\alpha},0)|^2}{dV_M}e^{\psi_2(z^{\alpha},0)-\psi_s(z^{\alpha})-\epsilon}dV_M[\psi_1]. 	\end{split}
 \end{equation}
When $s\rightarrow0$, $\psi_s(z^{\alpha})$ is decreasing to  $\psi_2(z^{\alpha},0)$ for any $z^{\alpha}\in\Delta^l$. As $\psi_2(z^{\alpha},0)>-\infty$ for almost every $z^{\alpha}\in\Delta^l$,  let $s\rightarrow 0$ and $\epsilon\rightarrow0$, then inequality \eqref{eq:210620f} implies that
\begin{equation}
\label{eq:210620h}
	\begin{split}
 		&\liminf_{t\rightarrow+\infty}\frac{\int_{\{\psi<-t\}}v^{\alpha}|F_t|^2e^{-\Phi_n+\psi_2}c(-\psi)}{\int_{t}^{+\infty}c(l)e^{-l}dl}\\
 		\geq&\frac{\pi^{n-l}}{(n-l)!}\int_{S_l}v^{\alpha}e^{-\Phi_n(z^{\alpha},0)}\frac{|f(z^{\alpha},0)|^2}{dV_M}dV_M[\psi_1]. 	\end{split}
\end{equation}
Note that $\Phi_n$ decreasing to $\varphi+\psi_2$, then inequality \eqref{eq:210620h} implies that inequality \eqref{eq:210620g} holds.

Following from inequality \eqref{eq:210620g} and the concavity of $G(t)$, we have
$$\lim_{t\rightarrow+\infty}\frac{G(t)}{\int_{t}^{+\infty}c(l)e^{-l}dl}\geq\sum_{k=1}^{n}\frac{\pi^{k}}{k!}\int_{S_{n-k}}\frac{|f|^2}{dV_M}e^{-\varphi-\psi_2}dV_M[\psi_1].$$
Thus, Proposition \ref{p:G} holds.
\end{proof}

\subsection{Proof of Theorem \ref{thm:n2}}

 Assume that $G(\hat{h}^{-1}(r))$ is linear with respect to $r$. As $G(T)\in(0,+\infty)$, we have $\int_{T}^{+\infty}c(t)e^{-t}dt<+\infty$. It follows from Corollary \ref{thm:1}, Remark \ref{r:c} and Lemma \ref{l:c'} that we can assume $c(t)e^{-t}$ is strictly decreasing on $(T,+\infty)$ and $c(t)$  is  increasing on $(a,+\infty)$ for some $a>T$. Thus, Proposition \ref{p:G} shows that equality \eqref{eq:2106i} holds.

In the following part,  assume that there exists $\tilde\psi$ satisfying the three statements in Theorem \ref{thm:n2} to get a contradiction. We prove it by comparing $G(t;\varphi,\psi)$ and $G(t;\tilde\varphi,\tilde\psi)$, where $\tilde\varphi=\varphi+\psi-\tilde\psi$.
It follows from Proposition \ref{p:G} and the linearity of $G(\hat{h}^{-1}(r);\varphi,\psi)$ that $\sum_{k=1}^{n}\int_{S_{n-k}}\frac{\pi^{k}}{k!}\frac{|F|^2}{dV_M}e^{-\varphi-\psi_2}dV_M[\psi_1]<+\infty$ and equality
\begin{equation}
	\label{eq:210620i}
	\frac{G(t;\varphi,\psi)}{\int_{t}^{+\infty}c(l)e^{-l}dl}=\sum_{k=1}^{n}\frac{\pi^{k}}{k!}\int_{S_{n-k}}\frac{|F|^2}{dV_M}e^{-\varphi-\psi_2}dV_M[\psi_1]\end{equation}
holds for any $t\geq T$.

Let $\tilde\varphi=\varphi+\psi-\tilde\psi$ As $(\tilde\varphi,\tilde\psi)\in W$, there exist plurisubharmonic functions $\tilde\psi_1$ and $\tilde\psi_2$ such that $\tilde\psi=\tilde\psi_1+\tilde\psi_2$, $\tilde\psi_1\in A'(S)$ and $\tilde\varphi+\tilde\psi_2$ is plurisubharmonic on $M$. $dV_M[\psi_1]=e^{-\psi_1+\tilde\psi_1}dV_M[\tilde\psi_1]$ implies that $\sum_{k=1}^{n}\frac{\pi^{k}}{k!}\int_{S_{n-k}}\frac{|F|^2}{dV_M}e^{-\tilde\varphi-\tilde\psi_2}dV_M[\tilde\psi_1]=\sum_{k=1}^{n}\frac{\pi^{k}}{k!}\int_{S_{n-k}}\frac{|F|^2}{dV_M}e^{-\varphi-\psi_2}dV_M[\psi_1]<+\infty.$ It follows from Corollary \ref{c:L2b} that
\begin{equation}
	\label{eq:210620j}\begin{split}
	\frac{G(T;\tilde\varphi,\tilde\psi)}{\int_{T}^{+\infty}c(l)e^{-l}dl}\leq &\sum_{k=1}^{n}\int_{S_{n-k}}\frac{\pi^{k}}{k!}\frac{|F|^2}{dV_M}e^{-\tilde\varphi-\tilde\psi_2}dV_M[\tilde\psi_1]\\
	=&\sum_{k=1}^{n}\int_{S_{n-k}}\frac{\pi^{k}}{k!}\frac{|F|^2}{dV_M}e^{-\varphi-\psi_2}dV_M[\psi_1].	
	\end{split}	
\end{equation}
Since $\tilde\psi\geq\psi$, $\tilde\psi\not=\psi$, there exists a subset $U$ of $M$ such that $\mu (U)>0$ and $\tilde\psi>\psi$ on $U$, where $\mu$ is Lebesgue measure on $M$. As $c(t)e^{-t}$
is strictly decreasing on $(T,+\infty)$, we have $G(T;\tilde\varphi,\tilde\psi)>G(T;\varphi,\psi)$. Then  inequality \eqref{eq:210620j} implies that
$$\sum_{k=1}^{n}\int_{S_{n-k}}\frac{\pi^{k}}{k!}\frac{|F|^2}{dV_M}e^{-\varphi-\psi_2}dV_M[\psi_1]\geq\frac{G(T;\tilde\varphi,\tilde\psi)}{\int_{T}^{+\infty}c(l)e^{-l}dl}>\frac{G(T;\varphi,\psi)}{\int_{T}^{+\infty}c(l)e^{-l}dl},$$
which contradicts equality \eqref{eq:210620i}. Thus Theorem \ref{thm:n2} holds.

\section{Proofs of Theorem \ref{thm:e2}, Theorem \ref{thm:e1}, Corollary \ref{c:ll3} and Corollary \ref{c:ll4}}

In this section,  we prove Theorem \ref{thm:e2}, Theorem \ref{thm:e1}, Corollary \ref{c:ll3} and Corollary \ref{c:ll4}.

\subsection{A necessary condition of linearity}
The following Proposition give a necessary condition of $G(\hat{h}^{-1}(r))$ is linear, and will be used in the proof of Theorem \ref{thm:e2}.
 \begin{Proposition}
 	\label{l:n} Let $\Omega$ be an open Riemann surface. Let $c\in\mathcal{P}_0$, and assume that there exists $t\geq0$ such that $G(t)\in(0,+\infty)$.
 If $G(\hat{h}^{-1}(r))$ is linear with respect to $r$, then there is no  Lebesgue measurable function $\tilde \varphi\geq\varphi$ such that $\tilde\varphi+\psi$ is subharmonic function on $M$ and satisfies:
	
	$(1)$ $\tilde\varphi\not=\varphi$ and $\mathcal{I}(\tilde\varphi+\psi)=\mathcal{I}(\varphi+\psi)$;
	
	$(2)$ $\lim_{t\rightarrow 0+0}\sup_{\{\psi\geq-t\}}(\tilde\varphi-\varphi)=0$;
	
	$(3)$ there exists an open subset $U\subset\subset\Omega$ such that $\sup_{\Omega\backslash U}(\tilde\varphi-\varphi)<+\infty$, $e^{-\tilde\varphi}c(-\psi)$ has a positive lower bound on $U$ and $\int_{U}|F_1-F_2|^2e^{-\varphi}c(-\psi)<+\infty$ for any $F_1\in\mathcal{H}^2(c,\tilde\varphi,t)$ and $F_2\in\mathcal{H}^2(c,\varphi,t)$, where $U\subset\subset\{\psi<-t\}$. 	
 \end{Proposition}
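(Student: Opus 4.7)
The plan is to adapt the argument of Theorem \ref{thm:n1}, replacing the global boundedness of $\tilde\varphi-\varphi$ by the two localized conditions encoded in (3). Assuming toward a contradiction that $G(\hat h^{-1}(r);\varphi)$ is linear, Corollary \ref{thm:1} produces a unique holomorphic $(1,0)$ form $F$ on $\Omega$ with $G(t;\varphi)=\int_{\{\psi<-t\}}|F|^{2}e^{-\varphi}c(-\psi)$ for every $t\geq T$ together with the annular identity \eqref{eq:20210412b}. Because $\tilde\varphi\geq\varphi$ and $\mathcal I(\tilde\varphi+\psi)=\mathcal I(\varphi+\psi)$, the same $F$ is a competitor for $G(\cdot;\tilde\varphi)$; since $\tilde\varphi>\varphi$ on a set of positive measure and $F\not\equiv 0$, one obtains the strict inequality $G(T;\tilde\varphi)\leq\int_{\{\psi<-T\}}|F|^{2}e^{-\tilde\varphi}c(-\psi)<G(T;\varphi)$. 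Condition (3) on $U$ together with $\sup_{\Omega\setminus U}(\tilde\varphi-\varphi)<+\infty$ yield $c\in\mathcal P_{T}$ relative to the pair $(\tilde\varphi,\psi)$, so Theorem \ref{thm:general_concave} furnishes the concavity of $g(r):=G(\hat h^{-1}(r);\tilde\varphi)$.

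Next I would carry out the annular comparison for $T<t_{1}<t_{2}<t$ (with $t$ as in condition (3), so $U\subset\subset\{\psi<-t\}\subset\{\psi<-t_{1}\}$), letting $F_{t_{1}}$ be the minimizer of $G(t_{1};\tilde\varphi)$ from Lemma \ref{lem:A}. The critical step is to show $\int_{\{\psi<-t_{1}\}}|F_{t_{1}}|^{2}e^{-\varphi}c(-\psi)<+\infty$ so that Lemma \ref{lem:A} applies to the $\varphi$-problem as well. Over $\{\psi<-t_{1}\}\setminus U$ the bound $\tilde\varphi-\varphi\leq C$ controls the integral by $e^{C}G(t_{1};\tilde\varphi)$; over $U$ the hypothesis in (3), applied with $F_{1}=F_{t_{1}}|_{\{\psi<-t\}}$ and $F_{2}=F|_{\{\psi<-t\}}$, gives $\int_{U}|F_{t_{1}}-F|^{2}e^{-\varphi}c(-\psi)<+\infty$, and together with $\int_{U}|F|^{2}e^{-\varphi}c(-\psi)\leq G(T;\varphi)<+\infty$ the triangle inequality yields the finiteness on $U$. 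With admissibility in hand, applying Lemma \ref{lem:A} to the $\varphi$-problem on both $\{\psi<-t_{1}\}$ and $\{\psi<-t_{2}\}$ and subtracting yields $\int_{\{-t_{2}\leq\psi<-t_{1}\}}|F_{t_{1}}|^{2}e^{-\varphi}c(-\psi)\geq\int_{\{-t_{2}\leq\psi<-t_{1}\}}|F|^{2}e^{-\varphi}c(-\psi)$.

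Chaining the minimality bound $G(t_{1};\tilde\varphi)-G(t_{2};\tilde\varphi)\geq\int_{\{-t_{2}\leq\psi<-t_{1}\}}|F_{t_{1}}|^{2}e^{-\tilde\varphi}c(-\psi)$, the pointwise estimate $e^{-\tilde\varphi}\geq(\inf_{\{\psi\geq -t_{2}\}}e^{\varphi-\tilde\varphi})e^{-\varphi}$, the annular inequality above, and the identity \eqref{eq:20210412b} for $F$ yields
$$\frac{G(t_{1};\tilde\varphi)-G(t_{2};\tilde\varphi)}{\int_{t_{1}}^{t_{2}}c(s)e^{-s}ds}\geq\Big(\inf_{\{\psi\geq -t_{2}\}}e^{\varphi-\tilde\varphi}\Big)\frac{G(T;\varphi)}{\int_{T}^{+\infty}c(s)e^{-s}ds}.$$
Letting first $t_{1}\to T+0$ (so that $G(t_{1};\tilde\varphi)\to G(T;\tilde\varphi)$ by Lemma \ref{lem:B}) and then $t_{2}\to T+0$ (so that condition (2) sends the infimum to $1$), the secant slopes on the left converge monotonically to the left derivative $g'_{-}(\hat h(T))$, forcing $g'_{-}(\hat h(T))\geq G(T;\varphi)/\int_{T}^{+\infty}c(s)e^{-s}ds$. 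On the other hand, concavity of $g$ together with $g(0)=\lim_{t\to+\infty}G(t;\tilde\varphi)=0$ gives $g'_{-}(\hat h(T))\leq g(\hat h(T))/\hat h(T)=G(T;\tilde\varphi)/\int_{T}^{+\infty}c(s)e^{-s}ds$, which is strictly smaller than $G(T;\varphi)/\int_{T}^{+\infty}c(s)e^{-s}ds$ by the first paragraph, delivering the contradiction.

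The main obstacle is the integrability $\int_{\{\psi<-t_{1}\}}|F_{t_{1}}|^{2}e^{-\varphi}c(-\psi)<+\infty$: without the global boundedness of $\tilde\varphi-\varphi$ this is non-trivial, and condition (3)'s $L^{2}$ hypothesis on $|F_{1}-F_{2}|^{2}e^{-\varphi}c(-\psi)$ over $U$ is precisely what bridges the gap and allows Lemma \ref{lem:A} to be invoked for the $\varphi$-problem with the competitor $F_{t_{1}}$ coming from the $\tilde\varphi$-problem.
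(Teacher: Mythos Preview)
Your proof is correct and follows essentially the same route as the paper's. Both argue by contradiction, invoke Corollary \ref{thm:1} to obtain the minimizing form $F$ for the $\varphi$-problem, verify via condition (3) that the $\tilde\varphi$-minimizer $F_{t_1}$ has finite $\varphi$-integral (splitting over $U$ and $\Omega\setminus U$ exactly as you do), apply the orthogonality of Lemma \ref{lem:A} to get the annular comparison, and finish by pitting the resulting lower bound for the secant slopes of $G(\cdot;\tilde\varphi)$ against concavity together with the strict inequality $G(T;\tilde\varphi)<G(T;\varphi)$. The only cosmetic differences are that the paper phrases the contradiction via a fixed auxiliary level $T_0>0$ rather than passing to the left derivative $g'_-(\hat h(T))$, and here $T=0$ throughout.
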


\begin{proof}
We prove the lemma by comparing $G(t;\varphi)$ and $G(t;\tilde\varphi)$. In the following, let us assume that  there exists a Lebesgue measurable function $\tilde \varphi$ satisfying these properties in Proposition \ref{l:n} to get a contradiction.

As $G(\hat{h}^{-1}(r);\varphi)$ is linear with respect to $r$, it follows from Corollary \ref{thm:1} that there exists a holomorphic $(1,0)$ form $F$ on $\Omega$ such that $(F-f,z_0)\in (\mathcal{O}(K_{\Omega})\otimes\mathcal{F})_{z_0}$  and $\forall t\geq 0$  equality
$$G(t;\varphi)=\int_{\{\psi<-t\}}|F|^2e^{-\varphi}c(-\psi)$$
holds.
 As $\tilde\varphi+\psi$ is subharmonic and there exists a subset $U\subset\subset\Omega$ such that $\sup_{\Omega\backslash U}(\tilde\varphi-\varphi)<+\infty$, $e^{-\tilde\varphi}c(-\psi)$ has a positive lower bound on $U$ and $\mathcal{I}(\tilde\varphi+\psi)=\mathcal{I}(\varphi+\psi)$, it follows from Theorem \ref{thm:general_concave} that $G(\hat{h}^{-1}(r);\tilde\varphi)$ is concave with respect to $r$.

 As $\tilde\varphi+\psi\geq\varphi+\psi$, $\tilde\varphi+\psi\not=\varphi+\psi$ and both of them are subharmonic functions on $\Omega$, then there exists a subset $V$ of $\Omega$ such that
  $e^{-\tilde\varphi}<e^{-\varphi}$ on a subset $V$ and $\mu(V)>0$, where $\mu$ is Lebesgue measure on $\Omega$. As $F\not\equiv0$, inequality
\begin{equation}
	\label{eq:210621a}
	\frac{G(T_0;\tilde\varphi)}{\int_{T_0}^{+\infty}c(s)e^{-s}ds}\leq\frac{\int_{\{\psi<-T_0\}}|F|^2e^{-\tilde\varphi}c(-\psi)}{\int_{T_0}^{+\infty}c(s)e^{-s}ds}<\frac{G(T_0;\varphi)}{\int_{T_0}^{+\infty}c(s)e^{-s}ds}
\end{equation}
holds for some $T_0>0$.
For $t>0$, there exists a holomorphic $(1,0)$ form $F_t$ on $\{\psi<-t\}$ such that $(F_t-f)_{z_0}\in (\mathcal{O}(K_{\Omega})\otimes\mathcal{F})_{z_0}$ and
$$G(t;\tilde\varphi)=\int_{\{\psi<-t\}}|F_t|^2e^{-\tilde\varphi}c(-\psi)<+\infty.$$

As  there exists a subset $U\subset\subset\Omega$ such that $\sup_{\Omega\backslash U}(\tilde\varphi-\varphi)<+\infty$, we get that
\begin{equation}
	\label{eq:210621b}
	\begin{split}
	\int_{\{\psi<-t\}}|F_t|^2e^{-\varphi}c(-\psi)=&	\int_{\{\psi<-t\}\cap U}|F_t|^2e^{-\varphi}c(-\psi)+	\int_{\{\psi<-t\}\backslash U}|F_t|^2e^{-\varphi}c(-\psi)\\
	\leq&2\int_{\{\psi<-t\}\cap U}|F|^2e^{-\varphi}c(-\psi)+2\int_{\{\psi<-t\}\cap U}|F_t-F|^2e^{-\varphi}c(-\psi)\\
	&+e^{\sup_{\Omega\backslash U}(\tilde\varphi-\varphi)}\int_{\{\psi<-t\}\backslash U}|F_t|^2e^{-\tilde\varphi}c(-\psi	)\\
	<&+\infty	\end{split}
\end{equation}
holds for small enough $t>0$.
 It follows from Lemma \ref{lem:A} that
\begin{equation}
	\label{eq:210621c}
	\begin{split}
	G(t_1;\tilde\varphi)-G(t_2;\tilde\varphi)&\geq \int_{\{-t_2\leq\psi<-t_1\}}|F_{t_1}|^2e^{-\tilde\varphi}c(-\psi)\\
		&\geq \left(\inf_{z\in\{-t_2\leq\psi\}}e^{\varphi-\tilde\varphi}\right)\int_{\{-t_2\leq\psi<-t_1\}}|F_{t_1}|^2e^{-\varphi}c(-\psi) \\
		&\geq \left(\inf_{z\in\{-t_2\leq\psi\}}e^{\varphi-\tilde\varphi}\right)\int_{\{-t_2\leq\psi<-t_1\}}|F|^2e^{-\varphi}c(-\psi)
	\end{split}
\end{equation}
holds for small enough $t_1$ and $t_2$ such that $0<t_1<t_2<+\infty$.
As  $\lim_{t\rightarrow 0+0}\sup_{\{\psi\geq-t\}}(\tilde\varphi-\varphi)=0$, it follows from inequality \eqref{eq:210621a} and \eqref{eq:210621c} that
\begin{displaymath}
	\begin{split}
	\liminf_{t_2\rightarrow 0+0}\frac{G(t_1;\tilde\varphi)-G(t_2;\tilde\varphi)}{\int_{t_1}^{t_2}c(s)e^{-s}ds}
	\geq& \liminf_{t_2\rightarrow 0+0}\left(\inf_{z\in\{-t_2\leq\psi\}}e^{\varphi-\tilde\varphi}\right) \frac{\int_{\{-t_2\leq\psi<-t_1\}}|F|^2e^{-\varphi}c(-\psi)}{\int_{t_1}^{t_2}c(s)e^{-s}ds}	\\
	=&\frac{G(T_0;\varphi)}{\int_{T_0}^{+\infty}c(s)e^{-s}ds} \\
	>&\frac{G(T_0;\tilde\varphi)}{\int_{T_0}^{+\infty}c(s)e^{-s}ds},
	\end{split}
\end{displaymath}
which  contradicts the concavity of $G(\hat{h}^{-1}(r);\tilde\varphi)$. Thus the assumption doesn't hold, and we complete the proof of Proposition \ref{l:n}.
\end{proof}
\subsection{Proof of Theorem \ref{thm:e2}}

Firstly, we prove the sufficiency by using Theorem \ref{thm:suita}.
 The following remark shows that it suffices to prove the sufficiency for the case $\psi=2G_{\Omega}(z,z_0)$.
\begin{Remark}
	\label{r:c(p)}
	Let $\tilde\varphi=\varphi+a\psi$, $\tilde c(t)=c(\frac{t}{1-a})e^{-\frac{at}{1-a}}$ and $\tilde\psi=(1-a)\psi$ for some $a\in(-\infty,1)$. It is clear that $e^{-\tilde\varphi}\tilde c(-\tilde\psi)=e^{-\varphi}c(-\psi)$, $(1-a)\int_{t}^{+\infty}c(l)e^{-l}dl=\int_{(1-a)t}^{+\infty}\tilde c(l)e^{-l}dl$ and $G(t;\varphi,\psi,c)=G((1-a)t;\tilde\varphi,\tilde\psi,\tilde{c})$.
\end{Remark}

 Let $\tilde{c}\equiv1$ on $(0,+\infty)$.
 Set $\hat{f}=\frac{f}{g}$, $\hat{\varphi}=\varphi-2\log|g|=2u$, and $\hat{\mathcal{F}}_{z_0}=\mathcal{I}(\hat{\varphi}+\psi)_{z_0}=\mathcal{I}(2G_{\Omega}(z,z_0))_{z_0}$. Denote
$$\inf\{\int_{\{\psi<-t\}}|\tilde f|^{2}e^{-\hat{\varphi}}:(\tilde{f}-\hat{f})_{z_0}\in(\mathcal{O}(K_{\Omega})\otimes\hat{\mathcal{F}})_{z_0}\,\&\,\tilde{f}\in H^0(\{\psi<-t\},\mathcal{O}(K_{\Omega}))\}$$
by $\hat{G}(t;\tilde{c})$. Without loss of generality, we can assume that $\hat{f}(z_0)=dw$, where $w$ is a local coordinate on a neighborhood $V_{z_0}$ of $z_0$ satisfying $w(z_0)=0$. By definition of $G(t;\tilde{c})$ and $B_{\Omega,e^{-2u}}(z_0)$, it is clear that $G(t;\tilde{c})=\hat{G}(t;\tilde{c})$ and $\hat{G}(0;\tilde{c})=\frac{2}{B_{\Omega,e^{-2u}}(z_0)}=\inf\{\int_{\Omega}|\tilde f|^2e^{-2u}:\tilde f$ is a holomorphic extension of $\hat f$ from $z_0$ to $\Omega$$\}$. Theorem \ref{thm:suita} shows that $G(0;\tilde{c})=\hat{G}(0;\tilde{c})=2\pi\frac{e^{-2u(z_0)}}{c_{\beta}^2(z_0)}$. Note that $\|\hat{f}\|_{z_0}=\pi\int_{z_0}\frac{|\hat{f}|^2}{dV_M}e^{-\hat{\varphi}}dV_{\Omega}[2G_{\Omega}(z,z_0)]=2\pi\frac{e^{-2u(z_0)}}{c_{\beta}^2(z_0)}$, therefore Theorem \ref{thm:ll1} tells us that $G(-\log r;\tilde{c})$ and $\hat{G}(-\log r;\tilde{c})$ is linear with respect to $r$.

As $\psi=2G_{\Omega}(z,z_0)$,  Lemma \ref{l:G-compact} shows that,  for any $t_0\geq0,$ there exists  $t>t_0$ such that $\{G_{\Omega}(z,z_0<-t)\}$ is a relatively compact subset of $\Omega$ and $g$ has no zero point in $\{G_{\Omega}(z,z_0<-t)\}\backslash{\{z_0\}}$. Combining Corollary \ref{thm:1}, Remark \ref{r:c} and $G(-\log r;\tilde{c})$ is linear with respect to $r$, we obtain that $G(\hat{h}^{-1}(r))$ is linear with respect to $r$, where $\hat{h}(t)=\int_{t}^{+\infty}c(l)e^{-l}dl$.

In the following part, we prove the necessity in three steps.

By Remark \ref{r:c(p)}, without loss of generality, we can assume that $\varphi$ is subharmonic near $z_0$. As $\varphi+\psi$ is a subharmonic function on $\Omega$, it follows from Weierstrass Theorem on open Riemann surfaces (see \cite{OF81}) and Siu's Decomposition Theorem that
\begin{equation}
	\label{eq:210620l}
	\varphi+\psi=2\log|g|+2G_{\Omega}(z,z_0)+2u,
\end{equation}
where $g$ is a holomorphic function on $\Omega$, and $u$ is a subharmonic function on $\Omega$ such that $v(dd^cu,z)\in[0,1)$ for any $z\in\Omega$.

\

\emph{Step 1: $\mathcal{F}_{z_0}=\mathcal{I}(\varphi+\psi)_{z_0}$,\,$ord_{z_0}(g)=org_{z_0}(f_1)$ and $v(dd^{c}\psi,z_0)>0$.}

\

As $\mathcal{I}(\varphi+\psi)_{z_0}=\mathcal{I}(2\log|g|+2G_{\Omega}(z,z_0))_{z_0}\subset\mathcal{F}_{z_0}$ and $G(0)\not=0$, we have $ord_{z_0}(g)+1>org_{z_0}(f_1)$. Corollary \ref{thm:1} tells us there exists a holomorphic $(1,0)$ form on $\Omega$ such that $(F-f,z_0)\in (\mathcal{O}(K_{\Omega})\otimes\mathcal{F})_{z_0}$ and $G(t)=\int_{\{\psi<-t\}}|F|^{2}e^{-\varphi}c(-\psi)$ for $t\geq0$. Denote that $\tilde{c}(t)=\max{\{c(t),e^{rt}\}}$  on $(0,+\infty)$, where $r\in(0,1)$. Set $F=\tilde Fdw$ on $V_{z_0}$,   and it follows from Corollary \ref{thm:1} and  Remark \ref{r:c} that $|\tilde F|^2e^{-\varphi-r\psi}$ is locally integrable near $z_0$ for any $r\in(0,1)$, which implies that $ord_{z_0}(\tilde F)\geq ord_{z_0}(g)$.

we prove $\mathcal{F}_{z_0}=\mathcal{I}(\varphi+\psi)_{z_0}$ by contradiction: if not,  then $\mathcal{F}_{z_0}\subsetneqq\mathcal{I}(2\log|g|+2G_{\Omega}(z,z_0))_{z_0}$. Since $ord_{z_0}(\tilde F)\geq ord_{z_0}(g)$,  we have $(\tilde F,z_0)\in \mathcal{F}_{z_0}$, which contradicts to $G(0)\not=0$. Thus $\mathcal{F}_{z_0}=\mathcal{I}(\varphi+\psi)_{z_0}$.

 As $ord_{z_0}(\tilde F)\geq ord_{z_0}(g)$, $ord_{z_0}(g)+1>org_{z_0}(f_1)$ and $(\tilde F-f_1,z_0)\in\mathcal{I}(2\log|g|+2G_{\Omega}(z,z_0))_{z_0}$, we have $ord_{z_0}(g)=org_{z_0}(f_1)$.

we prove $v(dd^{c}\psi,z_0)>0$ by contradiction: if not, $v(dd^{c}\psi,z_0)=0$ shows that $\mathcal{I}(\varphi+\psi)_{z_0}=\mathcal{I}(\varphi)_{z_0}$. Without loss of generality, we can assume that $c(t)>1$ for  large enough $t$, then $|\tilde F|^2e^{-\varphi}$ is locally integrable near $z_0$, which contradicts to $(\tilde F,z_0)\not\in\mathcal{F}_{z_0}$. Thus $v(dd^{c}\psi,z_0)>0$.

\

\emph{Step 2: $\psi=2pG_{\Omega}(z,z_0)$ for some $p>0$.}

\

As $\psi$ is  subharmonic function on $\Omega$, it follows from Siu's Decomposition Theorem that $\psi=2pG_{\Omega}(z,z_0)+\psi_1$ such that $v(dd^c\psi_1,z_0)=0$.

 Firstly, we prove $\psi_1$ is harmonic near $z_0$ by contradiction : if not,  there exists a closed  positive $(1,1)$ current $T\not\equiv0$, such that $supp T\subset\subset V_{z_0}$, $T\leq \frac{1}{2}i\partial\bar\partial\psi_1$ on $V_{z_0}$, where $V_{z_0}$ is an open neighborhood of $z_0$, satisfying that $g$ has not zero point on $\overline{V_{z_0}}\backslash\{z_0\}$, $\varphi$ is subharmonic on a neighborhood of $\overline{V_{z_0}}$ and $V_{z_0}\subset\subset\Omega$. Note that $\{z\in \overline{V_{z_0}}:\mathcal{I}(\varphi+\psi)_z\not=\mathcal{O}_z\}=\{z_0\}$.

Using Lemma \ref{l:cu}, there exists a subharmonic function $\Phi<0$ on $\Omega$, which satisfies the following properties: $i\partial\bar\partial\Phi\leq T$ and $i\partial\bar\partial\Phi\not\equiv0$; $\lim_{t\rightarrow0+0}(\inf_{\{G_{\Omega}(z,z_0)\geq-t\}}\Phi(z))=0$; $supp (i\partial\bar\partial\Phi)\subset V_{z_0}$ and $\inf_{\Omega\backslash V_{z_0}}\Phi>-\infty$. It following from Lemma \ref{l:green}, $v(dd^{c}\psi,z_0)>0$ and $\psi<0$ on $\Omega$, that $\lim_{t\rightarrow0+0}(\inf_{\{\psi\geq-t\}}\Phi(z))=0$.

Set $\tilde\varphi=\varphi-\Phi$, then $\tilde\varphi+\psi=\varphi+2pG_{\Omega}(z,z_0)+\psi_1-\Phi$ on $V_{z_0}$, where $\psi_1-\Phi$ is subharmonic on $V_{z_0}$. It is clear that $\tilde\varphi\geq\varphi$ and $\tilde\varphi\not=\varphi$. $supp T\subset\subset V_{z_0}$ and $i\partial\bar\partial\Phi\leq T\leq i\partial\bar\partial\varphi_1$ on $V_{z_0}$ show that $\tilde\varphi+\psi$ is subharmonic on $\Omega$, $\mathcal{I}(\tilde\varphi+\psi)=\mathcal{I}(\varphi+\psi)=\mathcal{I}(2\log|g|+2G_{\Omega}(z,z_0))$.

 Without loss of generality, we can assume that $c(t)>e^{\frac{t}{2}}$ for any $t>0$. $T\leq \frac{1}{2}i\partial\bar\partial\psi_1$ on $V_{z_0}$ and $i\partial\bar\partial\Phi\subset\subset V_{z_0}$ show that $\frac{1}{2}\psi-\Phi$ is subharmonic on $\Omega$, which implies that
$e^{-\tilde\varphi}c(-\psi)\geq e^{-\varphi}e^{\Phi-\frac{1}{2}\psi}$
 has a positive lower bound on $V_{z_0}$. Notice that $\inf_{\Omega\backslash V_{z_0}}(\varphi-\tilde\varphi)=\inf_{\Omega\backslash V_{z_0}}\Phi>-\infty$ and $\int_{V_{z_0}}|F_1-F_2|^2e^{-\varphi}c(-\psi)\leq C\int_{V_{z_0}}|F_1-F_2|^2e^{-\varphi-\psi}<+\infty$ for any $F_1\in\mathcal{H}^2(c,\tilde\varphi,t)$ and $F_2\in\mathcal{H}^2(c,\varphi,t)$, where $V_{z_0}\subset\subset\{\psi<-t\}$, then $\tilde\varphi$ satisfies the conditions in Proposition \ref{l:n}, which contradicts to the result of  Proposition \ref{l:n}. Thus $\psi_1$ is harmonic near $z_0$.

Then, we prove $\psi=2pG_{\Omega}(z,z_0)$. Using Remark \ref{r:c(p)}, it suffices to consider the case $p=1$, where $p=\frac{1}{2}v(dd^{c}\psi,z_0)$. By Siu's Decomposition Theorem and Lemma \ref{l:green}, there exists a subharmonic function $\psi_2\leq0$ on $\Omega$ such that $\psi=2G_{\Omega}(z,z_0)+\psi_2$. Note that $\psi_2(z_0)>-\infty$.

As $\Omega$ is an open Riemann surface, there exists a holomorphic function $f_2$ on $\Omega$, such that $ord_{z_0}(f_2)=ord_{z_0}(f_1)$ and $\{z\in\Omega:f_2(z)=0\}=\{z_0\}$.  Set $\tilde{f}=\frac{f}{f_2}$, $\tilde{\varphi}=\varphi-2\log|f_2|$, and $\tilde{\mathcal{F}}_{z_0}=\mathcal{I}(\tilde{\varphi}+\psi)_{z_0}=\mathcal{I}(2G_{\Omega}(z,z_0))_{z_0}$. Denote
\begin{displaymath}
	\begin{split}
		\inf\Bigg\{\int_{\{\psi<-t\}}|F|^{2}e^{-\tilde{\varphi}}c(-\psi):&(F-\tilde{f})_{z_0}\in(\mathcal{O}(K_{\Omega})\otimes\tilde{\mathcal{F}})_{z_0}\\
		&\&\,F\in H^0(\{\psi<-t\},\mathcal{O}(K_{\Omega}))\Bigg\}
	\end{split}
\end{displaymath}
by $\tilde{G}(t)$.
By the definition of $G(t)$ and $\tilde{G}(t)$, we know $G(t)=\tilde{G}(t)$ for any $t\geq0$, therefore $\tilde{G}(\hat{h}^{-1}(r))$ is linear with respect to $r$.
Note that $(\tilde{\varphi},\psi)\in W$, $(\tilde{\varphi}+\psi-2G_{\Omega}(z,z_0),2G_{\Omega}(z,z_0))\in W$, $\psi_2(z_0)>-\infty$ and $\psi_2\leq0$, then Theorem \ref{thm:n2} shows that $\psi=2G_{\Omega}(z,z_0)$.

\

\emph{Step 3.  $u$ is harmonic on $\Omega$ and $\chi_{-u}=\chi_{z_0}.$}

\

Without loss of generality, we can assume that $\psi=2G_{\Omega}(z,z_0)$.
Lemma \ref{l:G-compact} shows that,  for any $t_0\geq0,$ there exists  $t>t_0$ such that $\{G_{\Omega}(z,z_0)<-t\}$ is a relatively compact subset of $\Omega$ and $g$ has no zero point in $\{G_{\Omega}(z,z_0)<-t\}\backslash{\{z_0\}}$. Combining Corollary \ref{thm:1}, Remark \ref{r:c} and $G(\hat{h}^{-1}(r);c)$ is linear with respect to $r$, we obtain that $G(-\log r;\tilde{c}\equiv1)$ is linear with respect to $r$ and $G(0;\tilde{c})\in(0,+\infty)$.

Now, we assume that $u$ is not harmonic to get a contradiction. There exists a closed  positive $(1,1)$ current $T\not\equiv0$, such that $supp T\subset\subset \Omega$ and $T\leq i\partial\bar\partial u$. There exists an open subset $U\subset\subset \Omega$, such that $supp T\subset U$.

Using Lemma \ref{l:cu}, there exists a subharmonic function $\Phi<0$ on $\Omega$, which satisfies the following properties: $i\partial\bar\partial\Phi\leq T$ and $i\partial\bar\partial\Phi\not\equiv0$; $\lim_{t\rightarrow0+0}(\inf_{\{G_{\Omega}(z,z_0)\geq-t\}}\Phi(z))=0$; $supp (i\partial\bar\partial\Phi)\subset U$ and $\inf_{\Omega\backslash U}\Phi>-\infty$.

Set $\tilde\varphi=\varphi-\Phi$, then $\tilde\varphi=2\log|g|+2u-\Phi$ is subharmonic on $\Omega$. It is clear that $\tilde\varphi\geq\varphi$, $\tilde\varphi\not=\varphi$ and $\tilde\varphi+\psi$ is subharmonic on $\Omega$, $\mathcal{I}(\tilde\varphi+\psi)=\mathcal{I}(\varphi+\psi)=\mathcal{I}(2\log|g|+2G_{\Omega}(z,z_0))$.

 As $\tilde\varphi$ is subharmonic on $\Omega$, we have
$e^{-\tilde\varphi}$
 has a positive lower bound on $U$. Note that $\mathcal{I}(\varphi)=\mathcal{I}(\tilde\varphi)$, then
 $$\int_{U}|F_1-F_2|^2e^{-\varphi}\leq2\int_{U}|F_1|^2e^{-\varphi}+2\int_{U}|F_2|^2e^{-\varphi}<+\infty$$
  for any $F_1\in\mathcal{H}^2(\tilde{c},\tilde\varphi,t)$ and $F_2\in\mathcal{H}^2(\tilde{c},\varphi,t)$, where $U\subset\subset\{\psi<-t\}$ and $\tilde{c}\equiv1$.  Since $\inf_{\Omega\backslash U}(\varphi-\tilde\varphi)=\inf_{\Omega\backslash U}\Phi>-\infty$, then $\tilde\varphi$ satisfies the conditions in Proposition \ref{l:n}, which contradicts to the result of  Proposition \ref{l:n}. Thus $u$ is harmonic on $\Omega$.

Finally, we prove $\chi_{-u}=\chi_{z_0}$ by using  Theorem \ref{thm:suita}.

Recall some notations in the proof of sufficiency. Set $\hat{f}=\frac{f}{g}$, $\hat{\varphi}=\varphi-2\log|g|=2u$, and $\hat{\mathcal{F}}_{z_0}=\mathcal{I}(\hat{\varphi}+\psi)_{z_0}=\mathcal{I}(2G_{\Omega}(z,z_0))_{z_0}$. Denote
\begin{displaymath}
	\begin{split}
		\inf\Bigg\{\int_{\{\psi<-t\}}|\tilde f|^{2}e^{-\hat{\varphi}}:&(\tilde{f}-\hat{f})_{z_0}\in(\mathcal{O}(K_{\Omega})\otimes\hat{\mathcal{F}})_{z_0}
		\\&\&\,\tilde{f}\in H^0(\{\psi<-t\},\mathcal{O}(K_{\Omega}))\Bigg\}
	\end{split}
\end{displaymath}
by $\hat{G}(t;\tilde{c})$. Without loss of generality, we can assume that $\hat{f}(z_0)=dw$, where $w$ is a local coordinate on a neighborhood $V_{z_0}$ of $z_0$ satisfying $w(z_0)=0$. By definition of $G(t;\tilde{c})$ and $B_{\Omega,e^{-2u}}(z_0)$, it is clear that $G(-\log r;\tilde{c})=\hat{G}(-\log r;\tilde{c})$ is linear with respect to $r$ and $\hat{G}(0;\tilde{c})=\frac{2}{B_{\Omega,e^{-2u}}(z_0)}=\inf\{\int_{\Omega}|\tilde f|^2e^{-2u}:\tilde f$ is a holomorphic extension of $\hat f$ from $z_0$ to $\Omega$$\}$.

Note that $\|\hat{f}\|_{z_0}=2\pi\frac{e^{-2u(z_0)}}{c_{\beta}^2(z_0)}$ , then Theorem \ref{thm:n2} shows that
$$\hat{G}(0,\tilde{c})=2\pi\frac{e^{-2u(z_0)}}{c_{\beta}^2(z_0)},$$
i.e., $c_{\beta}^2(z_0)=\pi e^{-2u(z_0)}B_{\Omega,e^{-2u}}(z_0)$.
Therefore Theorem \ref{thm:suita} shows that $\chi_{-u}=\chi_{z_0}$.

Thus, Theorem \ref{thm:e2} holds.

\subsection{Proof of Theorem \ref{thm:e1}}
Theorem \ref{thm:e2} implies the  sufficiency. Thus we just need to prove the necessity.

As $\varphi+\psi$ is a subharmonic function on $\Omega$, it follows from Weierstrass Theorem on open Riemann surfaces (see \cite{OF81}) and Siu's Decomposition Theorem that
\begin{equation}
	\label{eq:210620l}
	\varphi+\psi=2\log|g|+2G_{\Omega}(z,z_0)+2u,
\end{equation}
where $g$ is a holomorphic function on $\Omega$, and $u$ is a subharmonic function on $\Omega$ such that $v(dd^cu,z)\in[0,1)$ for any $z\in\Omega$.

As $\mathcal{I}(\varphi+\psi)_{z_0}=\mathcal{I}(2\log|g|+2G_{\Omega}(z,z_0))_{z_0}\subset\mathcal{F}_{z_0}$ and $G(0)\not=0$, we have $ord_{z_0}(g)+1>org_{z_0}(f_1)$. Corollary \ref{thm:1} tells us there exists a holomorphic $(1,0)$ form on $\Omega$ such that $(F-f,z_0)\in (\mathcal{O}(K_{\Omega})\otimes\mathcal{F})_{z_0}$ and $G(t)=\int_{\{\psi<-t\}}|F|^{2}e^{-\varphi}c(-\psi)$ for $t\geq0$. Let $\tilde{c}(t)=\max{\{c(t),e^{rt}\}}$ defined on $(0,+\infty)$, where $r\in(0,1)$. Set $F=\tilde Fdw$ on $V_{z_0}$,   and it follows from Corollary \ref{thm:1} and  Remark \ref{r:c} that $|\tilde F|^2e^{-\varphi-r\psi}$ is locally integrable near $z_0$ for any $r\in(0,1)$. Note that
$$\int_{U}|\tilde F|^2e^{-\frac{\varphi+\psi}{p}}\leq\left(\int_{U}|\tilde F|^{2p}e^{-\varphi-\psi+ps\psi}\right)^{\frac{1}{p}}\left(\int_Ue^{-qs\psi}\right)^{\frac{1}{q}}$$
holds for any $p>1$, $\frac{1}{p}+\frac{1}{q}=1$, $U$ is a small open neighborhood of $z_0$, and $s\in(0,1)$. For any $p\in(1,+\infty)$, we can choose small enough $U$ and small enough $s\in(0,1)$ such that $\int_{U}|\tilde F|^{2p}e^{-\varphi-\psi+ps\psi}<+\infty$ and $\int_Ue^{-qs\psi}<+\infty$, which implies that $(\tilde F,z_0)\in\mathcal{I}(\frac{\varphi+\psi}{p})_{z_0}\subset\mathcal{I}(\frac{2\log|g|+2G_{\Omega}(z,z_0)}{p})_{z_0}$. Therefore,
we have $ord_{z_0}(\tilde F)\geq ord_{z_0}(g)$.

we prove $\mathcal{F}_{z_0}=\mathcal{I}(\varphi+\psi)_{z_0}$ by contradiction: if not,  then $\mathcal{F}_{z_0}\subsetneqq\mathcal{I}(2\log|g|+2G_{\Omega}(z,z_0))_{z_0}$. Since $ord_{z_0}(\tilde F)\geq ord_{z_0}(g)$,  we have $(\tilde F,z_0)\in \mathcal{F}_{z_0}$, which contradicts to $G(0)\not=0$. Thus $\mathcal{F}_{z_0}=\mathcal{I}(\varphi+\psi)_{z_0}$.

 As $ord_{z_0}(\tilde F)\geq ord_{z_0}(g)$, $ord_{z_0}(g)+1>org_{z_0}(f_1)$ and $(\tilde F-f_1,z_0)\in\mathcal{I}(2\log|g|+2G_{\Omega}(z,z_0))_{z_0}$, we have $ord_{z_0}(g)=org_{z_0}(f_1)$.

we prove $v(dd^{c}\psi,z_0)>0$ by contradiction: if not, as $|\tilde F|^2e^{-\varphi-r\psi}$ is locally integrable near $z_0$ for any $r\in(0,1)$ and $ord_{z_0}(g)=org_{z_0}(\tilde F)$, we have $e^{-2G_{\Omega}(z,z_0)+(1-r)\psi}$ is locally integrable near $z_0$.
 Therefore there exists $s>0$ such that
\begin{displaymath}
	\begin{split}
		\int_{\Delta_s}\frac{e^{(1-r)\psi}}{|w|^2}<+\infty,
	\end{split}
\end{displaymath}
where $w$ is a local coordinate near $z_0$ such that $w(z_0)=0$.
$e^{(1-r)\psi}$ is subharmonic shows that
\begin{displaymath}
	2\pi e^{(1-r)\psi(z_0)}\int_{0}^{s}\frac{1}{t}dt=\int_{\Delta_s}\frac{e^{(1-r)\psi}}{|w|^2}<+\infty,
\end{displaymath}
 which contradicts to $\psi(z_0)>-\infty$. Thus $v(dd^{c}\psi,z_0)>0$ holds.

Using Remark \ref{r:c(p)}, it suffices to consider the case $p=1$, where $p=\frac{1}{2}v(dd^{c}\psi,z_0)$. By Siu's Decomposition Theorem and Lemma \ref{l:green}, there exists a subharmonic function $\psi_2\leq0$ on $\Omega$ such that $\psi=2G_{\Omega}(z,z_0)+\psi_2$. Following the assumption in Theorem \ref{thm:e1}, we know $\psi_2(z_0)>-\infty$.

As $\Omega$ is an open Riemann surface, there exists a holomorphic function $f_2$ on $\Omega$, such that $ord_{z_0}(f_2)=ord_{z_0}(f_1)$ and $\{z\in\Omega:f_2=0\}=\{z_0\}$.  Set $\tilde{f}=\frac{f}{f_2}$, $\tilde{\varphi}=\varphi-2\log|f_2|$, and $\tilde{\mathcal{F}}_{z_0}=\mathcal{I}(\tilde{\varphi}+\psi)_{z_0}=\mathcal{I}(2G_{\Omega}(z,z_0))_{z_0}$. Denote
\begin{displaymath}
	\begin{split}
		\inf\Bigg\{\int_{\{\psi<-t\}}|F|^{2}e^{-\tilde{\varphi}}c(-\psi):&(F-\tilde{f})_{z_0}\in(\mathcal{O}(K_{\Omega})\otimes\tilde{\mathcal{F}})_{z_0}\\
		&\&\,F\in H^0(\{\psi<-t\},\mathcal{O}(K_{\Omega}))\Bigg\}
	\end{split}
\end{displaymath}
by $\tilde{G}(t)$.
By the definition of $G(t)$ and $\tilde{G}(t)$, we know $G(t)=\tilde{G}(t)$ for any $t\geq0$, therefore $\tilde{G}(\tilde{h}^{-1}(r))$ is linear with respect to $r$.
Note that $(\tilde{\varphi},\psi)\in W$, $(\tilde{\varphi}+\psi-2G_{\Omega}(z,z_0),2G_{\Omega}(z,z_0))\in W$, $\psi_2(z_0)>-\infty$ and $\psi_2\leq0$, then Theorem \ref{thm:n2} shows that $\psi=2G_{\Omega}(z,z_0)$.

As $\varphi+\psi$ is subharmonic on $\Omega$ and $\psi=2G_{\Omega}(z,z_0)$, we have $\varphi$ is subharmonic on $\Omega$. Then Theorem \ref{thm:e2} implies that $u$ is harmonic on $\Omega$ and $\chi_{-u}=\chi_{z_0}$.

Thus, Theorem \ref{thm:e1} holds.

\subsection{Proof of Corollary \ref{c:ll3}}
The following remark  shows that it suffices to prove the existence  of holomorphic extension satisfying inequality \eqref{eq:210809} for the case $c(t)$ has a positive lower bound and upper bound on $(t',+\infty)$ for any $t'>0$.

\begin{Remark}\label{r:L2c2}
Take $c_j$ is a positive measurable function on $(0,+\infty)$, such that $c_{j}(t)=c(t)$ when $t<j$, $c_j(t)=\min\{c(j),\frac{1}{j}\}$ when $t\geq j$. It is clear that $c_j(t)e^{-t}$ is decreasing with respect to $t$, and $\int_{0}^{+\infty}c_j(t)e^{-t}<+\infty$. As
$$\lim_{j\rightarrow+\infty}\int_{j}^{+\infty}c_j(t)e^{-t}=0,$$
we have
$$\lim_{j\rightarrow+\infty}\int_{0}^{+\infty}c_j(t)e^{-t}=\int_{0}^{+\infty}c(t)e^{-t}.$$

	If the existence  of holomorphic extension satisfying inequality \eqref{eq:210809} holds in this case, then there exists a holomorphic $(1,0)$ form $F_j$ on $\Omega$ such that $F_j(z_0)=f(z_0)$ and
$$\int_{\Omega}|F_j|^2e^{-\varphi}c_j(-\psi)\leq\left(\int_0^{+\infty}c_j(t)e^{-t}dt\right)\|f\|_{z_0}.$$

Note that $\psi$ has locally lower bound on $\Omega\backslash\psi^{-1}(-\infty)$ and $\psi^{-1}(-\infty)$ is a closed subset of an analytic subset $Z$ of $\Omega$. For any compact subset $K$ of $\Omega\backslash Z$, there exists $s_K>0$ such that $\int_{K}e^{-s_K\psi}dV_{\Omega}<+\infty$, where $dV_{\Omega}$ is a continuous volume form on $\Omega$.  Then  we have
$$\int_{K}\left(\frac{e^{\varphi}}{c_j(-\psi)}\right)^{s_K}dV_{\Omega}=\int_{K}\left(\frac{e^{\varphi+\psi}}{c_j(-\psi)}\right)^{s_K}e^{-s_K\psi}dV_{\Omega}\leq C\int_{K}e^{-s_K\psi}dV_{\Omega}<+\infty,$$
where $C$ is a constant independent of $j$.
 It follows from Lemma \ref{l:converge} ($g_j=e^{-\varphi}c_j(-\psi)$) that there exists a subsequence of $\{F_j\}$, denoted still by $\{F_j\}$, which is uniformly convergent to a holomorphic $(1,0)$ form $F$ on any compact subset of $\Omega$ and
\begin{displaymath}
	\begin{split}
		\int_{\Omega}|F|^2e^{-\varphi}c(-\psi)&\leq\lim_{j\rightarrow+\infty}\left(\int_0^{+\infty}c_j(t)e^{-t}dt\right)\|f\|_{z_0}\\
		&=\left(\int_0^{+\infty}c(t)e^{-t}dt\right)\|f\|_{z_0}.	\end{split}
\end{displaymath}
 Since $F_j(z_0)=f(z_0)$ for any $j$, we have $F(z_0)=f(z_0)$.
\end{Remark}

As $\psi\in A(z_0)$ and $e^{-\varphi-\psi}$ is not $L^1$ on any neighborhood of $z_0$, it follows from Siu's Decomposition Theorem and the following Lemma that  $\psi(z)-2G_{\Omega}(z,z_0)$ and $\varphi(z)+\psi(z)-2G_{\Omega}(z,z_0)$ is subharmonic on $\Omega$ with respect to $z$. Denote that $\psi_2(z)=\psi(z)-2G_{\Omega}(z,z_0)$.

\begin{Lemma}[\cite{skoda1972}]
	Let $u$ is a subharmonic function on $\Omega$. If $v(dd^cu,z_0)<1$, then $e^{-u}$ is $L^1$ on a neighborhood of $z_0$.
\end{Lemma}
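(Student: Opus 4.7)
\emph{The plan is to combine the Riesz decomposition of $u$ with a Jensen-type estimate on exponentials of logarithmic potentials.} I would first pass to a local coordinate $w$ with $w(z_0)=0$, fix a small disk $D=\{|w|<R\}$ on which $u$ is subharmonic, and invoke the Riesz representation to write
\[
u(w)=\int_{D}\log|w-\zeta|\,d\mu(\zeta)+h(w),
\]
where $\mu=dd^{c}u\ge 0$ is the Riesz measure (so in the sign convention of this paper $dd^{c}\log|w|=\delta_{0}$, and hence $\mu(\{0\})=\gamma:=v(dd^{c}u,z_{0})<1$) and $h$ is harmonic on $D$ and therefore bounded on every smaller concentric disk.

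Next I would split $\mu=\gamma\delta_{0}+\nu$ with $\nu(\{0\})=0$, and for a small $\rho>0$ further decompose $\nu=\nu_{\rho}+\nu'$ with $\nu_{\rho}:=\nu|_{\{|w|<\rho\}}$ and $\nu'$ carrying the rest. The potential of $\nu'$ is harmonic on $\{|w|<\rho/2\}$ and hence bounded there, so all harmonic pieces can be absorbed into a constant $C_{\rho}$, yielding the pointwise estimate
\[
e^{-u(w)}\le C_{\rho}\exp\Bigl(-\int\log|w-\zeta|\,d\tilde\mu(\zeta)\Bigr)
\]
on $\{|w|<\rho/2\}$, where $\tilde\mu:=\gamma\delta_{0}+\nu_{\rho}$ has total mass $M:=\gamma+\nu(\{|w|<\rho\})$. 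Since $\nu$ has no atom at the origin, $\nu(\{|w|<\rho\})\to 0$ as $\rho\to 0$, so combining with $\gamma<1$ I may fix $\rho$ small enough that $M<2$.

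To finish, I would write $\tilde\mu=M\tilde\nu$ with $\tilde\nu:=\tilde\mu/M$ a probability measure, and apply Jensen's inequality to the convex function $\exp$:
\[
\exp\Bigl(M\int-\log|w-\zeta|\,d\tilde\nu(\zeta)\Bigr)\le\int|w-\zeta|^{-M}\,d\tilde\nu(\zeta).
\]
Integrating over $\{|w|<\rho/2\}$ and using Fubini,
\[
\int_{\{|w|<\rho/2\}}e^{-u(w)}\,dA(w)\le C_{\rho}\int d\tilde\nu(\zeta)\int_{\{|w|<\rho/2\}}|w-\zeta|^{-M}\,dA(w),
\]
and the inner integral is uniformly bounded in $\zeta$ since $M<2$ makes $r^{1-M}$ integrable near $0$ in polar coordinates centered at $\zeta$. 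This gives the desired local $L^{1}$ integrability of $e^{-u}$ near $z_{0}$.

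The main obstacle will be the Jensen step itself, namely establishing that $\int e^{-V_{\tilde\mu}}<\infty$ as soon as the total mass $M$ of $\tilde\mu$ is below $2$ rather than below $1$; this is precisely where the strict inequality $\gamma<1$ is used, as it leaves room to push $M=\gamma+\nu(\{|w|<\rho\})$ below the integrability threshold $2$ by shrinking $\rho$.
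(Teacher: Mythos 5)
Your proof is correct, but note that the paper itself does not prove this lemma at all: it is quoted as a known result of Skoda \cite{skoda1972}, so there is no in-paper argument to compare with. Your route is the classical one-dimensional potential-theoretic proof: Riesz decomposition $u=p_{\mu}+h$ with the normalization $dd^{c}\log|w|=\delta_{0}$ (so that the Lelong number is the mass of the atom at $z_{0}$), isolation of the atom $\gamma\delta_{0}$ together with a small mass $\nu_{\rho}$ near the origin, then Jensen's inequality for the probability measure $\tilde\mu/M$ and Fubini, with the elementary fact that $\int_{\{|w|<r\}}|w-\zeta|^{-M}\,dA$ is bounded uniformly in $\zeta$ precisely when $M<2$. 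This in fact establishes the sharp one-variable statement (Lelong number $<2$ suffices for $e^{-u}\in L^{1}_{loc}$), of which the quoted lemma with threshold $1$ is a special case; Skoda's original theorem is the $n$-dimensional statement and is usually obtained by $L^{2}$/mean-value methods rather than this direct Riesz--Jensen computation, so your argument is both more elementary and, in dimension one, stronger. Two small points of hygiene, neither of which is a gap: the potential of $\nu'$ is harmonic on all of $\{|w|<\rho\}$ (since $\nu'$ puts no mass there) and is therefore bounded on the compact subdisk $\{|w|\le\rho/2\}$ — harmonicity on $\{|w|<\rho/2\}$ alone would not give boundedness; and the limit $\nu(\{|w|<\rho\})\to\nu(\{0\})=0$ uses continuity from above of the measure, which is legitimate because $\mu$ has finite mass on the closed disk. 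Also, your closing remark slightly misstates the role of the hypothesis: what the argument needs is only $\gamma<2$, so $\gamma<1$ leaves even more room than you suggest.
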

As $\Omega$ is a Stein manifold and $\varphi+\psi_2$ is subharmonic on $\Omega$, there exist smooth subharmonic functions $\Phi_l$ on $\Omega$, which are decreasingly convergent to $\varphi+\psi_2$.
We can find a sequence of open Riemann surfaces $\{D_m\}_{m=1}^{+\infty}$ satisfying $z_0\in D_m\subset\subset D_{m+1}$ for any $m$ and $\cup_{m=1}^{+\infty}D_m=\Omega$, and there is a holomorphic $(n,0)$ form $\tilde F$ on $\Omega$ such that $\tilde{F}(z_0)=f(z_0)$.

Note that $\int_{D_m}|\tilde F|^2<+\infty$ for any $m$ and
$$\int_{D_m}\mathbb{I}_{\{-t_0-1<\psi<-t_0\}}|\tilde{F}|^2e^{-\Phi_l-2G_{\Omega}(\cdot,z_0)}\leq e^{t_0+1}\int_{D_m}|\tilde{F}|^2e^{-\Phi_l+\psi_2}<+\infty$$
for any $m$, $l$ and $t_0>T$.
Using Lemma \ref{lem:GZ_sharp} ($\varphi\sim\Phi_l+2G_{\Omega}(\cdot,z_0)$), for any $D_m$, $l\in\mathbb{N}^+$ and $t_0>T$, there exists a holomorphic $(1,0)$ form $F_{l,m,t_0}$ on $D_m$, such that
\begin{equation}
\label{eq:210809d}
\begin{split}
&\int_{D_m}|F_{l,m,t_0}-(1-b_{t_0,1}(\psi))\tilde{F}|^{2}e^{-\Phi_l-2G_{\Omega}(\cdot,z_0)+v_{t_0,1}(\psi)}c(-v_{t_0,1}(\psi))\\
\leq& \left(\int_{0}^{t_{0}+1}c(t)e^{-t}dt\right) \int_{D_m}\mathbb{I}_{\{-t_0-1<\psi<-t_0\}}|\tilde{F}|^2e^{-\Phi_l-2G_{\Omega}(\cdot,z_0)}
\end{split}
\end{equation}
where
$b_{t_0,1}(t)=\int_{-\infty}^{t}\mathbb{I}_{\{-t_{0}-1< s<-t_{0}\}}ds$,
$v_{t_0,1}(t)=\int_{0}^{t}b_{t_0,1}(s)ds$. Note that $e^{-2G_{\Omega}(\cdot,z_0)}$ is not $L^1$ on any neighborhood of $z_0$, and $b_{t_0,1}(t)=0$ for  large enough $t$, then $(F_{l,m,t_0}-(1-b_{t_0,1}(\psi))\tilde{F})(z_0)=0$, and therefore $F_{l,m,t_0}(z_0)=f(z_0)$.

Note that $v_{t_0,1}(\psi)\geq\psi$ and $c(t)e^{-t}$ is decreasing, then the inequality \eqref{eq:210809d} becomes
\begin{equation}
\label{eq:210809e}
\begin{split}
&\int_{D_m}|F_{l,m,t_0}-(1-b_{t_0,1}(\psi))\tilde{F}|^{2}e^{-\Phi_l+\psi_2}c(-\psi)\\
\leq& \left(\int_{0}^{t_{0}+1}c(t)e^{-t}dt\right) \int_{D_m}\mathbb{I}_{\{-t_0-1<\psi<-t_0\}}|\tilde{F}|^2e^{-\Phi_l-2G_{\Omega}(\cdot,z_0)}.
\end{split}
\end{equation}
There exist smooth subharmonic functions $\Psi_k$ on $\Omega$, which are decreasingly convergent to $\psi_2$.
By definition of $dV_{\Omega}[\psi]$,  we have
\begin{equation}
	\label{eq:210809f}
	\begin{split}
	&\limsup_{t_0\rightarrow+\infty}\int_{D_m}\mathbb{I}_{\{-t_0-1<\psi<-t_0\}}|\tilde{F}|^2e^{-\Phi_l+\Psi_k-\psi}\\
	\leq&\pi\int_{z_0}\frac{|f|^2}{dV_{\Omega}}e^{-\Phi_l+\Psi_k}dV_{\Omega}[\psi]\\
	<&+\infty.	
	\end{split}
\end{equation}
Combining inequality \eqref{eq:210809e} and \eqref{eq:210809f}, let $t_0\rightarrow+\infty$, we have
\begin{equation}
	\label{eq:210809g}
	\begin{split}
		&\limsup_{t_0\rightarrow+\infty}\int_{D_m}|F_{l,m,t_0}-(1-b_{t_0,1}(\psi))\tilde{F}|^{2}e^{-\Phi_l+\psi_2}c(-\psi)\\
\leq& \limsup_{t_0\rightarrow+\infty}\left(\int_{0}^{t_{0}+1}c(t)e^{-t}dt\right) \int_{D_m}\mathbb{I}_{\{-t_0-1<\psi<-t_0\}}|\tilde{F}|^2e^{-\Phi_l+\Psi_k-\psi}\\
\leq&\pi\left(\int_{0}^{+\infty}c(t)e^{-t}dt\right) \int_{z_0}\frac{|f|^2}{dV_{\Omega}}e^{-\Phi_l+\Psi_k}dV_{\Omega}[\psi].
	\end{split}
\end{equation}
Let $k\rightarrow+\infty$, inequality \eqref{eq:210809g} implies that
\begin{equation}
	\label{eq:210809h}
	\begin{split}
		&\limsup_{t_0\rightarrow+\infty}\int_{D_m}|F_{l,m,t_0}-(1-b_{t_0,1}(\psi))\tilde{F}|^{2}e^{-\Phi_l+\psi_2}c(-\psi)\\
\leq&\pi\left(\int_{0}^{+\infty}c(t)e^{-t}dt\right) \int_{z_0}\frac{|f|^2}{dV_{\Omega}}e^{-\Phi_l+\psi_2}dV_{\Omega}[\psi].
	\end{split}
\end{equation}
Note that
$$\limsup_{t_0\rightarrow+\infty}\int_{D_m}|(1-b_{t_0,1}(\psi))\tilde{F}|^2e^{-\Phi_l+\psi_2}c(-\psi)<+\infty,$$
then we have
$$\limsup_{t_0\rightarrow+\infty}\int_{D_m}|F_{l,m,t_0}|^2e^{-\Phi_l+\psi_2}c(-\psi)<+\infty.$$
Using Lemma \ref{l:converge},
we obtain that
there exists a subsequence of $\{F_{l,m,t_0}\}_{t_0\rightarrow+\infty}$ (also denoted by $\{F_{l,m,t_0}\}_{t_0\rightarrow+\infty}$)
compactly convergent to a holomorphic (1,0) form on $D_m$ denoted by $F_{l,m}$. Then it follows from inequality \eqref{eq:210809h} and Fatou's Lemma that
\begin{equation}
\label{eq:210809i}
\begin{split}
\int_{D_m}|F_{l,m}|^{2}e^{-\Phi_l+\psi_2}c(-\psi)
=&\int_{D_m}\liminf_{t_0\rightarrow+\infty}|F_{l,m,t_0}-(1-b_{t_0,1}(\psi))\tilde{F}|^{2}e^{-\Phi_l+\psi_2}c(-\psi)\\
\leq&\liminf_{t_0\rightarrow+\infty}\int_{D_m}|F_{l,m,t_0}-(1-b_{t_0,1}(\psi))\tilde{F}|^{2}e^{-\Phi_l+\psi_2}c(-\psi)\\
\leq&\pi\left(\int_{0}^{+\infty}c(t)e^{-t}dt\right) \int_{z_0}\frac{|f|^2}{dV_{\Omega}}e^{-\Phi_l+\psi_2}dV_{\Omega}[\psi].
\end{split}
\end{equation}

As  $\|f\|_{z_0}=\pi\int_{z_0}\frac{|f|^2}{dV_{\Omega}}e^{-\varphi}dV_{\Omega}[\psi]<+\infty$ and $\Phi_l$  are decreasingly convergent to $\varphi+\psi_2$, we have
\begin{equation}
	\label{eq:210809j}
	\lim_{l\rightarrow+\infty}\pi\int_{z_0}\frac{|f|^2}{dV_{\Omega}}e^{-\Phi_l+\psi_2}dV_{\Omega}[\psi]=\|f\|_{z_0}<+\infty.
\end{equation}
It follows from inequality \eqref{eq:210809i} and \eqref{eq:210809j} that
\begin{equation}
	\label{eq:210809k}\limsup_{l\rightarrow+\infty}\int_{D_m}|F_{l,m}|^{2}e^{-\Phi_l+\psi_2}c(-\psi)\leq\left(\int_{0}^{+\infty}c(t)e^{-t}dt\right)\|f\|_{z_0}<+\infty.
\end{equation}
Using Lemma \ref{l:converge} ($g_l=e^{-\Phi_l+\psi_2}c(-\psi)$),
we obtain that
there exists a subsequence of $\{F_{l,m}\}_{l\rightarrow+\infty}$ (also denoted by $\{F_{l,m}\}_{l\rightarrow+\infty}$)
compactly convergent to a holomorphic (1,0) form on $D_m$ denoted by $F_{m}$ and
\begin{equation}
\label{eq:210809l}
\int_{D_m}|F_{m}|^{2}e^{-\varphi}c(-\psi)
\leq\left(\int_{0}^{+\infty}c(t)e^{-t}dt\right) \|f\|_{z_0}.
\end{equation}
 Inequality \eqref{eq:210809l} implies that
$$\int_{D_m}|F_{m'}|^{2}e^{-\varphi}c(-\psi)\leq \pi\left(\int_{0}^{+\infty}c(t)e^{-t}dt\right) \|f\|_{z_0}$$
holds for any $m'\geq m$. Note that $\varphi+\psi$ and $\psi$ are subharmonic on $\Omega$ and $\varphi=(\varphi+\psi)-\psi$.
 Using Lemma \ref{l:converge}, the diagonal method and Levi's Theorem, we obtain a subsequence of $\{F_m\}$, denoted also by $\{F_m\}$, which is uniformly convergent to a holomorphic $(1,0)$ form $F$ on $\Omega$ satisfying that $F(z_0)=f(z_0)$ and
 $$\int_{\Omega}|F|^{2}e^{-\varphi}c(-\psi)\leq \left(\int_{0}^{+\infty}c(t)e^{-t}dt\right)\|f\|_{z_0}$$
Thus the existence  of holomorphic extension satisfying inequality \eqref{eq:210809} holds.

In the following part, we prove the characterization for $\left(\int_{0}^{+\infty}c(t)e^{-t}dt\right)\|f\|_{z_0}=\inf\{\|\tilde F\|_{\Omega}:\tilde F$ is a holomorphic extension of $f$ from $z_0$ to $\Omega$$\}$.

Firstly, we prove the necessity. If $\|f\|_{z_0}=0$, then $F\equiv0$, which contradicts to $F(z_0)=f(z_0)\not=0$. Thus, we only consider the case $\|f\|_{z_0}\in(0,+\infty)$.

As $\{\psi<-t\}$ is an open Riemann surface. Note that $dV_{\Omega}[\psi+t]=e^{-t}dV_{\Omega}[\psi]$. By the above discussion ($\psi\sim\psi+t$, $c(\cdot)\sim c(\cdot+t)$ and $\Omega\sim\{\psi<-t\}$), for any $t>0$, there exists a holomorphic $(n,0)$ form $F_t$ on $\{\psi<-t\}$ such that $F_t(z_0)=f(z_0)$ and
$$\int_{\{\psi<-t\}}|F_t|^2e^{-\varphi}c(-\psi)\leq\left(\int_t^{+\infty}c(l)e^{-l}dl\right)\|f\|_{z_0}.$$
Let $\mathcal{F}|_{Z_0}=\mathcal{I}(\psi)_{z_0}$, by the definition of $G(t)$, we obtain that inequality
\begin{equation}
	\label{eq:210809m}
	\frac{G(t)}{\int_t^{+\infty}c(l)e^{-l}dl}\leq	\frac{G(0)}{\int_0^{+\infty}c(t)e^{-t}dt}\end{equation}
holds for any $t\geq0$.
 Theorem \ref{thm:general_concave} tells us $G(\hat{h}^{-1}(r))$ is concave with respect to $r$.
Combining inequality \eqref{eq:210809m} and Corollary \ref{thm:linear}, we obtain that $G(\hat{h}^{-1}(r))$ is linear with respect to $r$.  As $\psi-2G_{\Omega}(z,z_0)$ is bounded near $z_0$ and $G(0)=(\int_0^{+\infty}c(t)e^{-t}dt)\|f\|_{z_0}\in(0,+\infty)$,  Theorem \ref{thm:e1} shows that  statements $(1)$, $(2)$ and $(3)$  hold.

Now, we prove the sufficiency. Let $\mathcal{F}|_{Z_0}=\mathcal{I}(2G_{\Omega}(z,z_0))_{z_0}$, then Theorem \ref{thm:e2} shows that $G(\hat{h}^{-1}(r))$ is linear with respect to $r$. It follows from Lemma \ref{l:c'} and Corollary \ref{thm:1} that there exists $\tilde{c}\in\mathcal{P}_0$ such that $\tilde{c}(t)$ is increasing on $(a,+\infty)$ for some $a>0$, and $G(\hat{h}_{\tilde c}^{-1}(r);\tilde{c})$ is linear with respect to $r$, where $\hat{h}_{\tilde c}(t)=\int_{t}^{+\infty}\tilde{c}(l)e^{-l}dl$. Using Proposition \ref{p:G}, we have $G(0;\tilde{c})=\|f\|_{z_0}(\int_{0}^{+\infty}\tilde{c}(l)e^{-l}dl)$. Following from Corollary \ref{thm:1} and Remark \ref{r:c}, we obtain that $G(0;c)=\|f\|_{z_0}(\int_{0}^{+\infty}c(l)e^{-l}dl)$, which implies that $\|f\|_{z_0}(\int_{0}^{+\infty}c(l)e^{-l}dl)=\{\|\tilde F\|_{\Omega}:\tilde F$ is a holomorphic extension of $f$ from $z_0$ to $\Omega$$\}$.

Thus, Corollary \ref{c:ll3} holds.
\subsection{Proof of Corollary \ref{c:ll4}}
Note that $2G_{\Omega}(z,z_0)\in A'(z_0)$ and $\psi_1\in A'(z_0)$,  we have
\begin{equation}
	\label{eq:210730a}\|f\|^*_{z_0}=\pi\int_{z_0}\frac{|f|^2}{dV_{\Omega}}e^{-\varphi-\psi_2}dV_{\Omega}[\psi_1]=\pi\int_{z_0}\frac{|f|^2}{dV_{\Omega}}e^{-\varphi-\psi
	+2G_{\Omega}(z,z_0)}dV_{\Omega}[2G_{\Omega}(z,z_0)].\end{equation}
Corollary \ref{c:ll3} implies the  sufficiency. Thus, it suffices to prove the necessity.

Let $\mathcal{F}|_{Z_0}=\mathcal{I}(\psi_1)_{z_0}$. It follows from Lemma \ref{lem:A} that there exists a unique holomorphic extension from $z_0$ to $\Omega$, such that $\|F\|_{\Omega}\leq\|f\|_{z_0}^{*}(\int_{0}^{+\infty}c(l)e^{-l}dl)$. Using Corollary \ref{thm:ll2}, we know that $G(\hat{h}^{-1}(r))$ is linear with respect to $r$, therefore
\begin{equation}
	\label{eq:210627a}
	\frac{G(t)}{\int_{t}^{+\infty}c(l)e^{-l}dl}=\|f\|_{z_0}^{*}
	\end{equation}
holds for any $t\geq0$.

 Let $\tilde\psi=2G_{\Omega}(z,z_0)$. Lemma \ref{l:green} tells us $\psi-\tilde\psi\leq0$ on $\Omega$. Let $\tilde\varphi=\varphi+\psi-\tilde\psi$, then we compare $G(t;\varphi,\psi)$ and $G(t;\tilde\varphi,\tilde\psi)$ to prove $\psi-\tilde\psi\equiv0$. As $\|f\|_{z_0}^{*}<+\infty$ and $e^{-\tilde\varphi}c(-\tilde\psi)=e^{-\varphi-\psi}e^{\tilde\psi}c(-\tilde\psi)\geq e^{-\varphi}c(-\psi)$, it follows from Corollary \ref{c:L2b} and equality \eqref{eq:210730a} that $G(0;\tilde\varphi,\tilde\psi)\leq\|f\|^{*}_{z_0}\left(\int_{0}^{+\infty}c(t)e^{-t}dt\right)$. Without loss of generality, we can assume that $c(t)e^{-t}$ is strictly decreasing on $(0,+\infty)$. We prove $\psi-\tilde\psi\equiv0$ by contradiction: if not, $c(t)e^{-t}$ is strictly decreasing on $(0,+\infty)$ implies that $G(0;\tilde\varphi,\tilde\psi)>G(0;\varphi,\psi)$, which contradicts to $G(0;\tilde\varphi,\tilde\psi)\leq\|f\|^{*}_{z_0}\left(\int_{0}^{+\infty}c(t)e^{-t}dt\right)=G(0;\varphi,\psi)$. Thus, we have $\psi=2G_{\Omega}(z,z_0)$. Combining the linearity of $G(\hat{h}(r);\varphi,\psi)$, $G(0;\varphi,\psi)=\|f\|^{*}_{z_0}\left(\int_{0}^{+\infty}c(t)e^{-t}dt\right)\in(0,+\infty)$ and Theorem \ref{thm:e2}, we obtain that the other two statements in Corollary \ref{c:ll4} hold.

Thus, Corollary \ref{c:ll4} holds.

\section{Appendix}

\subsection{Proof of Lemma \ref{lem:GZ_sharp}}\label{sec:p-GZsharp}
In this section, we prove Lemma \ref{lem:GZ_sharp}.

It follows from Lemma \ref{l:FN1}
that there exist smooth strongly plurisubharmonic functions $\psi_{m}$ and $\varphi_{m}$ on $M$ decreasingly convergent to $\psi$ and $\varphi$ respectively.

The following remark shows that it suffices to consider Lemma \ref{lem:GZ_sharp} for the case that
$M$ is a relatively compact open Stein submanifold of a Stein manifold,
and $F$ is a holomorphic $(n,0)$ form on $\{\psi<-t_{0}\}$ such that $\int_{\{\psi<-t_{0}\}}|F|^{2}<+\infty$,
which implies that
$\sup_{m}\sup_{M}\psi_{m}<-T$ and $\sup_{m}\sup_{M}\varphi_{m}<+\infty$ on $M$.

\begin{Remark}
\label{rem:unify}
It is well-known that there exist open Stein submanifolds $D_{1}\subset\subset\cdots\subset\subset D_{j}\subset\subset D_{j+1}\subset\subset\cdots$
such that $\cup_{j=1}^{+\infty}D_{j}=M$.

If inequality \eqref{equ:3.4} holds on any $D_{j}$ and inequality \eqref{equ:20171122a} holds on $M$,
then for any $B>0$, we obtain a sequence of holomorphic (n,0) forms $\tilde{F}_{j}$ on $D_{j}$ such that
\begin{equation}
\label{eq:j1}
\begin{split}
&\int_{D_{j}}|\tilde{F}_{j}-(1-b_{t_0,B}(\psi))F|^{2}e^{-\varphi+v_{t_0,B}(\psi)}c(-v_{t_0,B}(\psi))
\\\leq&\int_{T}^{t_{0}+B}c(t)e^{-t}dt\int_{D_{j}}\frac{1}{B}\mathbb{I}_{\{-t_{0}-B<\psi<-t_{0}\}}|F|^{2}e^{-\varphi}\leq C\int_{T}^{t_{0}+B}c(t)e^{-t}dt
\end{split}
\end{equation}
is bounded with respect to $j$.
Note that for any given $j$, $e^{-\varphi+v_{t_0,B}(\psi)}c(-v_{t_0,B}(\psi))$ has a positive lower bound,
then it follows that for any any given $j$, $\int_{ D_{j}}|\tilde{F}_{j'}-(1-b_{t_0,B}(\psi))F|^{2}$ is bounded with respect to $j'\geq j$.
Combining with
\begin{equation}
\label{equ:20171123a}
\int_{ D_{j}}|(1-b_{t_0,B}(\psi))F|^{2}\leq
\int_{D_{j}\cap\{\psi<-t_{0}\}}|F|^{2}<+\infty
\end{equation}
and inequality \eqref{equ:3.4},
one can obtain that $\int_{ D_{j}}|\tilde{F}_{j'}|^{2}$ is bounded with respect to $j'\geq j$.

By the diagonal method, there exists a subsequence $F_{j''}$ uniformly convergent on any $D_{j}$ to a holomorphic $(n,0)$ form on $M$ denoted by $\tilde{F}$.
Then it follows from inequality \eqref{eq:j1} and Fatou's Lemma that

$$\int_{ D_{j}}|\tilde{F}-(1-b_{t_0,B}(\psi))F|^{2}e^{-(\varphi-v_{t_0,B}(\psi))}c(-v_{t_0,B}(\psi))\leq C\int_{T}^{t_{0}+B}c(t)e^{-t}dt,$$
then one can obtain Lemma \ref{lem:GZ_sharp} when $j$ goes to $+\infty$.
\end{Remark}

Next, we recall some lemmas on $L^{2}$ estimates for
some $\bar\partial$ equations.

\begin{Lemma}\label{l:vector7}(see \cite{demailly99,demailly2010})
Let $X$ be a complete K\"{a}hler manifold equipped with a (non
necessarily complete) K\"{a}hler metric $\omega$, and let $E$ be a
Hermitian vector bundle over $X$. Assume that there are smooth and
bounded functions $\eta$, $g>0$ on $X$ such that the (Hermitian)
curvature operator

$$\textbf{B}:=[\eta\sqrt{-1}\Theta_{E}-\sqrt{-1}\partial\bar\partial\eta-
\sqrt{-1}g\partial\eta\wedge\bar\partial\eta,\Lambda_{\omega}]$$
is positive definite everywhere on $\Lambda^{n,q}T^{*}_{X}\otimes E$, for some $q\geq 1$.
Then for every form $\lambda\in L^{2}(X,\Lambda^{n,q}T^{*}_{X}\otimes E)$ such that $D''\lambda=0$ and
$\int_{X}\langle\textbf{B}^{-1}\lambda,\lambda\rangle dV_{M}<\infty$,
there exists $u\in L^{2}(X,\Lambda^{n,q-1}T^{*}_{X}\otimes E)$ such that $D''u=\lambda$ and
$$\int_{X}(\eta+g^{-1})^{-1}|u|^{2}dV_{M}\leq\int_{X}\langle\textbf{B}^{-1}\lambda,\lambda\rangle dV_{M}.$$
\end{Lemma}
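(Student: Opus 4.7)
The plan is to establish Lemma \ref{lem:GZ_sharp} through Demailly's twisted $L^2$-estimate (Lemma \ref{l:vector7}), following the template developed in \cite{guan-zhou13p,guan_sharp}. First, invoking Remark \ref{rem:unify} I may assume $M$ is a relatively compact open Stein submanifold of a larger Stein manifold and $\int_M|F|^2<+\infty$, and a standard regularization further replaces $\psi,\varphi$ by smooth strongly plurisubharmonic $\psi_m\searrow\psi$, $\varphi_m\searrow\varphi$. Set $\Psi_m:=v_{t_0,B}(\psi_m)$; since $b_{t_0,B}\ge 0$ and $b_{t_0,B}'\ge 0$, $\Psi_m$ is smooth and plurisubharmonic. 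The candidate extension $F_0^m:=(1-b_{t_0,B}(\psi_m))F$ satisfies
\[
\bar\partial F_0^m=-b_{t_0,B}'(\psi_m)\,\bar\partial\psi_m\wedge F=:\lambda_m,
\]
a smooth $(n,1)$-form supported in $\{-t_0-B<\psi_m<-t_0\}$. The goal is to solve $\bar\partial u_m=\lambda_m$ with a sharp $L^2$-bound, after which $\tilde F_m:=F_0^m-u_m$ is the desired holomorphic form.

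Second, the heart of the proof is to apply Lemma \ref{l:vector7} to the trivial line bundle over $M$ equipped with weight $e^{-(\varphi_m-\Psi_m)}$, choosing smooth positive functions $s,g_0$ on $(S,+\infty)$ and setting $\eta:=s(-\Psi_m)$, $g:=g_0(-\Psi_m)$. Using $\Psi_m'(\psi_m)=b_{t_0,B}(\psi_m)$ and $\Psi_m''(\psi_m)=\frac{1}{B}\mathbb{I}_{\{-t_0-B<\psi_m<-t_0\}}$, the curvature operator $\textbf{B}$ expands into a nonnegative contribution from $\eta\,\sqrt{-1}\partial\bar\partial\varphi_m$ plus a scalar multiple of $\sqrt{-1}\partial\psi_m\wedge\bar\partial\psi_m$ supported exactly where $\lambda_m$ lives. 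I will pick $s,g_0$ so that $(\eta+g^{-1})^{-1}=c(-\Psi_m)$ globally, and so that the coefficient of $\sqrt{-1}\partial\psi_m\wedge\bar\partial\psi_m$ in $\textbf{B}$ just dominates the term arising from the pairing against $\lambda_m$. The resulting first-order ODE system for $(s,g_0)$ is solvable precisely because $c(t)e^{-t}$ is decreasing, i.e. $c\in\tilde{\mathcal{P}}_S$. Lemma \ref{l:vector7} then yields $u_m$ with $\bar\partial u_m=\lambda_m$ and
\[
\int_M|u_m|^2 e^{-\varphi_m+\Psi_m}c(-\Psi_m)\le\int_M\langle\textbf{B}^{-1}\lambda_m,\lambda_m\rangle e^{-\varphi_m+\Psi_m}.
\]
A direct computation of the right-hand side, using Fubini in $t=-\psi_m$ on the level sets of $\psi_m$ and invoking the hypothesis $\int_M\frac{1}{B}\mathbb{I}_{\{-t_0-B<\psi<-t_0\}}|F|^2e^{-\varphi}\le C$, reduces it to $C\int_S^{t_0+B}c(t)e^{-t}dt$ in the limit $m\to\infty$.

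Third, the holomorphic $(n,0)$ form $\tilde F_m:=F_0^m-u_m$ on $M$ satisfies the analogue of \eqref{equ:3.4} with smooth $(\varphi_m,\psi_m)$ replacing $(\varphi,\psi)$. Passing to the limit $m\to\infty$ via weak-$L^2$ compactness and Fatou's lemma, together with the Stein-exhaustion diagonal extraction of Remark \ref{rem:unify}, yields the desired $\tilde F$ on the original $M$. The main obstacle is in the second step: identifying smooth positive $s,g_0$ that both satisfy $(\eta+g^{-1})^{-1}=c(-\Psi_m)$ and generate a positive $\textbf{B}$ for which the pairing against $\lambda_m$ telescopes to exactly $C\int_S^{t_0+B}c(t)e^{-t}dt$, with no loss. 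This is an elementary but delicate one-variable construction whose compatibility is equivalent to the monotonicity of $c(t)e^{-t}$; while the recipe is by now standard in the Guan--Zhou framework, carrying it out faithfully and handling the non-smoothness of $b_{t_0,B},v_{t_0,B}$ at the breakpoints $t=-t_0,-t_0-B$ via a secondary mollification forms the technical core of the argument.
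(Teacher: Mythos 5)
Your proposal does not prove the statement in question; it proves a different one. The lemma you were asked to establish is the twisted $L^{2}$ existence theorem for the $\bar\partial$-equation itself (Lemma \ref{l:vector7}): given the positivity of the curvature operator $\textbf{B}=[\eta\sqrt{-1}\Theta_{E}-\sqrt{-1}\partial\bar\partial\eta-\sqrt{-1}g\partial\eta\wedge\bar\partial\eta,\Lambda_{\omega}]$, one must produce a solution $u$ of $D''u=\lambda$ with the weighted bound $\int_{X}(\eta+g^{-1})^{-1}|u|^{2}\leq\int_{X}\langle\textbf{B}^{-1}\lambda,\lambda\rangle$. Your text opens with ``the plan is to establish Lemma \ref{lem:GZ_sharp} through Demailly's twisted $L^2$-estimate (Lemma \ref{l:vector7})'' and then uses Lemma \ref{l:vector7} as a black box: the choice of $\eta=s(-v_{\varepsilon}(\psi_m))$, the weight $e^{-(\varphi_m-\Psi_m)}$, the ODE system for $(s,g_0)$ forced by the condition $(\eta+g^{-1})^{-1}e^{-\phi-\Psi_m}=e^{-\varphi_m}c(-\Psi_m)$, and the limiting arguments are exactly the content of the paper's Appendix proof of Lemma \ref{lem:GZ_sharp}, not of Lemma \ref{l:vector7}. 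As a proof of the stated lemma this is circular: the statement is assumed, never derived.

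What is actually required (and what the paper itself only cites from Demailly) is the functional-analytic argument behind such estimates: replace $\omega$ by complete metrics $\omega_{\varepsilon}=\omega+\varepsilon\hat\omega$ using the complete K\"ahler hypothesis; prove the twisted a priori inequality
$\left|\int_{X}\langle\lambda,\alpha\rangle\,dV\right|^{2}\leq\left(\int_{X}\langle\textbf{B}^{-1}\lambda,\lambda\rangle\,dV\right)\left(\|(\eta+g^{-1})^{1/2}D''^{*}\alpha\|^{2}+\|\eta^{1/2}D''\alpha\|^{2}\right)$
for $\alpha$ in the domain of $D''^{*}$, via the Bochner--Kodaira--Nakano identity applied to the twisted operators (this is where the terms $-\sqrt{-1}\partial\bar\partial\eta$ and $-\sqrt{-1}g\,\partial\eta\wedge\bar\partial\eta$ arise, the latter absorbing the cross term by Cauchy--Schwarz with weight $g$); then apply the Hahn--Banach/Riesz representation argument on the Hilbert space of $L^{2}$ forms to obtain $u$ with $D''u=\lambda$ and the asserted bound, and finally let $\varepsilon\to 0$ with a weak-limit extraction. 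None of these steps appears in your proposal, so there is a genuine gap: the key identity and the duality argument that constitute the proof of Lemma \ref{l:vector7} are missing, and the material you do supply belongs to the proof of a downstream result that presupposes it.
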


\begin{Lemma}
\label{l:positve}(see \cite{guan-zhou13ap})Let $X$ and $E$ be as in the above lemma and
$\theta$ be a continuous $(1,0)$ form on $X$. Then we have
$$[\sqrt{-1}\theta\wedge\bar\theta,\Lambda_{\omega}]\alpha=\bar\theta\wedge(\alpha\llcorner(\bar\theta)^\sharp\big),$$
for any $(n,1)$ form $\alpha$ with value in $E$. Moreover, for any
positive $(1,1)$ form $\beta$, we have $[\beta,\Lambda_{\omega}]$ is
semipositive.
\end{Lemma}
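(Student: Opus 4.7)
The proof is local and pointwise, so I would fix a point $x\in X$ and choose local coordinates $(z_{1},\dots,z_{n})$ in which the K\"ahler metric $\omega$ is expressed in an orthonormal frame at $x$, i.e.\ $\omega(x)=\sqrt{-1}\sum_{i=1}^{n}dz_{i}\wedge d\bar z_{i}$. Writing $\theta=\sum_{i}\theta_{i}\,dz_{i}$, the sharp operator sends $\bar\theta$ to $(\bar\theta)^{\sharp}=\sum_{i}\bar\theta_{i}\,\partial/\partial z_{i}$, and an $(n,1)$-form with values in $E$ is $\alpha=\sum_{j}\alpha_{j}\otimes(dz_{1}\wedge\cdots\wedge dz_{n})\wedge d\bar z_{j}$, with $\alpha_{j}\in E_{x}$. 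The plan is to compute both sides of the identity directly in this frame.

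For the right-hand side one immediately gets
$\bar\theta\wedge(\alpha\llcorner(\bar\theta)^{\sharp})=\sum_{i,j,k}\bar\theta_{i}\bar\theta_{k}\alpha_{j}\otimes d\bar z_{i}\wedge\bigl((dz_{1}\wedge\cdots\wedge dz_{n})\wedge d\bar z_{j}\bigr)\llcorner(\partial/\partial z_{k})$,
which after collapsing the interior product becomes a sum $\sum_{i,j}\bar\theta_{i}\bar\theta_{j}\bigl(\cdots\bigr)$ with an explicit sign. For the left-hand side I would use the standard Hermitian bracket: on an $(n,1)$-form, $\Lambda_{\omega}\alpha$ is an $(n,0)$-form obtained by contracting with $\omega^{-1}$, and then wedging with $\sqrt{-1}\theta\wedge\bar\theta$ and subtracting $\sqrt{-1}\theta\wedge\bar\theta\wedge\alpha$ followed by $\Lambda_{\omega}$ (which vanishes on $(n+1,2)$-forms if we are careful about bidegree). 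Matching coefficients term by term yields the identity. This is the computational heart of the lemma; the only real bookkeeping obstacle is sign and index tracking, which is handled by choosing the orthonormal frame.

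For the second assertion, I would diagonalize: any positive $(1,1)$-form $\beta$ on a complex vector space can be written as $\beta=\sum_{k}\sqrt{-1}\,\theta_{k}\wedge\bar\theta_{k}$ for suitable $(1,0)$-covectors $\theta_{k}$ (spectral decomposition of the associated Hermitian form). By linearity of the commutator, $[\beta,\Lambda_{\omega}]=\sum_{k}[\sqrt{-1}\theta_{k}\wedge\bar\theta_{k},\Lambda_{\omega}]$. Applying the first part to each summand and pairing with $\alpha$ gives
\[
\langle[\beta,\Lambda_{\omega}]\alpha,\alpha\rangle=\sum_{k}\langle\bar\theta_{k}\wedge(\alpha\llcorner(\bar\theta_{k})^{\sharp}),\alpha\rangle=\sum_{k}\bigl|\alpha\llcorner(\bar\theta_{k})^{\sharp}\bigr|^{2}\geq 0,
\]
where the last equality uses the adjointness $\langle\bar\theta\wedge u,v\rangle=\langle u,v\llcorner(\bar\theta)^{\sharp}\rangle$ on forms, valid on an orthonormal frame. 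This establishes semipositivity of $[\beta,\Lambda_{\omega}]$ pointwise, hence globally.

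The main technical obstacle is the first identity: carrying out the coefficient-matching cleanly requires a careful choice of frame and a consistent sign convention for $\llcorner$ and $\sharp$. Once the bracket formula is proved in the orthonormal frame, both it and the semipositivity statement pass to arbitrary Hermitian frames by tensoriality in $\theta$ and $\beta$, so no further work is needed.
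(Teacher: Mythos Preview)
The paper does not give its own proof of this lemma; it is simply quoted from \cite{guan-zhou13ap} and used as a black box in the proof of Lemma \ref{lem:GZ_sharp}. Your sketch follows the standard pointwise argument (orthonormal frame computation for the bracket identity, then spectral decomposition $\beta=\sum_k\sqrt{-1}\,\theta_k\wedge\bar\theta_k$ for the semipositivity), which is exactly how the result is proved in the literature, so there is nothing to compare against here. One small simplification you can exploit: since $\alpha$ has bidegree $(n,1)$, the term $\sqrt{-1}\theta\wedge\bar\theta\wedge\alpha$ is automatically zero (it would be of type $(n+1,2)$), so the commutator reduces to $\sqrt{-1}\theta\wedge\bar\theta\wedge(\Lambda_\omega\alpha)$ and the coefficient matching becomes a one-line check rather than a two-term cancellation.
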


The following Lemma belongs to Fornaess
and Narasimhan on approximation property of plurisubharmonic
functions of Stein manifolds.

\begin{Lemma}
\label{l:FN1}\cite{FN1980} Let $X$ be a Stein manifold and $\varphi \in PSH(X)$. Then there exists a sequence
$\{\varphi_{n}\}_{n=1,2,\cdots}$ of smooth strongly plurisubharmonic functions such that
$\varphi_{n} \downarrow \varphi$.
\end{Lemma}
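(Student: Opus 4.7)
The plan is to reduce the approximation problem on $X$ to an ordinary convolution smoothing problem in Euclidean space, via a Stein embedding and a holomorphic retraction, and then to patch the resulting local smoothings together with Richberg's regularized maximum.

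First I would invoke the Remmert--Bishop--Narasimhan embedding theorem: since $X$ is Stein of dimension $n$, there exists a proper holomorphic embedding $\iota\colon X\hookrightarrow \mathbb{C}^N$ realizing $X$ as a closed complex submanifold. Next, by the Docquier--Grauert tubular neighborhood theorem for Stein submanifolds, one obtains an open Stein neighborhood $U\subset\mathbb{C}^N$ of $\iota(X)$ together with a holomorphic retraction $r\colon U\to \iota(X)$. The pullback
\[
\Phi := \varphi\circ \iota^{-1}\circ r
\]
is plurisubharmonic on $U$ and restricts to $\varphi$ on $\iota(X)$. This converts the problem into smoothing a psh function defined on an open subset of $\mathbb{C}^N$.

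Second, I would run the standard convolution regularization. Fix a smooth nonnegative radially symmetric mollifier $\rho\in C_c^\infty(\mathbb{C}^N)$ with $\int\rho=1$ supported in the unit ball, and set $\rho_\varepsilon(z)=\varepsilon^{-2N}\rho(z/\varepsilon)$. On the sub-open set $U_\varepsilon:=\{z\in U:\operatorname{dist}(z,\partial U)>\varepsilon\}$ define $\Phi_\varepsilon:=\Phi*\rho_\varepsilon$. Classical facts give that $\Phi_\varepsilon\in C^\infty(U_\varepsilon)$ is plurisubharmonic, and $\Phi_\varepsilon\downarrow \Phi$ pointwise as $\varepsilon\downarrow 0$. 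The obstacle at this point is that the domain $U_\varepsilon$ shrinks with $\varepsilon$, so one cannot directly read off a decreasing sequence defined on all of $\iota(X)$.

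To overcome this, I would take a normal exhaustion $K_1\Subset K_2\Subset\cdots$ with $\bigcup_j K_j=X$ by compact sets and choose a strictly decreasing positive sequence $\varepsilon_j\downarrow 0$ fast enough that $\iota(K_{j+1})\Subset U_{\varepsilon_j}$ and $\Phi_{\varepsilon_j}<\Phi+2^{-j}$ on $\iota(K_{j+1})$. Setting $\psi_j:=(\Phi_{\varepsilon_j})\circ\iota$ on $K_{j+1}$ then gives a family of smooth psh functions on increasing open sets, each within $2^{-j}$ of $\varphi$ from above. The remaining task is to glue these into a single globally defined smooth psh function decreasing to $\varphi$. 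I would use Richberg's regularized maximum $\max_\eta$ to take, on each annular region $K_{j+1}\setminus K_{j-1}$, a smooth upper envelope of $\psi_j,\psi_{j+1},\ldots$ shifted by rapidly decreasing constants so that the local max stabilizes to $\psi_k$ on $K_k\setminus K_{k-1}$ when one is far enough out; this is the standard patching argument (see Demailly's monograph) and yields a smooth psh function $\tilde\Phi_n$ on $X$ with $\tilde\Phi_n\downarrow\varphi$.

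Finally, since $X$ is Stein it carries a $C^\infty$ strictly plurisubharmonic exhaustion function $\chi$. Replacing $\tilde\Phi_n$ by
\[
\varphi_n := \tilde\Phi_n + \tfrac{1}{n}\chi
\]
(after a further shift by a small positive constant to preserve monotonic decrease to $\varphi$, which is possible by making the choices of $\varepsilon_j$ and the regularization parameters in the Richberg gluing a bit faster) produces a decreasing sequence of smooth strongly plurisubharmonic functions on $X$ converging pointwise to $\varphi$. The main technical obstacle throughout is the bookkeeping in the Richberg patching to make sure the resulting sequence is genuinely monotone decreasing and not merely converging; this is handled by choosing the approximation errors on $K_{j+1}\setminus K_j$ to decay geometrically and choosing the regularized max parameter $\eta$ correspondingly small.
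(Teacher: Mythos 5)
The paper itself gives no argument for this lemma: it is quoted directly from Forn{\ae}ss--Narasimhan \cite{FN1980}, so there is no internal proof to compare with, and your proposal has to stand on its own. Its overall architecture (Remmert embedding $\iota\colon X\hookrightarrow\mathbb{C}^N$, Docquier--Grauert neighborhood $U$ with holomorphic retraction $r$, pullback $\Phi=\varphi\circ\iota^{-1}\circ r$, convolution $\Phi_\varepsilon=\Phi*\rho_\varepsilon$, then gluing plus adding $\frac{1}{n}\chi$ for strictness) is the classical and correct starting point.

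However, there is a genuine gap exactly at the step that carries the whole difficulty of the lemma. You choose $\varepsilon_j$ ``fast enough that $\Phi_{\varepsilon_j}<\Phi+2^{-j}$ on $\iota(K_{j+1})$'', and the subsequent Richberg bookkeeping (geometrically decaying errors, shifted constants, monotonicity of the glued sequence) rests entirely on such uniform bounds. But $\Phi_\varepsilon\downarrow\Phi$ only pointwise; the convergence is \emph{not} uniform on compact sets unless $\Phi$ is continuous, and $\varphi$ here is an arbitrary plurisubharmonic function, typically discontinuous and with $-\infty$ values. For example, if $\varphi$ behaves like $\log|w|$ near a point $x_0$ of $X$, then $\Phi(x_0)=-\infty$ while $\Phi_\varepsilon(x_0)>-\infty$ for every $\varepsilon>0$, so $\sup_{K}(\Phi_\varepsilon-\Phi)=+\infty$ for every compact $K$ containing $x_0$ and no choice of $\varepsilon_j$ can achieve your inequality. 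The same obstruction blocks the alternative comparisons one would need in the regularized-max patching (e.g.\ $\Phi_{\varepsilon_{j+1}}\geq\Phi_{\varepsilon_j}-\delta_j$ on an annulus fails when $\Phi$ has a pole there). Since producing a \emph{decreasing} global sequence is precisely the nontrivial content of Forn{\ae}ss--Narasimhan's theorem (decreasing approximation fails on general complex manifolds, so Steinness must be used in an essential way at the gluing stage), this step cannot be dismissed as bookkeeping; as written, the construction of $\tilde\Phi_n$, and hence the monotonicity of $\varphi_n=\tilde\Phi_n+\frac{1}{n}\chi$, does not go through. The local convolution part and the final addition of $\frac{1}{n}\chi$ are fine; what is missing is a correct mechanism for globalization, which is exactly what the cited reference supplies.
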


For the sake of completeness, let's recall some steps in the proof in
\cite{guan_sharp} (see also \cite{guan-zhou13p,guan-zhou13ap,GZopen-effect}) with some slight
modifications in order to prove Lemma \ref{lem:GZ_sharp}.

It follows from Remark \ref{rem:unify} that
it suffices to consider that $M$ is a Stein manifold, and $F$ is holomorphic $(n,0)$ form on $\{\psi<-t_{0}\}$ and
\begin{equation}
\label{equ:20171122e}
\int_{\{\psi<-t_{0}\}}|F|^{2}<+\infty,
\end{equation}
and there exist smooth plurisubharmonic functions $\psi_{m}$ and $\varphi_{m}$ on $M$ decreasingly convergent to $\psi$ and $\varphi$ respectively,
satisfying $\sup_{m}\sup_{M}\psi_{m}<-T$ and $\sup_{m}\sup_{M}\varphi_{m}<+\infty$.

\

\emph{Step 1: construct some functions}

\

Let $\varepsilon\in(0,\frac{1}{8}B)$.
Let $\{v_{\varepsilon}\}_{\varepsilon\in(0,\frac{1}{8}B)}$ be a family of smooth increasing convex functions on $\mathbb{R}$,
which are continuous functions on $\mathbb{R}\cup\{-\infty\}$, such that:

 $(1)$ $v_{\varepsilon}(t)=t$ for $t\geq-t_{0}-\varepsilon$, $v_{\varepsilon}(t)=constant$ for $t<-t_{0}-B+\varepsilon$ and are pointwise convergent to $v_{t_0,B}(t)$;

 $(2)$ $v''_{\varepsilon}(t)$ are pointwise convergent to $\frac{1}{B}\mathbb{I}_{(-t_{0}-B,-t_{0})}$, when $\varepsilon\to 0$,
 and $0\leq v''_{\varepsilon}(t)\leq \frac{2}{B}\mathbb{I}_{(-t_{0}-B+\varepsilon,-t_{0}-\varepsilon)}$ for any $t\in \mathbb{R}$;

 $(3)$ $v'_{\varepsilon}(t)$ are pointwise convergent to $b_{t_0,B}(t)$ which is a continuous function on $\mathbb{R}$, when $\varepsilon\to 0$, and $0\leq v'_{\varepsilon}(t)\leq1$ for any $t\in \mathbb{R}$.

One can construct the family $\{v_{\varepsilon}\}_{\varepsilon\in(0,\frac{1}{8}B)}$ by the setting
\begin{equation}
\label{equ:20140101}
\begin{split}
v_{\varepsilon}(t):=&\int_{-\infty}^{t}\left(\int_{-\infty}^{t_{1}}\left(\frac{1}{B-4\varepsilon}
\mathbb{I}_{(-t_{0}-B+2\varepsilon,-t_{0}-2\varepsilon)}*\rho_{\frac{1}{4}\varepsilon}\right)(s)ds\right)dt_{1}
\\&-\int_{-\infty}^{-t_0}\left(\int_{-\infty}^{t_{1}}\left(\frac{1}{B-4\varepsilon}\mathbb{I}_{(-t_{0}-B+2\varepsilon,
-t_{0}-2\varepsilon)}*\rho_{\frac{1}{4}\varepsilon}\right)(s)ds\right)dt_{1}-t_0,
\end{split}
\end{equation}
where $\rho_{\frac{1}{4}\varepsilon}$ is the kernel of convolution satisfying $supp(\rho_{\frac{1}{4}\varepsilon})\subset (-\frac{1}{4}\varepsilon,\frac{1}{4}\varepsilon)$.
Then it follows that
$$v''_{\varepsilon}(t)=\frac{1}{B-4\varepsilon}\mathbb{I}_{(-t_{0}-B+2\varepsilon,-t_{0}-2\varepsilon)}*\rho_{\frac{1}{4}\varepsilon}(t),$$
and
$$v'_{\varepsilon}(t)=\int_{-\infty}^{t}\left(\frac{1}{B-4\varepsilon}\mathbb{I}_{(-t_{0}-B+2\varepsilon,-t_{0}-2\varepsilon)}
*\rho_{\frac{1}{4}\varepsilon}\right)(s)ds.$$

Let $\eta=s(-v_{\varepsilon}(\psi_{m}))$ and $\phi=u(-v_{\varepsilon}(\psi_{m}))$,
where $s\in C^{\infty}((S,+\infty))$ satisfies $s>0$ and $s'>0$, and
$u\in C^{\infty}((S,+\infty))$, such that $u''s-s''>0$, and $s'-u's=1$.
It follows from $\sup_{m}\sup_{M}\psi_{m}<-S$ and $\max{\{t,-t_0-B\}}\leq v_{\epsilon}(t)\leq\max{\{t,t_0\}}$ that $\phi=u(-v_{\varepsilon}(\psi_{m}))$ are uniformly bounded
on $M$ with respect to $m$ and $\varepsilon$,
and $u(-v_{\varepsilon}(\psi))$ are uniformly bounded
on $M$ with respect to $\varepsilon$.
Let $\Phi=\phi+\varphi_{m'}$,
and let $\tilde{h}=e^{-\Phi}$.

Let $f(x)=2\mathbb{I}_{(-\frac{1}{2},\frac{1}{2})}*\rho(x)$ be a smooth function on $\mathbb{R}$, where $\rho$ is is the kernel of convolution satisfying $supp(\rho)\subset (-\frac{1}{3},\frac{1}{3})$ and $\rho\geq0$.

Let $g_{l}(x)=\left\{ \begin{array}{lcl}
	lf(lx) & \mbox{if}& x\leq0\\
	lf(l^2x) & \mbox{if}& x>0
    \end{array} \right.$, then $\{g_{l}\}_{l\in\mathbb{N}^+}$ be a family of smooth functions on $\mathbb R$ satisfying that:

$(1)$ $supp(g_l)\subset[-\frac{1}{l},\frac{1}{l}],$ $g_{l}(x)\geq 0$ for any $x\in\mathbb R$;

$(2)$ $\int_{-\frac{1}{l}}^0g_{l}(x)dx=1$, $\int_{0}^{\frac{1}{l}}g_{l}(x)dx\leq\frac{1}{l}$ for any $l\in\mathbb{N}^+$.

Set $c_{l}(t)=e^t\int_\mathbb{R}h(e^y(t-S)+S)g_{l}(y)dy$, where $h(t)=e^{-t}c(t)$ and $c\in\tilde{\mathcal P}_S$. It is easy to get
\begin{displaymath}
	c_l(t)-c(t)\geq e^t\int_{-\frac{1}{l}}^0(h(e^y(t-S)+S)-h(t))g_l(y)dy\geq0.
\end{displaymath}
 Set $\tilde h(t)=h(e^t+S)$ and $\tilde g_l(t)=g_l(-t)$, then $c_l(t)=e^t\tilde h*\tilde g_l(\ln(t-S))\in C^{\infty}(S,+\infty)$. Because $h(t)$ is decreasing with respect to $t$, so is $c_{l}(t)e^{-t}$. And
\begin{displaymath}
	\begin{split}
		\int_S^{s}c_l(t)e^{-t}dt &=	\int_S^{s}\int_{\mathbb{R}}h(e^y(t-S)+S)g_{l}(y)dydt\\
&= \int_{\mathbb{R}}e^{-y}g_l(y)\int_S^{e^y(s-S)+S}h(t)dtdy\\
&\leq\int_{\mathbb{R}}e^{-y}g_l(y)dy\int_S^{e(s-S)+S}h(t)dt\\
&<+\infty,
	\end{split}	
\end{displaymath}
then $c_l(t)\in\tilde{\mathcal P}_S$ for any $l\in\mathbb{N}^+$.

As $h(t)$ is decreasing with respect to $t$, then set $h^-(t)=\lim_{s\rightarrow t-0}h(s)\geq h(t)$ and $c^-(t)=\lim_{s\rightarrow t-0}c(s)\geq c(t)$, then we  claim that $\lim_{l\rightarrow +\infty}c_l(t)=c^-(t)$. In fact, we have
\begin{equation}
	\label{eq:c1}\begin{split}
		|c_l(t)-c^-(t)|\leq& e^t\int_{-\frac{1}{l}}^0|h(e^y(t-S)+S)-h^-(t)|g_l(y)dy\\&+e^t\int_0^{\frac{1}{l}}h(e^y(t-S)+S)g_l(y)dy.		
	\end{split}\end{equation}

$\forall \varepsilon>0$, $\exists \delta>0$ and $|h(t-\delta)-h^-(t)|<\varepsilon$. Then $\exists N>0$, $\forall l>N$, such that $e^y(t-S)+S>t-\delta$ for all $y\in[-\frac{1}{l},0)$ and $\frac{1}{l}<\varepsilon$. It following from \eqref{eq:c1} that
\begin{displaymath}
	|c_l(t)-c^-(t)|\leq \varepsilon e^t+\varepsilon h(t)e^t,
	\end{displaymath}
hence $\lim_{l\rightarrow +\infty}c_l(t)=c^-(t)$ for any $t>S$.

\

\emph{Step 2: Solving $\bar\partial-$equation with smooth polar function and smooth weight}

\

Now let $\alpha\in \Lambda_x^{n,1}T_{M}^{*}$, for any $x\in M$. Using inequality $s>0$ and the fact that $\varphi_{m}$ is
plurisubharmonic on $M$, we get
\begin{equation}
\label{equ:10.1}
\begin{split}
\langle\textbf{B}\alpha,\alpha\rangle_{\tilde{h}}
=&\langle[\eta\sqrt{-1}\Theta_{\tilde{h}}-\sqrt{-1}\partial\bar\partial\eta-
\sqrt{-1}g\partial\eta\wedge\bar\partial\eta,\Lambda_{\omega}]\alpha,\alpha\rangle_{\tilde{h}}
\\\geq&\langle[\eta\sqrt{-1}\partial\bar\partial\phi-\sqrt{-1}\partial\bar\partial\eta-
\sqrt{-1}g\partial\eta\wedge\bar\partial\eta,\Lambda_{\omega}]\alpha,\alpha\rangle_{\tilde{h}}.
\end{split}
\end{equation}
where $g>0$ is a smooth and bounded function on $M$.
We need the following calculations to determine $g$.
\begin{equation}
\label{}
\begin{split}
&\partial\bar{\partial}\eta=-s'(-v_{\varepsilon}(\psi_{m}))\partial\bar{\partial}(v_{\varepsilon}(\psi_{m}))
+s''(-v_{\varepsilon}(\psi_{m}))\partial v_{\varepsilon}(\psi_{m})\wedge
\bar{\partial}v_{\varepsilon}(\psi_{m}),
\end{split}
\end{equation}

and
\begin{equation}
\label{}
\begin{split}
&\partial\bar{\partial}\phi=-u'(-v_{\varepsilon}(\psi_{m}))\partial\bar{\partial}v_{\varepsilon}(\psi_{m})
+
u''(-v_{\varepsilon}(\psi_{m}))\partial v_{\varepsilon}(\psi_{m})\wedge\bar{\partial}v_{\varepsilon}(\psi_{m}).
\end{split}
\end{equation}
Then we have
\begin{equation}
\label{equ:vector1}
\begin{split}
&-\partial\bar{\partial}\eta+\eta\partial\bar{\partial}\phi-g(\partial\eta)\wedge\bar\partial\eta
\\=&(s'-su')\partial\bar{\partial}v_{\varepsilon}(\psi_{m})+((u''s-s'')-gs'^{2})\partial
(-v_{\varepsilon}(\psi_{m}))\bar{\partial}(-v_{\varepsilon}(\psi_{m}))
\\=&(s'-su')(v'_{\varepsilon}(\psi_{m})\partial\bar{\partial}\psi_{m}+v''_{\varepsilon}(\psi_{m})
\partial(\psi_{m})\wedge\bar{\partial}(\psi_{m}))
\\&+((u''s-s'')-gs'^{2})\partial
(-v_{\varepsilon}(\psi_{m}))\wedge\bar{\partial}(-v_{\varepsilon}(\psi_{m})).
\end{split}
\end{equation}
We omit composite item $-v_{\varepsilon}(\psi_{m})$ after $s'-su'$ and $(u''s-s'')-gs'^{2}$ in the above equalities. Let $g=\frac{u''s-s''}{s'^{2}}(-v_{\varepsilon}(\psi_{m}))$.
It follows that $\eta+g^{-1}=(s+\frac{s'^{2}}{u''s-s''})(-v_{\varepsilon}(\psi_{m}))$.

As $v'_{\varepsilon}\geq 0$  and $s'-su'=1$, using Lemma
\ref{l:positve}, equality \eqref{equ:vector1} and inequality
\eqref{equ:10.1}, we obtain
\begin{equation}
\label{equ:semi.vector3}
\begin{split}
\langle\textbf{B}\alpha, \alpha\rangle_{\tilde{h}}=&\langle[\eta\sqrt{-1}\Theta_{\tilde{h}}-\sqrt{-1}\partial\bar\partial
\eta-\sqrt{-1}g\partial\eta\wedge\bar\partial\eta,\Lambda_{\omega}]
\alpha,\alpha\rangle_{\tilde{h}}
\\\geq&
\langle[(v''_{\varepsilon}\circ\psi_{m})
\sqrt{-1}\partial\psi_{m}\wedge\bar{\partial}\psi_{m},\Lambda_{\omega}]\alpha,\alpha\rangle_{\tilde{h}}
\\=&\langle (v''_{\varepsilon}\circ\psi_{m}) \bar\partial\psi_{m}\wedge
(\alpha\llcorner(\bar\partial\psi_{m})^\sharp\big ),\alpha\rangle_{\tilde{h}}.
\end{split}
\end{equation}

Using the definition of contraction, Cauchy-Schwarz inequality and
the inequality \eqref{equ:semi.vector3}, we have
\begin{equation}
\label{equ:20181106}
\begin{split}
|\langle (v''_{\varepsilon}\circ\psi_{m})\bar\partial\psi_{m}\wedge \gamma,\tilde\alpha\rangle_{\tilde{h}}|^{2}
=&|\langle (v''_{\varepsilon}\circ\psi_{m}) \gamma,\tilde\alpha\llcorner(\bar\partial\psi_{m})^\sharp\big
\rangle_{\tilde{h}}|^{2}
\\\leq&\langle( v''_{\varepsilon}\circ\psi_{m}) \gamma,\gamma\rangle_{\tilde{h}}
(v''_{\varepsilon}\circ\psi_{m})|\tilde\alpha\llcorner(\bar\partial\psi_{m})^\sharp\big|_{\tilde{h}}^{2}
\\=&\langle (v''_{\varepsilon}\circ\psi_{m}) \gamma,\gamma\rangle_{\tilde{h}}
\langle (v''_{\varepsilon}\circ\psi_{m}) \bar\partial\psi_{m}\wedge
(\tilde\alpha\llcorner(\bar\partial\psi_{m})^\sharp\big ),\alpha\rangle_{\tilde{h}}
\\\leq&\langle (v''_{\varepsilon}\circ\psi_{m})\gamma,\gamma\rangle_{\tilde{h}}
\langle\textbf{B}\tilde\alpha,\tilde\alpha\rangle_{\tilde{h}},
\end{split}
\end{equation}
for any $(n,0)$ form $\gamma$.

It follows from $s>0$ and $\varphi_{m'}$ is strongly plurisubharmonic that $\textbf{B}$ is positive definite everywhere on $\Lambda^{n,1}T_{M}^{*}$.
As $F$ is holomorphic on $\{\psi<-t_{0}\}$ and $ Supp(v''_{\varepsilon}(\psi_{m}))\subset\{\psi<-t_0\}$,
then $\lambda:=\bar{\partial}[(1-v'_{\varepsilon}(\psi_{m})){F}]$
is well-defined and smooth on $M$.

Taking $\gamma=F$, and
$\tilde\alpha=\textbf{B}^{-1}\lambda$,
note that $\tilde{h}=e^{-\Phi}$,
using inequality \eqref{equ:20181106},
we have
$$\langle \textbf{B}^{-1}\lambda,\lambda\rangle_{\tilde{h}} \leq v''_{t_0,\varepsilon}(\psi_{m})| \tilde{F}|^{2}e^{-\Phi}.$$
Then it follows that
$$\int_{M}\langle \textbf{B}^{-1}\lambda,\lambda\rangle_{\tilde{h}}
 \leq \int_{M}v''_{t_0,\varepsilon}(\psi_{m})| \tilde{F}|^{2}e^{-\Phi}.$$
Assume that we can choose $\eta$ and $\phi$ such that $e^{v_{\varepsilon}\circ\psi_{m}}e^{\phi}c_l(-v_{\varepsilon}\circ\psi_{m})=(\eta+g^{-1})^{-1}$. Using Lemma \ref{l:vector7},
we have locally $L^{1}$ function $u_{m,m',\varepsilon,l}$ on $M$ such that $\bar{\partial}u_{m,m',\varepsilon,l}=\lambda$,
and
\begin{equation}
 \label{equ:3.2}
 \begin{split}
 &\int_{M}|u_{m,m',\varepsilon,l}|^{2}e^{v_{\varepsilon}(\psi_{m})-\varphi_{m'}}c_l(-v_{\varepsilon}\circ\psi_{m})\\=&\int_{M}|u_{m,m',\varepsilon,l}|^{2}(\eta+g^{-1})^{-1} e^{-\Phi}\\ \leq&\int_{M}\langle \textbf{B}^{-1}\lambda,\lambda\rangle_{\tilde{h}}\\
  \leq&\int_{M}v''_{\varepsilon}(\psi_{m})| F|^2e^{-\Phi}\\=&\int_{M}v''_{\varepsilon}(\psi_{m})| F|^2e^{-\phi-\varphi_{m'}}.
  \end{split}
\end{equation}

Let $F_{m,m',\varepsilon,l}:=-u_{m,m',\varepsilon,l}+(1-v'_{\varepsilon}(\psi_{m})){F}$.
Then inequality \eqref{equ:3.2} becomes
\begin{equation}
 \label{equ:20171122c}
 \begin{split}
 &\int_{M}|F_{m,m',\varepsilon,l}-(1-v'_{\varepsilon}(\psi_{m})){F}|^{2}e^{v_{\varepsilon}(\psi_{m})-\varphi_{m'}}c_l(-v_{\varepsilon}\circ\psi_{m})
  \\&\leq\int_{M}(v''_{\varepsilon}(\psi_{m}))| F|^2e^{-\phi-\varphi_{m'}}.
  \end{split}
\end{equation}

\

\emph{Step 3: Singular polar function and smooth weight}

\

As $\sup_{m,\varepsilon}\sup_{M}|\phi|=\sup_{m,\varepsilon}|u(-v_{\varepsilon}(\psi_{m}))|<+\infty$ and $\sup_{M}\varphi_{m'}<+\infty$,
note that
$$v''_{\varepsilon}(\psi_{m})| F|^2e^{-\phi-\varphi_{m'}}\leq\frac{2}{B}\mathbb{I}_{\{\psi<-t_{0}\}}| F|^{2}\sup_{m,\varepsilon}e^{-\phi-\varphi_{m'}}$$
on $M$,
then it follows from inequality \eqref{equ:20171122e} and the dominated convergence theorem that
\begin{equation}
\label{equ:20171122f}
 \lim_{m\to+\infty}\int_{M}v''_{\varepsilon}(\psi_{m})| F|^2e^{-\phi-\varphi_{m'}}=
 \int_{M}v''_{\varepsilon}(\psi)| F|^2e^{-u(-v_{\varepsilon}(\psi))-\varphi_{m'}}
\end{equation}

Note that $\inf_{m}\inf_{M}e^{v_{\varepsilon}(\psi_{m})-\varphi_{m'}}c_l(-v_{\varepsilon}\circ\psi_{m})>0$,
then it follows from inequality \eqref{equ:20171122c} and \eqref{equ:20171122f}
that $\sup_{m}\int_{M}|F_{m,m',\varepsilon,l}-(1-v'_{\varepsilon}(\psi_{m})){F}|^{2}<+\infty$.
Note that
\begin{equation}
\label{equ:20171122g}
|(1-v'_{\varepsilon}(\psi_{m}))F|\leq |\mathbb{I}_{\{\psi<-t_{0}\}}F|,
\end{equation}
then it follows from inequality \eqref{equ:20171122e}
that $\sup_{m}\int_{M}|F_{m,m',\varepsilon,l}|^{2}<+\infty$,
which implies that there exists a subsequence of $\{F_{m,m',\varepsilon,l}\}_{m}$
(also denoted by $\{F_{m,m',\varepsilon,l}\}_m$) compactly convergent to a holomorphic $F_{m',\varepsilon,l}$ on $M$.

Note that $e^{v_{\varepsilon}(\psi_{m})-\varphi_{m'}}c_l(-v_{\varepsilon}\circ\psi_{m})$ are uniformly bounded on $M$ with respect to $m$,
then it follows from
$|F_{m,m',\varepsilon,l}-(1-v'_{\varepsilon}(\psi_{m})){F}|^{2}\leq
 2(|F_{m,m',\varepsilon,l}|^{2}+ |(1-v'_{\varepsilon}(\psi_{m})){F}|^{2})
\leq  2(|F_{m,m',\varepsilon,l}|^{2}+  |\mathbb{I}_{\{\psi<-t_{0}\}}F^{2}|)$
and the dominated convergence theorem that
\begin{equation}
 \label{equ:20171122d}
 \begin{split}
 \lim_{m\to+\infty}&\int_{K}|F_{m,m',\varepsilon,l}-(1-v'_{\varepsilon}(\psi_{m})){F}|^{2}e^{v_{\varepsilon}(\psi_{m})-\varphi_{m'}}c_l(-v_{\varepsilon}\circ\psi_{m})
  \\=&\int_{K}|F_{m',\varepsilon,l}-(1-v'_{\varepsilon}(\psi)){F}|^{2}e^{v_{\varepsilon}(\psi)-\varphi_{m'}}c_l(-v_{\varepsilon}\circ\psi)
  \end{split}
\end{equation}
holds for any compact subset $K$ on $M$.
Combining with inequality \eqref{equ:20171122c} and \eqref{equ:20171122f},
one can obtain that
\begin{equation}
 \label{equ:20171122h}
 \begin{split}
&\int_{K}|F_{m',\varepsilon,l}-(1-v'_{\varepsilon}(\psi)){F}|^{2}e^{v_{\varepsilon}(\psi)-\varphi_{m'}}c_l(-v_{\varepsilon}\circ\psi)
\\&\leq
\int_{M}v''_{\varepsilon}(\psi)| F|^2e^{-u(-v_{\varepsilon}(\psi))-\varphi_{m'}},
\end{split}
\end{equation}
which implies
\begin{equation}
 \label{equ:20171122i}
 \begin{split}
&\int_{M}|F_{m',\varepsilon,l}-(1-v'_{\varepsilon}(\psi)){F}|^{2}e^{v_{\varepsilon}(\psi)-\varphi_{m'}}c_l(-v_{\varepsilon}\circ\psi)
\\&\leq
\int_{M}v''_{\varepsilon}(\psi)| F|^2e^{-u(-v_{\varepsilon}(\psi))-\varphi_{m'}}.
\end{split}
\end{equation}

\

\emph{Step 4: Nonsmooth cut-off function}

\

Note that
$\sup_{\varepsilon}\sup_{M}e^{-u(-v_{\varepsilon}(\psi))-\varphi_{m'}}<+\infty,$
and
$$v''_{\varepsilon}(\psi)| F|^2e^{-u(-v_{\varepsilon}(\psi))-\varphi_{m'}}\leq
\frac{2}{B}\mathbb{I}_{\{-t_{0}-B<\psi<-t_{0}\}}| F|^2\sup_{\varepsilon}\sup_{M}e^{-u(-v_{\varepsilon}(\psi))-\varphi_{m'}},$$
then it follows from inequality \eqref{equ:20171122e} and the dominated convergence theorem that
\begin{equation}
\label{equ:20171122j}
\begin{split}
&\lim_{\varepsilon\to0}\int_{M}v''_{\varepsilon}(\psi)| F|^2e^{-u(-v_{\varepsilon}(\psi))-\varphi_{m'}}
\\=&\int_{M}\frac{1}{B}\mathbb{I}_{\{-t_{0}-B<\psi<-t_{0}\}}|F|^2e^{-u(-v_{t_0,B}(\psi))-\varphi_{m'}}
\\\leq&(\sup_{M}e^{-u(-v_{t_0,B}(\psi))})\int_{M}\frac{1}{B}\mathbb{I}_{\{-t_{0}-B<\psi<-t_{0}\}}|F|^2e^{-\varphi_{m'}}<+\infty.
\end{split}
\end{equation}

Note that $\inf_{\varepsilon}\inf_{M}e^{v_{\varepsilon}(\psi)-\varphi_{m'}}c_l(-v_{\varepsilon}\circ\psi)>0$,
then it follows from inequality \eqref{equ:20171122i} and \eqref{equ:20171122j} that
$\sup_{\varepsilon}\int_{M}|F_{m',\varepsilon,l}-(1-v'_{\varepsilon}(\psi)){F}|^{2}<+\infty.$
Combining with
\begin{equation}
\label{equ:20171122k}
\sup_{\varepsilon}\int_{M}|(1-v'_{\varepsilon}(\psi)){F}|^{2}\leq\int_{M}\mathbb{I}_{\{\psi<-t_{0}\}}|F|^{2}<+\infty,
\end{equation}
one can obtain that $\sup_{\varepsilon}\int_{M}|F_{m',\varepsilon,l}|^{2}<+\infty$,
which implies that
there exists a subsequence of $\{F_{m',\varepsilon,l}\}_{\varepsilon\to0}$ (also denoted by $\{F_{m',\varepsilon,l}\}_{\varepsilon\to0}$)
compactly convergent to a holomorphic (n,0) form on $M$ denoted by $F_{m',l}$. Then it follows from inequality \eqref{equ:20171122i}, inequality \eqref{equ:20171122j} and Fatou's Lemma that
\begin{equation}
\label{equ:20171122n}
\begin{split}
&\int_{M}|F_{m',l}-(1-b_{t_0,B}(\psi)){F}|^{2}e^{v_{t_0,B}(\psi)-\varphi_{m'}}c_l(-v\circ\psi)\\
=&\int_{M}\liminf_{\varepsilon\rightarrow0}|F_{m',\varepsilon,l}-(1-v'_{\varepsilon}(\psi)){F}|^{2}e^{v_{\varepsilon}(\psi)-\varphi_{m'}}c_l(-v_{\varepsilon}\circ\psi)\\
\leq&\liminf_{\varepsilon\rightarrow0}\int_{M}|F_{m',\varepsilon,l}-(1-v'_{\varepsilon}(\psi)){F}|^{2}e^{v_{\varepsilon}(\psi)-\varphi_{m'}}c_l(-v_{\varepsilon}\circ\psi)\\
\leq&\liminf_{\varepsilon\rightarrow0}\int_{M}v''_{\varepsilon}(\psi)| F|^2e^{-u(-v_{\varepsilon}(\psi))-\varphi_{m'}}
\\\leq&(\sup_{M}e^{-u(-v_{t_0,B}(\psi))})\int_{M}\frac{1}{B}\mathbb{I}_{\{-t_{0}-B<\psi<-t_{0}\}}|F|^2e^{-\varphi_{m'}}.
\end{split}
\end{equation}

\

\emph{Step 5: Singular weight}

\

Note that
\begin{equation}
\label{equ:20171122o}
\int_{M}\frac{1}{B}\mathbb{I}_{\{-t_{0}-B<\psi<-t_{0}\}}|F|^2e^{-\varphi_{m'}}\leq\int_{M}\frac{1}{B}\mathbb{I}_{\{-t_{0}-B<\psi<-t_{0}\}}|F|^{2}e^{-\varphi}<+\infty,
\end{equation}
and $\sup_{M}e^{-u(-v_{t_0,B}(\psi))}<+\infty$,
then it from \eqref{equ:20171122n} that
$$\sup_{m'}\int_{M}|F_{m',l}-(1-b(\psi)){F}|^{2}e^{v(\psi)-\varphi_{m'}}c_l(-v\circ\psi)<+\infty.$$
Combining with $\inf_{m'}\inf_{M}e^{v(\psi)-\varphi_{m'}}c_l(-v(\psi))>0$,
one can obtain that
$$\sup_{m'}\int_{M}|F_{m',l}-(1-b(\psi)){F}|^{2}<+\infty.$$
Note that
\begin{equation}
\label{equ:20171122p}
\int_{M}|(1-b(\psi)){F}|^{2}\leq\int_{M}|\mathbb{I}_{\{\psi<-t_{0}\}}F|^{2} <+\infty.
\end{equation}
Then $\sup_{m'}\int_{M}|F_{m',l}|^{2}<+\infty$,
which implies that there exists a compactly convergent subsequence of $\{F_{m',l}\}_{m'}$ (also denoted by $\{F_{m',l}\}_{m'}$),
which converges to a holomorphic (n,0) form $\tilde{F_l}$ on $M$.
Then it follows from inequality \eqref{equ:20171122n}, inequality \eqref{equ:20171122o} and Fatou's Lemma that
\begin{equation}
\label{equ:20171122s}
\begin{split}
&\int_{M}|F_{l}-(1-b_{t_0,B}(\psi)){F}|^{2}e^{v_{t_0,B}(\psi)-\varphi}c_l(-v_{t_0,B}\circ\psi)\\
=&\int_{M}\liminf_{m'\rightarrow+\infty}|F_{m',l}-(1-b_{t_0,B}(\psi)){F}|^{2}e^{v_{t_0,B}(\psi)-\varphi_{m'}}c_l(-v_{t_0,B}\circ\psi)\\
\leq&\liminf_{m'\rightarrow+\infty}\int_{M}|F_{m',l}-(1-b_{t_0,B}(\psi)){F}|^{2}e^{v_{t_0,B}(\psi)-\varphi_{m'}}c_l(-v_{t_0,B}\circ\psi)\\
\leq&\liminf_{m'\rightarrow+\infty}(\sup_{M}e^{-u(-v_{t_0,B}(\psi))})\int_{M}\frac{1}{B}\mathbb{I}_{\{-t_{0}-B<\psi<-t_{0}\}}|F|^2e^{-\varphi_{m'}}
\\\leq&(\sup_{M}e^{-u(-v_{t_0,B}(\psi))})\int_{M}\frac{1}{B}\mathbb{I}_{\{-t_{0}-B<\psi<-t_{0}\}}|F|^2e^{-\varphi}.
\end{split}
\end{equation}

\

\emph{Step 6: ODE system}

\

we need to find $\eta$ and $\phi$ such that
$(\eta+g^{-1})=e^{-\psi_{m}}e^{-\phi}\frac{1}{c_l(-v_{\varepsilon}(\psi_{m}))}$ on $M$ and $s'-u's=1$.
As $\eta=s(-v_{\varepsilon}(\psi_{m}))$ and $\phi=u(-v_{\varepsilon}(\psi_{m}))$,
we have $(\eta+g^{-1}) e^{v_{\varepsilon}(\psi_{m})}e^{\phi}=(s+\frac{s'^{2}}{u''s-s''})e^{-t}e^{u}\circ(-v_{\varepsilon}(\psi_{m}))$.

Summarizing the above discussion about $s$ and $u$, we are naturally led to a
system of ODEs (see \cite{guan-zhou12,guan-zhou13p,guan-zhou13ap,GZopen-effect}):
\begin{equation}
\label{GZ}
\begin{split}
&1).\,\,\left(s+\frac{s'^{2}}{u''s-s''}\right)e^{u-t}=\frac{1}{c_l(t)}, \\
&2).\,\,s'-su'=1,
\end{split}
\end{equation}
where $t\in(T,+\infty)$.

It is not hard to solve the ODE system \eqref{GZ} and get $u(t)=-\log(\int_{S}^{t}c_l(t_{1})e^{-t_{1}}dt_{1})$ and
$s(t)=\frac{\int_{S}^{t}(\int_{S}^{t_{2}}c_l(t_{1})e^{-t_{1}}dt_{1})dt_{2}}{\int_{S}^{t}c_l(t_{1})e^{-t_{1}}dt_{1}}$
(see \cite{guan-zhou13ap}).
It follows that $s\in C^{\infty}((S,+\infty))$ satisfies $s>0$ and $s'>0$,
$u\in C^{\infty}((S,+\infty))$ satisfies $u''s-s''>0$.

As $u(t)=-\log(\int_{S}^{t}c_l(t_{1})e^{-t_{1}}dt_{1})$ is decreasing with respect to $t$,
then it follows from $-S\geq v(t)\geq\max\{t,-t_{0}-B_{0}\}\geq -t_{0}-B_{0}$ for any $t\leq0$
that
\begin{equation}
\begin{split}
\sup_{M}e^{-u(-v(\psi))}
\leq\sup_{t\in(S,t_{0}+B]}e^{-u(t)}
=\int_{S}^{t_{0}+B}c_l(t_{1})e^{-t_{1}}dt_{1},
\end{split}
\end{equation}
then it follows from inequality \eqref{equ:20171122a} and inequality \eqref{equ:20171122s} that
\begin{equation}
	\label{eq:2021329a}
	\int_{M}|F_{l}-(1-b_{t_0,B}(\psi)){F}|^{2}e^{v_{t_0,B}(\psi)-\varphi}c_l(-v_{t_0,B}(\psi))
	\leq C\int_{S}^{t_{0}+B}c_l(t_{1})e^{-t_{1}}dt_{1}.
\end{equation}

\

\emph{Step 7: non-smooth function $c$}

\

By the construction of $c_l$ in Step $1$, we have
\begin{equation}
	\label{eq:2021329b}
	\begin{split}
		&\int_{S}^{t_0+B}c_l(t_1)e^{-t_1}dt_1\\
		=&\int_{S}^{t_0+B}\int_{\mathbb{R}}h((t_1-S)e^y+S)g_l(y)dydt_1\\
		=&\int_{\mathbb{R}}e^{-y}g_l(y)\int_S^{(t_0+B-S)e^y+S}h(s)dsdy\\
		=&\int_{\mathbb{R}}e^{-y}g_l(y)dy\int_S^{t_0+B}h(s)ds+\int_{\mathbb{R}}e^{-y}g_l(y)\int_{t_0+B}^{(t_0+B-S)e^y+S}h(s)dsdy	.		
	\end{split}
\end{equation}
As
\begin{displaymath}
\begin{split}
	&\lim_{l\rightarrow+\infty}\left|\int_{\mathbb{R}}e^{-y}g_l(y)dy-1\right|\\\leq &\lim_{l\rightarrow+\infty}\left|\int_{-\frac{1}{l}}^0(e^{-y}-1)g_l(y)dy\right|+\lim_{l\rightarrow+\infty}\left|\int_0^{\frac{1}{l}}e^{-y}g_l(y)dy\right|\\
	=&0	
\end{split}
\end{displaymath}
and
\begin{displaymath}
	\begin{split}
	&\left|\int_{\mathbb{R}}e^{-y}g_l(y)\int_{t_0+B}^{(t_0+B-S)e^y+S}h(s)dsdy\right|\\
	\leq&e^{\frac{1}{l}}\left(1+\frac{1}{l}\right)h((t_0+B-S)e^{-1}+S)(t_0+B-S)(e^{\frac{1}{l}}-e^{-\frac{1}{l}}),		
	\end{split}
\end{displaymath}
then it follows from inequality \eqref{eq:2021329b} that
\begin{equation}
	\label{eq:2021329c}
	\lim_{l\rightarrow+\infty}\int_{S}^{t_0+B}c_l(t_1)e^{-t_1}dt_1=\int_{S}^{t_0+B}c(t_1)e^{-t_1}dt_1.
\end{equation}
Combining with $\inf_l\inf_{M}e^{v_{t_0,B}(\psi)-\varphi}c_l(-v(\psi))\geq\inf_{M}e^{v_{t_0,B}(\psi)-\varphi}c(-v(\psi))>0$, we obtain that
$$\sup_{l}\int_{M}|F_{l}-(1-b_{t_0,B}(\psi)){F}|^{2}<+\infty.$$
Note that
\begin{equation}
\label{eq:2021329d}
\int_{M}|(1-b_{t_0,B}(\psi)){F}|^{2}\leq\int_{M}|\mathbb{I}_{\{\psi<-t_{0}\}}F|^{2} <+\infty,
\end{equation}
then $\sup_{l}\int_{M}|F_{l}|^{2}<+\infty$,
which implies that there exists a compactly convergent subsequence of $\{F_{l}\}$ (also denoted by $\{F_{l}\}$),
which converges to a holomorphic (n,0) form $\tilde{F}$ on $M$.
Then it follows from inequality \eqref{eq:2021329a} and the Fatou's Lemma that
\begin{displaymath}
\begin{split}
&\int_{M}|\tilde F-(1-b_{t_0,B}(\psi)){F}|^{2}e^{v_{t_0,B}(\psi)-\varphi}c(-v_{t_0,B}(\psi))\\
\leq&\int_{M}|\tilde F-(1-b_{t_0,B}(\psi)){F}|^{2}e^{v_{t_0,B}(\psi)-\varphi}c^-(-v_{t_0,B}(\psi))\\
=&\int_{M}\liminf_{l\rightarrow+\infty}|F_{l}-(1-b_{t_0,B}(\psi)){F}|^{2}e^{v_{t_0,B}(\psi)-\varphi}c_l(-v_{t_0,B}(\psi))
\\\leq&\liminf_{l\rightarrow+\infty}\int_{M}|F_{l}-(1-b_{t_0,B}(\psi)){F}|^{2}e^{v_{t_0,B}(\psi)-\varphi}c_l(-v_{t_0,B}(\psi))\\
\leq&C\liminf_{l\rightarrow+\infty}\int_{S}^{t_0+B}c_l(t_1)e^{-t_1}dt_1
\\
=&C\int_{S}^{t_0+B}c(t_1)e^{-t_1}dt_1.
\end{split}
\end{displaymath}
Thus we prove Lemma \ref{lem:GZ_sharp}.

\subsection{Proof of Lemma \ref{l:cu}}
\label{sec:cu}
The proof is from \cite{GM-concave} with a few minor modifications.

Choose $p\in supp T\cap U$. By Lemma \ref{l:G-compact}, there exist  a real number $t>0$ and a coordinate $(V,w)$, such that $w(p)=0$, $w(V)\cong B(0,1)$ and $V\subset\subset\{G_{\Omega}(z,p)<-t\}\subset\subset U$. There exists a cut-off function $\theta$ on $\Omega$, such that $\theta\equiv1$ on $w^{-1}(B(0,\frac{1}{4}))$ and $supp\theta\subset\subset w^{-1}(B(0,\frac{1}{2}))$.

Let $\tilde{T}=\theta T$, then $\tilde{T}$ is a closed positive $(1,1)$ current on $\Omega$ with
 $supp \tilde{T}\subset\subset w^{-1}(B(0,\frac{1}{2}))$ and $\tilde{T}\not\equiv0$. Now, we prove that exists a subharmonic function $\Phi<0$ on $\Omega$, which satisfies the following properties: $i\partial\bar\partial\Phi= \tilde{T}$; $\lim_{t\rightarrow0+0}(\inf_{\{G_{\Omega}(z,z_0)\geq-t\}}\Phi(z))=0$; $\inf_{\Omega\backslash U}\Phi>-\infty$. Then $\Phi$ satisfies the requirements in Lemma \ref{l:cu}.

\

\emph{Step 1: Construct $\Phi$.}

\

Let $\rho\in C^{\infty}(\mathbb{C})$ be a function with
$supp \rho \subset B(0,\frac{1}{2})$ and $\rho(z)$ depends only on
$|z|$, $\rho \ge 0$ and
$\int_{\mathbb{C}}\rho(z)d\lambda_z=1$. Let
$\rho_{n}(z)=n\rho(nz)$, $\rho_n$ is a
family of smoothing kernels.

As $w(V)\cong B(0,1)$, without misunderstanding we see $(V,z_1)$ and $(B(0,1),w)$ the same.
As $supp \tilde{T}\subset\subset w^{-1}(B(0,\frac{1}{2}))$ and $supp \rho \subset B(0,\frac{1}{2})$, denote that
$T_n=\tilde{T}\ast \rho_n$ be the convolution of $\tilde{T}$. In fact, for
  any test function $h\in C_c^{\infty}(\Omega)$, $((h\circ w^{-1})\ast
\rho_n)(w)$ ($h\ast\rho_n(w)$ for short) is well defined on $w^{-1}(B(0,\frac{1}{2}))$, and
$\langle T_n(z_1),h(z_1)\rangle=<\tilde{T}(w), h\ast
\rho_n(w)>$. Then $T_n$ is a smooth closed positive $(1,1)$ current on $\Omega$ with
 $supp T_n\subset\subset w^{-1}(B(0,\frac{1}{2}+\frac{1}{2n}))$.

Let $u_n(z)=\langle
T_n(z_1),\frac{1}{\pi}G_{\Omega}(z,z_1)\rangle$. $G_{\Omega}(z,z_1)$ is locally integrable with respect $z_1\in\Omega$ for any fixed $z\in\Omega$ implies that $u_n(z)>-\infty$ for any $z\in\Omega$. For fixed $z$ and fixed $n$, we
will prove $\langle T_n(z_1),\frac{1}{\pi}G_{\Omega}(z,z_1)\rangle=\langle
\tilde{T}(w),(\frac{1}{\pi}G_{\Omega}(z,\cdot)\ast\rho_n)(w) \rangle$.
For fixed $z$, $G_{\Omega}(z,z_1)$ is a subharmonic function on $\Omega$.
There exists a sequence of smooth subharmonic functions
$G_m(z_1)$ decreasingly converge to $G_{\Omega}(z,z_1)$ with respect
to $m$. As
$G_m(z_1)$ is smooth, we have
\begin{equation}\label{eq:210623a}
\langle T_n(z_1),\frac{1}{\pi}G_m(z_1)\rangle=<\tilde{T}(w),
\frac{1}{\pi}G_m\ast \rho_n(w)>.
\end{equation}
As $\tilde{T}$ and $T_n$ are closed positive $(1,1)$ current
on $\Omega$ with $supp T_n \subset\subset V$ and $supp \tilde{T} \subset\subset V$, and $G_m(z_1)$
decreasingly converge to $G_{\Omega}(z,z_1)<0$ with respect to $m$ on $\Omega$, it
follows from Levi's Theorem and equality \eqref{eq:210623a} that
\begin{displaymath}
	\begin{split}
		\langle T_n(z_1),\frac{1}{\pi}G_{\Omega}(z,z_1)\rangle=&\lim _{m \to +\infty}\langle
T_n(z_1),\frac{1}{\pi}G_m(z_!)\rangle\\
=&\lim _{m \to +\infty}<\tilde{T}(w),
\frac{1}{\pi}G_m\ast \rho_n(w)>\\
=&<\tilde{T}(w),
\frac{1}{\pi}G_{\Omega}(z,\cdot)\ast \rho_n(w)>.
	\end{split}
\end{displaymath}

Fixed $z\in\Omega$, as $\frac{1}{\pi}G_{\Omega}(z,z_1)$ is subharmonic, then $\frac{1}{\pi}G_{\Omega}(z,\cdot)\ast\rho_n$ is
decreasingly  convergent  to $\frac{1}{\pi}G_{\Omega}(z,z_1)$ with respect to $n$.
Note that $\tilde{T}$ is a positive $(1,1)$ current on $\Omega$, then
$u_n(z)$ is decreasing with respect to $n$.
Let $\Phi(z)=\lim_{n\to +\infty}u_n(z)$. $G_{\Omega}(z,z_1)<0$ on $\Omega\times\Omega$ shows that $u_n(z)<0$ and $\Phi(z)<0$ on $\Omega$.

\

\emph{Step 2: $i\partial\bar{\partial}\Phi=\tilde{T}$.}

\

Firstly, we show that both $\{u_n\}$ and $\Phi$ is $L^1_{loc}$
function on $\Omega$. As $u_n\leq0$ on $\Omega$ and $u_n$ is decreasingly convergent to $\Phi$ with respect to $n$ on $\Omega$,  it suffices to prove that for any $q\in\Omega$ there exists an open subset $K\subset\subset\Omega$, such that $q\in K$ and $\int_{K}|u_n|dV_{\Omega}\leq C$, where $dV_{\Omega}$ is some continuous volume form and $C$ is a constant which independent of $n$.

It is clear that there exists a compact subset $D$ of $V$ such that $supp \tilde{T}\subset D$ and $supp T_n\subset D$ for any $n$. When $q\not\in V$, where exists a coordinate $w_1$ on a neighborhood $V'$ of $q$, such that $w_1(q)=0$, $V'\subset\subset \Omega$, $w_1(V')\cong B(3,1)$ and $\overline{V'}\cap D =\emptyset$. Note that for any $(z,z_1)\in V'\times D$, $G(z,z_1)<0$ on $\Omega\times\Omega$, $G(z,z_1)$ is harmonic with respect to  $z$ or $z_1$ when fixed another one and $\sum_{z_1\in V}|G(q,z_1)|<+\infty$.
Without loss of generality, we see $(V',z)$ and $(B(3,1),w_1)$ the same and assume that $dV_{\Omega}=d\lambda_z$ on $V'$, where $d\lambda_z$ is the Lebesgue measure on $\mathbb{C}$. Then we have
\begin{equation}
	\label{eq:210623c}
	\begin{split}
		\int_{V'}|u_n|dV_{\Omega}&=\frac{1}{\pi}\int_{z\in V'}\int_{z_1\in V}|G_{\Omega}(z,z_1)|T_n(z_1)d\lambda_z\\
		&=\frac{1}{\pi}\int_{z_1\in V}\int_{z\in V'}|G_{\Omega}(z,z_1)|d\lambda_zT_n	(z_1)\\
		&=\frac{1}{\pi}\int_{z_1\in V}\pi|G_{\Omega}(q,z_1)|T_n(z_1)\\
		&\leq\|T_n\|\sup_{z_1\in V}|G_{\Omega}(q,z_1)|\\
		&=\|\tilde{T}\|\sup_{z_1\in V}|G_{\Omega}(q,z_1)|.
		\end{split}
\end{equation}

When $q\in V$, $G_{\Omega}(w,\tilde{w})=\log|w-\tilde{w}|+v(w,\tilde{w})$ on $V\times V$, where $v(w,\tilde{w})$ is harmonic with respect to  $w$ or $\tilde{w}$ when fixed another one. Without loss of generality, we see $(V,z)$ and $(B(0,1),w)$ the same and assume that $dV_{\Omega}=d\lambda_w$ on $V$, where $d\lambda_w$ is the Lebesgue measure on $\mathbb{C}$. Then we have
\begin{equation}
	\label{eq:210623d}
	\begin{split}
		\int_{V}u_ndV_{\Omega}&=\frac{1}{\pi}\int_{w_\in V}\int_{\tilde{w}\in V}G_{\Omega}(w,\tilde{w})T_n(\tilde{w})d\lambda_w\\
		&=\frac{1}{\pi}\int_{\tilde{w}\in V}\int_{w\in V}G_{\Omega}(w,\tilde{w})d\lambda_wT_n(\tilde{w})	\\
		&=\frac{1}{\pi}\int_{\tilde{w}\in V}\int_{w\in V}\log|w-\tilde{w}|d\lambda_wT_n(\tilde{w})+\frac{1}{\pi}\int_{\tilde{w}\in V}\int_{w\in V}v(w,\tilde{w})d\lambda_wT_n(\tilde{w}).	\end{split}
\end{equation}
Note that
$$\int_{w\in V}\log|w-\tilde{w}|d\lambda_w\geq-\int_{w\in B(0,2)}|\log|w||d\lambda_w>-\infty$$
holds for any $\tilde{w}\in V$,
$$\int_{w\in V}v(w,\tilde{w})d\lambda_w=\pi v(q,\tilde{w})$$
holds for any $\tilde{w}\in V$ and $\inf_{\tilde{w}\in V}v(q,\tilde{w})>-\infty$, then equality \eqref{eq:210623d} implies that there exists a constant $N>0$ such that
\begin{equation}
	\label{eq:210623e}
	\int_{V}u_ndV_{\Omega}\geq N\|T_n\|.
\end{equation}
By the definition of $T_n$, we know $\|T_n\|=\|\tilde{T}\|<+\infty$. As $u_n\leq0$, combining inequality \eqref{eq:210623c} and \eqref{eq:210623e}, we obtain that any $q\in\Omega$ there exists an open subset $K\subset\subset\Omega$, such that $q\in K$ and $\int_{K}|u_n|dV_{\Omega}\leq C$, where $dV_{\Omega}$ is some continuous volume form and $C$ is a constant which independent of $n$. Hence, we know $\{u_n\} \in L^1_{loc}(\Omega)$ and $\Phi\in L^1_{loc}(\Omega)$.

Now we consider $i\partial\bar{\partial}\Phi$. Let $g\in
C^{\infty}_c(X)$ be a test function. It follows from $\Phi\in L_{loc}^1(\Omega)$ and the dominated convergence Theorem that
\begin{equation}
	\label{eq:2106j}
	\begin{split}
	\langle i\partial\bar{\partial}\Phi,g\rangle
&=\langle \Phi(z),i\partial\bar{\partial}g(z)\rangle\\
&=\lim_{n\to+\infty}\langle
u_n(z),i\partial\bar{\partial}g(z)\rangle. 	
	\end{split}
\end{equation}
As $u_n(z)=\langle
T_n(z_1),\frac{1}{\pi}G_{\Omega}(z,z_1)\rangle$, using Fubini's Theorem, equality \eqref{eq:2106j} becomes
\begin{equation}
	\label{eq:2106k}
	\begin{split}
	\langle i\partial\bar{\partial}\Phi,g\rangle
&=\lim_{n\to+\infty}\langle \langle
T_n(z_1),\frac{1}{\pi}G_{\Omega}(z,z_1)\rangle,i\partial\bar{\partial}g(z)\rangle\\
&=\lim_{n\to+\infty}\langle T_n(z_1),\langle
\frac{1}{\pi}G_{\Omega}(z,z_1),i\partial\bar{\partial}g(z)\rangle\rangle. 	
	\end{split}
\end{equation}
Since $T_n$ is positive $(1,1)$ current on $\Omega$, $T_n$ converge weakly to $\tilde{T}$ and $\frac{i}{\pi}\partial_z\bar\partial_z G_{\Omega}(z,z_1)=\delta_{z_1}$, it follows from equality \eqref{eq:2106k} that
\begin{equation}\label{eq:210623f}
\begin{split}
\langle i\partial\bar{\partial}\Phi,g\rangle
&=\lim_{n\to+\infty}\langle T_n(z_1),\langle
\frac{1}{\pi}G_{\Omega}(z,z_1),i\partial\bar{\partial}g(z)\rangle\rangle\\
&=\lim_{n\to+\infty}\langle T_n(z_1),g(z_1)\rangle\\
&=\langle \tilde{T},g\rangle,\\
\end{split}
\end{equation}
which implies that $i\partial\bar\partial\Phi=\tilde{T}$.

\

\emph{Step 3: $\lim_{t\rightarrow0+0}(\inf_{\{G_{\Omega}(z,z_0)\geq-t\}}\Phi(z))=0$ and $\inf_{\Omega\backslash U}\Phi>-\infty$}

\

Let $W\subset\subset \Omega$ be an open set
of $\Omega$ which satisfies $\overline{V}\cup\{z_0\} \subset W$
and $\overline{W}\cap \{-t\leq G_{\Omega}(z,z_0)\}=\emptyset$, where $t$ is a small enough positive number.
Then for every fixed $z\in\{-t\leq G_{\Omega}(z,z_0)\}$,
$G_{\Omega}(z,z_1)$ is harmonic function on a neighborhood of $\overline{W}$ with respect to
$z_1$.
By the Harnack inequality of harmonic function, there
exists a $M>0$ such that
$$\sup _{z_1\in \overline{W}}(-G_{\Omega}(z,z_1))\le
M\inf _{z_1\in \overline{W}}(-G_{\Omega}(z,z_1)).$$
As $z\in\{-t\leq G_{\Omega}(z,z_0)\}$, we have
$$Mt > -MG_{\Omega}(z,z_0)\ge M\inf _{z_1\in
\overline{W}}(-G_{\Omega}(z,z_1)\ge\sup _{z_1\in
\overline{W}}(-G_{\Omega}(z,z_1)\ge 0,$$
which means that $\lim_{t\rightarrow0+0}(\inf_{{\{G_{\Omega}(z,z_0)\geq-t\}}\times \overline{W}}G_{\Omega}(z,z_1))=0$.

Note that $0\geq u_n(z)=\langle T_n(z_1),\frac{1}{\pi}G_{\Omega}(z,z_1)\rangle\geq\frac{1}{\pi}\inf_{{\{G_{\Omega}(z,z_0)\geq-t\}}\times \overline{W}}G_{\Omega}(z,z_1)\|T_n\|$ holds for any $n$ and $z\in\{-t\leq G_{\Omega}(z,z_0)\}$, as $\|T_n\|=\|\tilde{T}\|<+\infty$ and $u_n$ is decreasingly convergent to $\Phi$, then we have
\begin{displaymath}
		\lim_{t\rightarrow0+0}(\inf_{\{G_{\Omega}(z,z_0)\geq-t\}}\Phi(z))\geq\lim_{t\rightarrow0+0}\frac{1}{\pi}\inf_{{\{G_{\Omega}(z,z_0)\geq-t\}}\times \overline{W}}G_{\Omega}(z,z_1)\|\tilde{T}\|=0	.
\end{displaymath}

Next, we prove $\inf_{\Omega\backslash U}\Phi>-\infty$.
Note that $p\in V\subset\subset\{G_{\Omega}(z,p)<-t\}\subset\subset U\subset\subset \Omega$, it follows from Lemma \ref{l:lo} that there exists a constant $N>0$, such that
\begin{equation}
	\label{eq:210623h}
	G_{\Omega}(z,z_1)\geq NG_{\Omega}(z,p)\geq-Nt
\end{equation}
holds for any $(z,z_1)\in(\Omega\backslash U,V)$. As $u_n(z)=\langle T_n(z_1),\frac{1}{\pi} G_{\Omega}(z,z_1)\rangle$ and $supp T_n\subset\subset V$ for any $n$, then we have $u_n(z)\geq-\frac{Nt}{\pi}\|T_n\|$ hold on $z\in\Omega\backslash U$. Note that $\|T_n\|=\|\tilde{T}\|$ and $u_n$ is decreasingly convergent to $\Phi$, then we have $\inf_{\Omega\backslash U}\Phi>-\infty$.

Thus, Lemma \ref{l:cu} holds.

\vspace{.1in} {\em Acknowledgements}.
The authors would like to thank Dr. Shijie Bao and Dr. Zhitong Mi for checking the manuscript and  pointing out some typos. The first named author was supported by National Key R\&D Program of China 2021YFA1003103, NSFC-11825101, NSFC-11522101 and NSFC-11431013.

\bibliographystyle{references}
\bibliography{xbib}

\begin{thebibliography}{100}
\bibitem{berndtsson}B. Berndtsson, The extension theorem of Ohsawa-Takegoshi and the theorem of
Donnelly-Fefferman, Ann. L'Inst. Fourier (Grenoble) 46 (1996), no.4, 1083--1094.
\bibitem{berndtsson09}B. Berndtsson, Curvature of vector bundles associated to holomorphic fibrations,
Annals of Math, 169 (2009), 531--560.
\bibitem{berndtsson13}B. Berndtsson, The openness conjecture for plurisubharmonic functions, arXiv:1305.5781.
\bibitem{demailly87}J.-P. Demailly, Mesures de Monge-Amp\`{e}re et mesures pluriharmoniques. (French) [Monge-Amp\`{e}re measures and pluriharmonic measures] Math. Z. 194 (1987), no. 4, 519--564.
\bibitem{demailly99}J.-P. Demailly, On the Ohsawa-Takegoshi-Manivel $L^{2}$ extension
theorem, Proceedings
of the Conference in honour of the 85th birthday of Pierre Lelong,
Paris, September 1997, Progress in Mathematics, Birkh\"auser, 2000.
\bibitem{demailly-note2000}J-P. Demailly, Multiplier ideal sheaves and analytic methods in algebraic geometry. School on Vanishing Theorems and Effective Results in Algebraic Geometry (Trieste, 2000), 1--148, ICTP Lect. Notes, 6, Abdus Salam Int. Cent. Theoret. Phys., Trieste, 2001.
\bibitem{demailly2010}J.-P. Demailly, Analytic Methods in Algebraic Geometry, Higher Education Press, Beijing, 2010.
\bibitem{demailly-book}J.-P. Demailly, Complex analytic and differential geometry, electronically accessible
at http://www-fourier.ujf-grenoble.fr/~demailly/books.html.
\bibitem{DEL00}J-P. Demailly, L. Ein, and R. Lazarsfeld, A subadditivity property of multiplier
ideals, Michigan Math. J. 48 (2000), 137--156.
\bibitem{D-K01}J-P. Demailly, J. Koll\'{a}r,
Semi-continuity of complex singularity exponents and K\"{a}hler-Einstein metrics on Fano orbifolds.
Ann. Sci. \'{E}cole Norm. Sup. (4) 34 (2001), no. 4, 525--556.
\bibitem{D-P03}J-P. Demailly, T. Peternell, A Kawamata-Viehweg vanishing theorem on compact K\"{a}hler manifolds. J. Differential Geom.  63  (2003),  no. 2, 231--277.
\bibitem{FM05v}C. Favre and M. Jonsson, Valuative analysis of planar plurisubharmonic functions, Invent. Math. 162 (2005), no. 2, 271--311.
\bibitem{FM05j}C. Favre and M. Jonsson, Valuations and multiplier ideals, J. Amer. Math. Soc. 18 (2005), no. 3, 655--684.
\bibitem{FN1980}J. E. Forn{\ae}ss, R. Narasimhan, The Levi problem on complex spaces with singularities,
Math. Ann. 248 (1980), no. 1, 47--72.
\bibitem{OF81}Otto Forster, Lectures on Riemann surfaces, Graduate Texts in Mathematics, Vol. 81, Springer-Verlag, New York-Berlin, 1981.

\bibitem{G-R}H. Grauert and R. Remmert, Coherent analytic sheaves, Grundlehren der mathematischen Wissenchaften, 265, Springer-Verlag, Berlin, 1984.
\bibitem{P-J}P. Griffiths and J. Harris, Principles of Algebraic Geometry, Pure Appl. Math., Wiley-Interscience [John Wiley \& Sons], New York, 1978. 
\bibitem{guan_sharp}Q.A. Guan, A sharp effectiveness result of Demailly's strong openness conjecture, Adv. in Math. 348 (2019), 51-80.

\bibitem{guan_general concave}Q.A. Guan, Genneral concavity of minimal $L^2$ integrals related to multiplier sheaves, arXiv:1811.03261v4[math.CV]

\bibitem{GM-concave}Q.A. Guan and Z.T. Mi, Concavity of minimal $L^2$ integrals related to multiplier ideal sheaves, Peking Math. J. (2022). https://doi.org/10.1007/s42543-021-00047-5.
\bibitem{GY-effect}Q.A. Guan and Z. Yuan, An effectiveness result of the strong openness property in $L^p$, arXiv:2106.03552.

\bibitem{guan-zhou12}Q.A. Guan and X.Y. Zhou, Optimal constant problem in the $L^{2}$ extension theorem, C. R. Acad. Sci. Paris. Ser. I. 350 (2012), no. 15--16, 753--756.
\bibitem{guan-zhou13p}Q.A. Guan and X.Y. Zhou, Optimal constant in an $L^2$ extension problem and a proof of a conjecture of Ohsawa,  Sci. China Math., 2015, 58(1): 35--59.
\bibitem{GZ-soc17}Q.A. Guan and X.Y. Zhou, Strong openness of multiplier ideal sheaves and optimal $L^2$ extension, Sci. China Math., 2017, 60:967-976.
\bibitem{guan-zhou13ap}Q.A. Guan and X.Y. Zhou, A solution of an $L^{2}$ extension problem with an optimal estimate and applications,
Ann. of Math. (2) 181 (2015), no. 3, 1139--1208.
\bibitem{GZopen-c}Q.A. Guan and X.Y. Zhou, A proof of Demailly's strong openness conjecture,
Ann. of Math. (2) 182 (2015), no. 2, 605--616. See also arXiv:1311.3781.
\bibitem{GZopen-effect}Q.A. Guan and X.Y. Zhou, Effectiveness of Demailly's strong openness conjecture and related problems, Invent. Math. 202 (2015), no. 2, 635--676.
\bibitem{GZrestrict}Q.A. Guan and X.Y. Zhou, Restriction formula and subadditivity property related to multiplier ideal sheaves. J. Reine Angew. Math. 769 (2020), 1--33. 

\bibitem{Gue12}H. Guenancia, Toric plurisubharmonic functions and analytic adjoint ideal sheaves,
Math. Z. 271 (2012), no. 3--4, 1011--1035.
\bibitem{Hiep14}P. H. Hiep, The weighted log canonical threshold. C. R. Math. Acad. Sci. Paris 352 (2014), no. 4, 283--288.
\bibitem{JM12}M. Jonsson and M. Musta\c{t}\u{a}, Valuations and asymptotic invariants for sequences of ideals, Annales de l'Institut Fourier A. 2012, vol. 62, no.6, pp. 2145--2209.
\bibitem{JM13}M. Jonsson and M. Musta\c{t}\u{a}, An algebraic approach to the openness conjecture of Demailly and Koll\'{a}r,
J. Inst. Math. Jussieu (2013), 1--26.
\bibitem{Ko92}J. Koll\'{a}r (with 14 coauthors): Flips and Abundance for Algebraic Threefolds; Ast\'{e}risque Vol. 211 (1992).
\bibitem{Laz04}Lazarsfeld R. Positivity in algebraic geometry. I. Classical setting: line bundles and linear series;
II. Positivity for vector bundles, and multiplier ideals. Ergebnisse der Mathematik und ihrer Grenzgebiete. 3. Folge.
 A Series of Modern Surveys in Mathematics, 48, 49. Springer-Verlag, Berlin, 2004.
\bibitem{Lempert14}L. Lempert, Modules of square integrable holomorphic germs, arXiv:1404.0407v2.
\bibitem{manivel93}L. Manivel, Un th\'{e}or\`{e}me de prolongement $L^2$ de sections
holomorphes d'un
fibr\'{e} vectoriel, Math. Zeitschrift, 212 (1993), 107--122.
\bibitem{Nadel90}A. Nadel, Multiplier ideal sheaves and K\"{a}hler-Einstein metrics of positive scalar curvature.
Ann. of Math. (2) 132 (1990), no. 3, 549--596.
\bibitem{ohsawa2}T. Ohsawa, On the extension of L2 holomorphic functions. II. Publ. Res. Inst. Math. Sci. 24 (1988), no. 2, 265--275.
\bibitem{ohsawa3}T. Ohsawa, On the extension of $L^{2}$ holomorphic functions. III.
Negligible weights. Math. Z. 219 (1995), no. 2, 215--225.
\bibitem{ohsawa5}T. Ohsawa, On the extension of $L^2$ holomorphic functions. V. Effects of generalization.
Nagoya Math. J. 161 (2001), 1--21.
Erratum to: "On the extension of $L^2$ holomorphic functions. V. Effects of generalization''
[Nagoya Math. J. 161 (2001), 1--21].
Nagoya Math. J. 163 (2001), 229.
\bibitem{ohsawa-takegoshi}T. Ohsawa and K. Takegoshi, On the extension of $L^{2}$ holomorphic functions,
Math. Z. 195 (1987), 197--204.

\bibitem{S-O69}L. Sario and K. Oikawa, Capacity functions, Grundl. Math. Wissen. 149, Springer-Verlag, New York, 1969. Mr 0065652. Zbl 0059.06901.
\bibitem{Sho92}V. Shokurov: 3-fold log flips; Izv. Russ. Acad. Nauk Ser. Mat. Vol. 56 (1992) 105--203.
\bibitem{Si}Nessim Sibony: Quelques probl\`emes de prolongement de courants en
analyse complexe. (French) [Some extension problems for currents in
complex analysis] Duke Math. J. 52 (1985), no. 1, 157--197.
\bibitem{siu74}Y.T. Siu, Analyticity of sets associated to Lelong numbers and the extension of closed positive currents. Invent. Math. 27 (1974), 53--156.
\bibitem{siu96}Y.T. Siu, The Fujita conjecture and the extension theorem of Ohsawa-Takegoshi,
Geometric Complex Analysis, Hayama. World Scientific (1996),
577--592.
\bibitem{siu00}Y.T. Siu, Extension of twisted pluricanonical sections with plurisubharmonic weight and invariance of semipositively twisted plurigenera for manifolds not necessarily of general type. Complex geometry (G\"{o}ttingen, 2000), 223--277, Springer, Berlin, (2002).
\bibitem{siu05}Y.T. Siu, Multiplier ideal sheaves in complex and algebraic geometry. Sci. China Ser. A 48
(2005), suppl., 1--31.
\bibitem{siu09} Y.T. Siu, Dynamic multiplier ideal sheaves and the construction of rational curves in Fano
manifolds. Complex analysis and digital geometry, 323--360, Acta Univ. Upsaliensis Skr.
Uppsala Univ. C Organ. Hist., 86, Uppsala Universitet, Uppsala, 2009.
\bibitem{skoda1972}H. Skoda, Sous-ensembles analytiques d'ordre fini ou infini dans $\mathbb{C}^n$, Bull. Soc. Math. France 100 (1972) 353-408.
\bibitem{Straube}E. Straube, Lectures on the $L^2$-Sobolev Theory of the $\bar\partial$-Neumann Problem.
ESI Lectures in Mathematics and Physics. ZÅ¡Â¹rich: European Mathematical
Society (2010).
\bibitem{tian87}G. Tian, On K\"{a}hler-Einstein metrics on certain K\"{a}hler manifolds with $C_{1}(M)>0$, Invent. Math.  89  (1987),  no. 2, 225--246.
\bibitem{Tsuji}M. Tsuji, Potential theory in modern function theory, Maruzen Co., Ltd., Tokyo, 1959. MR 0114894. Zbl 0087.28401.
\bibitem{xu-phd}W. Xu and X.Y. Zhou, Optimal L2 Extensions of Openness Type, arXiv:2202.04791v2.
\end{thebibliography}

\end{document}